\def\amslatex{$\mathcal{A}\kern-.1667em\lower.5ex\hbox{$\mathcal{M}$}\kern-.125em\mathcal{S}$-\LaTeX}
\newtheorem{set}{set}[section]
\newtheorem{Corollary}[set]{Corollary}
\newtheorem{Definition}[set]{Definition}
\newtheorem{Lemma}[set]{Lemma}
\newtheorem{Proposition}[set]{Proposition}
\newtheorem{Remark}[set]{Remark}
\newtheorem{Theorem}[set]{Theorem}
\newcommand{\CA}{$C^*$-algebra}
\newcommand{\ep}{\epsilon}
\newcommand{\dt}{\delta}
\newcommand{\beq}{\begin{eqnarray}}
\newcommand{\eneq}{\end{eqnarray}}
\newcommand{\andeqn}{\,\,\,{\rm and}\,\,\,}
\newcommand{\define}{\mathrel{\hbox{$\equiv$\hskip -.90em \lower .47ex \hbox{$\leftharpoondown$}}}}
\newcommand{\enifed}{\mathrel{\hbox{$\equiv$\hskip -.90em \lower .47ex \hbox{$\rightharpoondown$}}}}
\numberwithin{equation}{section}
\begin{document}
\title{On  generalized universal irrational rotation algebras and the operator $u+v$}
\author{Junsheng Fang\and Chunlan Jiang \and Huaxin Lin \and Feng Xu}
\date{}
 \maketitle

 \begin{abstract}

We introduce a class of generalized universal irrational rotation
 $C^*$-algebras $A_{\theta,\gamma}=C^*(x,w)$ which is
 characterized by the relations $w^*w=ww^*=1$, $x^*x=\gamma(w)$,
 $xx^*=\gamma(e^{-2\pi i\theta}w)$, and $xw=e^{-2\pi i\theta}wx$, where $\theta$ is an irrational number and $\gamma(z)\in C(\mathbb{T})$ is a
 positive function. We characterize  tracial linear functionals, simplicity,
 and $K$-groups of $A_{\theta,\gamma}$ in
 terms of zero points of $\gamma(z)$. We show that if $A_{\theta,\gamma}$ is simple then $A_{\theta,\gamma}$ is an $A{\mathbb T}$-algebra of real rank zero. We  classify $A_{\theta,\gamma}$ in terms of $\theta$ and zero points of $\gamma(z)$.
Let $A_\theta=C^*(u,v)$ be the universal irrational rotation $C^*$-algebra with $vu=e^{2\pi i\theta}uv$.
 Then $C^*(u+v)\cong A_{\theta,|1+z|^2}$.
As an application, we show that $C^*(u+v)$ is a proper simple $C^*$-subalgebra of
$A_\theta$ which has a unique trace,
$K_1(C^*(u+v))\cong \mathbb{Z}$, and there is an order isomorphism
of $K_0(C^*(u+v))$ onto $\mathbb{Z}+\mathbb{Z}\theta$. {Moreover,
$C^*(u+v)$ is a unital simple $A{\mathbb T}$-algebra of real rank zero.}
We also calculate the spectrum and the Brown measure of $u+v$.
 \end{abstract}

\section{Introduction}

The irrational rotation \CA\, $A_{\theta}$ has been one of most studied \CA s. It is known now that $A_\theta$ is a unital simple \CA\, with a unique tracial state. There is an order isomorphism
of $K_0(A_\theta)$ onto $\mathbb{Z}+\mathbb{Z}\theta$ and $K_1(A_\theta)\cong\mathbb{Z}^2$ (\cite{PV, Ri}). Moreover,
$A_\theta$ is a unital simple $A{\mathbb T}$-algebra of real rank zero ~\cite{EE}.

Let $u$ and $v$ be two unitary generators of the universal irrational rotation $C^*$-algebra  $A_\theta$  such that $vu=e^{2\pi i\theta}{uv}$. Then $u+v$ is {an ab}normal operator of $A_\theta$ and $C^*(u+v)$ is a proper $C^*$-subalgebra of $A_\theta$. In this paper, we study the algebraic structure of $C^*(u+v)$ and the spectral theory of $u+v$. Our motivation comes from our attempt to relate the theory of strongly irreducible operators relative to
${\rm II}_1$ factors  with irreducible subfactors (cf. Prop. \ref{indexn} and the question that follows).

In fact, we study a class of generalized universal irrational rotation  $C^*$-algebras $A_{\theta,\gamma}=C^*(x,w)$, which is
the universal $C^*$-algebra satisfying the
following properties:
\begin{equation}\label{E:11}
w^*w=ww^*=1,
\end{equation}
\begin{equation}\label{E:12}
x^*x=\gamma(w),
\end{equation}
\begin{equation}\label{E:13}
xx^*=\gamma(e^{-2\pi i\theta}w),
\end{equation}
\begin{equation}\label{E:14}
xw=e^{-2\pi i\theta}wx,
\end{equation}
where $\theta\in (0,1)$ and $\gamma(z)\in C(\mathbb{T})$ is a positive continuous function of the unit circle ${\mathbb T}$. As we will see that $C^*(u+v)\cong
 A_{\theta,|1+z|^2}$. If $\theta$ is an irrational number and $\gamma(z)\equiv 1$, then $A_{\theta,\gamma}$ is the  irrational rotation $C^*$-algebra $A_\theta$.  In fact, if $\gamma$ is invertible, then $A_{\theta, \gamma}=A_\theta.$  However, the main interest of this paper is to study $A_{\theta,\gamma}$ when  the set of the zero points of $\gamma(z)$ is nonempty.

It turns out that, when $\theta$ is fixed,  the $C^*$-algebra $A_{\theta,\gamma}$ only depends on the set of zero points and therefore the algebraic property of $A_{\theta,\gamma}$ is completely determined by the zero points of $\gamma(z)$. For example, we characterize simplicity and uniqueness of trace of  $A_{\theta,\gamma}$ as follows. Let $Y$ be the set of zero points of $\gamma(z)$ and let $\phi:\mathbb{T}\rightarrow\mathbb{T}$ be the rotation of the unit circle determined by $\theta$, i.e., $\phi(z)=e^{2\pi i\theta}z$. Denote by $Orb(\xi)=\{\phi^n(\xi):\,n\in\mathbb{Z}\}$ for $\xi\in \mathbb{T}$. Then the following properties are equivalent:
\begin{enumerate}
\item ${A}_{\theta, \gamma}$ is simple;
\item ${A}_{\theta, \gamma}$ has a unique tracial state;
\item $\phi^n(Y)\cap Y=\emptyset$ for all integer $n\not=0;$
\item For each $\xi\in {\mathbb T},$ $Orb(\xi)\cap Y$ contains at most one point.
\end{enumerate}
If $Y$ is not empty, then $K_1({A}_{\theta, \gamma})\cong\mathbb{Z}$ and $K_0({A}_{\theta, \gamma})$ is determined by the following splitting exact sequence
\[
0\to {\mathbb Z}\to K_0({A}_{\theta, \gamma})\to C(Y, {\mathbb Z})\to 0.
\]
We also show that if ${A}_{\theta, \gamma}$ is simple, then ${A}_{\theta, \gamma}$ has tracial rank zero and is an inductive limit of recursive subhomogenuous \CA s. As a result, the classification of  ${A}_{\theta,\gamma}$ falls into Elliott's classification program. Indeed, we obtain the following result. Let $\theta_1$ and $\theta_2$ be two irrational numbers,
$\gamma_1$ and $\gamma_2\in C({\mathbb T})$ be non-negative functions and let $Y_i$ be the set of zeros of $\gamma_i,$ $i=1,2.$
Let $\phi_1,\, \phi_2: {\mathbb T}\to {\mathbb T}$ be rotations of the unit circle determined by
$\theta_1$ and $\theta_2$ respectively.
Suppose that $\phi^n(Y_i)\cap Y_i=\emptyset$ for all integers $n\not=0,$  $i=1,2.$
Then  ${A}_{\theta_1, \gamma_1}\cong {A}_{\theta_2, \gamma_2}$  if and only if
the following hold:
\[
\theta_1=\pm \theta_2 mod(\mathbb Z)\andeqn C(Y_1, {\mathbb Z})/{\mathbb Z}\cong C(Y_2, {\mathbb Z})/{\mathbb Z}.
\]
In particular, when  $\gamma_1$ has only finitely many zeros, then
${A}_{\theta_1, \gamma_1}\cong {A}_{\theta_2, \gamma_2}$  if and only if $\theta_1=\pm \theta_2 mod(\mathbb Z)$ and
$\gamma_2$ has the same number of zeros as those of $\gamma_1.$

A special case of interest is
\[C^*(u+v)=C^*(u+v,u^*v)=C^*(u(1+u^*v),u^*v)\cong A_{\theta,\gamma},\]
where $\gamma(z)=|1+z|^2$. As an application of the above results of generalized universal irrational rotation $C^*$-algebras, we show that $C^*(u+v)$ is a proper simple $C^*$-subalgebra of
$A_\theta$ which has a unique trace,
$K_1(C^*(u+v))\cong\mathbb{Z}$, and there is an order isomorphism
of $K_0(C^*(u+v))$ onto $\mathbb{Z}+\mathbb{Z}\theta$. Moreover,
$C^*(u+v)$ is a unital simple $A{\mathbb T}$-algebra with real rank zero. Therefore, $C^*(u+v)$ has tracial rank zero.

The second part of the paper is to study the spectrum of $u+v$, which is motivated by the ``the Ten Martini
Problem" on the almost Mathieu operator.
In mathematical physics, the almost Mathieu operator is given by

\[(H_{\lambda,\theta,\beta} u)(n) = u(n+1) + u(n-1) + 2 \lambda
\cos(2\pi (n\theta +\beta)) u(n), \] acting as a self-adjoint
operator on the Hilbert space $\ell^2(\mathbb{Z})$. Here
$\theta,\beta, \lambda \in\mathbb{R}$ are parameters.  Almost
Mathieu operator was firstly introduced by R. Peierls \cite{Pe} and has
been extensively studied (see \cite{La} for a recent historical account and for the physics background). In pure mathematics, its importance
comes from the fact of being one of the best-understood examples
of an ergodic Schr\"{o}dinger operator.
 For example, three
problems (now all solved) of Barry Simon's fifteen
problems~\cite{Si} about Schr\"{o}dinger operators``for the
twenty-first century" featured the almost Mathieu operator. The
fourth problem in~\cite{Si} (known as the ``the Ten Martini
Problem" after Kac and Simon)  conjectures that the spectrum of
the almost Mathieu operator is a Cantor set for all $\lambda\neq
0$ and irrational numbers $\theta$. Recently, Avila and
Jitomirskaya 
confirmed this conjecture in~\cite{AJ}.
For a history of this problem and earlier partial results see
\cite{La,BS,Si,HS1,CEY,AK,Pu}.

 Recall
that the  irrational rotation $C^*$-algebra  $A_\theta$ can be represented on $\ell^2(\mathbb{Z})$, by
mapping $u$ into the bilateral shift (taking $\phi$ into
$(\phi(n-1))_{n\in\mathbb{Z}}$), and $v$ into the operation of
multiplication by $e^{2\pi in\theta}$ (taking $\phi$ into $e^{2\pi
in\theta}(\phi(n))_{n\in\mathbb{Z}}$), and then the polynomial
$(u+\lambda e^{2\pi i\beta} v)+(u+\lambda e^{2\pi i\beta} v)^*$ is mapped into the bounded self-adjoint operator
$H_{\lambda,\theta,\beta}$. Since $A_\theta$ is simple (when $\theta$ is
irrational), the spectrum of $H_{\lambda,\theta,\beta}$ is the same as the
spectrum of the element $(u+\lambda e^{2\pi i\beta} v)+(u+\lambda e^{2\pi i\beta} v)^*$.
A natural question is that what is the spectrum of
$u+\lambda e^{2\pi i\beta}v$? If $\theta$ is an irrational number, then by the uniqueness of $A_\theta$ the spectrum of $u+\lambda e^{2\pi i\beta}v$ is the same as $u+|\lambda|v$. So from now on, we always assume that $\lambda>0$ and $\beta=0$.

Let  $\tau$  be the unique tracial state on
$A_\theta$. By the GNS-construction, we obtain a
faithful representation $\pi$ of $A_\theta$ on
$L^2(A_\theta,\tau)$. The weak operator closure of
$\pi(A_\theta)$ is the hyperfinite ${\rm II}_1$ factor
$R$. Since the spectrum of $u+\lambda v$ is same as the spectrum
of $\pi(u+\lambda v)$ in $R$, we need only to calculate the
spectrum of $\pi(u+\lambda v)$ in $R$. In the following we
identify $A_\theta$ with $\pi(A_\theta)$ and
thus identify $u+\lambda v$ with $\pi(u+\lambda v)$.

One of the main results of the present paper is that the spectrum
of $u+\lambda v$ is given by
\[
\sigma(u+\lambda v)=
\begin{cases}
\mathbb{T}& 0<\lambda<1,\\
\overline{B(0,1)}&\lambda=1,\\
\lambda \mathbb{T}& \lambda>1.
\end{cases}
\]
Another result of spectral theory is related to {the} Brown measure.
L. G. Brown introduced in the paper~\cite{Br} a spectral
distribution measure $\mu_T$ for not necessarily  normal operators
$T$ in a von Neumann algebra $M$ with a faithful normal tracial
state $\tau$, which is called the Brown measure of $T$. Recently, U. Haagerup and H. Schultz~\cite{HS2} proved
a remarkable result about the existence of nontrivial
hyperinvariant subspaces of operators in type ${\rm II}_1$ factors.
They proved that if the support of $\mu_T$ contains more that two
points, then $T$ has a nontrivial hyperinvariant space. However,
the calculation of Brown measures of nonnormal operators is
difficult in general (see\cite{HL,BL,FHM}). In particular,
Haagerup and Larsen in~\cite{HL} showed that the Brown measure of
the sum of two free Haar unitary operator $T=u_1+u_2$ is rotation
invariant, has support equal to $\overline{B(0,\sqrt{2})}$
($=\sigma(T)$), and has radial density
\[
f_T(r)=
\begin{cases}
\frac{4}{4\pi(4-r^2)^2},& 0<r<\sqrt{2}\\
0,&\text{otherwise.}
\end{cases}
\]
In section 12, we will show that the Brown measure of $u+v$
(in $R$) is  the Haar measure on the unit circle.

This paper is organized as follows. In section 2 we introduce the
class of generalized universal irrational rotation
 $C^*$-algebras $A_{\theta,\gamma}=C^*(x,w)$.
 We prove that, in fact, $A_{\theta, \gamma}$ can be viewed as a $C^*$-subalgebra of $A_\theta.$
 We also fix some notation that will be used in the later sections.
 In section 3, we
 give some descriptions of the tracial state space of $A_{\theta, \gamma}$
  in terms of zero points of $\gamma(z)$. In particular, we show that
 $A$ has a unique tracial state if and only if each rotation orbit
 contains at most one zero point of $\gamma.$
  In section 4, we characterize simplicity of  $A_{\theta,\gamma}$ in terms of zero points of $\gamma(z)$. We show that $A_{\theta, \gamma}$ is simple if and only if it has a unique tracial state which is also equivalent to the condition that each rotation orbit contains at most one zero point of $\gamma.$
 In section 5, we
construct Rieffel's projections in every simple generalized universal
irrational rotation algebra $A_{\theta,\gamma}$.
In section 6, we calculate $K$-groups of
$A_{\theta,\gamma}$.
In section 7, using results of section 3-6 and recent development in the Elliott's classification program, we show that when ${A}_{\theta, \gamma}$ is simple, then ${A}_{\theta, \gamma}$ is an $A{\mathbb T}$-algebra of real rank zero.
We obtain a classification result of simple $C^*$-algebras of $A_{\theta,\gamma}$ in terms $\theta$ and zero points of $\gamma(z)$.
In section 8 we prove that the von Neumann
subalgebra generated by $u+\lambda v$ is $R$ for all
$0<\lambda<\infty$, and the $C^*$-algebra generated by $u+\lambda
v$ is $C^*(u,v)$ if $\lambda\neq 1$. However, for $\lambda=1$,
$C^*(u+v)$  is isomorphic to $A_{\theta, |1+z|^2}.$ Therefore
$C^*(u+v)$ is a unital simple $A{\mathbb T}$-algebra of real rank zero which has
$K_1(C^*(u+v))\cong {\mathbb Z}$ and $K_0(u+v)$ is order isomorphic to ${\mathbb Z}+{\mathbb Z}\theta.$
 In particular, $C^*(u+v)$ is not $\ast$-isomorphic to $C^*(u,v)$.

In section 9 we show that the spectral radius of $u+\lambda v$ is
$1$ if $0<\lambda\leq 1$.  A key idea in the calculation  is using
Birkhoff's Ergodic theorem. Then in section 10 we show that the
relative commutant of $u+v$ in $R$ does not contain any nontrivial
idempotent. By the Riesz spectral decomposition theorem, the
spectrum of $u+v$ is connected.
  Combining the fact that the spectrum of $u+v$ is
rotation symmetric, in section 11 we obtain that
$\sigma(u+v)=\overline{B(0,1)}$.
 We show that the spectral radius of $(u+\lambda v)^{-1}$ is less or equal
  than 1 for $0<\lambda<1$, which implies that $\sigma(u+\lambda v)$ is contained in
   the unit circle $\mathbb{T}$. Since the spectrum of $u+\lambda v$ is rotation symmetric,
 $\sigma(u+\lambda v)=\mathbb{T}$. By the symmetry of $u$ and $v$,
 $\sigma(u+\lambda v)=\lambda \sigma(\lambda^{-1}u+v)=\lambda \mathbb{T}$ for $\lambda>1$.
In section 12, we calculate Brown measure of $u+\lambda v$.

{\bf Acknowledgements:} The authors thank Professor Guihua Gong for many valuable discussions on this paper.

\section{Generalized universal irrational rotation $C^*$-algebras}
Let $u$ and $v$ be two unitary generators of the universal irrational rotation $C^*$-algebra $A_\theta$ such that $vu=e^{2\pi i\theta}uv$.
To study the properties of $C^*$-algebras generated by $u+
v$, we will consider the universal $C^*$-algebra satisfying the
following properties:
\begin{equation}\label{U1}
w^*w=ww^*=1,
\end{equation}
\begin{equation}\label{U2}
x^*x=\gamma(w),
\end{equation}
\begin{equation}\label{U3}
xx^*=\gamma(e^{-2\pi i\theta}w),
\end{equation}
\begin{equation}\label{U4}
xw=e^{-2\pi i\theta}wx,
\end{equation}
where $\gamma(z)\in C(\mathbb{T})$ is a positive function.

 A $C^*$-algebra $A_{\theta,\gamma}$ is universal for the
above relations provided that it is generated by operators $x,w$
satisfying (\ref{U1})-(\ref{U4}) and whenever
$\mathfrak{A}=C^*(x',w')$ is another $C^*$-algebra satisfying
(\ref{U1})-(\ref{U4}), there is a homomorphism of
$A_{\theta,\gamma}$ onto $\mathfrak{A}$ which carries
$x$ to $x'$ and $w$ to $w'$.  By (\ref{U1}), $w$ is a unitary
operator. So (\ref{U2}) implies that $\|x\|\leq \|\gamma\|^{1/2}$.
We may consider the collection of all
 operators $x_\alpha, w_\alpha$ in $B(H_{{\alpha}})$ satisfying
(\ref{U1})-(\ref{U4}). Then form the operator
\[
x=\sum\oplus x_\alpha\quad\text{and}\quad w=\sum\oplus w_\alpha.
\]
Let $A_{\theta,\gamma}=C^*(x,w)$. Then
$A_{\theta,\gamma}$ is the desired universal algebra.  Note that if
$\gamma(z)\equiv 1$, then $A_{\theta,\gamma}$ is
precisely the universal irrational rotation algebra. So we call
$A_{\theta,\gamma}$ a \emph{generalized universal
irrational rotation algebra}.

Let $A_\theta$ be the universal irrational rotation $C^*$-algebra with two
unitary generators $u,v$ with
$vu=e^{2\pi i\theta}uv$.
Then $u\gamma(v)^{1/2}$ and $v$ satisfy
(\ref{U1})-(\ref{U4}).
{So there is a $\ast$-homomorphism from $A_{\theta, \gamma}$ onto the $C^*$-subalgebra of $A_\theta$ generated by $u\gamma(v)^{1/2}$ and $v$. We will show that we may view $A_{\theta, \gamma}$ as  the $C^*$-subalgebra of $A_\theta$ generated by $u\gamma(v)^{1/2}$ and $v$ and $C^*(u+v)\cong A_{\theta, |1+z|^2}$.}

By (\ref{U1})-(\ref{U4}) and simple calculations, we have the
following equations.

\begin{equation}\label{U5}
x^*w=e^{2\pi i\theta}wx^*,
\end{equation}

\begin{equation}\label{U4-f(z)}
xf(w)=f(e^{-2\pi i\theta}w)x,\quad\forall f(z)\in C(\mathbb{T}),
\end{equation}

\begin{equation}\label{U5-f(z)}
x^*f(w)=f(e^{2\pi i\theta}w)x^*,\quad\forall f(z)\in
C(\mathbb{T}),
\end{equation}

\begin{equation}\label{E:x*rxr}
(x^*)^rx^r=\gamma\left(e^{2\pi
i(r-1)\theta}w\right)\gamma\left(e^{2\pi
i(r-2)\theta}w\right)\cdots \gamma(w),
\end{equation}

\begin{equation}\label{E:xrx*r}
x^r(x^*)^r=\gamma\left(e^{-2\pi
ir\theta}w\right)\gamma\left(e^{-2\pi i(r-1)\theta}w\right)\cdots
\gamma\left(e^{-2\pi i\theta}w\right).
\end{equation}

We apply the universal property to obtain certain special
automorphisms of $A_{\theta,\gamma}$. For any constant
$\lambda=e^{2\pi it}$ on the unit circle, the pair $(\lambda x,w)$
also satisfy  (\ref{U1})-(\ref{U4}). Thus there is an endomorphism
of $A_{\theta,\gamma}$ such that $\rho_t(x)=\lambda x$
and $\rho_t(w)=w$. By symmetry, $\rho_{-t}(x)=\bar{\lambda} x$ and
$\rho_{-t}(w)=w$. Hence,
$\rho_{-t}(\rho_t(x))=\rho_t(\rho_{-t}(x))=x$ and
$\rho_{-t}(\rho_t(w))=\rho_t(\rho_{-t}(w))=w$. This implies that
$\rho_t$ is an automorphism of $A_{\theta,\gamma}$.

For each fixed $a$ in $A_{\theta,\gamma}$, the map from
$[0,1]$ to $A_{\theta,\gamma}$ given by
$f(t)=\rho_t(a)$ is norm continuous. To verify this, notice
that it is true for all noncommutative polynomials in
$x,x^*,w,w^*$. These are dense and automorphisms are contractive;
so the rest follows from a simple approximation argument.

Define a map of $A_{\theta,\gamma}$ into itself by
\[
\Phi(a)=\int_0^1\rho_t(a)dt.
\]
Then the integral makes sense as Riemann sum because the integrand
is a norm continuous function. By (\ref{U1})-(\ref{E:xrx*r}) and simple
calculations, we can see that the following set
\[
\left\{\sum_{n=1}^Nx^nf_n(w)+f_0(w)+\sum_{n=1}^Nf_{-n}(w)(x^*)^n|\,N\in
\mathbb{N},\, f_n(z),f_{-n}(z)\in C(\mathbb{T})\right\}
\]
is dense in $A_{\theta,\gamma}$.

The proof of the following proposition is similar to the proof of Theorem VI.1.1 of~\cite{Da}. For the
sake of completeness, we include a detailed proof.
\begin{Proposition}\label{P:U Phi}
The map $\Phi$ is a faithful conditional expectation of
$A_{\theta,\gamma}$ onto $C^*(w)$ such that
$\Phi(x^kf(w))=\Phi(f(w)(x^*)^k)=0$ for all $f(z)\in
C(\mathbb{T})$ and $k\in \mathbb{N}$.
 In addition, for every $a\in A_{\theta,\gamma}$,
\[
\Phi(a)=\lim_{n\rightarrow\infty}\frac{1}{2n+1}\sum_{j=-n}^nw^ja(w^*)^j.
\]
\end{Proposition}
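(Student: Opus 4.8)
The plan is to follow the template of Theorem VI.1.1 of Davidson~\cite{Da}, adapting it to the presence of the weight $\gamma$. First I would establish that $\Phi$ takes values in $C^*(w)$ and kills the ``off-diagonal'' terms. Using the dense $*$-subalgebra consisting of sums $\sum_{n=1}^N x^n f_n(w) + f_0(w) + \sum_{n=1}^N f_{-n}(w)(x^*)^n$, it suffices to compute $\Phi$ on each homogeneous piece. For a summand of the form $x^k f(w)$ with $k\ge 1$, the automorphism $\rho_t$ acts by $\rho_t(x^k f(w)) = e^{2\pi i k t} x^k f(w)$, so $\Phi(x^k f(w)) = \left(\int_0^1 e^{2\pi i k t}\,dt\right) x^k f(w) = 0$; similarly $\Phi(f(w)(x^*)^k) = 0$ for $k\ge 1$, and $\Phi(f(w)) = f(w)$ since $\rho_t$ fixes $w$. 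This shows $\Phi$ maps the dense subalgebra into $C^*(w)$, hence all of $A_{\theta,\gamma}$ into $C^*(w)$ by continuity (each $\rho_t$ is contractive, so $\Phi$ is contractive and well-defined).

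Next I would verify that $\Phi$ is a conditional expectation. It is clearly linear, contractive, and restricts to the identity on $C^*(w)$, so it is a projection of norm one onto $C^*(w)$; by Tomiyama's theorem this automatically makes it a conditional expectation. Alternatively one checks the bimodule property $\Phi(b_1 a b_2) = b_1 \Phi(a) b_2$ for $b_1, b_2 \in C^*(w)$ directly, which follows because $\rho_t$ fixes $C^*(w)$ pointwise and is multiplicative, so $\rho_t(b_1 a b_2) = b_1 \rho_t(a) b_2$ and then integrate. Positivity is immediate since each $\rho_t$ is a $*$-automorphism hence positive, and a norm-convergent (Riemann) integral of positive elements is positive.

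For the averaging formula, I would show that $\frac{1}{2n+1}\sum_{j=-n}^n w^j a (w^*)^j \to \Phi(a)$ in norm. Again reduce to the dense subalgebra by a standard $\varepsilon/3$ argument, using that both the averaging maps and $\Phi$ are contractive and linear. On a summand $f_0(w)$ the conjugation by $w^j$ does nothing (powers of $w$ commute), so the average equals $f_0(w) = \Phi(f_0(w))$ exactly. On a summand $x^k f(w)$ with $k\ge 1$, relation~\eqref{U4-f(z)} gives $w^j x^k f(w) (w^*)^j = e^{-2\pi i j k\theta}\, x^k f(w)$ after moving the powers of $w$ past $x^k$; hence the average is $\left(\frac{1}{2n+1}\sum_{j=-n}^n e^{-2\pi i jk\theta}\right) x^k f(w)$, and since $\theta$ is irrational and $k\ne 0$, $e^{-2\pi ik\theta}\ne 1$, so the geometric sum is $O(1/n)$ and tends to $0 = \Phi(x^k f(w))$. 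The term $f(w)(x^*)^k$ is handled symmetrically using~\eqref{U5-f(z)}. This proves the formula on the dense subalgebra, hence everywhere.

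Finally, faithfulness of $\Phi$: suppose $a\ge 0$ and $\Phi(a) = 0$. Then for every $j$, $\Phi(w^j a (w^*)^j)$ — no, rather, I would argue as follows. For $a \ge 0$, $\Phi(a^*a) = 0$; write $a^*a$ and use that $\Phi$ restricted to any $w^j a^* a (w^*)^j$ also vanishes (since $\Phi$ commutes with conjugation by $w$, as $\rho_t$ does), so each term of the averaging sum has the same $\Phi$-image, and more directly one shows $\tau\circ\Phi = \tau$ for a faithful state is not yet available — so instead I use the standard argument: if $\Phi(a^*a) = 0$ with $a^*a \ge 0$ in $C^*(w)$-module language, pick any irreducible representation; the cleanest route is that $\Phi$ is faithful because the formula $\Phi(a) = \lim \frac{1}{2n+1}\sum w^j a (w^*)^j$ exhibits $\Phi(a^*a)$ as a limit of positive elements each dominating nothing a priori, so instead I invoke: for $a^*a\ge 0$, $a^*a \le \|a\|^2 \cdot 1$ is not enough; the honest argument (as in~\cite{Da}) is that if $\Phi(a^*a)=0$ then applying the tracial-type positivity, one shows $a=0$ by testing against the faithful expectation property combined with a faithful state on $C^*(w)\cong C(\mathbb{T})$ — every state $\omega$ on $C^*(w)$ with $\omega$ faithful gives a state $\omega\circ\Phi$ on $A_{\theta,\gamma}$ that is faithful, since $\omega\circ\Phi(a^*a)=0$ forces $\Phi(a^*a)=0$ forces, via $\Phi$ faithful, $a=0$ — this is circular, so the actual content is the last implication, which follows because $\Phi(a^*a) = \lim_n \frac{1}{2n+1}\sum_j w^j a^*a (w^*)^j$ and $w^j a^* a(w^*)^j \ge 0$ with the same norm as $a^*a$, so if the Ces\`aro average of these equinorm positive elements is $0$ then (using that in a $C^*$-algebra a Ces\`aro limit of positives being zero forces... ) — the main obstacle here is precisely making faithfulness rigorous; I would handle it by the clean classical argument: $\Phi(a^*a)=0$ and $0\le w^j a^*a (w^*)^j$, and for any state $\psi$, $\psi(w^j a^* a (w^*)^j)\ge 0$ with Ces\`aro average $\psi(\Phi(a^*a))=0$; taking $\psi$ to be a vector state in a faithful representation and using that $\|w^j a^*a(w^*)^j\| = \|a^*a\|$ is constant, a Ces\`aro average of nonnegative reals bounded below by... no — the correct and standard fix is: $w^j a^* a (w^*)^j \ge 0$ so the partial averages are $\ge 0$ and increase in no monotone way, but $\Phi(a^*a) = 0$ together with $\Phi$ being a conditional expectation onto $C^*(w)$ gives, for $e_j := w^j a (w^*)^j$, that $\Phi(e_j^* e_j) = \Phi(w^j a^*a (w^*)^j) = w^j \Phi(a^*a) (w^*)^j = 0$; since this holds for the single element $a$ already, I just need $\Phi(a^*a)=0 \Rightarrow a^*a = 0$: this is where I use that $\Phi$ is the norm limit of u.c.p. maps $\Phi_n(b) = \frac{1}{2n+1}\sum_j w^j b(w^*)^j$ each of which satisfies $\Phi_n(a^*a) \ge \frac{1}{2n+1} w^0 a^*a (w^*)^0 = \frac{1}{2n+1} a^*a$ — hence $\|a^*a\| \le (2n+1)\|\Phi_n(a^*a)\| \to (2n+1)\|\Phi(a^*a)\|$, wait this blows up; the right bound is $\Phi_n(a^*a) \ge \frac{1}{2n+1} a^* a$ gives $a^*a \le (2n+1)\Phi_n(a^*a)$ which is useless as $n\to\infty$ — so I instead use: for each fixed $n$, $\Phi(a^*a) = \Phi_m(\text{something})$... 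I will just cite the argument of~\cite{Da} that a faithful state on $C^*(w)$ composed with $\Phi$ is faithful on $A_{\theta,\gamma}$, the nontrivial point being that $\Phi$ itself is faithful, which in~\cite{Da} is proved by showing $\Phi(a^*a) = 0$ implies, after pairing with vector states, that all ``matrix coefficients'' of $a$ in the natural grading vanish. This faithfulness step is the main obstacle; everything else is the routine averaging computation sketched above.
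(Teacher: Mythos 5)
Most of your proposal matches the paper's argument: the computation of $\Phi$ on the dense subalgebra via $\int_0^1 e^{2\pi ikt}\,dt=0$, the appeal to Tomiyama's theorem for the conditional expectation property, and the geometric-sum estimate for the Ces\`aro averaging formula are all exactly what the paper does. The one genuine gap is faithfulness, and you correctly identify it as the obstacle but never close it: every route you attempt goes through the averaging formula $\Phi(a)=\lim_n\frac{1}{2n+1}\sum_j w^j a(w^*)^j$, and as you observe yourself, the only inequality that formula yields, $\Phi_n(a^*a)\ge\frac{1}{2n+1}a^*a$, degenerates in the limit. Ending with a vague citation of~\cite{Da} does not supply the missing step.

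The fix is to use the \emph{other} formula for $\Phi$, namely the defining Riemann integral $\Phi(a)=\int_0^1\rho_t(a)\,dt$, which is where the paper's (one-sentence) faithfulness argument lives. If $a\ge 0$ and $a\ne 0$, then each $\rho_t(a)$ is positive and nonzero because $\rho_t$ is a $*$-automorphism, and $t\mapsto\rho_t(a)$ is norm-continuous (this was established just before the proposition). Choose a state $\psi$ with $\psi(a)=\|a\|>0$. Then $t\mapsto\psi(\rho_t(a))$ is a continuous nonnegative function on $[0,1]$ that is strictly positive at $t=0$, hence strictly positive on a neighborhood of $0$, so
\[
\psi(\Phi(a))=\int_0^1\psi(\rho_t(a))\,dt>0,
\]
and therefore $\Phi(a)\ne 0$. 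No appeal to the Ces\`aro averages, to a trace, or to any auxiliary state on $C^*(w)$ is needed; the point is simply that a norm-continuous path of positive elements starting at a nonzero element has nonzero integral. With that paragraph inserted, your proof is complete and coincides with the paper's.
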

\begin{proof}
Since, $\|\rho_t(a)\|=\|a\|,$
$$
\left\|\sum_{j=1}^n \rho_{t_j}(a) \beta_j\right\|\le \|a\|
$$
for any scalar $0\le \beta_j\le 1$ such that
$\sum_{j=1}^n \beta_j=1.$ It follows that
$$
\|\Phi(a)\|=\left\|\int_0^1\rho_t(a)dt\right\|\le \|a\|.
$$
We conclude that
$\|\Phi\|\leq 1$. Since $\Phi(1)=1$, $\|\Phi\|=1$. Since
$\rho_t(w)=w$ for all $t$, $\rho_t(a)=a$ for all $a\in C^*(w)$.
Hence $\Phi(a)=a$ for all $a\in C^*(w)$. By the definition of
$\Phi$,
 \[\Phi(a_1aa_2)=\int_0^1 \rho_t(a_1aa_2)dt=\int_0^1 \rho_t(a_1)\rho_t(a)\rho_t(a_2)dt=\int_0^1 a_1\rho_t(a)a_2dt=a_1\Phi(a)a_2\]
for all $a_1,a_2\in C^*(w)$ and $a\in
A_{\theta,\gamma}$.

Suppose $a=x^kf(w)$ for $f(z)\in C(\mathbb{T})$ and
$k\in\mathbb{N}$. Then
\[\Phi(a)=\int_0^1\rho_t(x^kf(w))dt=\int_0^1\rho_t(x^k)\rho_t(f(w))dt=\int_0^1e^{2\pi ikt}x^kf(w)dt=\left(\int_0^1e^{2\pi ikt}dt\right)a=0.\]
 Suppose $a=f(w)(x^*)^k$
for $f(z)\in C(\mathbb{T})$ and $k\in\mathbb{N}$. Then
\[\Phi(a)=\int_0^1\rho_t(f(w)(x^*)^k)dt=\int_0^1\rho_t(f(w))\rho_t((x^*)^k)dt=\int_0^1e^{-2\pi ikt}f(w)(x^*)^kdt=\left(\int_0^1e^{-2\pi ikt}dt\right)a=0.\]
  Since $\|\Phi\|=1$,
$\Phi(A_{\theta,\gamma})\subseteq C^*(w)$. By Tomiyama's
Theorem~\cite{To}, $\Phi$ is a conditional expectation of
$A_{\theta,\gamma}$ onto $C^*(w)$. If $a$ is positive
and nonzero, then $\rho_t(a)$ is positive and nonzero for all $t$.
Thus the integral $\Phi(a)$ is positive and nonzero. Hence $\Phi$
is faithful.

Suppose $a=x^kf(w)$ for $f(z)\in C(\mathbb{T})$ and
$k\in\mathbb{N}$.  By equations (\ref{U4}) and (\ref{U5}),
\[
\lim_{n\rightarrow\infty}\frac{1}{2n+1}\sum_{j=-n}^nw^ja(w^*)^j=
\lim_{n\rightarrow\infty}\frac{1}{2n+1}\sum_{j=-n}^n e^{2\pi
ijk\theta}a=\lim_{n\rightarrow\infty}\frac{1}{2n+1}\left(\frac{\sin(2n+1)\pi
k\theta}{\sin\pi k\theta}\right)a=0.
\]
Hence
\[
\Phi(a)=\lim_{n\rightarrow\infty}\frac{1}{2n+1}\sum_{j=-n}^nw^ja(w^*)^j=0.
\]
Similarly, we can show that if $a=f(w)(x^*)^k$ for $f(z)\in
C(\mathbb{T})$ and $k\in\mathbb{N}$ then
\[
\Phi(a)=\lim_{n\rightarrow\infty}\frac{1}{2n+1}\sum_{j=-n}^nw^ja(w^*)^j=0.
\]
If $a=f(w)$ for some $f(z)\in C(\mathbb{T})$, then
\[
\Phi(a)=\lim_{n\rightarrow\infty}\frac{1}{2n+1}\sum_{j=-n}^nw^ja(w^*)^j=a.
\]
 By linearity and continuity, this formula is valid for
all $a$ in $A_{\theta,\gamma}$.
\end{proof}

\begin{Corollary}\label{C:rho t}
$\forall a\in A_{\theta,\gamma}$, $\rho_t(a)=a$ for all
$0\leq t\leq 1$ if and only if $a\in C^*(w)$.
\end{Corollary}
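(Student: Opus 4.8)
The plan is to prove both implications, the easy one first. For the ``if'' direction: if $a\in C^*(w)$, then since $\rho_t$ fixes $w$ (by construction $\rho_t(w)=w$) and $\rho_t$ is a $\ast$-homomorphism, it fixes every noncommutative polynomial in $w,w^*$; by density of such polynomials in $C^*(w)$ and continuity of $\rho_t$, we get $\rho_t(a)=a$ for all $t$. This was already observed inside the proof of Proposition \ref{P:U Phi}, so I would just cite that.

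For the ``only if'' direction, suppose $\rho_t(a)=a$ for all $t\in[0,1]$. The idea is to apply the conditional expectation $\Phi$ and use its integral formula. Integrating the identity $\rho_t(a)=a$ over $t$ gives $\Phi(a)=\int_0^1\rho_t(a)\,dt=\int_0^1 a\,dt=a$. Since Proposition \ref{P:U Phi} tells us that $\Phi(A_{\theta,\gamma})\subseteq C^*(w)$, we conclude $a=\Phi(a)\in C^*(w)$, as desired.

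I expect the main (and only real) obstacle to be making sure the elementary integral manipulation is justified, namely that $\int_0^1 \rho_t(a)\,dt = a$ whenever $\rho_t(a)=a$ for all $t$: this is immediate once one recalls that the integral is defined as a norm-convergent Riemann sum of the norm-continuous integrand $t\mapsto\rho_t(a)$ (established in Section 2 right before the definition of $\Phi$), so a constant integrand integrates to itself. No deeper structural input is needed; the corollary is essentially a restatement of the containment $\Phi(A_{\theta,\gamma})\subseteq C^*(w)$ combined with the averaging identity. Thus the whole argument is a couple of lines invoking Proposition \ref{P:U Phi}.
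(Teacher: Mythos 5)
Your proof is correct and follows essentially the same route as the paper: the paper also deduces $a=\Phi(a)\in C^*(w)$ from $\rho_t(a)=a$ via the integral formula for $\Phi$, and handles the converse by noting $\rho_t$ fixes $C^*(w)$ pointwise. Your write-up merely makes explicit the averaging identity $\int_0^1\rho_t(a)\,dt=a$ that the paper leaves implicit.
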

\begin{proof}
If $a\in  A_{\theta,\gamma}$ and $\rho_t(a)=a$ for all
$0\leq t\leq 1$, then $a=\Phi(a)\in C^*(w)$ by
Proposition~\ref{P:U Phi}. Conversely, since $\rho_t(w)=w$ for all
$t$, $\rho_t(a)=a$ for all $t$ and $a\in C^*(w)$.
\end{proof}

Let $m=dz/{2\pi}$ be the unique Haar measure on $\mathbb{T}$.
\begin{Remark}
\emph{ If $\gamma(z)\in C(\mathbb{T})$ is a positive function with
$m(\{z|\gamma(z)=0\})=0$, then (\ref{U3}) can be replaced by a
weaker condition
\begin{equation}\label{U3'}
xx^*\in C^*(w).
\end{equation}
To see this, let $xx^*=h(w)$ for some $h(z)\in C(\mathbb{T})$.
Then by (\ref{U5})
\[
\gamma(w)^2=x^*xx^*x=x^*h(w)x=h(e^{2\pi i\theta}w)x^*x=h(e^{2\pi
i\theta}w)\gamma(w).
\]
Hence $\gamma(z)^2=h(e^{2\pi i\theta}z)\gamma(z)$. Let
$E=\{z|\gamma(z)=0\}$. Then for $z\in \mathbb{T}\setminus E$,
$\gamma(z)=h(e^{2\pi i\theta}z)$. Since $m(\mathbb{T}\setminus
E)=1$, $\gamma(z)=h(e^{2\pi i\theta}z)$ for all $z\in \mathbb{T}$.
Thus $h(z)=\gamma(e^{-2\pi i\theta}z)$, which is (\ref{U3}).
 }
\end{Remark}

Note that in the  irrational rotation $C^*$-algebra
$C^*(u,v)$ with
 $vu=e^{2\pi i\theta}uv$, $u\gamma(v)^{1/2}$ and $v$
satisfy (\ref{U1})-(\ref{U4}). So there exists a homomorphism
$\varphi$ from $A_{\theta,\gamma}$ onto
$C^*(u\gamma(v)^{1/2},v)$ such that $\varphi(x)=u\gamma(v)^{1/2}$
and $\varphi(w)=v$. Since the spectrum $\sigma(v)$ is
$\mathbb{T}$, $\sigma(w)=\mathbb{T}$. Hence $C^*(w)\cong
C(\mathbb{T})$. In the following, we identify $C^*(w)$ with
$C(\mathbb{T})$. Let $\rho$ be the state on $C(\mathbb{T})$
induced by the Haar measure $m$ on $\mathbb{T}$. Then $\rho$ is
faithful on $C^*(w)$.

\begin{Lemma}\label{L: U trace}
For $a\in A_{\theta,\gamma}$, let $\tau(a)=\rho\cdot \Phi(a)$.
Then $\tau$ is a faithful trace on $A_{\theta,\gamma}$.
\end{Lemma}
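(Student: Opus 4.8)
The plan is to deduce everything from the two faithfulness statements already available: that $\Phi$ is a faithful conditional expectation onto $C^*(w)$ (Proposition~\ref{P:U Phi}) and that $\rho$ is faithful on $C^*(w)$. First I would record the easy part. Since $\Phi$ is positive and $\rho$ is a state, $\tau=\rho\circ\Phi$ is positive, and $\tau(1)=\rho(\Phi(1))=\rho(1)=1$, so $\tau$ is a state with $\|\tau\|\le\|\rho\|\,\|\Phi\|=1$. Faithfulness is immediate too: if $a\ge 0$ and $a\ne 0$, then $\Phi(a)$ is a nonzero positive element of $C^*(w)$ because $\Phi$ is faithful, and hence $\rho(\Phi(a))>0$ because $\rho$ is faithful on $C^*(w)$. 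So the whole content of the lemma is the trace identity $\tau(ab)=\tau(ba)$.

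For the trace identity, by boundedness of $\tau$ and linearity it suffices to verify it when $a$ and $b$ each range over the monomials $x^m f(w)$ ($m\ge 1$), $f(w)$, and $g(w)(x^*)^n$ ($n\ge 1$), which span a dense subalgebra (the set displayed before Proposition~\ref{P:U Phi}). Using the commutation relations (\ref{U4-f(z)})--(\ref{U5-f(z)}) and the formulas (\ref{E:x*rxr})--(\ref{E:xrx*r}), both $ab$ and $ba$ can be brought into one of the normal forms $x^k h(w)$, $h(w)$, or $h(w)(x^*)^k$. Whenever a factor $x^k$ or $(x^*)^k$ with $k\ge 1$ survives — which happens in all cases except the "balanced" one below — Proposition~\ref{P:U Phi} gives $\Phi(ab)=\Phi(ba)=0$, so $\tau(ab)=\tau(ba)=0$. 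For instance, with $a=x^m f(w)$ and $b=x^n g(w)$ both products land in $x^{m+n}C^*(w)$; with $a=x^m f(w)$, $b=g(w)(x^*)^n$, $m>n$, one moves the $x$'s past functions of $w$ to get $ab,ba\in x^{m-n}C^*(w)$; the case $a=f(w)$ is trivial since $\Phi$ is a $C^*(w)$-bimodule map and $\rho$ is a trace on the commutative algebra $C^*(w)$.

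The one case needing genuine work is the balanced one $a=x^m f(w)$, $b=g(w)(x^*)^m$ (the mirror case $a=g(w)(x^*)^m$, $b=x^n f(w)$ then follows by symmetry of the desired identity). Here $ab=x^m(fg)(w)(x^*)^m=(fg)(e^{-2\pi i m\theta}w)\,x^m(x^*)^m$ by (\ref{U4-f(z)}), while $ba=(fg)(w)\,(x^*)^m x^m$ since $C^*(w)$ is commutative; and comparing (\ref{E:x*rxr}) with (\ref{E:xrx*r}) shows $x^m(x^*)^m$ equals the rotation by $e^{-2\pi i m\theta}$ of $(x^*)^m x^m$. Thus, inside $C^*(w)\cong C(\mathbb T)$, the element $ab$ is the translate by $e^{-2\pi i m\theta}$ of $ba$, and since $\rho$ is the Haar-measure state it is invariant under that rotation, giving $\tau(ab)=\rho(ab)=\rho(ba)=\tau(ba)$. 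This balanced case is the only real obstacle: it is the sole situation where cancellation does not force both sides to zero, and rotation-invariance of the Haar measure $m$ is exactly what is needed. (As an alternative to the bookkeeping, one may note that on the dense subalgebra $\tau$ coincides with $\tau_\theta\circ\varphi$, where $\varphi\colon A_{\theta,\gamma}\to C^*(u\gamma(v)^{1/2},v)\subseteq A_\theta$ is the canonical surjection and $\tau_\theta$ is the trace of $A_\theta$; since $\varphi$ is a $*$-homomorphism this yields the trace property at once, though faithfulness of $\tau$ still has to be obtained as above.)
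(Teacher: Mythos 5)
Your proposal is correct and follows essentially the same route as the paper: faithfulness comes for free from the faithfulness of $\Phi$ and of $\rho$, the trace identity is checked on monomials, the unbalanced cases die because $\Phi$ annihilates $x^k C^*(w)$ and $C^*(w)(x^*)^k$ for $k\ge 1$, and the balanced case $a=x^mf(w)$, $b=g(w)(x^*)^m$ reduces, via (\ref{E:x*rxr})--(\ref{E:xrx*r}), to the rotation-invariance of the Haar measure. Your observation that $x^m(x^*)^m$ is exactly the rotate of $(x^*)^mx^m$ is the same computation the paper carries out explicitly under the integral sign, just stated more conceptually.
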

\begin{proof}
Since $\rho$ is a faithful state on $C^*(w)$ and $\Phi$ is a
faithful conditional expectation of $A_{\theta,\gamma}$
onto $C^*(w)$, $\tau$ is a faithful state on
$A_{\theta,\gamma}$. We only need to verify $\tau$ is a
trace. Note that the following set
\[
\left\{\sum_{n=1}^Nx^nf_n(w)+f_0(w)+\sum_{n=1}^Nf_{-n}(w)(x^*)^n|\,N\in
\mathbb{N},\, f_n(z),f_{-n}(z)\in C(\mathbb{T})\right\}
\]
is dense in $A_{\theta,\gamma}$. By boundedness,
linearity and positivity of $\tau$, we need only to verify
$\tau(ab)=\tau(ba)$ for the following two cases.

Case 1. $a=x^rf(w)$, $b=x^sg(w)$, $r,s\geq 0$. If $r+s=0$, i.e.,
$r=s=0$, then $\tau(ab)=\tau(ba)$ is trivial. Suppose $r+s>0$.
Then
\[
\tau(ab)=\tau(x^rf(w)x^sg(w))=\tau(x^{r+s}f(e^{2\pi
is\theta}w)g(w))=\rho(\Phi(x^{r+s})f(e^{2\pi is\theta}w)g(w))=0,
\]
and
\[\tau(ba)=\tau(x^sg(w)x^rf(w))=\tau(x^{r+s}g(e^{2\pi
ir\theta}w)f(w))=\rho(\Phi(x^{r+s})g(e^{2\pi
ir\theta}w)f(w))=0.\] So $\tau(ab)=\tau(ba)$.

Case 2. $a=x^rf(w)$, $b=g(w)(x^*)^s$, $r,s\geq 0$. If $r>s$,
then
\[
\tau(ab)=\tau(x^rf(w)g(w)(x^*)^s)=\tau(x^{r-s}f(e^{-2\pi
is\theta}w)g(e^{-2\pi
is\theta}w)x^s(x^*)^s)\]\[=\rho(\Phi(x^{r-s})f(e^{-2\pi
is\theta}w)g(e^{-2\pi is\theta}w)x^s(x^*)^s)=0,
\]
and
\[
\tau(ba)=\tau(g(w)(x^*)^sx^rf(w))=\tau(g(w)(x^*)^sx^sf(e^{-2\pi
i(r-s)\theta}w)x^{r-s})\]\[=\rho(g(w)(x^*)^sx^sf(e^{-2\pi
i(r-s)\theta}w)\Phi(x^{r-s}))=0.
\]
So $\tau(ab)=\tau(ba)$. Similarly, we can show that if $r<s$ then $\tau(ab)=\tau(ba)$. If $r=s$, then
 we have

\[
\tau(ba)=\rho(ba)=\rho(g(w)(x^*)^rx^rf(w))=\rho(g(w)f(w)\gamma\left(e^{2\pi
i(r-1)\theta}w\right)\gamma\left(e^{2\pi
i(r-2)\theta}w\right)\cdots \gamma(w)),
\]
\[
=\int_{\mathbb{T}}f(z)g(z)\gamma\left(e^{2\pi
i(r-1)\theta}z\right)\gamma\left(e^{2\pi
i(r-2)\theta}z\right)\cdots \gamma(z) dm(z),
\]
\[
\tau(ab)=\rho(ab)=\rho(x^rf(w)g(w)(x^*)^r)=\rho(f(e^{-2\pi
ir\theta}w)g(e^{-2\pi ir\theta}w)x^r(x^*)^r)
\]\[=\rho(f(e^{-2\pi
ir\theta}w)g(e^{-2\pi ir\theta}w)\gamma\left(e^{-2\pi
i\theta}w\right)\gamma\left(e^{-2\pi i2\theta}w\right)\cdots
\gamma\left(e^{-2\pi ir\theta}w\right))
\]
\[
=\int_{\mathbb{T}}f(e^{-2\pi ir\theta}z)g(e^{-2\pi
ir\theta}z)\gamma\left(e^{-2\pi
ir\theta}\cdot e^{2\pi i(r-1)\theta}z \right)\gamma\left(e^{-2\pi
ir\theta}\cdot e^{2\pi i(r-2)\theta}z\right)\cdots \gamma(e^{-2\pi ir\theta}z) dm(z).
\]

Since $m$ is the Haar measure on $\mathbb{T}$,
$\tau(ab)=\tau(ba)$.

\end{proof}

\begin{Theorem}\label{T:universal property}
The homomorphism $\varphi$ from $A_{\theta,\gamma}$ onto
$C^*(u\gamma(v)^{1/2},v)$ such that $\varphi(x)=u\gamma(v)^{1/2}$
and $\varphi(w)=v$ is an isomorphism.
\end{Theorem}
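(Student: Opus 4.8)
The map $\varphi$ is surjective by construction, so the whole content of the theorem is the injectivity of $\varphi$. The plan is to reduce injectivity of $\varphi$ on all of $A_{\theta,\gamma}$ to injectivity of $\varphi$ on the canonical commutative subalgebra $C^*(w)$, where it is essentially obvious, by using the faithful conditional expectation $\Phi$ of Proposition~\ref{P:U Phi}. The mechanism is this: if I can exhibit, on $B:=C^*(u\gamma(v)^{1/2},v)\subseteq A_\theta$, a norm-decreasing linear map $\Psi$ onto $C^*(v)$ that is intertwined with $\Phi$ by $\varphi$, i.e.\ $\varphi\circ\Phi=\Psi\circ\varphi$, then from $\varphi(a)=0$ I deduce $\varphi(\Phi(a^*a))=\Psi(\varphi(a^*a))=\Psi(0)=0$, hence $\Phi(a^*a)=0$ by injectivity of $\varphi$ on $C^*(w)$ (note $\Phi(a^*a)\in C^*(w)$), hence $a^*a=0$ by faithfulness of $\Phi$, hence $a=0$.

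First I would record that $\varphi|_{C^*(w)}$ is injective. Since $\varphi$ is a unital $*$-homomorphism with $\varphi(w)=v$, we have $\mathbb{T}=\sigma(v)=\sigma(\varphi(w))\subseteq\sigma(w)\subseteq\mathbb{T}$, so $\sigma(w)=\mathbb{T}$, $C^*(w)\cong C(\mathbb{T})\cong C^*(v)$, and $\varphi(f(w))=f(v)$ for every $f\in C(\mathbb{T})$ by continuous functional calculus; as $f(v)=0$ forces $f\equiv 0$ on $\mathbb{T}=\sigma(v)$, this gives injectivity. Next I would produce $\Psi$: the gauge automorphisms $\alpha_t$ of $A_\theta$ determined by $\alpha_t(u)=e^{2\pi it}u$, $\alpha_t(v)=v$ exist by universality of $A_\theta$, they leave $B$ globally invariant because $\alpha_t(u\gamma(v)^{1/2})=e^{2\pi it}u\gamma(v)^{1/2}$, and $\Psi(b):=\int_0^1\alpha_t(b)\,dt$ is a (faithful) conditional expectation of $A_\theta$ onto $C^*(v)$ — this is exactly Proposition~\ref{P:U Phi} in the case $\gamma\equiv 1$, where $A_{\theta,1}=A_\theta$, $x=u$, $w=v$. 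Then I would check the intertwining $\varphi\circ\rho_t=\alpha_t\circ\varphi$ on the generators ($\varphi(\rho_t(x))=e^{2\pi it}u\gamma(v)^{1/2}=\alpha_t(\varphi(x))$ and $\varphi(\rho_t(w))=v=\alpha_t(\varphi(w))$), conclude it holds on all of $A_{\theta,\gamma}$, and pass to the integrated identity $\varphi\circ\Phi=\Psi\circ\varphi$. Finally I would assemble the chain of implications from the first paragraph to conclude that $\varphi$ is injective, hence an isomorphism.

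The one step that needs a little care is the passage from the pointwise identity $\varphi\circ\rho_t=\alpha_t\circ\varphi$ to the integrated identity $\varphi\circ\Phi=\Psi\circ\varphi$: here one uses that $t\mapsto\rho_t(a)$ is norm continuous, so $\Phi(a)$ is a norm limit of Riemann sums $\frac1n\sum_j\rho_{t_j}(a)$, and that the contractive linear map $\varphi$ carries these to the matching Riemann sums $\frac1n\sum_j\alpha_{t_j}(\varphi(a))$ for $\Psi(\varphi(a))$; everything else is bookkeeping. An alternative route dispenses with $\Psi$ and uses instead the faithful trace $\tau=\rho\cdot\Phi$ of Lemma~\ref{L: U trace}: the canonical tracial state $\tau_\theta$ of $A_\theta$, restricted to $B$, pulls back under $\varphi$ to $\tau$ — both sides are ``Haar state composed with the averaging conditional expectation'' and they agree on the dense $*$-subalgebra spanned by the elements $x^nf(w)$ and $f(w)(x^*)^n$ — so $\varphi(a)=0$ forces $\tau(a^*a)=0$ and hence $a=0$; this is the same argument packaged through the trace.
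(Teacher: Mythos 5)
Your proposal is correct, and your main route is genuinely different from the paper's. The paper proves injectivity through the faithful trace $\tau=\rho\cdot\Phi$ of Lemma~\ref{L: U trace}: it observes that $\tau$ and the restriction of the canonical trace $\tau'$ of $A_\theta$ to $C^*(u\gamma(v)^{1/2},v)$ assign the same value to every noncommutative polynomial in the generators, so the GNS Hilbert spaces are unitarily identified by a unitary $U$ that spatially implements $\varphi$; faithfulness of $\tau$ then makes the GNS representation of $A_{\theta,\gamma}$ faithful and $\varphi$ injective. This is exactly the ``alternative route'' you sketch at the end. Your primary argument instead dispenses with traces altogether: it is the gauge-invariant uniqueness argument, intertwining the expectation $\Phi$ on $A_{\theta,\gamma}$ with the gauge-averaging expectation $\Psi$ on $A_\theta$ via $\varphi\circ\Phi=\Psi\circ\varphi$, and then bootstrapping injectivity from $C^*(w)$ (where it follows from $\sigma(w)=\mathbb{T}$) to all of $A_{\theta,\gamma}$ using faithfulness of $\Phi$. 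What your approach buys is independence from the trace computation of Lemma~\ref{L: U trace} and from uniqueness of the trace on $A_\theta$; it only needs Proposition~\ref{P:U Phi} and the elementary Riemann-sum commutation you flag, and it would survive in settings where no faithful trace is available. What the paper's approach buys is brevity, since $\tau$ has already been built; its only unstated burdens are the moment identity (which your trace-based remark justifies by noting both traces are the Haar state composed with the respective averaging expectations) and the standard fact that the GNS representation of a faithful state on a $C^*$-algebra is faithful. Both arguments are complete; yours is the more self-contained of the two.
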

\begin{proof}
Consider the GNS-construction of $A_{\theta,\gamma}$
with respect to the faithful trace $\tau$. Then we may assume that
$A_{\theta,\gamma}$ faithfully acts on the Hilbert space
$L^2(A_{\theta,\gamma},\tau)$. Let $\tau'$ be the unique
trace on $C^*(u,v)$, and let $x'=u\gamma(v)$, $w'=v$. For a
noncommutative polynomial $p$ in four variables, we have
$\tau(p(x,x^*,w,w^*))=\tau'(p(x',(x')^*,w',(w')^*))$. Hence the
operator $U:\, p(x,x^*,w,w^*)\rightarrow p(x',(x')^*,w',(w')^*)$
extends to a unitary operator from
$L^2(A_{\theta,\gamma},\tau)$ onto
$L^2(C^*(u\gamma(v)^{1/2},v),\tau')$. So $\varphi(x)=U^*xU$ is an
isomorphism.
\end{proof}

In what follows, we will identify $A_{\theta, \gamma}$ with
the $C^*$-subalgebra of $A_\theta$ generated by $u\gamma^{1/2}(v)$ and $v.$
We will take advantage of the knowledge of $A_\theta$ to study $A_{\theta, \gamma}.$
We will use the following conventions:

\begin{Definition}
{\rm
We may view $A_{\theta}=C({\mathbb T})\rtimes_\phi {\mathbb Z},$ where
$\phi: {\mathbb T}\to {\mathbb T}$ is defined by
$\phi(z)=e^{2\pi i \theta} z$ for all $z\in {\mathbb T}.$
Define $\alpha_\theta: C({\mathbb T})\to C({\mathbb T})$ by
$\alpha_{\theta}(f)=f\circ \phi$ for all $f\in C({\mathbb T}).$
Denote by $u$ the unitary in $A_{\theta}$ implementing $\alpha_\theta,$ i.e.,
$u^*fu=\alpha_\theta(f)=f\circ \phi$ for all $f\in C({\mathbb T}).$

Let $\gamma: {\mathbb T}\to R_+$ be a nonnegative continuous function and let
$$
Y=\{z\in {\mathbb T}: \gamma(z)=0\}.
$$

Viewing $A_{\theta, \gamma}$ as a $C^*$-subalgebra of $A_\theta,$ it  is easy to check that
$$
A_{\theta, \gamma}=C^*(C({\mathbb T}), uC_0({\mathbb T}\setminus Y)),
$$
the $C^*$-subalgebra of $A_\theta$ generated by $C({\mathbb T})$ and
$\{uf: f\in C_0({\mathbb T}\setminus Y)\}.$

Let $\xi\in {\mathbb T}$ denote by
$$
Orb(\xi)=\{\phi^n(\xi): n\in {\mathbb Z}\}
$$
the orbit of $\xi$ under the rotation $\phi.$
}

\end{Definition}

The following is an easy fact:

\begin{Proposition}\label{NP}
Let $\theta\in (0,1)$ be an irrational number and let $Y\subset {\mathbb T}$ be a subset. Then the following are equivalent:

\begin{enumerate}
\item[(1).] $\phi^n(Y)\cap Y=\emptyset$ for any integer $n\not=0;$

\item[(2).] For each $\xi\in {\mathbb T},$ $Orb(\xi)\cap Y$ contains at most one point;
    \item[(3).] $Y_1\cap Y_2=\emptyset,$
    where $Y_1=\cup_{n\ge 0}\phi^n(Y)$ and
    $Y_2=\cup_{k\ge 1} \phi^{-k}(Y).$

\end{enumerate}

\end{Proposition}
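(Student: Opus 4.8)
The plan is to prove the two equivalences $(1)\Leftrightarrow(2)$ and $(1)\Leftrightarrow(3)$ directly from the definition $\phi(z)=e^{2\pi i\theta}z$, translating each statement into a condition of the form ``there exist $y,y'\in Y$ with $\phi^m(y)=y'$ for some forbidden exponent $m$''. The only non-elementary input needed is that, since $\theta$ is irrational, the rotation $\phi$ acts freely on ${\mathbb T}$: $\phi^n$ has no fixed point whenever $n\neq0$. Everything else is bookkeeping with exponents, together with the trivial observation that $\phi^n(Y)\cap Y\neq\emptyset\Leftrightarrow\phi^{-n}(Y)\cap Y\neq\emptyset$ (apply $\phi^{-n}$).

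For $(1)\Rightarrow(2)$: if some $Orb(\xi)\cap Y$ contained two distinct points $\phi^m(\xi),\phi^n(\xi)$ with $m\neq n$, then $\phi^{m-n}(\phi^n(\xi))=\phi^m(\xi)$, so $\phi^m(\xi)\in\phi^{m-n}(Y)\cap Y$ with $m-n\neq0$, contradicting $(1)$. For $(2)\Rightarrow(1)$: if $\phi^n(Y)\cap Y\neq\emptyset$ for some $n\neq0$, pick $y\in Y$ with $\phi^n(y)\in Y$; by freeness $\phi^n(y)\neq y$, so $\{y,\phi^n(y)\}\subseteq Orb(y)\cap Y$ has two points, contradicting $(2)$.

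For $(1)\Rightarrow(3)$: if $z\in Y_1\cap Y_2$, write $z=\phi^n(y_1)$ with $n\geq0$, $y_1\in Y$, and $z=\phi^{-k}(y_2)$ with $k\geq1$, $y_2\in Y$; then $\phi^{n+k}(y_1)=y_2\in Y$ with $y_1\in Y$ and $n+k\geq1\neq0$, contradicting $(1)$. For $(3)\Rightarrow(1)$: if $\phi^n(Y)\cap Y\neq\emptyset$ for some $n\neq0$, we may assume $n\geq1$ by the remark above; choosing $y\in Y$ with $\phi^n(y)\in Y$, we get $y=\phi^{-n}(\phi^n(y))\in\phi^{-n}(Y)\subseteq Y_2$ while also $y\in Y\subseteq Y_1$, so $y\in Y_1\cap Y_2$, contradicting $(3)$. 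Strictly speaking there is no real obstacle here; the only point requiring attention is $(2)\Rightarrow(1)$, where one must invoke freeness of the irrational rotation to be sure that $y$ and $\phi^n(y)$ are genuinely \emph{two} points of the orbit (for a rational rotation this step, and the equivalence, would fail).
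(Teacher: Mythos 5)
Your proof is correct and uses essentially the same elementary exponent bookkeeping as the paper; the only organizational difference is that you prove the two equivalences $(1)\Leftrightarrow(2)$ and $(1)\Leftrightarrow(3)$ directly, whereas the paper runs the cycle $(1)\Rightarrow(2)\Rightarrow(3)\Rightarrow(1)$. Your explicit remark that freeness of the irrational rotation is needed to see that $y$ and $\phi^n(y)$ are genuinely distinct is a point the paper leaves implicit, and it is a worthwhile clarification.
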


\begin{proof}

(1) $\Rightarrow$ (2):  Suppose that $\phi^{k_1}(\xi),\phi^{k_2}(\xi) \in Y$ for
integers $k_1\not=k_2.$ Then $\phi^{k_1}\xi\in \phi^{k_1-k_2}(Y)\cap Y.$ This is contradiction. So (2) holds.

(2) $\Rightarrow$ (3): If $\xi\in Y_1\cap Y_2,$ then
there are $\xi_1, \xi_2\in Y$ such that
$\xi=e^{2\pi i n\theta}\xi_1=e^{-2\pi i k\theta} \xi_2$ for some
$n\ge 0$ and $k\ge 1.$  It follows that
$\xi_1\in Y$ and $e^{2\pi i (n+k)\theta}\xi_1\in Y.$ By (2), this is impossible. So (3) holds.

(3) $\Rightarrow$  (1): Suppose that $\xi\in \phi^n(Y)\cap Y$ for some
integer $n\not=0.$ If $n\le -1,$ then
$\xi\in Y_1\cap Y_2.$ If $n\ge 1,$ then
$\phi^{-n}(\xi)\in Y\cap \phi^{-n}(Y)\subset Y_1\cap Y_2.$

\end{proof}

\section{Traces on generalized universal irrational rotation $C^*$-algebras}

We will continue to study the traces on $A_{\theta, \gamma}.$ Here, again,
$\gamma\in C({\mathbb T})$ is a positive function and $Y$ is the set of zeros of $\gamma.$
The proof of Lemma~\ref{L: U trace} indeed implies the following
result.
\begin{Proposition}\label{P:U trace}
If $\mu$ is a complex regular Borel measure on $\mathbb{T}$ which
satisfies that
\begin{equation}\label{E:trace}
\int_{\mathbb{T}}f(e^{-2\pi i\theta}
z)d\mu(z)=\int_{\mathbb{T}}f(z)d\mu(z)
\end{equation}
for all $f(z)$ in $\widetilde{C_0({\mathbb T}\setminus Y{)}},$
the unitization of $C_0({\mathbb T}\setminus Y),$
and let $\sigma(f)=\int_{\mathbb{T}}f(z)d\mu(z)$ for $f(z)\in
C(\mathbb{T})$, then $\sigma\cdot \Phi$ is a bounded tracial
linear functional on $A_{\theta,\gamma}$. Conversely,
every bounded tracial linear functional on
$A_{\theta,\gamma}$ is given in this way.
\end{Proposition}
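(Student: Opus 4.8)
The plan is to handle both directions by passing to the dense $\ast$-subalgebra spanned by the elements $x^n f_n(w)$, $f_0(w)$, $f_{-n}(w)(x^*)^n$ (with $n\ge 1$, $f_n,f_{-n}\in C(\mathbb T)$) and exploiting the faithful conditional expectation $\Phi\colon A_{\theta,\gamma}\to C^*(w)\cong C(\mathbb T)$ of Proposition~\ref{P:U Phi}, which annihilates every off-diagonal term.

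For the ``if'' direction, given $\mu$ I would set $\sigma(f)=\int_{\mathbb T}f\,d\mu$ and $\tau=\sigma\circ\Phi$; boundedness is immediate from $|\tau(a)|\le\|\sigma\|\,\|\Phi(a)\|\le\|\mu\|\,\|a\|$. To prove traciality it suffices, by linearity and continuity, to check $\tau(ab)=\tau(ba)$ on monomials, running through the same case distinction as in the proof of Lemma~\ref{L: U trace}: whenever $ab$ or $ba$ carries an unbalanced power of $x$ one gets $\Phi(ab)=\Phi(ba)=0$ by Proposition~\ref{P:U Phi}, so both sides vanish. The only case that genuinely uses the hypothesis on $\mu$ is $a=x^rf(w)$, $b=g(w)(x^*)^r$; using (\ref{U4-f(z)}), (\ref{U5-f(z)}), (\ref{E:x*rxr}), (\ref{E:xrx*r}) one obtains $\tau(ba)=\int_{\mathbb T}F\,d\mu$ and $\tau(ab)=\int_{\mathbb T}F\circ\phi^{-r}\,d\mu$, where $\phi(z)=e^{2\pi i\theta}z$ and $F=fg\cdot\prod_{j=0}^{r-1}\gamma\circ\phi^{\,j}$. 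For each $m\in\{0,1,\dots,r-1\}$ the function $F\circ\phi^{-m}$ still has $\gamma=\gamma\circ\phi^{\,0}$ as a factor (the $j=m$ term), hence vanishes on $Y$ and so lies in $\widetilde{C_0(\mathbb T\setminus Y)}$; applying the invariance (\ref{E:trace}) $r$ times in the telescoping chain $\int F\circ\phi^{-r}\,d\mu=\int(F\circ\phi^{-(r-1)})\circ\phi^{-1}\,d\mu=\int F\circ\phi^{-(r-1)}\,d\mu=\dots=\int F\,d\mu$ gives $\tau(ab)=\tau(ba)$.

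For the converse, let $\tau_0$ be a bounded tracial linear functional. From (\ref{U4}) one has $wx^kw^*=e^{2\pi ik\theta}x^k$, so traciality gives $\tau_0(x^kf(w))=\tau_0\big(w\,x^kf(w)\,w^*\big)=e^{2\pi ik\theta}\tau_0(x^kf(w))$; since $\theta$ is irrational this forces $\tau_0(x^kf(w))=0$ for $k\ge 1$, and likewise $\tau_0(f(w)(x^*)^k)=0$. Hence $\tau_0$ and $\tau_0\circ\Phi$ agree on the dense subalgebra above, so $\tau_0=\tau_0\circ\Phi$ on all of $A_{\theta,\gamma}$ by continuity; restricting to $C^*(w)\cong C(\mathbb T)$ and invoking the Riesz representation theorem produces a complex regular Borel measure $\mu$ with $\tau_0(f(w))=\int_{\mathbb T}f\,d\mu$, so $\tau_0=\sigma\circ\Phi$ with $\sigma(f)=\int f\,d\mu$. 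To see $\mu$ satisfies (\ref{E:trace}) I would use the identification of $A_{\theta,\gamma}$ with the $C^*$-subalgebra of $A_\theta=C(\mathbb T)\rtimes_\phi\mathbb Z$ generated by $C(\mathbb T)$ and $uC_0(\mathbb T\setminus Y)$ (so $x=u\gamma^{1/2}$, $w=v$): for $h\in C(\mathbb T)$ the element $xh(w)\in A_{\theta,\gamma}$ satisfies $(xh(w))^*(xh(w))=|h|^2\gamma$ and $(xh(w))(xh(w))^*=u(|h|^2\gamma)u^*=(|h|^2\gamma)\circ\phi^{-1}$ (using $u^*fu=f\circ\phi$), both lying in $C(\mathbb T)$, so traciality of $\tau_0$ yields $\int_{\mathbb T}(|h|^2\gamma)\,d\mu=\int_{\mathbb T}(|h|^2\gamma)\circ\phi^{-1}\,d\mu$, which is exactly (\ref{E:trace}) for $f=|h|^2\gamma$. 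By polarization the closed linear span of $\{|h|^2\gamma:h\in C(\mathbb T)\}$ contains $\overline{\gamma\,C(\mathbb T)}=C_0(\mathbb T\setminus Y)$, and since both sides of (\ref{E:trace}) depend norm-continuously on $f$ and agree trivially on constants, (\ref{E:trace}) holds on all of $\widetilde{C_0(\mathbb T\setminus Y)}$.

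The step I expect to be the main obstacle is the bookkeeping in the ``if'' direction: identifying precisely which functions the invariance (\ref{E:trace}) must be applied to and checking that every intermediate function $F\circ\phi^{-m}$ ($0\le m\le r-1$) indeed lies in $\widetilde{C_0(\mathbb T\setminus Y)}$, which is what legitimizes the telescoping and is the only point where the exact form of the hypothesis is used. A secondary routine point is the density fact $\overline{\gamma\,C(\mathbb T)}=C_0(\mathbb T\setminus Y)$ needed in the converse to upgrade (\ref{E:trace}) from the functions $|h|^2\gamma$ to all of $\widetilde{C_0(\mathbb T\setminus Y)}$.
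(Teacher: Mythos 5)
Your proposal is correct and follows essentially the same route as the paper: the ``if'' direction is exactly the argument the paper defers to (the proof of Lemma~\ref{L: U trace}), with your telescoping check that each $F\circ\phi^{-m}$ vanishes on $Y$ being precisely the point the paper leaves implicit, and the converse matches the paper's use of traciality on $(xh(w))(xh(w))^*$ versus $(xh(w))^*(xh(w))$ together with $\overline{\gamma\,C(\mathbb T)}=C_0(\mathbb T\setminus Y)$. The only cosmetic difference is that you derive $\tau_0=\tau_0\circ\Phi$ from the eigenvalue relation $wx^kw^*=e^{2\pi ik\theta}x^k$ and irrationality of $\theta$, whereas the paper invokes the Ces\`aro-mean formula for $\Phi$ from Proposition~\ref{P:U Phi}; both are valid.
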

\begin{proof}
If $\mu$ satisfies (\ref{E:trace}) for all $f(z)$ in $\widetilde{C_0({\mathbb T}\setminus Y{)}}$,  then by a
similar argument of the proof of Lemma~\ref{L: U trace},
$\sigma\cdot \Phi$ is a bounded tracial linear functional on
$A_{\theta,\gamma}$. Conversely, suppose $\sigma$ is a
bounded tracial linear functional on
$A_{\theta,\gamma}$. By Proposition~\ref{P:U Phi},
$\sigma(a)=\sigma(\Phi(a))$. By the Riesz representation theorem,
$\sigma$ induces a complex regular Borel measure $\mu$ on
$\mathbb{T}$.

Since for all $f(z)\in C(\mathbb{T})$,
\[
\sigma(xf(w)\overline{f(w)}x^*)=\sigma(|f|^2(e^{-2\pi
i\theta}w)\gamma(e^{-2\pi
i\theta}w))\]\[=\int_{\mathbb{T}}|f|^2(e^{-2\pi
i\theta}z)\gamma(e^{-2\pi
i\theta}z)d\mu(z)=\int_{\mathbb{T}}|f|^2(z)\gamma(z)d\mu(e^{2\pi
i\theta}z)
\]
and
\[
\sigma(\overline{f(w)}x^*xf(w))=\sigma(|f|^2(w)\gamma(w))=\int_{\mathbb{T}}|f|^2(z)\gamma(z)d\mu(z),
\]
we have
\[
\int_{\mathbb{T}}|f|^2(z)\gamma(z)d\mu(e^{2\pi
i\theta}z)=\int_{\mathbb{T}}|f|^2(z)\gamma(z)d\mu(z),\quad\forall
f(z)\in C(\mathbb{T}).
\]
Since every continuous function is a linear combinations of
positive functions,
\[
\int_{\mathbb{T}}f(z)\gamma(z)d\mu(e^{2\pi
i\theta}z)=\int_{\mathbb{T}}f(z)\gamma(z)d\mu(z),\quad\forall
f(z)\in C(\mathbb{T}).
\]
This implies that (\ref{E:trace}) is true for all $f(z)\in
\overline{\gamma(z)C(\mathbb{T})}=C_0(\mathbb{T}\setminus Y)$. Since (\ref{E:trace}) is true
for $f(z)\equiv 1$, $\mu$ is a regular Borel measure on
$\mathbb{T}$ which satisfies
\[
\int_{\mathbb{T}}f(e^{-2\pi i\theta}
z)d\mu(z)=\int_{\mathbb{T}}f(z)d\mu(z)
\]
for all $f(z)$ in $\widetilde{C_0({\mathbb T}\setminus Y{)}}.$

\end{proof}

Recall that $\phi: {\mathbb T}\to {\mathbb T}$ is the rotation of circle by $\theta$, i.e., $\phi(z)=e^{2\pi i\theta}z$ for $z\in\mathbb{T}$.

\begin{Theorem}\label{P:U unique trace}
Let $Y$ be the set of zero points of $\gamma(z)$. Then the following conditions are equivalent:
\begin{enumerate}
\item There exists a unique trace on $A_{\theta,\gamma}$;
\item $\phi^n(Y)\cap Y=\emptyset$ for all integers $n\not=0$;
\item For each $\xi\in {\mathbb T},$ $Orb(\xi)\cap Y$ contains at most one point.
\end{enumerate}
\end{Theorem}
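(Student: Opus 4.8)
The strategy is to establish the cycle of implications $(3) \Rightarrow (1) \Rightarrow (2) \Rightarrow (3)$, exploiting the explicit description of tracial linear functionals from Proposition~\ref{P:U trace}. By that proposition, traces on $A_{\theta,\gamma}$ correspond bijectively to regular Borel probability measures $\mu$ on $\mathbb{T}$ satisfying $\int f(e^{-2\pi i\theta}z)\,d\mu(z)=\int f(z)\,d\mu(z)$ for all $f$ in the unitization of $C_0(\mathbb{T}\setminus Y)$. The key observation is that this invariance condition is \emph{weaker} than full $\phi$-invariance: it only constrains $\mu$ away from $Y$. Concretely, writing the condition out, it says that for every Borel set $E$ with $\overline{E}\cap Y=\emptyset$ (or more carefully, testing against $f\in C_0(\mathbb{T}\setminus Y)$), one has $\mu(\phi^{-1}(E))=\mu(E)$; equivalently, the signed measure $\phi_*\mu-\mu$ is supported on $Y$. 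Since $Y=\phi(Y)\cup(Y\setminus\phi(Y))$ considerations are delicate, I would phrase it as: $\mu$ is such a trace-measure iff $\phi_*\mu$ and $\mu$ agree on all Borel subsets of $\mathbb{T}\setminus(Y\cup\phi^{-1}(Y))$, which after a short argument is equivalent to $\phi_*\mu-\mu$ being concentrated on $Y\cup\phi^{-1}(Y)$, a countable-type set once we know the orbit structure.

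For $(3)\Rightarrow(1)$: assume each orbit meets $Y$ in at most one point. Let $\mu$ be any trace-measure. I claim $\mu$ is genuinely $\phi$-invariant, whence by unique ergodicity of the irrational rotation $\mu=m$, the Haar measure, giving uniqueness. To see $\phi$-invariance: the defect measure $\nu:=\phi_*\mu-\mu$ is concentrated on the set $Y\cup\phi^{-1}(Y)$. But $\nu=\phi_*\mu-\mu$ also satisfies $\sum_{k=-n}^{n}\phi_*^k\nu = \phi_*^{n+1}\mu-\phi_*^{-n}\mu$, which is bounded in total variation by $2$; hence the "formal sum" $\sum_{k\in\mathbb{Z}}\phi_*^k\nu$ telescopes and must vanish in an appropriate sense. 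More cleanly: if $\nu\neq 0$, pick a point $y\in Y$ in its support (using that $\nu$ lives on the $\phi$-translates of $Y$ and that distinct orbits contribute independently); by hypothesis the orbit of $y$ meets $Y$ only at $y$, so translating, the atoms/mass of $\phi_*^k\nu$ at the orbit points are forced by the single value $\nu(\{y\})$-type data, and telescoping the bounded partial sums forces that data to be $0$. I would make this rigorous by testing $\nu$ against $f(z)=\sum_{k=-n}^{n} g(\phi^k z)$ for suitable $g\in C_0$ supported near $y$, using that such $f$ — while not literally in $C_0(\mathbb{T}\setminus Y)$ — can be handled by the orbit-disjointness to isolate one orbit at a time.

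The implication $(1)\Rightarrow(2)$ is immediate since every C*-algebra with a tracial state has one, and if the tracial \emph{linear} functional is unique up to scalar then so is the tracial \emph{state}; conversely I should note $(2)$ as literally stated (unique \emph{trace}, i.e. tracial state) — but the cleaner route is $(1)\Leftrightarrow(2)$ via the fact that tracial states are exactly the normalized positive tracial linear functionals, and positivity of $\sigma\cdot\Phi$ corresponds to positivity of $\mu$, so uniqueness in one sense matches the other. For $(2)\Rightarrow(3)$ (equivalently, the contrapositive $\neg(3)\Rightarrow\neg(1)$, which also gives $\neg(2)$ with a little care): suppose some orbit contains two distinct points $\xi,\phi^N(\xi)\in Y$ with $N\neq 0$. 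Then I construct a trace-measure distinct from Haar measure. The idea: put a point mass (or a small bump) at $\xi$ and compensate at $\phi^N(\xi)$ — precisely, the measure $\delta_\xi-\delta_{\phi^N(\xi)}$... no, rather build $\mu$ so that $\phi_*\mu-\mu$ is a signed combination of atoms sitting inside $Y$. The clean construction: let $\mu_0 = \sum_{j=0}^{N-1}\delta_{\phi^j(\xi)}$ (a "finite arc" of the orbit). Then $\phi_*\mu_0-\mu_0=\delta_{\phi^N(\xi)}-\delta_\xi$, which is supported on $\{\xi,\phi^N(\xi)\}\subset Y$; hence for $f\in C_0(\mathbb{T}\setminus Y)$ we get $\int f\circ\phi^{-1}d\mu_0=\int f\,d\mu_0$, so (after normalizing and checking it's genuinely a different trace-measure from $m$, e.g. it is atomic) $\mu_0$ yields a second tracial state, contradicting both $(1)$ and $(2)$. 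One must double-check the orientation of the invariance ($\phi$ vs $\phi^{-1}$) and that $N>0$ WLOG.

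\textbf{Main obstacle.} The delicate point is the direction $(3)\Rightarrow(1)$: turning "the defect measure $\phi_*\mu-\mu$ is concentrated on $Y$ and each orbit hits $Y$ at most once" into "$\mu$ is $\phi$-invariant." The subtlety is that $Y$ need not be countable, so I cannot just reason atom-by-atom; I must argue that along each single orbit the telescoping of the uniformly-bounded partial sums $\phi_*^{n+1}\mu-\phi_*^{-n}\mu$ forces the orbit's contribution to the defect to vanish, and then patch this over all orbits using that the orbits are disjoint and the defect is supported on their (at-most-one-per-orbit) intersections with $Y$. Equivalently, one shows directly that any such $\mu$ must already satisfy the full invariance $\int f\circ\phi\,d\mu=\int f\,d\mu$ for \emph{all} $f\in C(\mathbb{T})$ — not just those vanishing on $Y$ — by a density/approximation argument that replaces a general $f$ by one vanishing on a tiny neighborhood of the single problematic orbit point and controls the error via the boundedness of the telescoped sums. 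Once full invariance is in hand, unique ergodicity of the irrational rotation closes the argument immediately.
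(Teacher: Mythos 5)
Your proposal is correct and follows essentially the same route as the paper: the non-uniqueness direction uses the same atomic measure supported on a finite orbit segment joining two zeros of $\gamma$, and the uniqueness direction rests on the same mechanism --- the defect of $\phi$-invariance of a trace-measure is concentrated on $Y$, the translates $\phi^k(Y)$ are pairwise disjoint by hypothesis, uniform boundedness forces the defect to vanish, and unique ergodicity of the irrational rotation then identifies $\mu$ with Haar measure. The one step you leave as a plan closes exactly as you suggest: the paper uses disjointly supported bump functions $h\circ\phi^{-j}$ to get $\mu(\phi^j(Y))<\tfrac{1}{2k+1}$ and then an $\varepsilon$-approximation by functions vanishing near $Y$, while your telescoping version closes at once by observing that the measures $\phi_*^k\nu$ are mutually singular, so $(2n+1)\|\nu\|=\bigl\|\sum_{k=-n}^{n}\phi_*^k\nu\bigr\|=\|\phi_*^{n+1}\mu-\phi_*^{-n}\mu\|\le 2$ for all $n$.
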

\begin{proof}
The equivalence of 2 and 3 follows from Proposition~\ref{NP}.

``$1\Rightarrow 2$". Suppose that $\phi^k(Y)\cap Y\neq\emptyset$ for some integers $k\not=0$.
Assume that $z_1\in Y$ and $z_2=\phi^{{k}}(z_1)=e^{2\pi ik\theta}z_1\in Y$. By symmetry, we may assume
that $k>0$. Let
\[
\mu=\frac{\delta_{e^{2\pi i\theta}z_1}+\delta_{e^{2\pi
i2\theta}z_1}+\cdots+\delta_{z_2}}{k},
\]
{where $\dt_t$ is the point-mass at $t.$}
Then $\widetilde{ C_0({\mathbb T}\setminus Y{)}}\subseteq \{f\in C(\mathbb{T}):\, f(z_1)=f(z_2)\}$. Note that for
$f(z)\in C(\mathbb{T})$ with $f(z_1)=f(z_2)$ we have
\[
\int_{\mathbb{T}}f(e^{-2\pi
i\theta}z)d\mu(z)=\frac{f(z_1)+f(e^{2\pi
i\theta}z_1)+\cdots+f(e^{2\pi
i(k-1)\theta})}{k}\]\[=\frac{f(e^{2\pi
i\theta}z_1)+\cdots+f(e^{2\pi
i(k-1)\theta})+f(z_2)}{k}=\int_{\mathbb{T}}f(z)d\mu(z).
\]
By Proposition~\ref{P:U trace}, $\mu$ induces a trace different
from the trace given in Lemma~\ref{L: U trace}.

``$2\Rightarrow 1$" Let $C=\widetilde{C_0({\mathbb T}\setminus Y{)}}$ be the unitization of
$C_0({\mathbb T}\setminus Y),$  and let $\rho$ be a tracial state on $A_{\theta, \gamma}.$ It follows from \ref{P:U trace} that $\rho=\mu\circ \Phi,$ where $\mu$ is a Borel probability measure on ${\mathbb T}$ such that
\beq\label{Unitrace-1}
\int_{{\mathbb T}}f(\phi^{-1}(z))d\mu(z)=\int_{{\mathbb T}} f(z)d\mu(z)
\eneq
for all $f\in C.$
Define $X_0=Y$ and $X_n=\phi^n(Y),$ $n=\pm1, \pm 2,....$
By the assumption, $\{X_n: n\in {\mathbb Z}\}$ are mutually disjoint closed subsets
of ${\mathbb T}.$
We claim that
\beq\label{Unitrace-1+1}
\mu(X_n)=0,\,\,\,n\in\mathbb{Z}.
\eneq
Let $k\ge 1$ be an integer. One can find an open subset $G\subset {\mathbb T}$ such that
\beq\label{Unitrace-1+2}
X_0\subset G\andeqn \phi^{j}(G)\cap \phi^{i}(G)=\emptyset
\eneq
if $i\not=j$ and $-k\le i, j\le k.$ Define $0\le g\le 1$ in $C({\mathbb T})$ such that $g(z)=0$ if $z\in X_0$ and $g(z)=1$ if $z\in {\mathbb T}\setminus G.$ Then $g\in C_0({\mathbb T}\setminus Y).$ Let $h=1-g.$  Then $h(z)=1$ if $z\in X_0$ and
$h(z)=0$ if $z\in {\mathbb T}\setminus G.$ Moreover, $h\in C.$
Let $h_j=h\circ \phi^{-j},$ $-k\leq j\leq k.$ Note that  $h_j(z)=1$ if $z\in \phi^j(X_0)$ and $h_j(z)=0$ if $z\in {\mathbb T}\setminus \phi^j(G)$ for $-k\leq j\leq k.$ In particular, if $-k\leq j\leq k$ and $j\neq 0$, then $h_j(z)=0$ for $z\in X_0$.   Therefore $h_j\in C_0({\mathbb T}\setminus Y)\subset C$ for $-k\leq j\leq k$ and $j\neq 0$. It follows from \ref{Unitrace-1} that
\beq\label{Unitrace-1+3}
\int_{\mathbb T}h_jd\mu=\int_{\mathbb T} hd\mu,\,\,\,-k\leq j\leq k.
\eneq
Since $h_j$ has disjoint support, (\ref{Unitrace-1+3}) implies that
\beq\label{Unitarce-1+4}
0\le \int_{\mathbb T} h_j d\mu=\int_{\mathbb T} h d\mu <\frac{1}{2k+1}.
\eneq
Therefore,
\beq\label{Unitrace-1+5}
\mu(X_j)<\frac{1}{2k+1}, \,\,\,-k\leq j\leq k.
\eneq
Since (\ref{Unitrace-1+5}) holds for any integer $k\ge 1,$ we conclude
that the claim (\ref{Unitrace-1+1}) holds.

Let $f\in C({\mathbb T})$ and  let  $\ep>0.$ Since $\mu(X_0)=\mu(X_{-1})=0$, we can
choose an open subset $O\subset {\mathbb T}$ such that
\beq\label{Unitrace-3}
Y\subset O, \mu(O)<\ep/(2\|f\|+1)\andeqn  \mu(\phi^{-1}(O))<\ep/(2\|f\|+1).
\eneq
Define a continuous function $g_1\in C$ such that $0\le g_1\le 1,$
\beq\label{Unitrace-4}
g_1(z)=0,\,\,\, {\rm when}\,\,\, z\in Y\andeqn g_1(z)=1\,\,\,{\rm when}\,\,\, z\in {\mathbb T}\setminus O.
\eneq
Note that $fg_1\in C.$ In particular,
\beq\label{Untrace-5}
\int_{\mathbb T} fg_1\circ \phi^{-1} d\mu=\int_{\mathbb T}fg_1d\mu.
\eneq
Then
\beq\label{Unitrace-6}
&&\left|\int_{{\mathbb T}} f(e^{-2i\pi \theta}z)d\mu(z) -\int_{\mathbb T}f(z)d\mu(z)\right|\\
&\le & \left|\int_{\mathbb T} (f-g_1f)\circ \phi^{-1} d\mu\right|+\left|\int_{\mathbb T} (fg_1-fg_1\circ \phi^{-1})d\mu\right|\\
&&+\left|\int_{\mathbb T} (f-g_1f) d\mu\right|\\
&\leq & \int_{\mathbb T} |(f-g_1f)\circ \phi^{-1}| d\mu+
\int_{\mathbb T} |f-g_1f| d\mu \\
&\le &\|f\| \mu(\phi^{-1}(O))+\|f\|\mu(O)|<\ep/2+\ep/2=\ep
\eneq
It follows that
\beq\label{Unitrace-7}
\int_{\mathbb T} f(e^{-2 i \pi \theta}z)d\mu(z)=\int_{\mathbb T} f(z)d\mu(z)
\eneq
for all $f\in C({\mathbb T}).$ Therefore, $\mu$ is the Haar measure on $\mathbb T.$ This shows that $A_{\theta, \gamma}$ has a unique tracial state.
\end{proof}

\begin{Remark}
\emph{If $\gamma(z)$ has a single zero point, then there exists a unique tracial state on $A_{\theta,\gamma}$}
\end{Remark}

\begin{Remark}
\emph{If $\gamma(z)$ has two zero points $z_1,z_2$, then there exists a unique tracial state on $A_{\theta,\gamma}$ if and only if
there does not exist $k\in \mathbb{N}$ such that $z_2=e^{2\pi ik\theta}z_1$.}
\end{Remark}

For a $C^*$-algebra $\mathfrak{A}$, we denote by
$\rm{Tr}(\mathfrak{A})$ the space of bounded tracial linear
functionals on $\mathfrak{A}$. Denote by $T({\mathfrak{A}})$ the tracial state space of ${\mathfrak{A}}.$

Let $\Delta$ be a subset of ${\mathbb T}$ which contains exactly one point of each orbit ${\rm Orb}(\xi)$ and let $Y$ be the set of zeros of $\gamma(z)$.

\begin{Lemma}\label{Lnumbertrace}
Let $\xi_1, \xi_2,...,\xi_r\in \Delta$ and
$Y_j=Y\cap Orb(\xi_j),$ $j=1,2,...,r.$
Let $Y_j'\subset Y_j$ be a finite subset of $Y_j$ and
let  $|Y_j'|$ be
the cardinality of $Y_j',$
Then ${\rm dim}{\rm Tr}(A_{\theta,\gamma})\ge 1+\sum_{j=1}^r (|Y_j'|-1).$
\end{Lemma}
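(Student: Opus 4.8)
The plan is to exhibit, for each $j$, a supply of $|Y_j'|-1$ linearly independent tracial functionals supported "near" the orbit $Orb(\xi_j)$, together with the canonical trace $\tau$ of Lemma \ref{L: U trace}, and to show the whole collection is linearly independent by a support/separation argument. By Proposition \ref{P:U trace}, a bounded tracial linear functional on $A_{\theta,\gamma}$ is exactly $\sigma\circ\Phi$ where $\sigma(f)=\int_{\mathbb T}f\,d\mu$ and $\mu$ is a complex regular Borel measure on ${\mathbb T}$ satisfying the invariance relation (\ref{E:trace}) for all $f\in\widetilde{C_0({\mathbb T}\setminus Y)}$. So it suffices to produce $1+\sum_{j=1}^r(|Y_j'|-1)$ linearly independent such measures.

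First I would fix $j$ and enumerate $Y_j'=\{\eta_{j,0},\eta_{j,1},\dots,\eta_{j,m_j}\}$ with $m_j=|Y_j'|-1$, where each $\eta_{j,l}\in Orb(\xi_j)$. Since these all lie on a single orbit, for each $l\ge 1$ there is a (unique, by irrationality of $\theta$) integer $k_{j,l}\ne 0$ with $\eta_{j,l}=\phi^{k_{j,l}}(\eta_{j,0})$; after replacing $\eta_{j,0}$ by another point of $Y_j'$ if necessary we may take all $k_{j,l}>0$ (reorder so the "base point" is the one furthest back along the orbit; this just relabels). Then, mimicking the measure built in the ``$1\Rightarrow 2$'' direction of Theorem \ref{P:U unique trace}, for each $l=1,\dots,m_j$ I would set
\[
\mu_{j,l}=\frac{1}{k_{j,l}}\sum_{s=1}^{k_{j,l}}\delta_{\phi^{s}(\eta_{j,0})}.
\]
The key verification is that $\mu_{j,l}$ satisfies (\ref{E:trace}) for every $f\in\widetilde{C_0({\mathbb T}\setminus Y)}$: any such $f$ is constant-plus-a-function-vanishing-on-$Y$, hence in particular $f(\eta_{j,0})=f(\eta_{j,l})$ since both points lie in $Y$, and then the telescoping computation in Theorem \ref{P:U unique trace} gives $\int f\circ\phi^{-1}\,d\mu_{j,l}=\int f\,d\mu_{j,l}$ exactly as there. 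Thus each $\mu_{j,l}$ induces a bounded tracial functional $\tau_{j,l}=\sigma_{j,l}\circ\Phi$ on $A_{\theta,\gamma}$, and we also keep $\tau$ coming from Haar measure $m$.

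Finally I would prove that $\{\tau\}\cup\{\tau_{j,l}: 1\le j\le r,\ 1\le l\le m_j\}$ is linearly independent, which gives $\dim\mathrm{Tr}(A_{\theta,\gamma})\ge 1+\sum_{j=1}^r(|Y_j'|-1)$. Suppose $c_0\,m+\sum_{j,l}c_{j,l}\mu_{j,l}=0$ as measures (it suffices to test on $C({\mathbb T})$, since $\Phi$ restricted to $C^*(w)=C({\mathbb T})$ is the identity, so the functionals agree iff the measures do). Evaluating on continuous functions supported near a single point of the finite set $\bigcup_{j,l}\mathrm{supp}(\mu_{j,l})$ and using that the orbits $Orb(\xi_1),\dots,Orb(\xi_r)$ are pairwise disjoint (distinct elements of $\Delta$), together with the disjointness of the atoms used within each $\mu_{j,l}$, forces first $c_0=0$ (Haar measure is nonatomic while the $\mu_{j,l}$ are finitely atomic — restrict to a tiny arc around an atom and take $f$ a bump there) and then, orbit by orbit, a triangular system in the $c_{j,l}$ that I would resolve by testing on bump functions at the points $\phi^{k_{j,l}}(\eta_{j,0})$ ordered by how ``deep'' they sit along the orbit; the largest $k_{j,l}$ isolates its coefficient, and one induces downward. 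The main obstacle is precisely this last bookkeeping: making the atoms of the various $\mu_{j,l}$ within one orbit genuinely separable by continuous functions and organizing the resulting linear system so that it is visibly of full rank; once the orbit disjointness from $\Delta$ and the irrationality of $\theta$ are in hand, this reduces to a finite, elementary argument.
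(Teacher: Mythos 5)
Your proof is correct and follows essentially the same route as the paper: both construct atomic measures from point masses along the orbit segments joining consecutive points of $Y_j'$, verify the invariance relation of Proposition \ref{P:U trace} by using $f(\eta_{j,0})=f(\eta_{j,l})$ for $f\in\widetilde{C_0(\mathbb{T}\setminus Y)}$, and adjoin the Haar trace. The only difference is cosmetic: the paper's $\mu_{j,k}$ are supported on \emph{disjoint} consecutive blocks between successive zeros, so linear independence is immediate from mutual singularity, whereas your cumulative sums have nested supports and therefore require the (correct) triangular elimination you describe.
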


\begin{proof}
Suppose that
$Y_j'=\left\{z_{j,1}, (e^{2\pi im_{j,1}\theta})z_{j,1},\cdots,
(e^{2\pi im_{j,n_j}\theta})z_{j,1}\right\}$ with
$1<m_{j,1}<\cdots<m_{j,n_j},$ where $|Y_j'|=n_j+1,$ $j=1,2,...,r.$
  As in the proof of Proposition~\ref{P:U unique trace}, the Haar measure $m$ together with  \[\mu_{j,1}=\frac{\delta_{\left(e^{2\pi
i\theta}\right)z_{j,1}}+\cdots+\delta_{\left(e^{2\pi
im_{j,1}\theta}\right)z_{j,1}}}{m_{j,1}-1}\]
\[\mu_{j,2}=\frac{\delta_{\left(e^{2\pi
i(m_{j,1}+1)\theta}\right)z_{j,1}}+\cdots+\delta_{\left(e^{2\pi
im_{j,2}\theta}\right)z_{j,1}}}{m_{j,2}-m_{j,1}}\]\[\vdots
\]
\[\mu_{j,n_j}=\frac{\delta_{\left(e^{2\pi
i(m_{j,n_j-1}+1)\theta}\right)z_{j,1}}+\cdots+\delta_{\left(e^{2\pi
im_{j,n_j}\theta}\right)z_{j,1}}}{m_{n_j}-m_{n_j-1}}\] induce
$1+\sum_{j=1}^r(|Y_j'|-1)$ linearly independent tracial states on
$A_{\theta,\gamma}.$  This proves that ${\rm dim}({\rm
Tr}(A_{\theta,\gamma}))\geq 1+\sum_{j=1}^r(|Y_j'|-1)$.
\end{proof}

\begin{Corollary}
Let $\xi\in {\mathbb T}$ and let $N(\xi)$  be the number of points in $Y\cap Orb(\xi)$.
If $\sum_{\xi\in \Delta}N(\xi)=\infty$, then $A_{\theta,\gamma}$ has infinitely many extreme points in its tracial state space ${\rm T}(A_{\theta,\gamma})$ and ${\rm dim}({\rm Tr}(A_{\theta, \gamma}))=\infty.$
\end{Corollary}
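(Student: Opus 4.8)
The plan is to deduce this Corollary directly from Lemma~\ref{Lnumbertrace}, which already does essentially all of the work; the Corollary is simply an unbounding statement extracted from that finite lower bound. First I would fix the set $\Delta$ containing exactly one point of each orbit and recall the hypothesis $\sum_{\xi\in\Delta}N(\xi)=\infty$, where $N(\xi)=|Y\cap \mathrm{Orb}(\xi)|$ is allowed to be $+\infty$. There are two cases to separate: either some single $N(\xi_0)=\infty$, or every $N(\xi)$ is finite but infinitely many of them are $\ge 2$ (more precisely, the sum of $(N(\xi)-1)$ over $\xi\in\Delta$ diverges). In either case the strategy is the same: produce, for every $M\in\mathbb{N}$, a finite collection $\xi_1,\dots,\xi_r\in\Delta$ together with finite subsets $Y_j'\subset Y_j:=Y\cap\mathrm{Orb}(\xi_j)$ such that $1+\sum_{j=1}^r(|Y_j'|-1)\ge M$, and then invoke Lemma~\ref{Lnumbertrace} to conclude $\dim\mathrm{Tr}(A_{\theta,\gamma})\ge M$. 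Since $M$ is arbitrary, $\dim\mathrm{Tr}(A_{\theta,\gamma})=\infty$.

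Concretely, in the first case pick $\xi_1=\xi_0$ with $N(\xi_0)=\infty$; then $Y_1=Y\cap\mathrm{Orb}(\xi_0)$ is infinite, so for any $M$ we may choose a finite subset $Y_1'\subset Y_1$ with $|Y_1'|=M+1$, giving $1+(|Y_1'|-1)=M+1>M$. In the second case, since $\sum_{\xi\in\Delta}(N(\xi)-1)\ge$ (number of $\xi$ with $N(\xi)\ge 2$) and the full sum $\sum N(\xi)$ diverges while each term is finite, the partial sums of $(N(\xi)-1)$ over larger and larger finite subsets of $\Delta$ are unbounded; hence we may choose finitely many $\xi_1,\dots,\xi_r\in\Delta$ with $\sum_{j=1}^r(N(\xi_j)-1)\ge M$, and for each take $Y_j'=Y_j$ (finite, of cardinality $N(\xi_j)$), so that $1+\sum_{j=1}^r(|Y_j'|-1)\ge 1+M>M$. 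Applying Lemma~\ref{Lnumbertrace} in each case yields $\dim\mathrm{Tr}(A_{\theta,\gamma})\ge M$ for all $M$, hence $\dim\mathrm{Tr}(A_{\theta,\gamma})=\infty$.

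It remains to upgrade the statement about dimension to the statement about extreme points of the tracial state space $T(A_{\theta,\gamma})$. Here I would note that the linearly independent tracial functionals constructed in the proof of Lemma~\ref{Lnumbertrace} are in fact tracial \emph{states} (the Haar measure $m$ and the normalized point-mass averages $\mu_{j,\ell}$ are all probability measures, hence yield states $\mu\circ\Phi$). A compact convex set whose affine span is infinite-dimensional cannot have finitely many extreme points, since a finite-dimensional simplex is the convex hull of its finitely many extreme points and spans only a finite-dimensional affine subspace; equivalently, if $T(A_{\theta,\gamma})$ had only finitely many extreme points, then by Krein--Milman (and finite-dimensionality of the convex hull of a finite set) $\mathrm{Tr}(A_{\theta,\gamma})$, its linear span, would be finite-dimensional, contradicting what we just proved. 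So $T(A_{\theta,\gamma})$ has infinitely many extreme points.

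I do not expect any serious obstacle here; this is a bookkeeping argument on top of Lemma~\ref{Lnumbertrace}. The only mild subtlety worth stating carefully is the case split on whether some orbit meets $Y$ in infinitely many points versus infinitely many orbits meeting $Y$ in at least two points — both are covered by the observation that unbounded partial sums of $\sum_{\xi\in\Delta}(N(\xi)-1)$ follow from $\sum_{\xi\in\Delta}N(\xi)=\infty$ together with $|\Delta\cap\mathrm{supp}|$ considerations. One should also make explicit that the $\mu_{j,\ell}$ in Lemma~\ref{Lnumbertrace} are normalized, so the functionals they induce are genuine states and not merely nonzero tracial functionals; this is needed to pass from $\dim\mathrm{Tr}=\infty$ to ``infinitely many extreme traces'' rather than just ``infinitely many traces.''
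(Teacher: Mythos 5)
Your overall route is the same as the paper's: extract from the hypothesis finite subsets $Y_j'$ with $1+\sum_j(|Y_j'|-1)$ arbitrarily large and feed them into Lemma~\ref{Lnumbertrace}. Your final paragraph upgrading $\dim({\rm Tr}(A_{\theta,\gamma}))=\infty$ to ``infinitely many extreme tracial states'' via Krein--Milman (a compact convex set with finitely many extreme points spans a finite-dimensional space) is correct and is in fact more explicit than the paper, which dismisses that step with ``the corollary follows.''

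There is, however, a genuine gap, located exactly at the point you flag as the ``only mild subtlety.'' The implication you rely on --- that $\sum_{\xi\in\Delta}N(\xi)=\infty$ forces the partial sums of $\sum_{\xi\in\Delta}(N(\xi)-1)$ to be unbounded --- is false. Since $\Delta$ meets every orbit exactly once, $\sum_{\xi\in\Delta}N(\xi)$ is simply the cardinality of $Y$; if $Y$ is infinite but every orbit meets $Y$ in at most one point, then $\sum_{\xi\in\Delta}N(\xi)=\infty$ while $N(\xi)-1\le 0$ for all $\xi$, so your ``second case'' collapses and Lemma~\ref{Lnumbertrace} yields only the trivial bound $\dim({\rm Tr})\ge 1$. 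Worse, in that situation Theorem~\ref{P:U unique trace} says $A_{\theta,\gamma}$ has a \emph{unique} tracial state, so the conclusion of the Corollary genuinely fails there: no argument can close this gap. The hypothesis must be read (as surely intended) as $\sum_{\xi\in\Delta,\,N(\xi)\ge 1}(N(\xi)-1)=\infty$, i.e.\ infinitely many ``excess'' zeros beyond one per orbit, with $N(\xi_0)=\infty$ for a single orbit as one instance; under that hypothesis your two cases do exhaust the possibilities and the rest of your argument goes through. In fairness, the paper's own proof makes exactly the same unjustified jump (``one can find $\dots$ such that $\sum_{j=1}^n(|Y_j|-1)>N$''), so the defect lies in the statement as printed rather than in something you alone overlooked --- but the implication you assert as covering both cases is not true and should not be presented as established.
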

\begin{proof}
For any integer $N\ge 1,$ since
$\sum_{\xi\in \Delta}N(\xi)=\infty,$ one can find $\xi_1,\xi_2,...,\xi_n\in {\mathbb T}$ and finite subsets $Y_j\subset Y\cap Orb(\xi_j),$ $j=1,2,...,n$ such that
$$
\sum_{j=1}^n (|Y_j|-1)>N.
$$
It follows from \ref{Lnumbertrace} that ${\rm dim}({\rm Tr}(A_{\theta,\gamma}))\ge N.$ It follows that ${\rm dim}({\rm Tr}(A_{\theta,\gamma}))=\infty.$ The corollary follows.
\end{proof}

\begin{Proposition}\label{T:dimsion}
Let $\xi_1, \xi_2,...,\xi_r\in \Delta$ and
$Y_j=Y\cap Orb(\xi_j),$ $j=1,2,...,r$, such that
$Y=\cup_{j=1}^r Y_j.$ Suppose that $Y$ is a finite set.
 Then ${\rm
dim}({\rm
Tr}(A_{\theta,\gamma}))=1+\sum_{j=1}^r(|Y_j|-1)$, where
$|Y_j|$ is the number of elements in $Y_j$.
\end{Proposition}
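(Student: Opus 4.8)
The plan is to realise ${\rm Tr}(A_{\theta,\gamma})$ as an explicit finite-dimensional space of measures and to compute its dimension. The lower bound $\dim{\rm Tr}(A_{\theta,\gamma})\ge 1+\sum_{j=1}^r(|Y_j|-1)$ is already Lemma~\ref{Lnumbertrace}, applied with $Y_j'=Y_j$ (legitimate since $Y$, hence each $Y_j$, is finite), so the work lies in the matching upper bound.

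First I would invoke Proposition~\ref{P:U trace}: the map $\mu\mapsto\bigl(f\mapsto\int_{\mathbb{T}}f\,d\mu\bigr)\circ\Phi$ is a linear bijection from
\[
\mathcal{M}=\Bigl\{\mu\in M(\mathbb{T}):\int_{\mathbb{T}}f(\phi^{-1}(z))\,d\mu(z)=\int_{\mathbb{T}}f(z)\,d\mu(z)\ \text{for all }f\in\widetilde{C_0(\mathbb{T}\setminus Y)}\Bigr\}
\]
onto ${\rm Tr}(A_{\theta,\gamma})$, where $M(\mathbb{T})$ is the space of complex regular Borel measures on $\mathbb{T}$; linearity is clear, and injectivity follows by restricting to $C^*(w)\cong C(\mathbb{T})$, on which $\Phi$ acts as the identity. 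Since $Y$ is finite, $\widetilde{C_0(\mathbb{T}\setminus Y)}$ is identified with $\{f\in C(\mathbb{T}):f|_Y\text{ is constant}\}$, whose annihilator in $M(\mathbb{T})=C(\mathbb{T})^*$ is $V:=\{\sum_{y\in Y}c_y\delta_y:\sum_{y\in Y}c_y=0\}$, of dimension $|Y|-1$ (and $V=\{0\}$ when $Y=\emptyset$). Writing $S:=(\phi^{-1})_*$ for pushforward on $M(\mathbb{T})$, the defining condition of $\mathcal{M}$ says precisely $(I-S)\mu\in V$, so $\mathcal{M}=(I-S)^{-1}(V)$ and, by rank--nullity for the map $(I-S):\mathcal{M}\to V$,
\[
\dim\mathcal{M}=\dim\ker(I-S)+\dim\bigl({\rm im}(I-S)\cap V\bigr).
\]
The kernel is the space of $\phi$-invariant complex measures; reducing to positive measures via uniqueness of the Jordan decomposition and then applying unique ergodicity of the irrational rotation $\phi$ gives $\ker(I-S)=\mathbb{C}\,m$, contributing $1$.

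It then remains to prove ${\rm im}(I-S)\cap V\subseteq V_0:=\{\sum_{y\in Y}c_y\delta_y:\sum_{y\in Y_j}c_y=0\text{ for each }j\}$, which has dimension $\sum_{j=1}^r(|Y_j|-1)$; this is the crux. Given $\lambda=\sum_{y\in Y}c_y\delta_y=(I-S)\mu$, iterating gives $\mu=S^n\mu+\sum_{l=0}^{n-1}S^l\lambda$ for all $n\ge1$, and since $S$ is isometric on $M(\mathbb{T})$ (pushforward by a homeomorphism preserves total variation), the partial sums $\sum_{l=0}^{n-1}S^l\lambda$ have norm at most $2\|\mu\|$. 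Encoding each orbit by $Y_j=\{\phi^t(\xi_j):t\in T_j\}$ for a finite set $T_j\subset\mathbb{Z}$, one computes that the coefficient of $\sum_{l=0}^{n-1}S^l\lambda$ at the point $\phi^\tau(\xi_j)$ equals $\sum_{t\in T_j\cap[\tau,\tau+n-1]}c_{\phi^t(\xi_j)}$, which is exactly $s_j:=\sum_{y\in Y_j}c_y$ whenever $\tau$ lies in an interval of length $\ge n-d_j$, with $d_j:=\max T_j-\min T_j$. Since the orbits of $\xi_1,\dots,\xi_r$ are disjoint and infinite, this produces at least $n-d_j$ distinct points carrying coefficient $s_j$ for each $j$, so $\bigl\|\sum_{l=0}^{n-1}S^l\lambda\bigr\|\ge\sum_{j=1}^r(n-d_j)|s_j|$; boundedness in $n$ forces $s_j=0$ for every $j$, i.e. $\lambda\in V_0$. (The reverse inclusion $V_0\subseteq{\rm im}(I-S)$ follows, if desired, from finite telescoping sums of point masses within each orbit.)

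Combining the three contributions gives $\dim\mathcal{M}\le 1+\sum_{j=1}^r(|Y_j|-1)$, which together with Lemma~\ref{Lnumbertrace} yields equality and proves the proposition. I expect the main obstacle to be the inclusion ${\rm im}(I-S)\cap V\subseteq V_0$: it expresses that the net mass of a finite-measure coboundary over any single rotation orbit must vanish, and the care required is precisely that the zeros of $\gamma$ lying on one orbit may sit at arbitrary positions along it, so one must verify that the telescoped mass of $\sum_{l<n}S^l\lambda$ accumulates with the same coefficient $s_j$ at linearly many distinct orbit points, forcing its total variation to diverge unless $s_j=0$.
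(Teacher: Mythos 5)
Your proof is correct. Both you and the paper reduce the problem, via Proposition~\ref{P:U trace}, to computing the dimension of the space $\mathcal{M}$ of complex measures $\mu$ with $(I-S)\mu$ annihilating $\widetilde{C_0(\mathbb{T}\setminus Y)}$, and both arguments rest on the same two pillars: unique ergodicity of the irrational rotation handles the invariant part, and a nonzero net mass transported along a single orbit forces the total variation to diverge. The difference is in the packaging. The paper chooses the explicit basis $\{\mu_{j,k}-S\mu_{j,k}\}\cup\{\nu_j\}$ of $C^{\perp}$, writes $(I-S)\mu$ in these coordinates, and kills the coefficients $t_j$ of the $\nu_j$ by iterating the relation to get $\bar{\mu}(\{e^{2\pi i n\theta}z_{1,1}\})\ge t_1$ for all $n$; your rank--nullity formulation $\dim\mathcal{M}=\dim\ker(I-S)+\dim(\mathrm{im}(I-S)\cap V)$ together with the inclusion $\mathrm{im}(I-S)\cap V\subseteq V_0$ isolates exactly the same phenomenon coordinate-free, and your telescoping bound $\bigl\|\sum_{l<n}S^l\lambda\bigr\|=\|\mu-S^n\mu\|\le 2\|\mu\|$ played against the lower bound $\sum_j(n-d_j)|s_j|$ is a crisper version of the paper's divergence argument. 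What your version buys is that the upper bound becomes a pure dimension count requiring no explicit solution for $\mu$; what the paper's buys is the explicit decomposition $\mu=\sum s_{j,k}\mu_{j,k}+tm$ of every tracial functional, which it then reuses verbatim in Proposition~\ref{T:extrem points} to identify the extreme traces, so if you adopt your route you would need to extract that decomposition separately. Two small points you should make explicit: that an $S$-invariant complex measure has $S$-invariant Jordan components (so unique ergodicity really applies to give $\ker(I-S)=\mathbb{C}m$), and that the atoms $\phi^{\tau}(\xi_j)$ you count are pairwise distinct because $\theta$ is irrational and the $\xi_j$ lie in distinct orbits; both are true and easy, but they are the places where the norm estimate could silently fail if overlooked.
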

\begin{proof}
By Lemma~\ref{Lnumbertrace}, ${\rm dim}({\rm
Tr}(A_{\theta,\gamma}))\geq 1+\sum_{j=1}^r(|Y_j|-1)$. We need to show
${\rm dim}({\rm
Tr}(A_{\theta,\gamma}))\leq 1+\sum_{j=1}^r(|Y_j|-1)$.
By Proposition~\ref{P:U trace}, a regular Borel
probability measure $\mu$ on $\mathbb{T}$ induces a trace on
$A_{\theta,\gamma}$ if and only if
\[
\int_{\mathbb{T}}f(z)d\mu(e^{2\pi i\theta}z)=\int_{\mathbb{T}}f(z)d\mu(z)
\]
for all $f(z)\in \widetilde{ C_0({\mathbb T}\setminus Y{)}}.$ Suppose that the zero points of  $\gamma(z)$ are $z_1,\cdots,
z_n$. Then the norm closure of $\gamma(z)C(\mathbb{T})$ in
$C(\mathbb{T})$ is
\[J=\{f(z)\in C(\mathbb{T}):\, f(z_1)=\cdots=f(z_n)=0\}\] and so
\[C=\widetilde{ C_0({\mathbb T}\setminus Y{)}}=\{f(z)\in C(\mathbb{T}):\, f(z_1)=\cdots=f(z_n)\}\subseteq C(\mathbb{T}).\] Therefore, $\mu$
induces a trace on $A_{\theta,\gamma}$ if and only if
\[
\int_{\mathbb{T}}f(z)d\mu(e^{2\pi
i\theta}z)=\int_{\mathbb{T}}f(z)d\mu(z)
\]
for all $f(z)\in C$.

Let $C^\perp=\{\rho:\, \rho\in
C(\mathbb{T})^*\,\text{and}\, \rho(a)=0 \,\text{for
all}\, a\in C\}$. Note that
$C(\mathbb{T})/C\cong \mathbb{C}^{n-1}$.
So ${\rm dim}C^\perp=n-1$. Suppose that
$Y_j=\left\{z_{j,1}, (e^{2\pi im_{j,1}\theta})z_{j,1},\cdots,
(e^{2\pi im_{j,n_j}\theta})z_{j,1}\right\}$ with
$1<m_{j,1}<\cdots<m_{j,n_j}$. Define $\mu_{j,k}'s$ as in the proof Lemma~\ref{Lnumbertrace} and let
$\nu_j=\delta_{z_{j,1}}-\delta_{z_{j+1,1}}$ for $1\leq j\leq r-1$.
Then
\[\{\mu_{j,k}-\mu_{j,k}(e^{2\pi i\theta}\cdot):\, 1\leq j\leq r, 1\leq k\leq
n_j\}\cup\{\nu_j:\,1\leq j\leq r-1\}
\]
are $n-1$ linearly independent elements in $C^\perp$.
Therefore, there are real numbers $s_{j,k}$ and $t_j$ such that
\[
\mu(e^{2\pi i\theta}E)-\mu(E)-\sum s_{j,k}(\mu_{j,k}(e^{2\pi
i\theta}E)-\mu_{j,k}(E))=\sum_{j=1}^{r-1}t_j\nu_{j}(E)
\] for all Borel measurable subset $E$ of $\mathbb{T}$.
Let $\bar{\mu}(E)=\mu(E)-\sum s_{j,k}\mu_{j,k}(E)$. Then
$\bar{u}(\{z_{1,1}\})=\mu(\{z_{1,1}\})\geq 0$ and
\begin{equation}\label{E:bar mu}
\bar{\mu}(e^{2\pi i
\theta}E)-\bar{\mu}(E)=\sum_{j=1}^{r-1}t_j(\delta_{z_{j,1}}(E)-\delta_{z_{j+1,1}}(E)).
\end{equation}
Claim $t_1=\cdots=t_{r-1}=0$. Otherwise,  we may assume that
$t_1>0$. In (\ref{E:bar mu}) let $E=\{z_{1,1}\}$, then we have
$\delta_{z_{j,1}}(E)=0$ for all $2\leq j\leq r$. Hence,
$\bar{\mu}(\{e^{2\pi i \theta}z_{1,1}\})\geq
t_1+\bar{\mu}(\{z_{1,1}\})\geq t_1$. In (\ref{E:bar mu}) let
$E=\{e^{2\pi i\theta}z_{1,1}\}$, then we have
$\delta_{z_{j,1}}(E)=0$ for all $1\leq j\leq r$. Hence,
$\bar{\mu}(\{e^{2\pi i2\theta}z_{1,1}\})=\bar{\mu}(\{e^{2\pi
i\theta}z_{1,1}\})\geq t_1>0$. By induction, we have
$\bar{\mu}(\{e^{2\pi in\theta}z_{1,1}\})\geq t_1>0$ for all
$n\in\mathbb{N}$. This contradicts to the fact that $\bar{\mu}$ is
a bounded real measure.

Therefore,
\[
\mu(e^{2\pi i\theta}E)-\mu(E)=\sum s_{j,k}(\mu_{j,k}(e^{2\pi
i\theta}E)-\mu_{j,k}(E))
\] for all Borel measurable subset $E$ of $\mathbb{T}$, i.e.,
\[
\mu(e^{2\pi i\theta}E)-\sum s_{j,k}(\mu_{j,k}(e^{2\pi
i\theta}E))=\mu(E)-\sum s_{j,k}(\mu_{j,k}(E))
\]
for all Borel subset $E$ of $\mathbb{T}$. Let
\[
\nu=\mu-\sum s_{j,k}\mu_{j,k}.
\]
Then
\[
\nu(e^{2\pi i\theta}E)=\nu(E)
\]
for all Borel subsets of $\mathbb{T}$.  Therefore, for
every $n\in \mathbb{N}$, $\nu(e^{2\pi in\theta}E)=\nu(E)$ for all
Borel subsets $E$ of $\mathbb{T}$. Since $\theta$ is an irrational
number, $\{e^{2\pi in\theta}:\,n\in\mathbb{N}\}$ is dense in
$\mathbb{T}$. By the Lebesgue dominated theorem, $\nu(zE)=\nu(E)$
for all Borel subsets $E$ of $\mathbb{T}$ and $z\in \mathbb{T}$.
By the uniqueness of the Haar measure on $\mathbb{T}$,  there
exists $t\in\mathbb{R}$ such that $\nu=tm$, i.e., $\mu=\sum s_{j,k}\mu_{j,k}+tm$
This implies that ${\rm dim}({\rm Tr}(A_{\theta,\gamma}))\leq
1+\sum_{j=1}^r(|Y_j|-1)$. So ${\rm dim}({\rm
Tr}(A_{\theta,\gamma}))=1+\sum_{j=1}^r(|Y_j|-1)$.
\end{proof}

\begin{Proposition}\label{T:extrem points}
Suppose $\gamma(z)$ has finitely many points in its zero set $Y$ and there are $\xi_1, \xi_2,...,\xi_r\in \Delta$ such that
$Y=\cup_{j=1}^r Y_j,$ where $Y_j=Y\cap Orb(\xi_j).$
Then
$\tau$ and the traces induced by $\mu_{j,k}'s$ constructed in
Lemma~\ref{Lnumbertrace} are precisely the extreme points of
${\rm T}(A_{\theta,\gamma})$.
\end{Proposition}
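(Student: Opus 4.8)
The plan is to upgrade the affine description of the tracial state space supplied by Proposition~\ref{T:dimsion} to an explicit description of $T(A_{\theta,\gamma})$ as a simplex whose vertices are the listed traces. Recall from Proposition~\ref{P:U trace} that every tracial state of $A_{\theta,\gamma}$ has the form $\sigma_\mu\circ\Phi$, where $\sigma_\mu(f)=\int_{\mathbb T} f\,d\mu$ and $\mu$ is a Borel probability measure on ${\mathbb T}$ satisfying (\ref{E:trace}) for all $f\in\widetilde{C_0({\mathbb T}\setminus Y)}$; write $\mathcal M$ for the compact convex set of such $\mu$. The correspondence $\mu\mapsto\sigma_\mu\circ\Phi$ is plainly affine, and it is injective because $\Phi$ restricts to the identity on $C^*(w)\cong C({\mathbb T})$ (Proposition~\ref{P:U Phi}), so two such states agreeing on $A_{\theta,\gamma}$ agree on $C({\mathbb T})$ and the measures then coincide by Riesz uniqueness. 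Both $m$ (Haar measure) and every $\mu_{j,k}$ constructed in Lemma~\ref{Lnumbertrace} lie in $\mathcal M$. Since extreme points are preserved by affine isomorphisms, it suffices to prove that $\mathcal M$ equals the convex hull of $\{m\}\cup\{\mu_{j,k}\}$ and that this set is affinely independent.

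First I would re-invoke the computation carried out in the proof of Proposition~\ref{T:dimsion}: because $Y$ is finite, every $\mu\in\mathcal M$ has a representation $\mu=t\,m+\sum_{j,k}s_{j,k}\mu_{j,k}$ with $t,s_{j,k}\in{\mathbb R}$, and comparing total masses (all measures involved are probability measures) gives $t+\sum_{j,k}s_{j,k}=1$. The new point is to pin down the signs, and this is where finiteness of $Y$ enters essentially. Each $\mu_{j,k}$ is a normalised finite sum of point masses, the finite supports of distinct $\mu_{j,k}$ are pairwise disjoint, and $m$ is non-atomic; hence the decomposition of the signed measure $\mu$ into its purely atomic and purely continuous parts is exactly $\mu_{\mathrm{at}}=\sum_{j,k}s_{j,k}\mu_{j,k}$ and $\mu_{\mathrm{cont}}=t\,m$. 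Positivity of $\mu$ forces both parts to be positive: $\mu_{\mathrm{cont}}=t\,m\ge 0$ gives $t\ge 0$, and evaluating $\mu_{\mathrm{at}}$ at a single atom of a fixed $\mu_{j,k}$, at which all the other $\mu_{j',k'}$ vanish, gives $s_{j,k}\ge 0$. Thus every $\mu\in\mathcal M$ is a genuine convex combination of $m$ and the $\mu_{j,k}$, so $\mathcal M=\mathrm{conv}(\{m\}\cup\{\mu_{j,k}\})$.

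Finally, the measures $m$ and $\mu_{j,k}$ are linearly, hence affinely, independent in $\mathrm{Tr}(A_{\theta,\gamma})$ — this is exactly the count already recorded in Lemma~\ref{Lnumbertrace} and Proposition~\ref{T:dimsion}, where their number $1+\sum_j(|Y_j|-1)$ equals $\dim\mathrm{Tr}(A_{\theta,\gamma})$. For a polytope the extreme points always lie among a spanning set, and affine independence yields the reverse inclusion, so the extreme points of $\mathcal M$ are precisely $m$ and the $\mu_{j,k}$. Transporting this back along the affine isomorphism $\mu\mapsto\sigma_\mu\circ\Phi$, the extreme points of $T(A_{\theta,\gamma})$ are exactly $\tau=\sigma_m\circ\Phi$ and the traces induced by the $\mu_{j,k}$, and these are mutually distinct because $\tau$ is non-atomic while the $\mu_{j,k}$ have pairwise disjoint atomic supports. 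The one step deserving genuine care is the atomic/continuous splitting in the middle paragraph: it is the device converting the affine-span assertion of Proposition~\ref{T:dimsion} into a convex-hull assertion, and it relies essentially on $Y$ being finite — with $Y$ infinite one would first have to control accumulation of atoms before isolating the $\mu_{j,k}$.
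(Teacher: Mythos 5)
Your proposal is correct and follows essentially the same route as the paper: both invoke the real-coefficient decomposition $\mu = t\,m+\sum s_{j,k}\mu_{j,k}$ from Proposition~\ref{T:dimsion} and then use mutual singularity of $m$ and the $\mu_{j,k}$ (your atomic/continuous splitting makes this explicit) to force $t,s_{j,k}\ge 0$ with $t+\sum s_{j,k}=1$. Your write-up merely spells out details the paper leaves implicit (the affine bijection $\mu\mapsto\sigma_\mu\circ\Phi$ and the affine-independence step identifying the vertices), so there is nothing to correct.
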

\begin{proof}
Let $\sigma$ be a tracial state on $A_{\theta,\gamma}$
induced by a regular Borel probability measure $\mu$ on
$\mathbb{T}$. Then by the proof of Proposition~\ref{T:dimsion}, there are real numbers $t,s_{j,k}$ such that
\[
\mu(E)=tm(E)+\sum s_{j,k}\mu_{j,k}(E)
\]
for all Borel subsets $E$ of $\mathbb{T}$. Since $m$ and
$\mu_{j,k}'$ are mutually disjoint measures, $t,s_{j,k}\geq 0$ and
$t+\sum s_{j,k}=1$. This shows that $\tau$ and the traces induced
by $\mu_{j,k}'s$ constructed in Lemma~\ref{Lnumbertrace} are
precisely the extreme points of
${\rm T}(A_{\theta,\gamma})$.
\end{proof}

\begin{Corollary}\label{C:extreme points}
Suppose $\gamma(z)$ has finite zero points. Then $\tau$ is the
unique extreme point in ${\rm T}(A_{\theta,\gamma})$
which is faithful on $A_{\theta,\gamma}$.
\end{Corollary}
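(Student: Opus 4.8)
The plan is to read off the extreme points of $T(A_{\theta,\gamma})$ from Proposition~\ref{T:extrem points} and then decide, one by one, which of them is faithful; the answer will be that only $\tau$ is.

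First I would isolate an elementary faithfulness criterion for traces of the form $\sigma\circ\Phi$. Recall from Proposition~\ref{P:U Phi} that $\Phi$ is a \emph{faithful} conditional expectation of $A_{\theta,\gamma}$ onto $C^*(w)\cong C(\mathbb T)$, and in particular $\Phi(a)=a$ for all $a\in C^*(w)$. Hence, if $\mu$ is a regular Borel probability measure on $\mathbb T$ satisfying the invariance condition of Proposition~\ref{P:U trace} and $\sigma(f)=\int_{\mathbb T}f\,d\mu$, then the tracial state $\sigma\circ\Phi$ is faithful on $A_{\theta,\gamma}$ if and only if ${\rm supp}(\mu)=\mathbb T$. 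Indeed, for $0\le a\in A_{\theta,\gamma}$ we have $(\sigma\circ\Phi)(a)=\int_{\mathbb T}\Phi(a)\,d\mu$ with $0\le\Phi(a)\in C(\mathbb T)$: if $\mu$ has full support this vanishes only when $\Phi(a)=0$, hence only when $a=0$ by faithfulness of $\Phi$; while if ${\rm supp}(\mu)\ne\mathbb T$ one picks $0\le f\in C(\mathbb T)$ nonzero but vanishing on ${\rm supp}(\mu)$ and sets $a=f(w)^{1/2}\ne 0$, for which $(\sigma\circ\Phi)(a^*a)=\int_{\mathbb T}f\,d\mu=0$.

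Applying this: $\tau=\rho\circ\Phi$ is induced by the Haar measure $m$, which has full support, so $\tau$ is faithful (this is exactly Lemma~\ref{L: U trace}), and $\tau$ is an extreme point of $T(A_{\theta,\gamma})$ by Proposition~\ref{T:extrem points}. On the other hand, Proposition~\ref{T:extrem points} tells us that every \emph{other} extreme point of $T(A_{\theta,\gamma})$ is the trace induced by one of the measures $\mu_{j,k}$ constructed in Lemma~\ref{Lnumbertrace}; each such $\mu_{j,k}$ is a finite average of point masses, so ${\rm supp}(\mu_{j,k})$ is a finite, hence proper, subset of $\mathbb T$, and the criterion above shows this trace is not faithful. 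Therefore $\tau$ is the unique extreme point of $T(A_{\theta,\gamma})$ that is faithful on $A_{\theta,\gamma}$.

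I do not expect a genuine obstacle: the statement is essentially a bookkeeping consequence of Proposition~\ref{T:extrem points} once the faithfulness criterion for $\sigma\circ\Phi$ is in place. The only point requiring a little care is that criterion, which rests on $\Phi$ being a faithful conditional expectation fixing $C^*(w)$ pointwise — both supplied by Proposition~\ref{P:U Phi}.
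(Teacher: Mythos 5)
Your proposal is correct and is exactly the argument the paper leaves implicit (the corollary is stated without proof as an immediate consequence of Proposition~\ref{T:extrem points} and Lemma~\ref{L: U trace}): $\tau$ comes from the Haar measure and is faithful, while every other extreme point comes from a finitely supported measure $\mu_{j,k}$ and so kills the nonzero positive element $f(w)$ for any $f\ge 0$ vanishing on that finite support. The faithfulness criterion you isolate (faithful $\iff$ $\mathrm{supp}(\mu)=\mathbb{T}$, via faithfulness of $\Phi$) is the right way to make this precise.
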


\section{Simplicity of generalized universal $C^*$-algebras}

In this section, we provide a characterization of simplicity of a
generalized universal algebra $A_{\theta,\gamma}$ in
terms of the zero points of $\gamma(z)$.
We begin with the following lemma.

\begin{Lemma}\label{L:less than 1 general case}
Let $f_n(z)\in C(\mathbb{T})$ for $-M\leq n\leq N$. Then
\[
\|x^kf_k(w)\|\leq\left\|\sum_{n=1}^Nx^nf_n(w)+f_0(w)+\sum_{m=1}^Mf_{-m}(w)\left(x^*\right)^{m}\right\|,
\]
and
\[
\|f_{-k}(w)(x^*)^k\|\leq\left\|\sum_{n=1}^Nx^nf_n(w)+f_0(w)+\sum_{m=1}^Mf_{-m}(w)\left(x^*\right)^{m}\right\|.
\]
 \end{Lemma}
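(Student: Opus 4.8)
The plan is to extract the coefficient $x^k f_k(w)$ by averaging conjugates against the automorphisms $\rho_t$ with a suitable character, exactly as $\Phi$ extracts the $0$-th coefficient. Recall from the discussion preceding Proposition~\ref{P:U Phi} that $\rho_t(x)=e^{2\pi it}x$ and $\rho_t(w)=w$, that $t\mapsto\rho_t(a)$ is norm continuous, and that $\|\rho_t(a)\|=\|a\|$ for all $t$. Set $a=\sum_{n=1}^N x^nf_n(w)+f_0(w)+\sum_{m=1}^M f_{-m}(w)(x^*)^m$. Since $\rho_t(x^nf_n(w))=e^{2\pi int}x^nf_n(w)$ and $\rho_t(f_{-m}(w)(x^*)^m)=e^{-2\pi imt}f_{-m}(w)(x^*)^m$, the integral
\[
\int_0^1 e^{-2\pi ikt}\rho_t(a)\,dt
\]
makes sense as a norm-convergent Riemann integral, and term-by-term the orthogonality relations $\int_0^1 e^{2\pi i(n-k)t}dt=\delta_{n,k}$ kill every summand except $x^kf_k(w)$ (here $k\ge 1$ is among $1,\dots,N$; the second inequality is handled symmetrically with $e^{+2\pi ikt}$, or by applying the first inequality to $a^*$). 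Hence $\int_0^1 e^{-2\pi ikt}\rho_t(a)\,dt=x^kf_k(w)$.

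The norm estimate is then the same convexity argument used in the proof of Proposition~\ref{P:U Phi}: for any finite set of points $t_1,\dots,t_\ell\in[0,1]$ and scalars $\beta_j\ge 0$ with $\sum_j\beta_j=1$, the element $\sum_j \beta_j e^{-2\pi ikt_j}\rho_{t_j}(a)$ has norm at most $\sum_j\beta_j\|\rho_{t_j}(a)\| = \|a\|$, because $|e^{-2\pi ikt_j}|=1$. Passing to the limit of Riemann sums and using continuity of the norm gives
\[
\|x^kf_k(w)\| = \left\|\int_0^1 e^{-2\pi ikt}\rho_t(a)\,dt\right\|\le \|a\|,
\]
which is the first assertion. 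Applying this with $a$ replaced by $a^*=\sum_{n=1}^N f_n(w)^*(x^*)^n+f_0(w)^*+\sum_{m=1}^M x^m f_{-m}(w)^*$ and taking adjoints yields $\|f_{-k}(w)(x^*)^k\|\le\|a^*\|=\|a\|$, the second assertion.

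There is essentially no obstacle here; the only point requiring a line of care is the legitimacy of moving the scalar $e^{-2\pi ikt}$ inside the integral and interchanging the integral with the bounded functionals/computations needed to identify the result as $x^kf_k(w)$, but this is immediate since the integrand is norm continuous and the relations (\ref{U4}), (\ref{U5}) are used only to put each monomial in the normal form $x^n g(w)$ or $g(w)(x^*)^m$ on which $\rho_t$ acts by a scalar. I would present the identification of the integral and the convexity bound as the two displayed steps above, noting that the second inequality follows from the first by the $*$-operation.
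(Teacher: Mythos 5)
Your proof is correct, but it takes a different (more intrinsic) route than the paper. The paper writes $x^k=u^k\gamma_k(w)$ for a suitable $\gamma_k\in C(\mathbb T)_+$, multiplies $a$ on the left by the unitary $u^{-k}$ of the ambient algebra $A_\theta$, and observes that $\Phi(u^{-k}a)=u^{-k}x^kf_k(w)=\gamma_k(w)f_k(w)$, so that $\|x^kf_k(w)\|=\|\Phi(u^{-k}a)\|\le\|u^{-k}a\|=\|a\|$; this is a one-line argument, but it relies on viewing $A_{\theta,\gamma}$ inside $A_\theta$ (Theorem~\ref{T:universal property}) and on the fact that the averaging formula for $\Phi$ still extracts the zeroth Fourier coefficient there. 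You instead stay entirely inside $A_{\theta,\gamma}$ and extract the $k$-th spectral component directly, replacing $\Phi=\int_0^1\rho_t(\cdot)\,dt$ by the twisted average $\int_0^1 e^{-2\pi ikt}\rho_t(\cdot)\,dt$; the contractivity then follows from the same convexity/Riemann-sum argument used for $\Phi$ in Proposition~\ref{P:U Phi}, and the second inequality from applying the first to $a^*$. Both arguments are Fourier-coefficient extractions for the gauge circle action; yours has the advantage of not invoking the embedding into $A_\theta$, while the paper's is shorter once that embedding is in hand. No gaps.
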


\begin{proof}
There is a function $\gamma_k\in C({\mathbb T})_+$ such that
$x^k=u^k \gamma_k(w),$ $k=1,2,...,$
Therefore $u^{-k}x^kf_k(w)=\gamma_k(w)f_k(w).$

Put $a=\sum_{i=0}^N x^if_i(w)+\sum_{j=1}^Mf_{-j}(w)(x^*)^k.$
Let $\Phi$ be the conditional expectation. Then
\beq\nonumber
\|x^kf_k(w)\|=\|u^{-k}x^kf_k(w)\|=\|\Phi(u^{-k}a)\|\le \|u^{-k}a\|=\|a\|.
\eneq
So the first part of the lemma follows. The second part follows similarly.
\end{proof}

\begin{Lemma}\label{T:simple}
Let $Y_1$ be the set of zero points of functions
$\gamma(e^{2\pi in\theta}z)$ for $n\geq 0$, and let $Y_2$ be
the set of zero points of functions $\gamma(e^{-2\pi in\theta}z)$
for $n\geq 1$. Then $A_{\theta,\gamma}$ is a simple
algebra if and only if $Y_1\cap Y_2=\emptyset$.
\end{Lemma}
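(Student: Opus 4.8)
The plan is to prove the two implications separately, throughout identifying $A_{\theta,\gamma}$ with the $C^*$-subalgebra $C^*(C(\mathbb{T}),uC_0(\mathbb{T}\setminus Y))$ of $A_\theta=C(\mathbb{T})\rtimes_\phi\mathbb{Z}$ supplied by Theorem~\ref{T:universal property}. First I would note that $Y_1\cap Y_2=\emptyset$ is merely a restatement of ``$\phi^n(Y)\cap Y=\emptyset$ for all $n\neq 0$'': indeed $Y_1=\bigcup_{n\ge 0}\phi^{-n}(Y)$ and $Y_2=\bigcup_{n\ge 1}\phi^{n}(Y)$, so $\phi^{-a}(Y)\cap\phi^{b}(Y)=\phi^{b}(\phi^{-a-b}(Y)\cap Y)$ and the index $a+b$ runs over all positive integers. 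By Proposition~\ref{NP} this is in turn equivalent to the condition that every orbit $Orb(\xi)$ meets $Y$ in at most one point, so it suffices to show $A_{\theta,\gamma}$ is simple iff this orbit condition holds.

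For the forward implication I would argue the contrapositive: if $Y_1\cap Y_2\neq\emptyset$, then unwinding the definitions produces $\eta\in Y$ and an integer $k\ge 1$ with $\phi^k(\eta)\in Y$. Consider the orbit representation $\pi$ of $A_\theta$ on $\ell^2(\mathbb{Z})$ given by $\pi(f)\delta_n=f(\phi^n(\eta))\delta_n$ for $f\in C(\mathbb{T})$ and $\pi(u)\delta_n=\delta_{n+1}$. The key point is that $V=\ell^2(\{1,\dots,k\})$ is reducing for $\pi(A_{\theta,\gamma})$: it is invariant under the diagonal operators $\pi(C(\mathbb{T}))$; for $f\in C_0(\mathbb{T}\setminus Y)$ one has $\pi(uf)\delta_j=f(\phi^j(\eta))\delta_{j+1}$, which lies in $V$ for $1\le j\le k$ because $f(\phi^k(\eta))=0$; and $\pi(f^*u^*)\delta_j=\overline{f(\phi^{j-1}(\eta))}\delta_{j-1}$, which lies in $V$ for $1\le j\le k$ because $f(\eta)=0$. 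Compressing to $V$ then gives a nonzero $\ast$-homomorphism $\sigma\colon A_{\theta,\gamma}\to M_k(\mathbb{C})$. If $A_{\theta,\gamma}$ were simple, $\sigma$ would be faithful, exhibiting $A_{\theta,\gamma}$ inside the finite-dimensional algebra $M_k(\mathbb{C})$; but $A_{\theta,\gamma}\supseteq C^*(w)\cong C(\mathbb{T})$ is infinite-dimensional, a contradiction.

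For the reverse implication I would take a nonzero closed ideal $I$ and first show $I\cap C(\mathbb{T})\neq 0$. Since $w$ is a unitary of $A_{\theta,\gamma}$, the averaging formula $\Phi(a)=\lim_n\frac{1}{2n+1}\sum_{j=-n}^{n}w^jaw^{-j}$ from Proposition~\ref{P:U Phi} gives $\Phi(I)\subseteq I$; applying this to $a^*a$ for $0\neq a\in I$ and using faithfulness of $\Phi$ yields a nonzero element of $I\cap C^*(w)$. Identifying $C^*(w)=C(\mathbb{T})$, the set $I\cap C(\mathbb{T})$ is a nonzero ideal of $C(\mathbb{T})$, so $I\cap C(\mathbb{T})=C_0(U)$ for some nonempty open $U\subseteq\mathbb{T}$. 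Next I would conjugate $C_0(U)$ by the elements $uf$ with $f\in C_0(\mathbb{T}\setminus Y)$ and $u^*h$ with $h\in C_0(\mathbb{T}\setminus\phi(Y))$, all lying in $A_{\theta,\gamma}$: direct computation gives $(uf)g(uf)^*=(|f|^2g)\circ\phi^{-1}$ and $(u^*h)g(u^*h)^*=(|h|^2g)\circ\phi$ for $g\in C_0(U)$. Letting $f,h,g$ vary, and using that the span of $C_0(\mathbb{T}\setminus Y)\,C_0(\mathbb{T}\setminus Y)\,C_0(U)$ is dense in $C_0(U\setminus Y)$ (and similarly with $\phi(Y)$ in place of $Y$), membership of these functions in the closed set $C_0(U)$ forces $\phi(U\setminus Y)\subseteq U$ and $\phi^{-1}(U)\setminus Y\subseteq U$.

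Finally I would deduce $U=\mathbb{T}$, whence $1\in I$ and $I=A_{\theta,\gamma}$. Putting $F=\mathbb{T}\setminus U$, the two inclusions become $\phi^{-1}(F)\subseteq F\cup Y$ and $\phi(F)\subseteq F\cup\phi(Y)$, which iterate to $\phi^{-n}(F)\subseteq F\cup\bigcup_{j=0}^{n-1}\phi^{-j}(Y)$ and $\phi^{n}(F)\subseteq F\cup\bigcup_{j=1}^{n}\phi^{j}(Y)$ for every $n\ge 1$. Suppose $F\neq\emptyset$ and fix $p\in F$. By the orbit condition $Orb(p)$ meets $Y$ in at most one point, say $\phi^{n_0}(p)$ when it exists; I set $B=\{\phi^{-n}(p):n\ge 0\}$ if $n_0\ge 0$ or if $Orb(p)\cap Y=\emptyset$, and $B=\{\phi^{n}(p):n\ge 0\}$ if $n_0<0$. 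In the first case, for $n\ge 1$ a relation $\phi^{-n}(p)\in\phi^{-j}(Y)$ with $0\le j\le n-1$ would force $\phi^{-m}(p)\in Y$ for some $m\ge 1$, impossible since the only $Y$-point $\phi^{n_0}(p)$ has $n_0\ge 0$; the second case is symmetric. Hence the iterated inclusions give $B\subseteq F$. But a half-orbit of an irrational rotation is dense in $\mathbb{T}$, so $F\supseteq\overline{B}=\mathbb{T}$, contradicting $U\neq\emptyset$; therefore $F=\emptyset$. The main obstacle is precisely this last step — tracking how the zero set $Y$ spoils the exact rotation-invariance of $U$, and choosing the half-orbit so that the single permitted passage through $Y$ causes no damage.
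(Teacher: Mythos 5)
Your proof is correct, and the two directions compare differently with the paper's. For ``$Y_1\cap Y_2=\emptyset\Rightarrow$ simple'' you follow essentially the paper's route: average over $\{w^j\cdot w^{-j}\}$ via Proposition~\ref{P:U Phi} to get a nonzero intersection of the ideal with $C^*(w)$, then propagate along the orbit; the paper phrases this as forcing a function vanishing at a point $z_0$ to vanish on a dense half-orbit (splitting into the cases $z_0\in Y_2$ and $z_0\notin Y_2$), while you dualize to the cozero set $U$ and show its complement would have to contain a dense half-orbit --- the same mechanism, with your version keeping more careful track of how $Y$ interrupts the equivariance of $U$. For the converse the approaches genuinely diverge: the paper exhibits a proper ideal $A_{\theta,\gamma}\,J\,A_{\theta,\gamma}$ generated by functions vanishing on a finite orbit segment and proves properness through the norm estimates of Lemma~\ref{L:less than 1 general case}, whereas you compress the orbit representation on $\ell^2({\mathbb Z})$ based at $\eta\in Y$ to the subspace $\ell^2(\{1,\dots,k\})$, which is reducing precisely because $\gamma$ vanishes at both $\eta$ and $\phi^k(\eta)$, and conclude from the resulting unital $\ast$-homomorphism onto a subalgebra of $M_k({\mathbb C})$ that simplicity would force an embedding of the infinite-dimensional $A_{\theta,\gamma}$ into a matrix algebra. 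Your argument is shorter, avoids the delicate estimate of Lemma~\ref{L:less than 1 general case}, and gives the stronger conclusion that $A_{\theta,\gamma}$ admits a nonzero finite-dimensional quotient whenever the orbit condition fails; the paper's construction, on the other hand, displays the obstructing ideal explicitly inside the algebra.
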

\begin{proof}
Suppose $Y_1\cap Y_2=\emptyset$ and $J$ is a non-zero
ideal of $A_{\theta,\gamma}$. Then there is a positive
nonzero element $x$ in $J$. Since $w^jx(w^*)^j\in J$, the limit
formula for $\Phi(x)$ in Proposition~\ref{P:U Phi} shows that $\Phi(x)\in J\cap C^*(w)$. Since
$\Phi$ is faithful, $\Phi(x)>0$. So $J\cap C^*(w)$ is a nontrivial
ideal in $C^*(w)$, which is contained in a maximal nontrivial
ideal
\[
I=\{f(w)|f(z)\in C(\mathbb{T})\,\text{and}\,f(z_0)=0\,\text{for
some}\,z_0\in \mathbb{T}\}
\]
of $C^*(w)$.

Let $f(z)\in C(\mathbb{T})$ such that $f(w)\in J\cap C^*(w)\subset
I$. Then $f(z_0)=0$. By (\ref{U5-f(z)}) and (\ref{E:x*rxr}), we have
\begin{equation}\label{E:x*gx}
x^*f(w)x=f(e^{2\pi i\theta}w)\gamma(w)\in J\cap C^*(w)\subset I.
\end{equation}
By (\ref{U4-f(z)}) and (\ref{E:xrx*r}), we have
\begin{equation}\label{E:xgx*}
xf(w)x^*=f(e^{-2\pi i\theta}w)\gamma(-e^{2\pi i\theta}w)\in J\cap
C^*(w)\subset I.
\end{equation}

Case 1. Suppose $z_0\in Y_2$. Then the assumption of the
theorem implies that $z_0\notin Y_1$. So (\ref{E:x*gx})
implies that $f(e^{2\pi i\theta}z_0)=0$. Repeat using
(\ref{E:x*gx}), we have for all $n\in \mathbb{N}$,
\[
(x^*)^nf(w)x^n=f(e^{2\pi in\theta}w)\gamma(e^{2\pi
i(n-1)\theta}w)\gamma(e^{2\pi i(n-2)\theta}w)\cdots \gamma(w)\in
J\cap C^*(w)\subset I.
\]
Thus $f(e^{2\pi in\theta}z_0)=0$ for all $n\in \mathbb{N}$. Since
$\{e^{2\pi in\theta}z_0:\, n\in\mathbb{N}\}$ is dense in
$\mathbb{T}$, $f(z)=0$ for all $z\in \mathbb{T}$. This implies
that $J\cap C^*(w)$ is trivial and we obtain a contradiction.

Case 2. Suppose $z_0\notin Y_2$.  Then (\ref{E:xgx*})
implies that $f(e^{-2\pi i\theta}z_0)=0$. Repeat using
(\ref{E:xgx*}), we have for all $n\in \mathbb{N}$,
\[
x^nf(w)(x^*)^n=f(e^{-2\pi in\theta}w)\gamma(e^{-2\pi
in\theta}w)\gamma(e^{-2\pi i(n-1)\theta}w)\cdots \gamma(e^{-2\pi
i\theta}w)\in J\cap C^*(w)\subset I.
\]
Thus $f(e^{-2\pi in\theta}z_0)=0$ for all $n\in \mathbb{N}$. Since
$\{e^{-2\pi in\theta}z_0:\, n\in\mathbb{N}\}$ is dense in
$\mathbb{T}$, $f(z)=0$ for all $z\in \mathbb{T}$. This implies
that $J\cap C^*(w)$ is trivial and we obtain a contradiction.
\vskip 0.5cm

Conversely, suppose $Y_1\cap Y_2\neq\emptyset$. We may
assume that $\lambda\in \mathbb{T}$ is a zero point of
$\gamma(e^{2\pi in\theta}z)$ and $\gamma(e^{-2\pi im\theta}z)$.
Consider the subset
\[
J=\{f(w)|f(z)\in C(\mathbb{T})\,\text{and}\, f(e^{2\pi i
n\theta}\lambda)=\cdots=f(\lambda)=\cdots=f(e^{-2\pi i
m\theta}\lambda)=0\}
\]
of $C^*(w)$. Claim that
$I=A_{\theta,\gamma}JA_{\theta,\gamma}$ is a
two-sided ideal of $A_{\theta,\gamma}$. Otherwise, there
exists $f_i(w)\in J$,
\[
a_i=\sum_{n=1}^K (x^*)^ng^i_{-n}(w)+g^i(w)+\sum_{n=1}^Kg^i_{n}(w)x^n,
\]
and
\[
b_i=\sum_{n=1}^K (x^*)^nh^i_{-n}(w)+h^i(w)+\sum_{n=1}^Kh^i_{n}(w)x^n,
\]
for sufficiently large $K\in \mathbb{N}$ such that
\[
\left\|\sum_{i=1}^Na_if_i(w)b_i-1\right\|<1,
\]
where $g^i_n,g^i,h^i_n,h^i\in C(\mathbb{T})$. By Lemma~\ref{L:less than 1 general case}
and simple computations, we have
\[
\|\sum_{i=1}^N g^i_{-K}(e^{2\pi i K\theta}w)f_i(e^{2\pi i K\theta}w)h^i_{K}(e^{2\pi i
K\theta}w)\gamma(e^{2\pi i (K-1)\theta}w)\cdots
\gamma(w)+\]\[g^i_{-(K-1)}(e^{2\pi i (K-1)\theta}w)f_i(e^{2\pi i
(K-1)\theta}w)h^i_{K-1}(e^{2\pi i (K-1)\theta}w)\gamma(e^{2\pi i
(K-2)\theta}w)\cdots \gamma(w) +\cdots+
\]
\[
g^i_{-1}(e^{2\pi i\theta}w)f_i(e^{2\pi i\theta}w)h^i_{1}(e^{2\pi
i\theta}w)\gamma(w)+g^i(w)f_i(w)h^i(w)+g^i_1(w)f_i(e^{-2\pi
i\theta}w)h^i_{-1}(w)\gamma(e^{-2\pi i\theta}w)+\cdots+
\]
\[
g^i_{K-1}(w)f_i(e^{-2\pi i(K-1)\theta}w)h^i_{-(K-1)}(w)\gamma(e^{-2\pi
(K-1)i\theta}w)\cdots \gamma(e^{-2\pi i\theta}w)+
\]
\[
g^i_{K}(w)f_i(e^{-2\pi iK\theta}w)h^i_{-K}(w)\gamma(e^{-2\pi
Ki\theta}w)\cdots \gamma(e^{-2\pi i\theta}w)-1\|<1.
\]
Let
\[
\bar{f}(z)=\sum_{i=1}^Ng^i_{-K}(e^{2\pi i K\theta}z)f_i(e^{2\pi i
K\theta}z)h^i_{K}(e^{2\pi i K\theta}z)\gamma(e^{2\pi i
(K-1)\theta}z)\cdots \gamma(z)+\cdots+
\]
\[
g^i_{-1}(e^{2\pi i\theta}z)f_i(e^{2\pi i\theta}z)h^i_{1}(e^{2\pi
i\theta}z)\gamma(z)+g^i(z)f_i(z)h^i(z)+g^i_1(z)f_i(e^{-2\pi
i\theta}z)h^i_{-1}(z)\gamma(e^{-2\pi i\theta}z)+\cdots+
\]
\[
g^i_{K}(z)f_i(e^{-2\pi iK\theta}z)h^i_{-K}(z)\gamma(e^{-2\pi
Ki\theta}z)\cdots \gamma(e^{-2\pi i\theta}z).
\]

Since $f_i(z)\in J$, $f_i(e^{2\pi i
n\theta}\lambda)=\cdots=f_i(\lambda)=\cdots=f_i(e^{-2\pi i
m\theta}\lambda)=0$. Note that $\gamma(e^{2\pi
in\theta}\lambda)=\gamma(e^{-2\pi im\theta}\lambda)=0$. So
$\bar{f}(\lambda)=0$. Hence $\|\bar{f}(z)-1\|\geq 1$ and
$\|\bar{f}(w)-1\|\geq 1$. By Lemma~\ref{L:less than 1 general case},
\[
\left\|\sum_{i=1}^Na_if_i(w)b_i-1\right\|\geq \|\bar{f}(w)-1\|\geq 1.
\]
 This is a contradiction.
\end{proof}

\begin{Theorem}\label{SimpleAtheta}
{Let $\theta$ be an irrational number, $\gamma\in C({\mathbb T})$ be a non-negative function, let $Y$ be the set of zeros of $\gamma$ and let $\phi: {\mathbb T}\to {\mathbb T}$ be the homeomorphism by rotation of $\theta.$
Then the following are equivalent:

{(1) $A_{\theta, \gamma}$ is simple;}

{(2) $A_{\theta, \gamma}$ has a unique tracial state;}

{(3) $\phi^n(Y)\cap Y=\emptyset$ for all integers $n\not=0.$ }

{(4) For each $\xi\in {\mathbb T},$ $Orb(\xi)\cap Y$ contains at most one point. }}
\end{Theorem}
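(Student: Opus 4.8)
The plan is to assemble the theorem from the pieces already established in the excerpt. The equivalence of (3) and (4) is exactly Proposition~\ref{NP}, so nothing new is needed there. The equivalence of (1) and (3) is Lemma~\ref{T:simple} once we observe that, in the notation of that lemma, $Y_1$ is the set of zeros of $\gamma(\phi^n(z))$ for $n\ge 0$, which is $\cup_{n\ge 0}\phi^{-n}(Y)$, and $Y_2$ is the set of zeros of $\gamma(\phi^{-n}(z))$ for $n\ge 1$, i.e. $\cup_{n\ge 1}\phi^{n}(Y)$; hence $Y_1\cap Y_2=\emptyset$ is equivalent to condition (3) of Proposition~\ref{NP} (after relabeling $\phi\leftrightarrow\phi^{-1}$, which is harmless). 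Thus (1)$\Leftrightarrow$(3)$\Leftrightarrow$(4) is immediate, and it remains only to bring (2) into the circle.

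First I would prove (2)$\Rightarrow$(3). This is the contrapositive: if (3) fails, there are integers $n\ne 0$ with $\phi^n(Y)\cap Y\ne\emptyset$, so there exist $z_1\in Y$ and an integer $k>0$ with $\phi^k(z_1)=z_2\in Y$. Then the explicit measure
\[
\mu=\frac{\delta_{\phi(z_1)}+\delta_{\phi^2(z_1)}+\cdots+\delta_{z_2}}{k}
\]
satisfies the invariance condition (\ref{E:trace}) on $\widetilde{C_0({\mathbb T}\setminus Y)}$, exactly as in the proof of Theorem~\ref{P:U unique trace}, ``$1\Rightarrow 2$'': indeed every $f\in \widetilde{C_0({\mathbb T}\setminus Y)}$ has $f(z_1)=f(z_2)$, and the telescoping computation $\int f(\phi^{-1}(z))\,d\mu=\int f(z)\,d\mu$ goes through verbatim. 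By Proposition~\ref{P:U trace} this measure induces a tracial state distinct from the Haar trace $\tau$ of Lemma~\ref{L: U trace}, so $A_{\theta,\gamma}$ does not have a unique tracial state. Hence (2)$\Rightarrow$(3).

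Next, (3)$\Rightarrow$(2) is precisely the content of the ``$2\Rightarrow 1$'' direction of Theorem~\ref{P:U unique trace}, whose hypothesis is exactly $\phi^n(Y)\cap Y=\emptyset$ for all $n\ne 0$, and whose conclusion is that $A_{\theta,\gamma}$ has a unique tracial state. So no new argument is required; one simply cites that theorem. Putting everything together, (1)$\Leftrightarrow$(3)$\Leftrightarrow$(4) by Lemma~\ref{T:simple} and Proposition~\ref{NP}, while (2)$\Leftrightarrow$(3) by Theorem~\ref{P:U unique trace} (together with the construction of the extra trace above), so all four conditions are equivalent.

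I do not expect any genuine obstacle here: the theorem is essentially a bookkeeping statement collecting Lemma~\ref{T:simple}, Proposition~\ref{NP}, and Theorem~\ref{P:U unique trace}. The only point requiring a moment's care is matching the ad hoc sets $Y_1,Y_2$ of Lemma~\ref{T:simple} with the orbit condition of Proposition~\ref{NP}; once that identification is spelled out, the rest is citation. One could alternatively give a direct proof of (1)$\Rightarrow$(2) — a simple unital $C^*$-algebra with a tracial state automatically has a unique one only if one already knows trace uniqueness, so that shortcut is not available in general, and routing through condition (3) as above is the cleanest path.
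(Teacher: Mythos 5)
Your proposal is correct and follows essentially the same route as the paper: the paper likewise obtains (2)$\Leftrightarrow$(3)$\Leftrightarrow$(4) by citing Theorem~\ref{P:U unique trace} (whose two directions are exactly the measure construction and the uniqueness argument you describe) and (1)$\Leftrightarrow$(3) by citing Lemma~\ref{T:simple} together with Proposition~\ref{NP} to match the sets $Y_1,Y_2$ with the orbit condition. Your explicit identification of $Y_1=\cup_{n\ge 0}\phi^{-n}(Y)$ and $Y_2=\cup_{n\ge 1}\phi^{n}(Y)$ (up to the harmless swap $\phi\leftrightarrow\phi^{-1}$) is exactly the bookkeeping the paper performs.
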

\begin{proof}
The equivalence of (2), (3) and (4) follows from Theorem~\ref{P:U unique trace}.  Let $Y_1$ be the set of zero points of functions
$\gamma(e^{2\pi in\theta}z)$ for $n\geq 0$, and let $Y_2$ be
the set of zero points of functions $\gamma(e^{-2\pi in\theta}z)$
for $n\geq 1$. By Proposition~\ref{NP}, condition (3) is equivalent to $Y_1\cap Y_2=\emptyset$. By Lemma~\ref{T:simple}, (1) is equivalent to (3).
\end{proof}

\begin{Corollary}\label{C:simplicity}
Suppose $\gamma(z)\in C(\mathbb{T})$ is a positive function with a
single zero point. Then $A_{\theta,\gamma}$ is a simple $C^*$-algebra with a unique tracial state.
\end{Corollary}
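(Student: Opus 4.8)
The plan is to deduce the statement immediately from Theorem~\ref{SimpleAtheta}. Write $Y$ for the zero set of $\gamma$; by hypothesis $Y=\{z_0\}$ is a singleton. The only thing I need to verify is condition (3) of that theorem, namely $\phi^n(Y)\cap Y=\emptyset$ for every integer $n\not=0$, and then simplicity (condition (1)) and uniqueness of the tracial state (condition (2)) follow automatically.

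First I would observe that, since $Y$ has exactly one element, $\phi^n(Y)\cap Y\not=\emptyset$ would force $\phi^n(z_0)=z_0$, i.e. $e^{2\pi i n\theta}z_0=z_0$, i.e. $e^{2\pi i n\theta}=1$, i.e. $n\theta\in\mathbb{Z}$. Because $\theta$ is irrational this is impossible for $n\not=0$. Hence condition (3) holds, and Theorem~\ref{SimpleAtheta} gives at once that $A_{\theta,\gamma}$ is simple and has a unique tracial state.

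If one prefers not to route through Theorem~\ref{SimpleAtheta}, the same computation can be fed directly into Lemma~\ref{T:simple}: here $Y_1=\{e^{-2\pi i n\theta}z_0:n\ge 0\}$ and $Y_2=\{e^{2\pi i n\theta}z_0:n\ge 1\}$, and an equality $e^{-2\pi i n\theta}z_0=e^{2\pi i m\theta}z_0$ with $n\ge 0$ and $m\ge 1$ would give $(n+m)\theta\in\mathbb{Z}$ with $n+m\ge 1$, again contradicting irrationality of $\theta$; thus $Y_1\cap Y_2=\emptyset$ and the lemma applies, while Theorem~\ref{P:U unique trace} handles uniqueness of the trace. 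There is essentially no obstacle here: all the work has already been done in Theorem~\ref{SimpleAtheta} (equivalently in Lemma~\ref{T:simple} and Theorem~\ref{P:U unique trace}), and the corollary is just the remark that a one-point set trivially satisfies the orbit condition whenever $\theta$ is irrational.
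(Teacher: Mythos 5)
Your proposal is correct and matches the paper's intent exactly: the corollary is stated without proof as an immediate consequence of Theorem~\ref{SimpleAtheta}, and your verification that a singleton zero set satisfies condition (3) (via irrationality of $\theta$) — or, even more trivially, condition (4) — is precisely the one-line check required.
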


\begin{Corollary}\label{C:simplicity2}
Suppose $\gamma(z)\in C(\mathbb{T})$ is a positive function with two
 zero points $z_1,z_2$.  Then $A_{\theta,\gamma}$ is a simple $C^*$-algebra with a unique tracial state if and only if there does not exist
 integer $k$ such that $z_2=e^{2\pi ik\theta}z_1$.
\end{Corollary}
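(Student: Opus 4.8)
The statement is an immediate specialization of Theorem~\ref{SimpleAtheta}, so the plan is simply to verify that, for the zero set $Y=\{z_1,z_2\}$ with $z_1\neq z_2$, condition~(3) (equivalently~(4)) of that theorem is equivalent to the non-existence of an integer $k$ with $z_2=e^{2\pi ik\theta}z_1$. First I would record the easy direction: if $z_2=e^{2\pi ik\theta}z_1=\phi^k(z_1)$ for some integer $k$, then $k\neq 0$ (since $z_1\neq z_2$ and, as $\theta$ is irrational, $e^{2\pi ik\theta}=1$ forces $k=0$), so $\phi^k(z_1)=z_2\in Y$ while $z_1\in Y$, giving $\phi^k(Y)\cap Y\neq\emptyset$; by Theorem~\ref{SimpleAtheta} the algebra is then not simple and does not have a unique tracial state. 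In fact the point-mass measures of $\mu_{j,k}$-type as in Lemma~\ref{Lnumbertrace} exhibit a second tracial state explicitly.

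Conversely, suppose no such $k$ exists, and assume for contradiction that $\phi^n(Y)\cap Y\neq\emptyset$ for some integer $n\neq 0$. Then one of the four equalities $\phi^n(z_1)=z_1$, $\phi^n(z_2)=z_2$, $\phi^n(z_1)=z_2$, $\phi^n(z_2)=z_1$ must hold. The first two are impossible because $\phi^n$ is a nontrivial rotation of $\mathbb{T}$ (irrationality of $\theta$, $n\neq 0$) and hence fixed-point free. The third gives $z_2=e^{2\pi in\theta}z_1$ and the fourth gives $z_2=e^{-2\pi in\theta}z_1$, each contradicting the hypothesis. Hence $\phi^n(Y)\cap Y=\emptyset$ for all $n\neq 0$, and Theorem~\ref{SimpleAtheta} yields that $A_{\theta,\gamma}$ is simple with a unique tracial state.

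There is essentially no obstacle here: the only content is the elementary orbit bookkeeping above, while the substantive work — the equivalence of simplicity, uniqueness of trace, and the orbit condition on $Y$ — is already carried out in Theorem~\ref{SimpleAtheta} (through Lemma~\ref{T:simple} and Theorem~\ref{P:U unique trace}). The same reasoning, specialized to $Y=\{z_1\}$, reproves Corollary~\ref{C:simplicity}, where the orbit condition is automatic.
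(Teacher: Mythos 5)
Your proposal is correct and follows exactly the route the paper intends: the corollary is stated as an immediate specialization of Theorem~\ref{SimpleAtheta}, and your orbit bookkeeping (using that $\phi^n$ is fixed-point free for $n\neq 0$ when $\theta$ is irrational) is precisely the elementary verification needed. Nothing further is required.
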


\begin{Corollary}\label{C:non-simplicity}
If $m(\{z|\gamma(z)=0\})>0$, then $A_{\theta,\gamma}$ is
not simple.
\end{Corollary}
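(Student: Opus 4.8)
The plan is to reduce the statement to Theorem~\ref{SimpleAtheta} together with a standard Poincar\'e-recurrence fact about the rotation $\phi$. First I would set $Y=\{z\in\mathbb{T}:\gamma(z)=0\}$; this is the zero set of a continuous function, hence closed and in particular Borel, and the hypothesis says exactly that $c:=m(Y)>0$. Since $m=dz/2\pi$ is the Haar measure on $\mathbb{T}$ and $\phi(z)=e^{2\pi i\theta}z$ is a rotation, $\phi$ preserves $m$ by translation-invariance; consequently $m(\phi^n(Y))=c>0$ for every $n\in\mathbb{Z}$, and all the sets $\phi^n(Y)$ are Borel.

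The key step is to observe that the sets $\phi^n(Y)$, $n\ge 0$, cannot be pairwise disjoint. Indeed, if they were, then $1=m(\mathbb{T})\ge\sum_{n\ge 0}m(\phi^n(Y))=\sum_{n\ge 0}c=\infty$, a contradiction. Hence there are integers $0\le i<j$ with $m\bigl(\phi^i(Y)\cap\phi^j(Y)\bigr)>0$, and applying the measure-preserving homeomorphism $\phi^{-i}$ gives $m\bigl(Y\cap\phi^{\,j-i}(Y)\bigr)>0$. Setting $n=j-i\ge 1$, this forces in particular $\phi^n(Y)\cap Y\neq\emptyset$, so condition (3) of Theorem~\ref{SimpleAtheta} fails. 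By the equivalence (1)$\Leftrightarrow$(3) of that theorem, $A_{\theta,\gamma}$ is not simple.

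I do not expect any real obstacle here: once Theorem~\ref{SimpleAtheta} is available, the whole argument is a two-line measure count. The only points requiring a word of care are that $Y$ is Borel (clear, as $\gamma$ is continuous) and that $\phi$ is measure preserving (immediate from the translation-invariance of Haar measure). As an alternative one could run the same counting argument directly at the level of the sets $Y_1,Y_2$ and the ideal produced in Lemma~\ref{T:simple}, but routing through Theorem~\ref{SimpleAtheta} is the most economical presentation.
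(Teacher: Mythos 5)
Your proof is correct and is essentially the contrapositive phrasing of the paper's own argument: both reduce to Theorem~\ref{SimpleAtheta} and the observation that infinitely many disjoint rotates of a set of fixed positive Haar measure cannot fit in $\mathbb{T}$. (The paper argues: simple $\Rightarrow$ the $\phi^n(Y)$ are pairwise disjoint $\Rightarrow$ $m(Y)=0$; you run the same count in the other direction.)
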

\begin{proof}
Let $Y=\{z|\gamma(z)=0\}$.  If $A_{\theta, \gamma}$ is  simple,
then by Theorem \ref{SimpleAtheta}, $\phi^n(Y)\cap Y=\emptyset$
for all integers $n\not=0.$ Then $\{\phi^n(Y): n\in {\mathbb Z}\}$ is a sequence of mutually disjoint subsets. Therefore $m(Y)=0.$

\end{proof}

\section{Rieffel's projections in generalized universal algebras}

\begin{Lemma}\label{L:lambda z}
If $\lambda\in \mathbb{T}$, then
$A_{\theta,\gamma(z)}\cong
A_{\theta,\gamma(\lambda z)}$.
\end{Lemma}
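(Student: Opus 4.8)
The plan is to invoke the universal property of $A_{\theta,\gamma}$ (as made rigorous by Theorem~\ref{T:universal property}), applied to a rescaling of the unitary generator by the constant $\lambda$. First note that $z\mapsto\gamma(\lambda z)$ is again a positive function in $C(\mathbb{T})$ --- its zero set is $\bar\lambda Y$, where $Y=\{z:\gamma(z)=0\}$ --- so $A_{\theta,\gamma(\lambda z)}$ is a well-defined generalized universal irrational rotation algebra of the same type, and the defining relation (\ref{U4}) is unchanged ($\theta$ plays no role in the rescaling).

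I would first build a surjection $\psi\colon A_{\theta,\gamma(z)}\to A_{\theta,\gamma(\lambda z)}$. Write $A_{\theta,\gamma(\lambda z)}=C^*(x',w')$, so that $(w')^*w'=w'(w')^*=1$, $(x')^*x'=\gamma(\lambda w')$, $x'(x')^*=\gamma(\lambda e^{-2\pi i\theta}w')$ and $x'w'=e^{-2\pi i\theta}w'x'$. Set $\tilde w=\lambda w'$. Then $\tilde w$ is unitary and $C^*(x',\tilde w)=C^*(x',w')=A_{\theta,\gamma(\lambda z)}$, and a one-line check shows that the pair $(x',\tilde w)$ satisfies (\ref{U1})--(\ref{U4}) for the function $\gamma(z)$: namely $(x')^*x'=\gamma(\lambda w')=\gamma(\tilde w)$, $x'(x')^*=\gamma(\lambda e^{-2\pi i\theta}w')=\gamma(e^{-2\pi i\theta}\tilde w)$, and $x'\tilde w=\lambda x'w'=\lambda e^{-2\pi i\theta}w'x'=e^{-2\pi i\theta}\tilde w x'$. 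The universal property of $A_{\theta,\gamma(z)}$ then yields a $*$-homomorphism $\psi$ from $A_{\theta,\gamma(z)}$ onto $A_{\theta,\gamma(\lambda z)}$ with $\psi(x)=x'$ and $\psi(w)=\lambda w'$.

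Running the identical argument with $\lambda$ replaced by $\bar\lambda$ --- that is, inside $A_{\theta,\gamma(z)}=C^*(x,w)$ put $\tilde w'=\bar\lambda w$ and observe $\gamma(\lambda\tilde w')=\gamma(w)$ and $\gamma(\lambda e^{-2\pi i\theta}\tilde w')=\gamma(e^{-2\pi i\theta}w)$, while $x\tilde w'=e^{-2\pi i\theta}\tilde w'x$ --- produces a $*$-homomorphism $\phi$ from $A_{\theta,\gamma(\lambda z)}$ onto $A_{\theta,\gamma(z)}$ with $\phi(x')=x$ and $\phi(w')=\bar\lambda w$. Finally I would check that $\phi\circ\psi$ fixes $x$ and $w$ (since $\phi(\lambda w')=\lambda\bar\lambda w=w$) and that $\psi\circ\phi$ fixes $x'$ and $w'$; as these composites agree with the identity on the generators, they agree with it on the dense $*$-subalgebra they generate, hence everywhere, so $\psi$ is a $*$-isomorphism. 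There is essentially no obstacle here beyond keeping the two sets of defining relations bookkept correctly; alternatively one could argue via the GNS representation with respect to the canonical trace exactly as in the proof of Theorem~\ref{T:universal property}, the relevant trace being visibly invariant under $z\mapsto\lambda z$.
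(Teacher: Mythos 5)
Your argument is correct and is essentially the paper's own proof: apply the universal property in both directions by rescaling the unitary generator, then check the two composites fix the generators. (Your version is in fact slightly more careful, since the inverse map must send $w'\mapsto\bar\lambda w$, as you write, rather than $\lambda w$ as in the paper's text.)
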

\begin{proof}
Let $A_{\theta,\gamma(z)}=C^*(x,w)$ and
$A_{\theta,\gamma(\lambda z)}=C^*(x',w')$.
 Then $x',\lambda w'$ satisfy (\ref{U1})-(\ref{U4}) for $\gamma(z)$. So there is a homomorphism
  $\varphi:\,A_{\theta,\gamma(z)}\rightarrow A_{\theta,\gamma(\lambda z)}$ such that $\varphi(x)=x'$, $\varphi(w)=\lambda w'$. By symmetry, there is a homomorphism $\psi:\,A_{\theta,\gamma(\lambda z)}\rightarrow A_{\theta,\gamma(z)}$ such that $\psi(x')=x$, $\psi(w')=\lambda w$. Hence $\psi\cdot \varphi(x)=x$ and $\psi\cdot\varphi(w)=w$; $\varphi\cdot\psi(x')=x'$ and $\varphi\cdot\psi(w')=w'$. So $\varphi$ is an isomorphism from  $A_{\theta,\gamma(z)}$ onto $ A_{\theta,\gamma(\lambda z)}$.
\end{proof}

\begin{Lemma}\label{L:lambda 1-theta}
$A_{\theta,\gamma}\cong A_{1-\theta,\gamma}$.
\end{Lemma}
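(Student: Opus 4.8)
The plan is to follow the template of the proof of Lemma~\ref{L:lambda z}: exhibit a pair of elements inside $A_{\theta,\gamma}=C^*(x,w)$ that satisfies the defining relations (\ref{U1})--(\ref{U4}) for the parameter $1-\theta$, invoke the universal property to obtain a $\ast$-homomorphism $A_{1-\theta,\gamma}\to A_{\theta,\gamma}$, construct a $\ast$-homomorphism in the opposite direction by the symmetric recipe, and check that the two compositions fix the generators. The natural candidate for the generating pair is $(x^\ast,\,e^{-2\pi i\theta}w)$; the underlying idea is that passing to the adjoint reverses the sense of the rotation, turning a rotation by $\theta$ into a rotation by $-\theta\equiv 1-\theta$.

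Concretely, I would first record that since $e^{-2\pi i(1-\theta)}=e^{2\pi i\theta}$, the algebra $A_{1-\theta,\gamma}=C^*(x',w')$ is universal for the relations $(w')^\ast w'=w'(w')^\ast=1$, $(x')^\ast x'=\gamma(w')$, $x'(x')^\ast=\gamma(e^{2\pi i\theta}w')$, and $x'w'=e^{2\pi i\theta}w'x'$. Then I set $X=x^\ast$ and $W=e^{-2\pi i\theta}w$ in $A_{\theta,\gamma}$ and verify the four relations: $W$ is unitary; by (\ref{U3}), $X^\ast X=xx^\ast=\gamma(e^{-2\pi i\theta}w)=\gamma(W)$; by (\ref{U2}), $XX^\ast=x^\ast x=\gamma(w)=\gamma(e^{2\pi i\theta}W)$; and by (\ref{U5}), $XW=e^{-2\pi i\theta}x^\ast w=e^{-2\pi i\theta}\bigl(e^{2\pi i\theta}wx^\ast\bigr)=wx^\ast=e^{2\pi i\theta}\bigl(e^{-2\pi i\theta}w\bigr)x^\ast=e^{2\pi i\theta}WX$. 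Hence $(X,W)$ satisfies the $1-\theta$ relations, so universality of $A_{1-\theta,\gamma}$ yields a $\ast$-homomorphism $\psi\colon A_{1-\theta,\gamma}\to C^*(X,W)=A_{\theta,\gamma}$ with $\psi(x')=x^\ast$ and $\psi(w')=e^{-2\pi i\theta}w$. Running the identical construction with $\theta$ and $1-\theta$ interchanged (using $(x')^\ast$ and $e^{2\pi i\theta}w'$ inside $A_{1-\theta,\gamma}$ together with the analogue of (\ref{U5}) there) produces $\varphi\colon A_{\theta,\gamma}\to A_{1-\theta,\gamma}$ with $\varphi(x)=(x')^\ast$ and $\varphi(w)=e^{2\pi i\theta}w'$. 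Finally $\psi\circ\varphi$ fixes $x$, since $\psi((x')^\ast)=(x^\ast)^\ast=x$, and fixes $w$, since $\psi(e^{2\pi i\theta}w')=e^{2\pi i\theta}e^{-2\pi i\theta}w=w$; symmetrically $\varphi\circ\psi$ fixes $x'$ and $w'$. Since $x,w$ generate $A_{\theta,\gamma}$ and $x',w'$ generate $A_{1-\theta,\gamma}$, both compositions are the identity, so $\varphi$ is a $\ast$-isomorphism.

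I do not expect a genuine obstacle; this is a routine application of universality, exactly as in Lemma~\ref{L:lambda z}. The only care needed is the bookkeeping of the scalar $e^{\pm 2\pi i\theta}$: one must substitute $e^{-2\pi i(1-\theta)}=e^{2\pi i\theta}$ correctly into relations (\ref{U3}) and (\ref{U4}) for the parameter $1-\theta$, and one must use precisely the derived relation (\ref{U5}), namely $x^\ast w=e^{2\pi i\theta}wx^\ast$, to convert the commutation relation (\ref{U4}) for $(x,w)$ into the one required for $(x^\ast,\,e^{-2\pi i\theta}w)$.
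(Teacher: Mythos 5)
Your proof is correct: the pair $(x^*,\,e^{-2\pi i\theta}w)$ does satisfy (\ref{U1})--(\ref{U4}) for the parameter $1-\theta$ and the \emph{same} $\gamma$ (using $e^{-2\pi i(1-\theta)}=e^{2\pi i\theta}$ and the derived relation (\ref{U5})), it generates all of $A_{\theta,\gamma}$, and the two-sided composition argument then yields the isomorphism. The overall template --- universality plus a pair of homomorphisms whose compositions fix the generators --- is the same as the paper's, but your choice of witnessing pair is genuinely different. The paper sends $(x,w)\mapsto (x',(w')^*)$, i.e.\ it inverts the unitary instead of passing to the adjoint of $x$. That pair does satisfy (\ref{U1}) and (\ref{U4}), but for (\ref{U2}) it would require $(x')^*x'=\gamma((w')^*)$, whereas the defining relation in $A_{1-\theta,\gamma}$ gives $(x')^*x'=\gamma(w')$; similarly for (\ref{U3}). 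These agree only when $\gamma(\bar z)=\gamma(z)$ --- which holds for the application $\gamma(z)=|1+z|^2$, but not for a general positive $\gamma\in C(\mathbb{T})$. Your pair $(x^*,\,e^{-2\pi i\theta}w)$ avoids this issue entirely and establishes the lemma as stated for arbitrary $\gamma$, so your argument is not merely a variant of the paper's but the more robust (and, for general $\gamma$, the correct) implementation of the same idea.
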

\begin{proof}
Let $A_{\theta,\gamma}=C^*(x,w)$ and
$A_{1-\theta,\gamma}=C^*(x',w')$.
 Then $x', (w')^*$ satisfy (\ref{U1})-(\ref{U4}) for $\theta$ and $\gamma$. So there is a homomorphism $\varphi:\,A_{\theta,\gamma}\rightarrow A_{1-\theta,\gamma}$ such that $\varphi(x)=x'$, $\varphi(w)= (w')^*$. By symmetry, there is a homomorphism $\psi:\,A_{1-\theta,\gamma}\rightarrow A_{\theta,\gamma}$ such that $\varphi(x')=x$, $\varphi(w')= w^*$. Hence $\psi\cdot \varphi(x)=x$ and $\psi\cdot\varphi(w)=w$; $\varphi\cdot\psi(x')=x'$ and $\varphi\cdot\psi(w')=w'$. So $\varphi$ is an isomorphism from  $A_{\theta,\gamma}$ onto $ A_{1-\theta,\gamma}$.
\end{proof}

The proof the following theorem is similar to the proof of Theorem 1.1 of~\cite{Ri}. However, some details should be treated carefully.
\begin{Theorem}\label{T:Rieffel projection}
Suppose $\gamma$ is a positive function in $C(\mathbb{T})$ and
there exists $\lambda\in \mathbb{T}$ such that $\gamma(\lambda
e^{2\pi in\theta})\neq 0$ for all nonnegative integers $n$. Then
for every $\alpha$ in $(\mathbb{Z}+\mathbb{Z}\theta)\cap [0,1]$,
there is a projection $p$ in $A_{\theta,\gamma}$ such
that $\tau(p)=\alpha$.
\end{Theorem}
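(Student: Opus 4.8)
The plan is to adapt Rieffel's construction of projections in $A_\theta$ (Theorem 1.1 of \cite{Ri}) to the present setting, taking care that the auxiliary functions we multiply by $x$ and $x^*$ lie in $C_0(\mathbb{T}\setminus Y)$ so that the resulting elements genuinely belong to $A_{\theta,\gamma}\subset A_\theta$. First, using Lemma~\ref{L:lambda z} we may rotate $\gamma$ so that $\lambda=1$, i.e. we may assume $\gamma(e^{2\pi in\theta})\neq 0$ for all $n\ge 0$; and using Lemma~\ref{L:lambda 1-theta} together with the fact that $\alpha\in(\mathbb{Z}+\mathbb{Z}\theta)\cap[0,1]$ iff $1-\alpha$ is, we may reduce to the case where $\alpha=m-n\theta$ for suitable integers with $0<\alpha<1$ (the cases $\alpha=0,1$ being trivial). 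Recall from Lemma~\ref{L:less than 1 general case} that $x^k=u^k\gamma_k(w)$ with $\gamma_k\in C(\mathbb{T})_+$, and that $\gamma_k$ vanishes exactly on $\bigcup_{j=0}^{k-1}\phi^{-j}(Y)$ shifted appropriately; the hypothesis on $\lambda=1$ guarantees these zero sets avoid the relevant points.

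The key step is to write down the Rieffel projection explicitly in the form
\[
p = f_{-1}(w)(x^*)^n + f_0(w) + x^n f_1(w),
\]
where $f_0,f_{-1},f_1\in C(\mathbb{T})$ are chosen (as in \cite{Ri}) so that $p=p^*=p^2$. Self-adjointness forces $f_0$ real and $f_{-1}(w)=\overline{f_1(w)}$ after using the commutation relation \eqref{U5-f(z)} to move $(x^*)^n$ past $f_{-1}(w)$; the idempotent condition $p^2=p$ then unwinds, via \eqref{E:x*rxr}, \eqref{E:xrx*r}, \eqref{U4-f(z)} and \eqref{U5-f(z)}, into a finite system of functional equations on $\mathbb{T}$ relating $f_0(z)$, $f_1(z)$, $f_1(\phi^{\pm n}(z))$ and the products $\gamma_n(z),\gamma_n(\phi^{-n}(z))$. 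One solves this exactly as Rieffel does: pick $f_0$ to be a continuous "bump-plateau" function supported on an arc of length slightly more than $\alpha=m-n\theta$, equal to $1$ on a subarc, and then $|f_1|^2$ is determined on the overlap arcs by $|f_1|^2\cdot(\text{product of }\gamma\text{'s}) = f_0 - f_0^2$ (with a choice of continuous square root), while $f_1$ is extended by $0$ off that overlap. The extra feature compared with \cite{Ri} is the factor of a product of $\gamma$-values in the denominator: here is where we need $\gamma(e^{2\pi ij\theta})\neq 0$, $0\le j\le n-1$, to ensure this denominator is bounded away from $0$ on the support of $f_1$, so that $f_1$ is a genuine continuous function — and moreover $f_1$ visibly vanishes on $Y$ (indeed on a neighborhood of $Y$), so that $x^nf_1(w)\in A_{\theta,\gamma}$ by the description $A_{\theta,\gamma}=C^*(C(\mathbb{T}),uC_0(\mathbb{T}\setminus Y))$.

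Once $p$ is shown to be a projection, its trace is computed by $\tau(p)=\rho(\Phi(p))=\int_{\mathbb{T}}f_0(z)\,dm(z)$ using Proposition~\ref{P:U Phi} (the off-diagonal terms $x^nf_1(w)$ and $f_{-1}(w)(x^*)^n$ are killed by $\Phi$); by the normalization of the plateau function and translation-invariance of $m$ this integral equals the length of the arc, which we arrange to be exactly $\alpha$ by a standard juggling of the two arcs of lengths $\alpha$ and $1-\alpha$ in Rieffel's argument (the measure of the support minus the measure of the overlap).

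I expect the main obstacle to be bookkeeping the $\gamma$-factors correctly: in $A_\theta$ one has $\tau(x^n(x^*)^n)=\int \gamma_n$ rather than $1$, so the functional equations for $p^2=p$ are not literally Rieffel's but carry these weights, and one must verify both that the candidate $f_1$ obtained by dividing by the $\gamma$-product is continuous (using the $\lambda$-hypothesis) and that the resulting $p$ still squares to itself — i.e. that the weights cancel consistently across the three pieces of the computation. The reductions in the first paragraph and the trace computation in the last are routine; the delicate part is choosing the arcs and the square root so that all continuity and idempotency conditions hold simultaneously, exactly as in \cite{Ri} but with the weights present.
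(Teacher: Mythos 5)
Your proposal follows essentially the same route as the paper's proof: reduce to $\lambda=1$ via Lemma~\ref{L:lambda z} (and use Lemma~\ref{L:lambda 1-theta} for the $\alpha\mapsto 1-\alpha$ symmetry), make the Rieffel ansatz $p=g(w)x^{n}+f(w)+h(w)(x^*)^{n}$, derive the three functional equations from $p=p^*=p^2$, and solve them with a piecewise-linear plateau function $f$ and $g=\sqrt{f-f^2}$ divided by the product $\beta(t-n\theta)\cdots\beta(t-\theta)$, where the hypothesis $\gamma(e^{2\pi ij\theta})\neq 0$ guarantees continuity of $g$ for small enough $\epsilon$, and finally compute $\tau(p)=\int_0^1 f\,dt=\alpha$ via the conditional expectation. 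This matches the paper's argument in all essentials (your remark about membership of $x^nf_1(w)$ in $A_{\theta,\gamma}$ is automatic since $x$ and $w$ generate the algebra).
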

\begin{proof}
By Lemma~\ref{L:lambda z}, we may assume that $\lambda=1$. Firstly
we prove if $\alpha=\theta\in (0,1)$ then there exists a
projection $p$ in $A_{\theta,\gamma}$ such that
$\tau(p)=\theta$. By Lemma~\ref{L:lambda 1-theta}, we may assume
that $0<\theta<1/2$.

A dense set of elements of $A_{\theta,\gamma}$ can be
represented by a finite sum of the form $\sum_{i=1}^n
f_i(w)x^i+f(w)+\sum_{j=1}^mf_{-j}(w)\left(x^*\right)^j$, where
$f_k(z), f(z)\in C(\mathbb{T})$. Note that the set
$C(\mathbb{T})x^i$, $C(\mathbb{T})$,
$C(\mathbb{T})\left(x^*\right)^j$ are mutually orthogonal to each
other in $L^2(A_{\theta,\gamma}, \tau)$. In the
following we identify $C^*(w)$ with $C(\mathbb{R}/\mathbb{Z})$.
For $f(t)\in C(\mathbb{R}/\mathbb{Z})$, define
$f_{\theta}(t)=f(t-\theta)$. Let $\beta(t)=\left(\gamma(e^{2\pi
it})\right)^{1/2}$. Then $\beta(n\theta)\neq 0$
for all nonnegative integers $n$.

We look for a projection $p=g(t)x+f(t)+h(t)x^*$ such that
$\tau(p)=\theta$.  Since $p=p^*$, by (\ref{U4}) and (\ref{U5}),
\[
g(t)x+f(t)+h(t)x^*=x^*\bar{g}(t)+\bar{f}(t)+x\bar{h}(t)=
\bar{g}_{-\theta}(t)x^*+\bar{f}(t)+\bar{h}_{\theta}(t)x.
\]
By comparing coefficients, we see that $f=\bar{f}$ is a real
valued function; and that $h(t)=\overline{g(t+\theta)}$ or
equivalently $h(t-\theta)=\overline{g(t)}$. Since $p=p^2$,
(\ref{U4-f(z)})-(\ref{E:xrx*r}) imply
\[
g(t)x+f(t)+h(t)x^*=g(t)g_\theta(t)
x^2+(g(t)(f(t)+f_\theta(t)))x+\left[g(t)h_\theta(t)\beta^2(t-\theta)+f^2(t)+h(t)g_{-\theta}(t)\beta^2(t)\right]
\]
\[
+h(t)(f(t)+f_{-\theta}(t))x^*+h(t)h_{-\theta}(t)\left(x^*\right)^2.
\]
By comparing coefficients and replacing $h$'s with $g$'s using the
relation between them, we arrive at the necessary and sufficient
conditions:
\begin{equation}\label{E:g(t)g(t-theta)}
g(t)g(t-\theta)=0,
\end{equation}
\begin{equation}\label{E:g(t)(1-t)}
g(t)(1-f(t)-f(t-\theta))=0,
\end{equation}
\begin{equation}\label{E:f(t)-f(t)2}
f(t)-f(t)^2=|g(t)\beta(t-\theta)|^2+|g(t+\theta)\beta(t)|^2.
\end{equation}
 Pick any positive $\epsilon>0$ such that
$\theta+\epsilon<1/2$. Define $f$ to be the piece-wise linear
function
\[
f(t)=\begin{cases}
\epsilon^{-1}t& \text{for}\quad 0\leq t\leq \epsilon\\
1&\text{for}\quad \epsilon\leq t\leq \theta\\
\epsilon^{-1}(\theta+\epsilon-t) &\text{for}\quad \theta\leq t\leq \theta+\epsilon\\
0&\text{for}\quad \theta+\epsilon\leq t\leq 1
\end{cases}
\]
and define
\[
g(t)=\begin{cases}
\sqrt{f(t)-f(t)^2}/\beta(t-\theta)&\text{for}\quad \theta\leq t\leq \theta+\epsilon\\
0&\text{otherwise}
\end{cases}.
\]
Since $\beta(0)\neq 0$, $g(t)\in C(\mathbb{T})$ for sufficiently
small $\epsilon>0$. Then $f(t)$ and $g(t)$ satisfy equations
(\ref{E:g(t)g(t-theta)}), (\ref{E:g(t)(1-t)}), and
(\ref{E:f(t)-f(t)2}). So $\tau(p)=\int_0^1f(t)dt=\theta$. We also
get the projection $1-p$ with trace $\tau(1-p)=1-\theta$.

In the following we show that for $k\geq 2$ there is a projection
$q$ such that $\tau(q)$ is the fractional part ${k\theta}$ of
$k\theta$. Let $\alpha=\{k\theta\}$. We may assume that
$\alpha<1/2$.  The idea is similar. Let
$q=g_1(t)(u+v)^k+f_1(t)+h_1(t)\left((u+v)^*\right)^k$. Then we
will have the following equations
\begin{equation}\label{E:g(t)g(t-theta)'}
g_1(t)g_1(t-\alpha)=0,
\end{equation}
\begin{equation}\label{E:g(t)(1-t)'}
g_1(t)(1-f_1(t)-f_1(t-\alpha))=0,
\end{equation}
\begin{equation*}
f_1(t)-f_1(t)^2=|g_1(t)\beta(t-k\theta)\cdots\beta
(t-\theta)|^2+|g_1(t+\alpha)\beta(t+(k-1)\theta)\cdots\beta(t)|^2
\end{equation*}
\begin{equation*}
=|g_1(t)\beta(t-k\theta)\beta(t-(k-1)\theta)\cdots\beta(t-\theta)|^2
\end{equation*}
\begin{equation}\label{E:f(t)-f(t)2'}
+|g_1(t+\alpha)\beta((t+\alpha)-k\theta)\beta((t+\alpha)-(k-1)\theta)\cdots\beta((t+\alpha)-\theta)|^2.
\end{equation}
 Pick any positive $\epsilon>0$ such that $\theta+\epsilon<1/2$. Define $f_1$ to be the piece-wise linear function
\[
f_1(t)=\begin{cases}
\epsilon^{-1}t& \text{for}\quad 0\leq t\leq \epsilon\\
1&\text{for}\quad \epsilon\leq t\leq \alpha\\
\epsilon^{-1}(\alpha+\epsilon-t) &\text{for}\quad \alpha\leq t\leq \alpha+\epsilon\\
0&\text{for}\quad \alpha+\epsilon\leq t\leq 1
\end{cases}
\]
and define
\[
g_1(t)=\begin{cases}
\sqrt{f_1(t)-f_1(t)^2}/\beta(t-k\theta)\cdots\beta(t-\theta)&\text{for}\quad \alpha\leq t\leq \alpha+\epsilon\\
0&\text{otherwise}
\end{cases}.
\]
Since $\beta(0)\neq 0$, $\beta(\theta)\neq 0$, $\cdots$,
$\beta((k-1)\theta)\neq 0$,  $g_1(t)\in C(\mathbb{T})$ for
sufficiently small $\epsilon>0$. Then $f_1(t)$ and $g_1(t)$
satisfy equations (\ref{E:g(t)g(t-theta)'}), (\ref{E:g(t)(1-t)'}),
and (\ref{E:f(t)-f(t)2'}).
 So $\tau(q)=\int_0^1f_1(t)dt=\alpha$.

\end{proof}

\begin{Corollary}\label{C:Rieffel projections}
If $m(\{z|\gamma(z)=0\})=0$, e.g., the zero points of $f(z)$ is
countable, then for every $\alpha$ in
$(\mathbb{Z}+\mathbb{Z}\theta)\cap [0,1]$, there is a projection
$p$ in $A_{\theta,\gamma}$ such that $\tau(p)=\alpha$.
\end{Corollary}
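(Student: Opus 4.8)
The plan is to derive this directly from Theorem~\ref{T:Rieffel projection}, whose only hypothesis beyond positivity of $\gamma$ is the existence of a single point $\lambda\in\mathbb{T}$ with $\gamma(\lambda e^{2\pi in\theta})\neq 0$ for every nonnegative integer $n$. So the whole content of the corollary is to show that the condition $m(Y)=0$, where $Y=\{z\in\mathbb{T}:\gamma(z)=0\}$, forces such a $\lambda$ to exist.

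First I would identify the set of ``bad'' parameters: a point $\lambda\in\mathbb{T}$ fails the requirement exactly when $\lambda e^{2\pi in\theta}\in Y$ for some $n\ge 0$, that is, when $\lambda\in e^{-2\pi in\theta}Y=\phi^{-n}(Y)$ for some $n\ge 0$. Hence the bad set is $B=\bigcup_{n\ge 0}\phi^{-n}(Y)$. Since the Haar measure $m$ is rotation invariant, $m(\phi^{-n}(Y))=m(Y)=0$ for each $n$, so countable subadditivity gives $m(B)=0$. In particular $B\neq\mathbb{T}$, so I may choose any $\lambda\in\mathbb{T}\setminus B$; by construction $\gamma(\lambda e^{2\pi in\theta})\neq 0$ for all $n\ge 0$. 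Theorem~\ref{T:Rieffel projection} then applies with this $\lambda$ and produces, for each $\alpha\in(\mathbb{Z}+\mathbb{Z}\theta)\cap[0,1]$, a projection $p\in A_{\theta,\gamma}$ with $\tau(p)=\alpha$.

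For the parenthetical assertion I would simply note that a countable subset of $\mathbb{T}$ is $m$-null, since singletons have measure zero and $m$ is countably additive; thus the hypothesis ``$Y$ is countable'' is a special case of ``$m(Y)=0$''. There is no genuine obstacle in this argument; the only step meriting a sentence of care is the measure-zero estimate for $B$, which is an immediate consequence of rotation invariance of $m$ together with countable subadditivity.
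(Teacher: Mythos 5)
Your argument is correct, and it is cleaner than the one the paper gives. You reduce the corollary to producing a single $\lambda\in\mathbb{T}$ whose forward orbit $\{\lambda e^{2\pi in\theta}\}_{n\ge 0}$ misses $Y=\{z:\gamma(z)=0\}$, and you get it by observing that the bad set $B=\bigcup_{n\ge 0}\phi^{-n}(Y)$ is a countable union of rotates of the null set $Y$, hence null, hence proper in $\mathbb{T}$. The paper instead argues by contradiction with a Vitali-type construction: assuming every rotation orbit meets $Y$, it uses the axiom of choice to pick a set of orbit representatives inside $Y$; this set would be null (as a subset of a null set) yet, being a set of representatives for the orbits of an irrational rotation, is famously non-measurable, a contradiction. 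The paper's route produces an entire orbit disjoint from $Y$ (slightly more than Theorem~\ref{T:Rieffel projection} requires), at the cost of invoking choice and non-measurability; your route uses only rotation invariance and countable subadditivity of Haar measure and goes straight to the hypothesis actually needed. Both correctly finish by citing Theorem~\ref{T:Rieffel projection}, and your handling of the parenthetical (countable implies null) is fine.
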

\begin{proof}
We divide $\mathbb{T}$ into equivalent classes $F_{\alpha}$, where $x,y\in F_\alpha$
 if and only if $x=e^{2\pi ik\theta}y$ for some $k\in \mathbb{Z}$.
 Suppose $\forall \alpha$, $F_\alpha\cap \{z|\gamma(z)=0\}\neq
 \emptyset$.
 By axiom of choice we can choose a representative set
  $\{x_\alpha\}_{\alpha\in Y}$ of $\{F_\alpha\}_{\alpha\in Y}$ such that $x_\alpha\in F_\alpha\cap
  \{z|\gamma(z)=0\}$ for each $\alpha\in Y$. Then $m(\{x_\alpha\}_{\alpha\in
  Y})=0$.
   On the other hand it is well-known that
   $\{x_\alpha\}_{\alpha\in Y}$ is not Lebesgue measurable.
   This is a contradiction. Therefore, there exists $\alpha\in
   Y$ such that the intersection of $F_\alpha$ and the set of zero
points of $\gamma$ is empty. Now the corollary follows from
Theorem~\ref{T:Rieffel projection}.
\end{proof}

Combining Corollary~\ref{C:non-simplicity} and
Corollary~\ref{C:Rieffel projections}, we obtain the following
result.
\begin{Corollary}
If a generalized universal $C^*$-algebra
$A_{\theta,\gamma}$ is simple, then for every $\alpha$
in $(\mathbb{Z}+\mathbb{Z}\theta)\cap [0,1]$, there is a
projection $p$ in $A_{\theta,\gamma}$ such that
$\tau(p)=\alpha$.
\end{Corollary}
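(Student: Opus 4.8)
The plan is to deduce this statement immediately from the two corollaries that precede it, so the argument is essentially two lines. First I would argue by contraposition via Corollary~\ref{C:non-simplicity}: since $A_{\theta,\gamma}$ is assumed simple, it cannot be that $m(\{z\mid\gamma(z)=0\})>0$, and hence $m(\{z\mid\gamma(z)=0\})=0$. (Alternatively, one sees this directly from Theorem~\ref{SimpleAtheta}: simplicity forces $\phi^n(Y)\cap Y=\emptyset$ for all integers $n\neq 0$, so the sets $\{\phi^n(Y):n\in\mathbb{Z}\}$ are mutually disjoint Borel subsets of $\mathbb{T}$, all of the same Haar measure as $Y$ because $\phi$ preserves $m$; as $m(\mathbb{T})=1<\infty$, this is possible only if $m(Y)=0$.)

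Second, with $m(\{z\mid\gamma(z)=0\})=0$ established, I would invoke Corollary~\ref{C:Rieffel projections} verbatim: it yields, for each $\alpha\in(\mathbb{Z}+\mathbb{Z}\theta)\cap[0,1]$, a projection $p\in A_{\theta,\gamma}$ with $\tau(p)=\alpha$. This completes the proof.

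There is no real obstacle here; the content lies entirely in the earlier results, and the only thing to record is the combination of Corollaries~\ref{C:non-simplicity} and \ref{C:Rieffel projections}. Were one to unwind the proof, the single nontrivial ingredient is the Vitali-type nonmeasurability argument inside Corollary~\ref{C:Rieffel projections}: if every rotation orbit met the zero set $Y$, then choosing one zero point from each orbit would produce a nonmeasurable subset of $\mathbb{T}$ contained in the Haar-null set $Y$, a contradiction; hence some orbit avoids $Y$, and after the rotation normalization of Lemma~\ref{L:lambda z} one applies the Rieffel-projection construction of Theorem~\ref{T:Rieffel projection}.
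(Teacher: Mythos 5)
Your proof is correct and is exactly the paper's argument: the paper also obtains this corollary by combining Corollary~\ref{C:non-simplicity} (in contrapositive form, to conclude $m(Y)=0$ from simplicity) with Corollary~\ref{C:Rieffel projections}. The additional unwinding you sketch (the Vitali-type argument and the reduction to Theorem~\ref{T:Rieffel projection}) matches the paper's proofs of those earlier results as well.
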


This corollary also follows from \ref{NTKtheory}.

\section{K-groups of generalized universal irrational rotation algebras}

 Let $A_\theta$ be the universal irrational rotation $C^*$-algebra with two
unitary generators $u,v$ satisfying $vu=e^{2\pi i\theta}uv$. Then there exists an action $\alpha_z$ of $\mathbb{T}$ on
$A_{\theta}$ defined by $\alpha_z(u)=zu$ and
$\alpha_z(v)=v$.
 By Theorem~\ref{T:universal property}, we may identify $A_{\theta,\gamma}$ with the
unital $C^*$-subalgebra $B$ of $A_\theta$ generated by $u\gamma^{1/2}(v)$ and $v$. Then $x=u\gamma^{1/2}(v)$ and  $w=v$.  Let $A$ be the unital
$C^*$-algebra generated by $v$. The following definition is introduced by Ruy Excel in~\cite{Ex}.

\begin{Definition}
\emph{For each $n\in \mathbb{Z}$ the $n^{th}$ spectral subspace
for $\alpha$ is defined by
\[
B_n=\{b\in
A_{\theta,\gamma}:\,\alpha_z(b)=z^nb\quad\text{for}\,
z\in \mathbb{T}\}.
\]
}
\end{Definition}

\begin{Lemma}\label{L:Bn}
$B_0=A$ and $B_1=\{uf(v):\, f(\lambda)=0\quad\text{for}\,
\lambda\in Y\}$.
\end{Lemma}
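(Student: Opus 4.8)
The plan is to exploit that the restriction of the circle action $\alpha_z$ to $A_{\theta,\gamma}=C^*(x,w)$ (with $x=u\gamma^{1/2}(v)$, $w=v$) is precisely the automorphism $\rho_t$ of Section~2 for $z=e^{2\pi i t}$: both automorphisms fix $w=v$ and scale $x$ by $z$, so they agree on $C^*(x,w)$; in particular $\alpha_z$ preserves $A_{\theta,\gamma}$ and $B_n=\{b\in A_{\theta,\gamma}:\rho_t(b)=e^{2\pi i nt}b\ \text{for all }t\}$. For $B_0$ this gives the claim at once: $b\in B_0$ means $\rho_t(b)=b$ for all $t$, which by Corollary~\ref{C:rho t} is equivalent to $b\in C^*(w)=C^*(v)=A$.

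For $B_1$ I would mimic the averaging construction of $\Phi$ in Section~2, inserting a character twist. Define $\Phi_1(a)=\int_0^1 e^{-2\pi i t}\rho_t(a)\,dt$; exactly as in Section~2 the integrand is norm continuous, so this Riemann integral makes sense and $\|\Phi_1\|\le 1$. A substitution $t\mapsto t+s$ in the integral shows $\alpha_{e^{2\pi i s}}\circ\Phi_1=e^{2\pi i s}\Phi_1$ for all $s$, whence $\Phi_1(A_{\theta,\gamma})\subseteq B_1$; conversely if $b\in B_1$ then $\rho_t(b)=e^{2\pi i t}b$, so $\Phi_1(b)=\bigl(\int_0^1 1\,dt\bigr)b=b$. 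Thus $\Phi_1$ is a continuous idempotent with range exactly $B_1$, and consequently $B_1=\overline{\Phi_1(D)}$ for the dense set $D=\{\sum_{n\ge1}x^nf_n(w)+f_0(w)+\sum_{n\ge1}f_{-n}(w)(x^*)^n\}$ of Section~2. Using $\rho_t(x^nf_n(w))=e^{2\pi i nt}x^nf_n(w)$, $\rho_t(f_{-n}(w)(x^*)^n)=e^{-2\pi i nt}f_{-n}(w)(x^*)^n$, and $\int_0^1 e^{-2\pi i t}e^{2\pi i nt}\,dt=\delta_{n,1}$, one computes $\Phi_1$ of a general element of $D$ to be $xf_1(w)$. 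Hence $B_1=\overline{\{xf(w):f\in C(\mathbb T)\}}$.

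It then remains to rewrite this in terms of $u$ and $v$. Since $xf(w)=u\gamma^{1/2}(v)f(v)=u(\gamma^{1/2}f)(v)$ and $u$ is unitary, $B_1=u\cdot\overline{\{(\gamma^{1/2}f)(v):f\in C(\mathbb T)\}}$. Identifying $C^*(v)$ with $C(\mathbb T)$, the (not necessarily closed) ideal $\gamma^{1/2}C(\mathbb T)$ has norm closure equal to a closed ideal of $C(\mathbb T)$, hence of the form $C_0(\mathbb T\setminus F)$ for a closed set $F$; since $\gamma^{1/2}$ itself lies in this ideal and vanishes precisely on $Y$, one gets $F=Y$, so $\overline{\gamma^{1/2}C(\mathbb T)}=C_0(\mathbb T\setminus Y)=\{f\in C(\mathbb T):f|_Y=0\}$. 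Therefore $B_1=\{uf(v):f\in C(\mathbb T),\ f(\lambda)=0\ \text{for all }\lambda\in Y\}$, as claimed.

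The only steps needing care are the two routine-but-nontrivial facts used above: that $\Phi_1$ maps $A_{\theta,\gamma}$ \emph{onto} $B_1$ and restricts to the identity on $B_1$ (the change-of-variables identity $\alpha_{e^{2\pi i s}}\circ\Phi_1=e^{2\pi i s}\Phi_1$), and the identification of the closed ideal $\overline{\gamma^{1/2}C(\mathbb T)}$ with $C_0(\mathbb T\setminus Y)$. I expect the former to be the main (still minor) obstacle, since it is precisely what pins $B_1$ down as the spectral subspace rather than merely showing $B_1\supseteq\overline{xC(\mathbb T)}$.
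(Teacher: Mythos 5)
Your argument is correct, but for the key inclusion $B_1\subseteq\overline{\{xf(w):f\in C(\mathbb{T})\}}$ it takes a genuinely different route from the paper's. The paper first observes that $\alpha_z(u^*b)=u^*b$ for $b\in B_1$, so that $b=uf(v)$ by the structure of the fixed-point algebra of the gauge action on the ambient $A_\theta$, and then rules out $f(\lambda_0)\neq 0$ for $\lambda_0\in Y$ by the distance estimate of Lemma~\ref{L:less than 1 general case}: every $y$ in the dense subset satisfies $\|y-uf(v)\|\ge|f(\lambda_0)|$, so such a $b$ could not lie in $A_{\theta,\gamma}$. You instead construct the first spectral projection $\Phi_1(a)=\int_0^1 e^{-2\pi it}\rho_t(a)\,dt$ intrinsically in $A_{\theta,\gamma}$, exactly parallel to $\Phi$ in Proposition~\ref{P:U Phi}, verify it is a contractive idempotent with range precisely $B_1$, and evaluate it on the dense set. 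This is the standard Fourier-coefficient argument for circle actions; it avoids both the detour through $A_\theta$ and the use of Lemma~\ref{L:less than 1 general case} (whose statement, as written, concerns elements of $A_{\theta,\gamma}$, so the paper is implicitly invoking its obvious extension to differences $y-uf(v)$ in $A_\theta$), at the cost of the routine verifications of the change-of-variables identity $\rho_s\circ\Phi_1=e^{2\pi is}\Phi_1$ and of $\Phi_1|_{B_1}=\mathrm{id}$, which you carry out correctly. Both proofs then rest on the same final identification $\overline{\gamma^{1/2}C(\mathbb{T})}=C_0(\mathbb{T}\setminus Y)=\{f\in C(\mathbb{T}):f|_Y=0\}$, which you justify in detail and the paper simply asserts.
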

\begin{proof}
By Corollary~\ref{C:rho t}, $B_0=A$. We need to show
$B_1=\{uf(v):\, f(\lambda)=0\quad\text{for}\, \lambda\in
Y\}$. Note that $\alpha_z(xg(v))=zxg(v)$.  Since the norm
closure of $\{xg(v):\, g\in C(\mathbb{T})\}$ is $\{u f(v):\, f(\lambda)=0\quad\text{for}\,
\lambda\in Y\}$, $\{u f(v):\, f(\lambda)=0\quad\text{for}\,
\lambda\in Y\}\subseteq B_1$. On the other hand, if $b\in
B_1$, then $\alpha_z(u^*b)=u^*b$ for all $z\in \mathbb{T}$. This
implies that $b=uf(v)$ for some $f(z)\in C(\mathbb{T})$. Suppose
$f(\lambda_0)\neq 0$ for some $\lambda_0\in Y$. Then for any
$y=\sum_{n=1}^N x^nf_n(z)+f_0(z)+\sum_{n=1}^Nf_{-n}(z)(x^*)^n$, by
Lemma~\ref{L:less than 1 general case} we have
\[
\|y-z\|\geq
\|xf_1(v)-uf(v)\|=\|uh(v)f_1(v)-uf(v)\|=\|h(v)f_1(v)-f(v)\|\geq
|f(\lambda_0)|>0.
\]
Thus for any $y\in A_{\theta,\gamma}$ we have
$\|y-uf(v)\|\geq |f(\lambda_0)|>0$. This is a contradiction. So
$B_1=\{uf(v):\, f(\lambda)=0\quad\text{for}\, \lambda\in
Y\}$.
\end{proof}

\begin{Definition}
\emph{ If $X$ and $Y$ are subsets of a $C^*$-algebra, then $XY$
denotes the closed linear span of the set of products $xy$ with
$x\in X$ and $y\in Y$. }
\end{Definition}

\begin{Corollary}\label{C:B1B1*}
$B_1^*B_1=\{f(v):\, f(\lambda)=0\quad\text{for}\, \lambda\in
Y\}\subseteq A$ and $B_1B_1^*=uB_1B_1^*u^*\subset A$.
\end{Corollary}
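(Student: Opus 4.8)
The plan is to read off both identities directly from the description of $B_1$ given by Lemma~\ref{L:Bn}. Write $J=\{f\in C(\mathbb{T}):f(\lambda)=0\text{ for all }\lambda\in Y\}=C_0(\mathbb{T}\setminus Y)$, a self-adjoint (and, when $Y\neq\emptyset$, non-unital) subalgebra of $A=C^*(v)\cong C(\mathbb{T})$ under $f\mapsto f(v)$. By Lemma~\ref{L:Bn}, $B_1=\{uf(v):f\in J\}$, and since $J$ is closed under complex conjugation, $B_1^*=\{\overline{f}(v)u^*:f\in J\}=\{f(v)u^*:f\in J\}$. I would then compute products: for $b=uf(v)$ and $c=ug(v)$ in $B_1$, the relation $u^*u=1$ gives $b^*c=(\overline{f}g)(v)$ and $bc^*=u(f\overline{g})(v)u^*$.

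Taking closed linear spans, $B_1^*B_1$ is the image under $f\mapsto f(v)$ of the closed linear span of $\{\overline{f}g:f,g\in J\}$; since $\overline{f}$ ranges over $J$ as $f$ does, this is the closed linear span of $\{hk:h,k\in J\}$. The one substantive point is that this closed linear span is all of $J$ — the standard fact that a $C^*$-algebra is the closed linear span of its own products (apply an approximate unit of $C_0(\mathbb{T}\setminus Y)$). Hence $B_1^*B_1=\{f(v):f(\lambda)=0\text{ for }\lambda\in Y\}$, which is visibly contained in $A$.

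For the second identity I would note that, by the product computation above, $B_1B_1^*$ is the closed linear span of $\{u(f\overline{g})(v)u^*:f,g\in J\}$, and that $a\mapsto uau^*$ is an isometric $\ast$-automorphism of $A_\theta$; it therefore carries the closed linear span of $\{(f\overline{g})(v):f,g\in J\}=B_1^*B_1$ onto $B_1B_1^*$, giving $B_1B_1^*=u(B_1^*B_1)u^*$. Containment in $A$ then follows from $uvu^*=e^{-2\pi i\theta}v$, which yields $uf(v)u^*=f(e^{-2\pi i\theta}v)\in C^*(v)=A$ for all $f\in C(\mathbb{T})$. No serious obstacle is expected here: beyond the bookkeeping with adjoints, the only real input is the approximate-unit identity $\overline{J\cdot J}=J$, and one must simply remember that it is the closed linear span, not the mere set of products, of functions vanishing on $Y$ that recovers all of $C_0(\mathbb{T}\setminus Y)$.
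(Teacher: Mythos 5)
Your proof is correct and follows the same (essentially unique) route the paper leaves implicit: Lemma~\ref{L:Bn} identifies $B_1$ with $uC_0(\mathbb{T}\setminus Y)$, the products collapse via $u^*u=1$, and the only substantive point is $\overline{\mathrm{span}}\,(J\cdot J)=J$ for the ideal $J=C_0(\mathbb{T}\setminus Y)$, which you correctly supply via an approximate unit. Note that what you actually establish for the second identity is $B_1B_1^*=u(B_1^*B_1)u^*$ rather than the literal $uB_1B_1^*u^*$ printed in the corollary; the printed version would force $Y=\phi(Y)$ and is evidently a typo, and your version is the one the paper uses in Lemma~\ref{L:regular}, so no correction to your argument is needed.
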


\begin{Lemma}\label{L:regular}
The action of $\mathbb{T}$ on $A_{\theta,\gamma}$ is
regular in the sense of~\cite{Ex} \emph{(}see Definition
4.4\emph{)}, i.e., there exist an isomorphism $\theta:
B_1^*B_1\rightarrow B_1B_1^*$ and a linear isometry $\phi$ from
$B_1^*$ onto $B_1B_1^*$ such that for $y_1,y_2\in B_1$, $a\in
B_1^*B_1$ and $b\in B_1B_1^*$,
\begin{enumerate}
\item $\phi(y_1^*b)=\phi(y_1^*)b$; \item
$\phi(ay_1^*)=\theta(a)\phi(y_1^*)$; \item
$\phi(y_1^*)^*\phi(y_2^*)=y_1y_2^*$;\item
$\phi(y_1^*)\phi(y_2^*)^*=\theta(y_1^*y_2)$.
\end{enumerate}
\end{Lemma}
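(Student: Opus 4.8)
The plan is to write down explicit formulas for $\theta$ and $\phi$ using the ambient unitary $u\in A_\theta$, and then to verify the four identities by one‑line computations. By Lemma~\ref{L:Bn}, every element of $B_1$ has the form $uf(v)$ with $f\in C_0({\mathbb T}\setminus Y)$, so $B_1^*=C_0({\mathbb T}\setminus Y)u^*$; and by Corollary~\ref{C:B1B1*}, $B_1^*B_1=C_0({\mathbb T}\setminus Y)\subseteq A$ and $B_1B_1^*=uC_0({\mathbb T}\setminus Y)u^*\subseteq A$. I would then set $\theta\colon B_1^*B_1\to B_1B_1^*$, $\theta(a)=uau^*$, and $\phi\colon B_1^*\to B_1B_1^*$, $\phi(b)=ub$. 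The first point to check is that these land where claimed: although $u$ itself need not lie in $A_{\theta,\gamma}$, for $a\in B_1^*B_1$ one has $uau^*\in uC_0({\mathbb T}\setminus Y)u^*=B_1B_1^*$, and for $b=f(v)u^*\in B_1^*$ one has $ub=uf(v)u^*\in uC_0({\mathbb T}\setminus Y)u^*=B_1B_1^*$, so both maps are genuinely maps into $A_{\theta,\gamma}$ with the asserted targets.

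Next I would record the structural facts: $\theta$ is a $*$-isomorphism, being conjugation by a unitary, and it carries $B_1^*B_1=C_0({\mathbb T}\setminus Y)$ onto $uC_0({\mathbb T}\setminus Y)u^*=B_1B_1^*$; and $\phi$ is a linear isometry, being left multiplication by the unitary $u$ (so $\|\phi(b)\|=\|b\|$), which maps $B_1^*$ onto $uB_1^*=uC_0({\mathbb T}\setminus Y)u^*=B_1B_1^*$. With $\phi(b)=ub$, $\phi(b)^*=b^*u^*$, $\theta(a)=uau^*$ and $u^*u=1$ in hand, the four relations reduce to associativity in $A_\theta$: for $y_1,y_2\in B_1$, $a\in B_1^*B_1$, $b\in B_1B_1^*$ one has $y_1^*b,\ ay_1^*\in B_1^*B_1B_1^*=B_1^*$, and then
\[
\phi(y_1^*b)=u\,y_1^*b=(uy_1^*)b=\phi(y_1^*)b,\qquad \phi(ay_1^*)=u\,ay_1^*=(uau^*)(uy_1^*)=\theta(a)\phi(y_1^*),
\]
\[
\phi(y_1^*)^*\phi(y_2^*)=(uy_1^*)^*(uy_2^*)=y_1u^*uy_2^*=y_1y_2^*,\qquad \phi(y_1^*)\phi(y_2^*)^*=uy_1^*y_2u^*=u(y_1^*y_2)u^*=\theta(y_1^*y_2).
\]

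The mathematical content is therefore light; what needs care is the bookkeeping of spectral subspaces — the identifications $B_1^*B_1=C_0({\mathbb T}\setminus Y)$, $B_1B_1^*=uC_0({\mathbb T}\setminus Y)u^*$, $B_1^*=C_0({\mathbb T}\setminus Y)u^*$, together with the absorption relation $B_1^*B_1B_1^*=B_1^*$ that keeps each intermediate product inside the right module — and all of this is supplied by Lemma~\ref{L:Bn} and Corollary~\ref{C:B1B1*}. Conceptually this is just the statement that $A_{\theta,\gamma}=C^*\big(C({\mathbb T}),uC_0({\mathbb T}\setminus Y)\big)$ is a partial crossed product of $C({\mathbb T})$ by ${\mathbb Z}$ with $B_1$ the attendant partial‑isometry bimodule, and Exel's regularity axioms in~\cite{Ex} are tailored to exactly this picture; one could alternatively invoke that framework, but the direct verification above is shorter. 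I expect the only (minor) obstacle to be notational: keeping straight which of $C_0({\mathbb T}\setminus Y)$, $uC_0({\mathbb T}\setminus Y)u^*$, $C_0({\mathbb T}\setminus Y)u^*$ each expression belongs to when chasing the products through.
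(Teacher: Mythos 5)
Your proposal is correct and follows essentially the same route as the paper: the paper also sets $\theta(f(v))=uf(v)u^*$ and $\phi(f(v)u^*)=uf(v)u^*$ (i.e.\ left multiplication by the ambient unitary $u$) and verifies the four identities by the same one-line computations using $u^*u=1$, with the identifications of $B_1$, $B_1^*B_1$ and $B_1B_1^*$ supplied by Lemma~\ref{L:Bn} and Corollary~\ref{C:B1B1*}. Your explicit remark that $u\notin A_{\theta,\gamma}$ while the products $uau^*$, $ub$ do land in $A_{\theta,\gamma}$, and the absorption relation $B_1^*B_1B_1^*=B_1^*$, are exactly the bookkeeping points the paper's computation relies on implicitly.
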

\begin{proof}
By Corollary~\ref{C:B1B1*}, $B_1B_1^*=uB_1B_1^*u^*$. Let
$\theta(f(v))=uf(v)u^*$. Then $\theta$ is an isomorphism of
$B_1B_1^*$ onto $B_1^*B_1$. Define $\phi(f(v)u^*)=uf(v)u^*$. By
Lemma~\ref{L:Bn} and Corollary~\ref{C:B1B1*}, $\phi$ is a linear
isometry from $B_1^*$ onto $B_1B_1^*$. Let $y_1=uf_1(v)$ and
$y_2=uf_2(v)$ such that $y_1,y_2\in B_1$, $a=g_1(v)\in
B_1^*B_1\subset A$ and $b=g_2(v)\in B_1B_1^*\subset A$. Then
\[
\phi(y_1^*b)=\phi(\bar{f}_1(v)u^*g_2(v))=\phi(\bar{f}_1(v)\theta^{-1}(g_2(v))u^*)=u\bar{f}_1(v)\theta^{-1}(g_2(v))u^*=u\bar{f}_1(v)u^*g_2(v)=\phi(y_1^*)b,
\]
\[
\phi(ay_1^*)=\phi(g_1(v)\bar{f}_1(v)u^*)=ug_1(v)\bar{f}_1(v)u^*=\theta(g_1(v))u\bar{f}_1(v)u^*=\theta(a)\phi(y_1^*),
\]
\[
\phi(y_1^*)^*\phi(y_2^*)=(uy_1^*)^*(uy_2^*)=y_1y_2^*,
\]
\[
\phi(y_1^*)\phi(y_2^*)^*=(uy_1^*)(uy_2^*)^*=uy_1^*y_2u^*=\theta(y_1^*y_2).
\]
\end{proof}

\begin{Lemma}\label{L:Ruy}
Let $\Theta=(\theta, B_1^*B_1, B_1B_1^*)$ be the partial
automorphism of the fixed point algebra $A$ as in~\cite{Ex}. Then
there exists an isomorphism
\[
\varphi: C^*(A,\Theta)\rightarrow A_{\theta,\gamma}.
\]
\end{Lemma}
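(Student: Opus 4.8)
The plan is to deduce the lemma directly from Exel's structure theorem for regular circle actions in~\cite{Ex}. Recall that $C^*(A,\Theta)$ is Exel's crossed product of $A$ by the partial automorphism $\Theta=(\theta,B_1^*B_1,B_1B_1^*)$: it is the universal $C^*$-algebra generated by a copy of $A$ together with a partial isometry implementing $\theta$ on the ideal $B_1^*B_1$, and it carries a canonical circle action whose fixed-point algebra is that copy of $A$. Exel proves conversely that every $C^*$-algebra $B$ carrying a semi-saturated, regular circle action is isomorphic to $C^*(B_0,\Theta_B)$, where $B_0$ is its fixed-point algebra and $\Theta_B$ is the partial automorphism extracted from the regularity data. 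So the task is to check that the action $\alpha$ of $\mathbb{T}$ on $A_{\theta,\gamma}$ meets these hypotheses and that the partial automorphism it induces is exactly the $\Theta$ in the statement.

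First I would verify semi-saturation. Since $w=v\in B_0=A$ and $x=u\gamma^{1/2}(v)\in B_1$ by Lemma~\ref{L:Bn}, the algebra $A_{\theta,\gamma}=C^*(x,w)$ is generated by $B_0$ and $B_1$; more precisely, moving all factors of $x$ to the left via $(\ref{U4-f(z)})$ and using the density of the elements $\sum_n x^nf_n(w)+f_0(w)+\sum_n f_{-n}(w)(x^*)^n$ yields $B_n=\overline{x^nC(\mathbb{T})}=\overline{B_1^{\,n}}$ for $n\ge 1$ and $B_{-n}=B_n^*$, which is semi-saturation. Regularity in the exact sense of Definition~4.4 of~\cite{Ex} is the content of Lemma~\ref{L:regular}, where the isomorphism $\theta\colon B_1^*B_1\to B_1B_1^*$, $\theta(f(v))=uf(v)u^*$, and the linear isometry $\phi$ on $B_1^*$ are exhibited; and Corollary~\ref{C:B1B1*} identifies the domain and range ideals in $A$ as $B_1^*B_1$ and $B_1B_1^*$. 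Hence the partial automorphism induced by $\alpha$ is precisely $\Theta=(\theta,B_1^*B_1,B_1B_1^*)$.

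Exel's structure theorem then produces an isomorphism $\varphi\colon C^*(A,\Theta)\to A_{\theta,\gamma}$. By construction it restricts to the identity on the canonical copy of $A=C^*(w)$, intertwines the canonical circle action on $C^*(A,\Theta)$ with $\alpha$, and carries the degree-one subspace onto $B_1=\overline{xC(\mathbb{T})}$. These compatibilities are what will be needed in the next section, when $C^*(A,\Theta)$ is fed into the six-term Pimsner--Voiculescu-type exact sequence for partial crossed products from~\cite{Ex} to compute $K_*(A_{\theta,\gamma})$.

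The one point requiring genuine care is the bookkeeping: one must confirm that ``regular'' as verified in Lemma~\ref{L:regular} coincides on the nose with Exel's Definition~4.4 --- in particular the orientation of $\theta$ and the fact that $\phi$ is defined on $B_1^*$ rather than on $B_1$ --- and that semi-saturation together with regularity are exactly the hypotheses of the structure theorem of~\cite{Ex}. Once this dictionary is fixed the isomorphism $\varphi$ is immediate from~\cite{Ex}, and no further estimate or computation is involved.
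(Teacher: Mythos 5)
Your proposal is correct and follows the same route as the paper: verify that the circle action is semi-saturated (since $A_{\theta,\gamma}$ is generated by the fixed-point algebra $A$ and the first spectral subspace $B_1$), invoke Lemma~\ref{L:regular} for regularity, and then apply Exel's structure theorem (Theorem 4.21 of~\cite{Ex}). The extra remarks on the identification of the higher spectral subspaces and the compatibility of $\varphi$ with the circle action are harmless elaborations of the same argument.
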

\begin{proof}
Clearly $A_{\theta,\gamma}$ is generated by the fixed
point algebra $A$ and the first spectral subspace $B_1$. So the
action $\alpha$ of $\mathbb{T}$ on $A_{\theta,\gamma}$
is semi-saturated (see Definition 4.1 of ~\cite{Ex}). By
Lemma~\ref{L:regular}, $\alpha$ is also regular. By Theorem 4.21
of ~\cite{Ex}, there exists an isomorphism
\[
\varphi: C^*(A,\Theta)\rightarrow A_{\theta,\gamma}.
\]
\end{proof}

\begin{Theorem}\label{T:K-groups}
Let $Y$ be the set of zeros of $\gamma.$ If ${\mathbb T}\not=Y\neq \emptyset$, then
\beq\label{TKgroup-1}
K_1({A}_{\theta, \gamma})={\mathbb Z}
\eneq
and there exists a splitting short exact sequence:
\beq\label{TKgroup-2}
0\to {\mathbb Z}\to K_0({A}_{\theta, \gamma})\to C(Y, {\mathbb Z})\to 0.
\eneq
In particular, if $Y$ has $n$ points, then
\beq\label{TKgroup-3}
K_0({A}_{\theta, \gamma})={\mathbb Z}^{n+1}.
\eneq
\end{Theorem}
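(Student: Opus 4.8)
The plan is to compute the $K$-theory of $A_{\theta,\gamma}$ via the crossed-product-by-partial-automorphism picture supplied by Lemma~\ref{L:Ruy}, together with the associated Pimsner--Voiculescu-type six-term exact sequence for partial automorphisms. Write $A = C(\mathbb{T})$ for the fixed-point algebra, and recall from Corollary~\ref{C:B1B1*} that the ideals $B_1^*B_1 = \{f \in C(\mathbb{T}) : f|_Y = 0\} = C_0(\mathbb{T}\setminus Y)$ and $B_1 B_1^* = C_0(\mathbb{T}\setminus \phi(Y))$ are the domain and range ideals of the partial automorphism $\Theta$. By Exel's theorem (Theorem~4.21 and the $K$-theory section of~\cite{Ex}), $A_{\theta,\gamma} \cong C^*(A,\Theta)$ sits in a six-term exact sequence
\[
K_0(B_1^*B_1) \xrightarrow{\ \mathrm{id} - [\Theta]\ } K_0(A) \to K_0(A_{\theta,\gamma}) \to K_1(B_1^*B_1) \xrightarrow{\ \mathrm{id}-[\Theta]\ } K_1(A) \to K_1(A_{\theta,\gamma}) \to \cdots,
\]
where the connecting maps arise from the inclusion $B_1^*B_1 \hookrightarrow A$ composed with the comparison of $\Theta$ against the identity embedding of the ideals. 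The first step is therefore to write down these groups explicitly: $K_0(C(\mathbb{T})) = \mathbb{Z}$, $K_1(C(\mathbb{T})) = \mathbb{Z}$, and, since $\mathbb{T}\setminus Y$ is a (possibly infinite) disjoint union of open arcs, $K_0(C_0(\mathbb{T}\setminus Y)) = 0$ (each arc contributes nothing to $K_0$ of $C_0$ of an open interval) while $K_1(C_0(\mathbb{T}\setminus Y)) = C(Y,\mathbb{Z})$ when $Y$ is nonempty — here I would use the exact sequence $0 \to C_0(\mathbb{T}\setminus Y) \to C(\mathbb{T}) \to C(Y) \to 0$ to identify $K_1(C_0(\mathbb{T}\setminus Y))$ with the kernel of $K_0(C(\mathbb{T})) = \mathbb{Z} \to K_0(C(Y)) = C(Y,\mathbb{Z})$ shifted by the index map, yielding the reduced group and hence, after accounting for the $K_1(C(\mathbb{T})) = \mathbb{Z}$ contribution, the statement $K_1(C_0(\mathbb{T}\setminus Y)) \cong C(Y,\mathbb{Z})$.

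Next I would analyze the two maps labelled $\mathrm{id} - [\Theta]$. On $K_0$: the source $K_0(B_1^*B_1) = 0$, so automatically $\mathrm{id}-[\Theta] = 0$ there, and the cokernel map gives an injection $K_0(A) = \mathbb{Z} \hookrightarrow K_0(A_{\theta,\gamma})$ — this is the copy of $\mathbb{Z}$ generated by $[1]$, and it is a good moment to note that this injection is split by any tracial state (for instance the canonical $\tau$ of Lemma~\ref{L: U trace}), which is exactly what produces the \emph{splitting} of~\eqref{TKgroup-2}. On $K_1$: here one must compute $\mathrm{id} - [\Theta] \colon K_1(C_0(\mathbb{T}\setminus Y)) = C(Y,\mathbb{Z}) \to K_1(C(\mathbb{T})) = \mathbb{Z}$. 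The identity inclusion $C_0(\mathbb{T}\setminus Y) \hookrightarrow C(\mathbb{T})$ induces the ``sum of winding numbers'' (equivalently, the augmentation) map $C(Y,\mathbb{Z}) \to \mathbb{Z}$, which is surjective; the map induced by $\Theta$ (rotation of the ideal $C_0(\mathbb{T}\setminus Y)$ onto $C_0(\mathbb{T}\setminus\phi(Y))$ by $\phi$) likewise gives the augmentation, since rotation by $\theta$ does not change winding numbers. Hence $\mathrm{id} - [\Theta] = 0$ on $K_1$ as well. Feeding this back into the six-term sequence: from the $K_1$ end we get $K_1(A_{\theta,\gamma}) \cong \mathrm{coker}(\mathrm{id}-[\Theta]\colon K_0(B_1^*B_1)\to K_0(A)) = 0 \to \ker(\mathrm{id}-[\Theta]\colon K_1(C(\mathbb{T}))\to \cdots)$ — more carefully, the relevant portion reads $K_1(B_1^*B_1) \xrightarrow{0} K_1(A) \to K_1(A_{\theta,\gamma}) \to K_0(B_1^*B_1) = 0$, so $K_1(A_{\theta,\gamma}) \cong K_1(C(\mathbb{T})) = \mathbb{Z}$, which is~\eqref{TKgroup-1}. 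For $K_0$: the segment $K_0(B_1^*B_1) = 0 \to K_0(A) \to K_0(A_{\theta,\gamma}) \to K_1(B_1^*B_1) \xrightarrow{0} K_1(A)$ gives the short exact sequence $0 \to \mathbb{Z} \to K_0(A_{\theta,\gamma}) \to C(Y,\mathbb{Z}) \to 0$, split by a trace, which is~\eqref{TKgroup-2}. Finally, when $|Y| = n$ the group $C(Y,\mathbb{Z}) \cong \mathbb{Z}^n$ is free, so the split sequence forces $K_0(A_{\theta,\gamma}) \cong \mathbb{Z}^{n+1}$, giving~\eqref{TKgroup-3}.

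The main obstacle I anticipate is not the skeleton above but the careful verification that the connecting maps $\mathrm{id} - [\Theta]$ in Exel's sequence are literally the maps I claimed — in particular, getting the orientation and normalization conventions right so that ``$[\Theta]$ on $K_1$ of the domain ideal'' really is the push-forward by the partial homeomorphism $\phi$, and confirming that $\Theta$ as constructed in Lemma~\ref{L:regular} (namely $f(v) \mapsto u f(v) u^*$, i.e. $f \mapsto f\circ\phi^{-1}$ suitably restricted) induces the winding-number-preserving map on $K_1$. A secondary subtlety is the case where $Y$ is infinite: one must check $C(Y,\mathbb{Z})$ is the correct $K$-group (it is $K^1(\mathbb{T}\setminus Y)$, and for an open subset of $\mathbb{T}$ which is a union of arcs this is indeed locally constant $\mathbb{Z}$-valued functions on the ``boundary'' $Y$, with the augmentation to $\mathbb{Z}$ recording total winding) and that the splitting by a trace still makes sense — here invoking the existence of the faithful trace $\tau$ from Lemma~\ref{L: U trace} and the induced map $K_0 \to \mathbb{R}$ (landing in $\mathbb{Z}$ on the image of $[1]$) handles it. Once these identifications are pinned down, the three displayed conclusions follow by direct reading of the six-term sequence as above.
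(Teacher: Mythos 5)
Your overall route is the same as the paper's: identify $A_{\theta,\gamma}$ with Exel's crossed product by the partial automorphism $\Theta$ (Lemma~\ref{L:Ruy}), feed $K_0(J)=0$, $K_1(J)\cong C(Y,\mathbb{Z})$, $K_*(C(\mathbb{T}))=\mathbb{Z}$ into the six-term sequence of \cite{Ex}, and observe that $i_*-\theta_*^{-1}$ vanishes on $K_1(J)$ because $\Theta$ is the restriction of the rotation automorphism, which acts trivially on $K_*(C(\mathbb{T}))$. That part of your argument, including the derivation of (\ref{TKgroup-1}) and of the short exact sequence in (\ref{TKgroup-2}), and the conclusion (\ref{TKgroup-3}) for finite $Y$ (where $C(Y,\mathbb{Z})\cong\mathbb{Z}^n$ is free, so splitting is automatic), is correct and essentially identical to the paper's.

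The genuine gap is your splitting argument for (\ref{TKgroup-2}) in general. You claim the inclusion $\mathbb{Z}=K_0(C(\mathbb{T}))\hookrightarrow K_0(A_{\theta,\gamma})$ is "split by any tracial state," i.e.\ that $\tau_*\colon K_0(A_{\theta,\gamma})\to\mathbb{R}$ provides a retraction onto the copy of $\mathbb{Z}$ generated by $[1]$. It does not: a retraction must take values in $\mathbb{Z}$ on all of $K_0$, but by Theorem~\ref{T:Rieffel projection} (and Corollary~\ref{C:Rieffel projections}) $A_{\theta,\gamma}$ contains projections of trace $\theta$, so $\tau_*(K_0(A_{\theta,\gamma}))\supseteq\mathbb{Z}+\mathbb{Z}\theta\not\subseteq\mathbb{Z}$. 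Knowing that $\tau_*([1])=1$ lands in $\mathbb{Z}$ is not enough. The splitting is only an issue when $Y$ is infinite, and there one needs the purely algebraic fact $Ext^1_{\mathbb{Z}}(C(Y,\mathbb{Z}),\mathbb{Z})=0$; the paper obtains this by realizing $Y$ as a planar compact set with $K_1(C(Y))=0$, invoking the BDF theorem that $Ext(C(Y))=0$, and then reading off from Brown's UCT that every group extension of $C(Y,\mathbb{Z})$ by $\mathbb{Z}$ splits. (Alternatively one could quote that $C(Y,\mathbb{Z})$ is a subgroup of the group of bounded integer-valued functions on $Y$, hence free by the N\"obeling--Specker theorem; but some such input is required, and the trace supplies none of it.)
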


\begin{proof}
Let $J=B_1B_1^*$. By Lemma~\ref{L:Ruy} and Theorem 7.1 of~\cite{Ex}, we have the
following exact sequence of $K$-groups
\[
\begin{array}{ccccc}
K_0(J)&\overset{i_*-\theta_*^{-1}}{\longrightarrow}&K_0(A)&\overset{i_*}{\longrightarrow}&K_0(A_{\theta,\gamma})\\
\Big\uparrow&&&&\Big\downarrow\\
K_1(A_{\theta,\gamma})&\overset{i_*}{\longleftarrow}&K_1(A)&\overset{i_*-\theta_*^{-1}}{\longleftarrow}&K_1(J)\\
\end{array}
\]
It is {easy to see}  that $K_0(J)=0$, {$K_1(J)\cong C(Y, {\mathbb Z})$},
$K_0(A)\cong K_1(A)\cong\mathbb{Z}$. Note that
\[
K_1(J)\overset{i_*-\theta_*^{-1}}{\longrightarrow}K_1(A)
\]
is the composition of maps
\[
K_1(J)\overset{id}{\longrightarrow}K_1(A)
\]
and
\[
K_1(A)\overset{id-\theta_*^{-1}}{\longrightarrow}K_1(A).
\]
Since
\[
K_1(A)\overset{id-\theta_*^{-1}}{\longrightarrow}K_1(A)
\] is zero map, \[
K_1(J)\overset{i_*-\theta_*^{-1}}{\longrightarrow}K_1(A)
\]
is zero map. This gives the short exact sequence (\ref{TKgroup-2}).

To see it splits,
note that $Y$ may be identified with a compact subset of the unit line segment which in turn is viewed as a compact subset of the plane. Note also
that $K_1(C(Y))=\{0\}.$ It follows from the BDF-theory~\cite{BDF}  that
$Ext(C(Y))=\{0\}.$ Let $E$ be a unital essential extension
of the form:
$$
0\to {\cal K}\to E\to C(Y)\to 0.
$$
The fact that $Ext(C(Y))=\{0\}$
implies, in particular,
the short exact sequence
$$
0\to K_0({\cal K})\to K_0(E)\to K_0(C(Y))\to 0
$$
splits for any such $E,$ or,
$$
0\to {\mathbb Z}\to K_0(E)\to C(Y,{\mathbb Z})\to 0
$$
splits for any such group $K_0(E).$ It follows (from Brown's UCT) that
$Ext_{\mathbb Z}({\mathbb Z}, C(Y, {\mathbb Z}))=\{0\}.$
Therefore the short exact sequence (\ref{TKgroup-2}) splits.
\end{proof}

\begin{Corollary}
$K_i(A_{\theta, \gamma})$ is torsion free, $i=0,1.$  If $\gamma$ has finitely many zeros,
then $K_i(A_{\theta, \gamma})$ is free, $i=0,1.$
\end{Corollary}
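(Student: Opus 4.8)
The plan is to read everything off Theorem~\ref{T:K-groups}, splitting into cases according to the size of the zero set $Y$ of $\gamma$, and supplying only the elementary observation that $C(Y,{\mathbb Z})$ has no torsion.

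First I would dispose of the degenerate cases. If $\gamma$ is invertible, i.e.\ $Y=\emptyset$, then $A_{\theta,\gamma}=A_\theta$, so $K_0(A_{\theta,\gamma})\cong{\mathbb Z}^2\cong K_1(A_{\theta,\gamma})$, which are free, hence torsion free; this case is also subsumed by the ``finitely many zeros'' assertion. (If one wishes to allow $Y={\mathbb T}$, then $\gamma\equiv 0$, so $x=0$ and $A_{\theta,\gamma}=C^*(w)\cong C({\mathbb T})$, whose $K$-groups are both ${\mathbb Z}$.)

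Next, for $\emptyset\neq Y\neq{\mathbb T}$, I would invoke Theorem~\ref{T:K-groups} directly. It gives $K_1(A_{\theta,\gamma})\cong{\mathbb Z}$, which is torsion free, and it gives the splitting short exact sequence \eqref{TKgroup-2}, whence $K_0(A_{\theta,\gamma})\cong{\mathbb Z}\oplus C(Y,{\mathbb Z})$. The one thing to record is that $C(Y,{\mathbb Z})$ is torsion free: with pointwise operations it is a subgroup of the product group $\prod_{y\in Y}{\mathbb Z}$, and any subgroup of a torsion-free abelian group is torsion free (concretely, if $nf=0$ with $n\neq 0$, then $nf(y)=0$ in ${\mathbb Z}$ for every $y\in Y$, so $f=0$). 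A direct sum of torsion-free groups is torsion free, so $K_0(A_{\theta,\gamma})$ is torsion free as well. Finally, for the freeness statement, suppose $\gamma$ has finitely many zeros; then $Y$ is finite, so either $Y=\emptyset$ (handled above) or $|Y|=n\geq 1$, in which case $C(Y,{\mathbb Z})\cong{\mathbb Z}^n$ and \eqref{TKgroup-3} gives $K_0(A_{\theta,\gamma})\cong{\mathbb Z}^{n+1}$ while $K_1(A_{\theta,\gamma})\cong{\mathbb Z}$ by \eqref{TKgroup-1}; both are free.

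There is no genuine obstacle here: the entire content sits in Theorem~\ref{T:K-groups}, and the only steps that need a word are the elementary absence of torsion in $C(Y,{\mathbb Z})$ and the bookkeeping of the boundary cases $Y=\emptyset$ (and $Y={\mathbb T}$).
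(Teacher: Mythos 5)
Your proposal is correct and is essentially the argument the paper intends: the corollary is read off directly from Theorem~\ref{T:K-groups}, using that $K_0(A_{\theta,\gamma})\cong{\mathbb Z}\oplus C(Y,{\mathbb Z})$ by the splitting of (\ref{TKgroup-2}) and that $C(Y,{\mathbb Z})$ is torsion free (and is ${\mathbb Z}^n$ when $|Y|=n$). Your explicit treatment of the boundary cases $Y=\emptyset$ and $Y={\mathbb T}$ is a small but welcome addition, since the theorem as stated excludes them.
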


\section{Classification of simple $C^*$-algebras of $A_{\theta, \gamma}$}

{
In this section, we will discuss the structure of $A_{\theta, \gamma}$ when
it is simple. } For recursive subhomogeneous algebras see~\cite{Phi}, Section 1.
Recall also that the Jiang-Su algebra $\cal{Z}$ is a unital simple $C^*$-algebra of recursive subhomogeneous $C^*$-algebra with one dimensional base spaces with a unique tracial state and with $K_0(\cal{Z})=\mathbb{Z}$ and
$K_1({\cal Z})=\{0\}$ (see~\cite{JS}).

\begin{Lemma}\label{NLzstable}
{
Let $\theta$ be an irrational number.
 Suppose that
$\gamma$ has at least one zero.
Then $A_{\theta, \gamma}$ is an inductive limit of
 recursive subhomogeneous $C^*$-algebras
with one dimensional base spaces. In particular, if $A_{\theta,\gamma}$ is simple, then $A_{\theta,\gamma}$ is
${\cal Z}$-stable, where ${\cal Z}$ is the Jiang-Su algebra~\cite{JS}.}
\end{Lemma}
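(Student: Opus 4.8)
**The plan is to realize $A_{\theta,\gamma}$ explicitly as a crossed-product–type inductive limit, following the circle of ideas used by Q. Lin and N.~C. Phillips for irrational rotation algebras with towers of partitions.** The starting point is the identification already established in this paper: $A_{\theta,\gamma}=C^*(C(\mathbb T),uC_0(\mathbb T\setminus Y))\subseteq A_\theta=C(\mathbb T)\rtimes_\phi\mathbb Z$. Since $Y\neq\emptyset$, I would fix $\xi_0\in Y$ and use $\xi_0$ as the ``cut point'' of the circle. For $n\geq 1$, consider the orbit points $\xi_0,\phi(\xi_0),\dots,\phi^{n-1}(\xi_0)$ (together with $\xi_0$ again after a suitable large return); because $\theta$ is irrational these are distinct, and because $\gamma(\xi_0)=0$ the generator $uf$ with $f\in C_0(\mathbb T\setminus Y)$ automatically vanishes at $\xi_0$. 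This is exactly the mechanism that turns the crossed product by $\mathbb Z$ into an inductive limit of \emph{interval} algebras: cutting the circle at $\xi_0$ (and at its first $n$ iterates) breaks the ``wrap-around'' unitary into a partial isometry supported away from the cut, so the relevant building block at stage $n$ is a direct sum of algebras of the form $C([0,1])\otimes M_{k}$, glued recursively along the orbit — i.e. a recursive subhomogeneous algebra with one-dimensional base spaces.

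\textbf{The key steps, in order, would be the following.} First, set up the Rokhlin-type tower: choose a decreasing sequence of arcs $I_n\ni\xi_0$ and decompose $\mathbb T$ into the $\phi$-translates of $I_n$ together with a remainder arc, using the three-distance (Steinhaus) structure of $\{k\theta\}$ so that the tower has exactly two column heights at each stage. Second, write down the subalgebra $C_n\subseteq A_{\theta,\gamma}$ generated by $C(\mathbb T)$ and the ``truncated'' elements $u\,\chi$ where $\chi$ ranges over functions supported inside the union of the tower columns (and vanishing at $Y$, which is free since $\xi_0\in Y$ and the columns are designed to miss the cut). Identify $C_n$ as a recursive subhomogeneous algebra with one-dimensional base: each column of height $k$ contributes a block isomorphic to $\{g\in C([0,1],M_k): g(0),g(1)\ \text{pinched appropriately}\}$, with the pinching dictated by how $\gamma$ vanishes along the column and by the boundary identifications between columns. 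Third, check $C_n\subseteq C_{n+1}$ and $\overline{\bigcup_n C_n}=A_{\theta,\gamma}$; density follows from the standard dense spanning set $\{\sum x^nf_n(w)+f_0(w)+\sum f_{-n}(w)(x^*)^n\}$ recorded in Section~2, since each monomial $x^nf_n(w)=u^n\gamma_n(w)f_n(w)$ can be approximated inside $C_N$ once $N$ is large enough that the tower columns are long enough to accommodate the word of length $n$ (here again $\gamma_n$ vanishing on iterates of $Y$ is what makes the truncation harmless). Finally, invoke the cited $\mathcal Z$-stability machinery: an inductive limit of recursive subhomogeneous algebras with uniformly bounded (indeed one-) dimensional base spaces, when simple and with slow dimension growth — which is automatic here since the base spaces are one-dimensional — absorbs the Jiang–Su algebra $\mathcal Z$. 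Simplicity is supplied by Theorem~\ref{SimpleAtheta}, so the second sentence of the lemma is immediate from the first.

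\textbf{The main obstacle is the bookkeeping in Step~2: correctly identifying the pinched interval algebras and verifying the recursive-subhomogeneous gluing data.} The subtlety flagged in the remark before the analogous theorem in~\cite{Ri} — ``some details should be treated carefully'' — reappears here: unlike the classical $A_\theta$ case where the building blocks are honest matrix algebras over intervals, the vanishing of $\gamma$ along an orbit forces certain boundary evaluations of the sections to land in proper corners (rank drops), so the blocks are genuine recursive subhomogeneous algebras rather than plain homogeneous ones, and one must check that the pullback/gluing maps in the recursive description are unital and that the resulting $C_n$ is exactly the $C^*$-algebra generated by $C(\mathbb T)$ and the truncated generators, with nothing extra. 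I would handle this by working one column at a time: a column of height $k$ based on an arc $J$ with $\phi^j(J)$ meeting $Y$ for $j$ in some subset $S\subseteq\{0,\dots,k-1\}$ gives sections in $M_k(C(J))$ subject to the condition that the $(j,j{+}1)$ off-diagonal ``shift'' entry vanishes whenever $j\in S$; this is precisely a recursive subhomogeneous presentation, and the passage $C_n\hookrightarrow C_{n+1}$ refines $J$ and lengthens columns in the standard tower-refinement way. Once this local picture is pinned down, compatibility of the tower refinements (the three-distance combinatorics) and the density argument are routine, and the $\mathcal Z$-stability conclusion is a direct citation.
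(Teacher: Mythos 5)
Your strategy --- rebuild the Rokhlin/first-return tower decomposition by hand, with the circle cut at a point $\xi_0\in Y$, and identify the finite-stage algebras as recursive subhomogeneous --- is workable in spirit, but as written it has a genuine gap exactly where you flag "the main obstacle": the assertion that $C_n$ equals the algebra of pinched matrix-valued sections over the tower bases, and that this section algebra admits a recursive subhomogeneous presentation, is the entire nontrivial content of the lemma, and your sketch does not supply it. Two concrete difficulties. First, the lemma assumes nothing beyond $Y\neq\emptyset$ (simplicity is only used for the second sentence), so $Y$ may be an arbitrary nonempty closed set --- an arc, a fat Cantor set, a set meeting its own rotates --- and then the vanishing conditions inside a single column are imposed along closed subsets of the base arc rather than along whole levels $j\in S$; your description of the pinching as "the $(j,j{+}1)$ shift entry vanishes whenever $j\in S$" does not capture this, and the "generated algebra $=$ pinched section algebra" identification becomes delicate precisely here. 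Second, for the tower based at a single point $\xi_0$ to have finitely many column heights at each stage one needs the first return time to the base to be bounded, which is what having a base with nonempty interior buys you; basing the construction on the possibly thin set $Y$ itself (or on one point of it) forces you to re-prove, essentially from scratch, the structure theorem you are implicitly relying on.

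The paper's proof avoids all of this with a short device you should note: fatten $Y$ to the closed $\epsilon_n$-neighborhoods $Y_n=\{x\in\mathbb T:\mathrm{dist}(x,Y)\le\epsilon_n\}$, which have nonempty interior, so that Theorem 2.3 of \cite{LP} (equivalently Example 1.6 of \cite{Phi}) applies verbatim to give that each $A_{Y_n}=C^*(C(\mathbb T),uC_0(\mathbb T\setminus Y_n))$ is a recursive subhomogeneous $C^*$-algebra with one-dimensional base spaces; since $Y_{n+1}\subseteq Y_n$ one has $A_{Y_n}\subseteq A_{Y_{n+1}}$, and $\overline{\bigcup_n A_{Y_n}}=A_Y=A_{\theta,\gamma}$ because any $f\in C_0(\mathbb T\setminus Y)$ is a uniform limit of functions vanishing on a neighborhood of $Y$. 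In other words, the approximation is done at the level of the orbit-breaking set (from the outside), not at the level of tower truncations, and the hard RSH identification is a citation rather than a construction. For the second sentence the paper also takes a different route from your "slow dimension growth" appeal: each $A_{Y_n}$ has decomposition rank at most one by Theorem 1.6 of \cite{WinterPLMS04}, decomposition rank passes to the inductive limit, and a simple unital $C^*$-algebra with finite decomposition rank is $\mathcal Z$-stable by Theorem 5.1 of \cite{Winterinv11}. Your $\mathcal Z$-stability conclusion would also follow from the first part by Toms-type results on simple ASH algebras, so the essential repair needed in your argument is confined to the first part.
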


\begin{proof}
{Let $Y$ be the set of zeros of $\gamma.$ It is a closed subset of ${\mathbb T}.$
Let $1/4>\ep_n>0$ be such that $\lim_{n\to\infty}\ep_n=0.$ Define
$$
Y_n=\{ x\in {\mathbb T}: {\rm dist}(x, Y)\le \ep_n\},\,\,\,n=1,2,....
$$
Define
\beq\label{NLzstable-1}
A_Y=C^*(C({\mathbb T}), uC_0({\mathbb T}\setminus Y))\andeqn\\
A_{Y_n}=C^*(C({\mathbb T}), uC_0({\mathbb T}\setminus Y_n)).
\eneq
}
{By  Theorem 2.3 of \cite{LP} (or Example 1.6 of~\cite{Phi}), $A_{Y_n}$ is a  recursive
subhomogeneous \CA\, with one dimensional base spaces.  Since $A_{Y}=\lim_{n\to\infty}A_{Y_n}$ (with inclusion maps), the first part of the lemma follows.}

{To see the second part, it follows from Theorem 1.6 of \cite{WinterPLMS04}
that each $A_{Y_n}$ has decomposition rank at most one. Therefore $A_Y$ has
decomposition rank one. Since we assume that $A_Y$ is simple, by Theorem 5.1 of \cite{Winterinv11}, $A_Y$ is ${\cal Z}$-stable. Note that
$A_Y=A_{\theta,\gamma}.$
}
\end{proof}

{
\begin{Lemma}\label{NTKtheory}
Suppose that $A_{\theta, \gamma}$ is simple and $Y$ is the set of zeros of $\gamma$. Let $\imath$ be the embedding of $A_{\theta, \gamma}=C^*(C({\mathbb T}), uC_0({\mathbb T}\setminus Y))\subset A_{\theta}$, and let $\rho_{A_{\theta, \gamma}}$ be the induced map of $K_0(A_{\theta, \gamma})$ into $K_0(A_{\theta})$. Then
\beq\label{NTKtheory-1}
\rho_{A_{\theta, \gamma}}(K_0(A_{\theta, \gamma}))={\mathbb Z}+{\mathbb Z}{\theta}
\andeqn {\rm ker}\rho_{A_{\theta, \gamma}}\cong C(Y,{\mathbb Z})/{\mathbb Z},
\eneq
where  ${\mathbb Z}$ is identified
with constant functions in $C(Y, {\mathbb Z}).$
Thus one has the following splitting short exact sequence:
\beq\label{NTKtheory-2}
0\to C(Y, {\mathbb Z})/{\mathbb Z}\to K_0(A_{\theta, \gamma})
\overset{\rho_{A_{\theta,\gamma}}}{\longrightarrow} {\mathbb Z}+{\mathbb Z}{\theta}\to 0.
\eneq
Moreover, in this case,
\beq\label{NTKtheory-2}
K_0(A_{\theta, \gamma})_+=\{0\}\cup\{x\in K_0(A_{\theta, \gamma}):
\rho_{A_{\theta,\gamma}}(x)>0\}
\eneq
and $K_0(A_{\theta, \gamma})$ is weakly unperforated and
has the Riesz interpolation property.
\end{Lemma}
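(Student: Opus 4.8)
The plan is to reduce the whole statement to a computation with the unique trace together with the $K$-theory exact sequence of Theorem~\ref{T:K-groups}, and to invoke $\mathcal Z$-stability (Lemma~\ref{NLzstable}) only for the order-theoretic conclusions. We may assume $Y\neq\emptyset$ (for $Y=\emptyset$ one has $A_{\theta,\gamma}=A_\theta$ and everything is trivial), and $Y\neq\mathbb T$ by Corollary~\ref{C:non-simplicity}, so Theorem~\ref{T:K-groups} applies. Write $\tau$ for the tracial state on $A_{\theta,\gamma}$ and $\tau_{A_\theta}$ for the unique trace of $A_\theta$. Since $A_{\theta,\gamma}$ is simple it has a unique trace (Theorem~\ref{SimpleAtheta}), and $\tau_{A_\theta}\circ\imath$ is a tracial state on $A_{\theta,\gamma}$, so $\tau=\tau_{A_\theta}\circ\imath$ and hence $\tau_{*}=(\tau_{A_\theta})_{*}\circ\rho_{A_{\theta,\gamma}}$ on $K_0$. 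Recall from \cite{PV,Ri} that $(\tau_{A_\theta})_{*}\colon K_0(A_\theta)\to\mathbb R$ is an isomorphism of ordered groups onto $\mathbb Z+\mathbb Z\theta$; in particular it is injective, so $\ker\rho_{A_{\theta,\gamma}}=\ker(\tau_{*})$, and $\rho_{A_{\theta,\gamma}}$ is onto $K_0(A_\theta)$ precisely when $\tau_{*}\bigl(K_0(A_{\theta,\gamma})\bigr)=\mathbb Z+\mathbb Z\theta$.

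The inclusion $\imath$ forces $\tau_{*}(K_0(A_{\theta,\gamma}))\subseteq(\tau_{A_\theta})_{*}(K_0(A_\theta))=\mathbb Z+\mathbb Z\theta$, and for the reverse inclusion Theorem~\ref{T:Rieffel projection} supplies a projection of trace $\alpha$ for every $\alpha\in(\mathbb Z+\mathbb Z\theta)\cap[0,1]$, and such $\alpha$ generate $\mathbb Z+\mathbb Z\theta$; hence $\tau_{*}(K_0(A_{\theta,\gamma}))=\mathbb Z+\mathbb Z\theta$ and $\rho_{A_{\theta,\gamma}}$ is surjective, giving $\rho_{A_{\theta,\gamma}}(K_0(A_{\theta,\gamma}))=K_0(A_\theta)\cong\mathbb Z+\mathbb Z\theta$. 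For the kernel I would use the exact sequence of Theorem~\ref{T:K-groups}, $0\to\mathbb Z[1]\to K_0(A_{\theta,\gamma})\xrightarrow{\ \pi\ }C(Y,\mathbb Z)\to0$, where the copy of $\mathbb Z$ is generated by the class of the unit and $\pi([1])=0$. Since $\tau_{*}([1])=1$ we get $\mathbb Z[1]\cap\ker\tau_{*}=0$, so $\pi$ is injective on $\ker\tau_{*}$; and since $\tau_{*}(\mathbb Z[1])=\mathbb Z$ while $\pi([1])=0$, reduction of $\tau_{*}$ modulo $\mathbb Z$ descends to a homomorphism $\bar\tau\colon C(Y,\mathbb Z)\to\mathbb R/\mathbb Z$ with $\pi(\ker\tau_{*})=\ker\bar\tau$ (lift a class and correct by the appropriate integer multiple of $[1]$). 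The image of $\bar\tau$ is $(\mathbb Z+\mathbb Z\theta)/\mathbb Z$, which is infinite cyclic because $\theta$ is irrational, so $\bar\tau$ is a surjection of $C(Y,\mathbb Z)$ onto $\mathbb Z$. Now $C(Y,\mathbb Z)$ is a countable free abelian group: a continuous $\mathbb Z$-valued function on $Y\subseteq\mathbb T$ is locally constant, so $C(Y,\mathbb Z)\cong C(\pi_0(Y),\mathbb Z)$ with $\pi_0(Y)$ the compact, metrizable, totally disconnected space of connected components of $Y$, and $C(\pi_0(Y),\mathbb Z)$ is a direct limit of finitely generated free groups along split monomorphisms, hence free. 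The kernel of any surjection of a countable free abelian group onto $\mathbb Z$ is again free, of rank one less; evaluation at a point of $Y$ is one such surjection with kernel $\cong C(Y,\mathbb Z)/\mathbb Z$ (the $\mathbb Z$ being the constants), so $\ker\bar\tau\cong C(Y,\mathbb Z)/\mathbb Z$, and therefore $\ker\rho_{A_{\theta,\gamma}}=\ker\tau_{*}\cong\ker\bar\tau\cong C(Y,\mathbb Z)/\mathbb Z$. Together with surjectivity of $\rho_{A_{\theta,\gamma}}$ this yields the exact sequence~(\ref{NTKtheory-2}), which splits since $\mathbb Z+\mathbb Z\theta\cong\mathbb Z^{2}$ is free.

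It remains to describe $K_0(A_{\theta,\gamma})_+$ and to deduce the two order properties. By Lemma~\ref{NLzstable}, $A_{\theta,\gamma}$ is a simple, unital, $\mathcal Z$-stable $C^*$-algebra with a unique (hence faithful) trace; in particular it is stably finite and has strict comparison of positive elements implemented by $\tau$. Hence $K_0(A_{\theta,\gamma})_+$ is exactly the set of classes of projections over $A_{\theta,\gamma}$: for such a class $[p]$ one has $\tau_{*}([p])=\tau(p)\ge0$ with equality only when $p=0$, i.e.\ $[p]=0$; conversely, writing an $x$ with $\tau_{*}(x)>0$ as $[p]-[q]$ gives $\tau(q)<\tau(p)$, so $q$ is Cuntz subequivalent to $p$ and $x$ is the class of a nonzero projection. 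Thus $K_0(A_{\theta,\gamma})_+=\{0\}\cup\{x:\tau_{*}(x)>0\}$, which is $\{0\}\cup\{x:\rho_{A_{\theta,\gamma}}(x)>0\}$ since the order on $K_0(A_\theta)$ is the one induced by $(\tau_{A_\theta})_{*}$. Weak unperforation is now immediate, as $k\tau_{*}(x)>0\iff\tau_{*}(x)>0$. Riesz interpolation follows formally: given $x_1,x_2\le y_1,y_2$, put $a=\max_i\tau_{*}(x_i)$ and $b=\min_j\tau_{*}(y_j)$, so $a\le b$; if $a<b$, any $z\in K_0(A_{\theta,\gamma})$ with $a<\tau_{*}(z)<b$ interpolates, and such $z$ exists because $\tau_{*}$ is onto the dense subgroup $\mathbb Z+\mathbb Z\theta$ of $\mathbb R$; if $a=b$, the extremal $x_i$ and $y_j$ must coincide (their difference is positive with trace $0$, hence zero) and serve as the interpolant.

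I expect the identification $\ker\rho_{A_{\theta,\gamma}}\cong C(Y,\mathbb Z)/\mathbb Z$ to be the delicate point: one must verify that the $\mathbb Z$ appearing in Theorem~\ref{T:K-groups} is really generated by $[1]$, that $\tau_{*}$ descends correctly modulo $\mathbb Z$, and that an arbitrary surjection $C(Y,\mathbb Z)\twoheadrightarrow\mathbb Z$ has kernel isomorphic to $C(Y,\mathbb Z)/\mathbb Z$, the freeness of $C(Y,\mathbb Z)$ being the input that makes this last comparison work. A more conceptual route to the range-and-kernel statements is to apply $K$-theory to the map of Exel's six-term exact sequences induced by $\imath$, comparing $A_{\theta,\gamma}=C^*(C(\mathbb T),\Theta)$ (Lemma~\ref{L:Ruy}) with $A_\theta=C^*(C(\mathbb T),\alpha_\theta)$; a diagram chase then simultaneously shows $\rho_{A_{\theta,\gamma}}$ onto and identifies $\ker\rho_{A_{\theta,\gamma}}$ with $\ker\bigl(K_1(C_0(\mathbb T\setminus Y))\to K_1(C(\mathbb T))\bigr)\cong C(Y,\mathbb Z)/\mathbb Z$.
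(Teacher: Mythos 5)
Your argument is correct, but the first half follows a genuinely different route from the paper's. For the range and the kernel of $\rho_{A_{\theta,\gamma}}$ the paper invokes Putnam's six-term exact sequence for the inclusion $A_{\theta,\gamma}\subset A_\theta$ (Theorem 2.4 and Example 2.6 of \cite{Putnam}): surjectivity drops out of $K_1(C(Y))=0$, and $\ker\rho_{A_{\theta,\gamma}}$ is read off as the image of $K_0(C(Y))$, then computed to be $C(Y,\mathbb Z)/\mathbb Z$ using $K_1(A_{\theta,\gamma})=\mathbb Z$. You instead factor $\tau_*=(\tau_{A_\theta})_*\circ\rho_{A_{\theta,\gamma}}$ through the order embedding of $K_0(A_\theta)$ into $\mathbb R$, get surjectivity from the Rieffel projections of Theorem~\ref{T:Rieffel projection} (legitimately — that corollary rests only on Corollaries~\ref{C:non-simplicity} and~\ref{C:Rieffel projections}, so there is no circularity), and identify the kernel by pushing $\tau_*$ modulo $\mathbb Z$ through the sequence $0\to\mathbb Z[1]\to K_0(A_{\theta,\gamma})\to C(Y,\mathbb Z)\to0$ of Theorem~\ref{T:K-groups} and comparing two surjections of the free abelian group $C(Y,\mathbb Z)$ onto $\mathbb Z$. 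This buys self-containedness (no appeal to Putnam) and makes transparent why $\rho_{A_{\theta,\gamma}}$ is essentially the trace map, which you then reuse for the order structure; the cost is that your identification of $\ker\rho_{A_{\theta,\gamma}}$ with $C(Y,\mathbb Z)/\mathbb Z$ is only an abstract isomorphism of free abelian groups of equal rank, whereas the paper's is the natural one coming from $K_0(C(Y))$ — but the lemma asserts only an abstract isomorphism, so this is adequate. Your checks that the $\mathbb Z$ in Theorem~\ref{T:K-groups} is generated by $[1]$ (it is: that copy of $\mathbb Z$ is $i_*(K_0(C^*(w)))$) and that $C(Y,\mathbb Z)$ is free all go through. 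The second half — strict comparison from $\mathcal Z$-stability, then the positive cone, weak unperforation, and Riesz interpolation — is the same argument as the paper's, carried out in more detail.
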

}

\begin{proof}
{
Denote by $\phi: {\mathbb T}\to {\mathbb T}$ the rotation of the unit circle by $\theta$, i.e., $\phi(z)=e^{2\pi i\theta}z$ for $z\in \mathbb{T}$.
By the assumption of the lemma and Theorem~\ref{SimpleAtheta}, $\phi^n(Y)\cap Y=\emptyset$ for all integers $n\not=0.$
By Theorem 2.4 and Example 2.6 of \cite{Putnam}, one obtains the following
six term exact sequence:}
{
\[
\begin{array}{ccccc}
K_0(C(Y))&{\longrightarrow}&K_0(A_{\theta, \gamma})&\overset{\imath_{*0}}{\longrightarrow}&K_0(A_{\theta})\\
\Big\uparrow&&&&\Big\downarrow\\
K_1(A_{\theta})&\overset{\imath_{*1}}{\longleftarrow}&K_1(A_{\theta,\gamma})
&{\longleftarrow}&K_1(C(Y))\\
\end{array}
\]
Note that $Y$ is a proper closed subset of the circle. It follows
that $K_1(C(Y))=\{0\}$ and  $\imath_{*0}=\rho_{A_{\theta, \gamma}}$ is surjective.
Since
$K_0(A_{\theta})={\mathbb Z}+{\mathbb Z}\theta$ as an ordered subgroup of
${\mathbb R},$  ${\rm Ran}\rho_{A_{\theta,\gamma}}=\mathbb{Z}+\mathbb{Z}\theta$.   By Theorem \ref{T:K-groups}, $K_1(A_{\theta, \gamma})={\mathbb Z}.$ One then computes that
$$
{\rm ker}{\rho_{A_{\theta, \gamma}}}\cong C(Y, {\mathbb Z})/{\mathbb Z}.
$$
}
{
It is proved in Lemma~\ref{NLzstable} that $A_{\theta, \gamma}$ is ${\cal Z}$-stable. In particular, $K_0(A_{\theta, \gamma})$ has the strict comparison.
Therefore
\beq\label{NTKtheory-4}
K_0(A_{\theta, \gamma})_+=\{0\}\cup\{x\in K_0(A_{\theta, \gamma}): \rho_{A_{\theta, \gamma}}(x)>0\}.
\eneq
It follows that $K_0(A_{\theta, \gamma})$ is weakly unperforated and
has the Riesz interpolation property.
}
\end{proof}

For the convenience of the reader, we recall the meaning of tracial rank zero (or tracial topological rank zero) for simple $C^*$-algebras.

\begin{Definition}
\emph{
Let $A$ be a simple unital $C^*$-algebra. Then $A$ has tracial rank zero if for every subset ${\cal F}\subset A$, every $\epsilon>0$, and every nonzero positive element $c\in A$, there exists a projection $p\in A$ and a unital finite dimensional subalgebra $E\subset pAp$ such that:
\begin{enumerate}
\item[(1)] $\|[a,p]\|<\epsilon$ for all $a\in {\cal F}$.
\item[(2)] ${\rm dist}(pap, E)<\epsilon$ for all $a\in {\cal F}$.
\item[(3)] $1-p$ is Murray-von Neumann equivalent to a projection in $\overline{cAc}$.
\end{enumerate}
}
\end{Definition}
This definition is equivalent to the original one following from~\cite{Lin2}, Proposition 3.8.
\begin{Theorem}\label{T1}
{
Let $A_{\theta, \gamma}$ be a unital simple $C^*$-algebra. Then
$A_{\theta, \gamma}$ is a unital simple $A{\mathbb T}$-algebra of real rank zero. In particular,
$A_{\theta, \gamma}$ has tracial rank zero.}
\end{Theorem}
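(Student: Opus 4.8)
The plan is to collect the structural and $K$-theoretic information obtained in Sections~3--6 and feed it into the Elliott classification program. We may assume $Y\neq\emptyset$, since otherwise $\gamma$ is invertible, $A_{\theta,\gamma}=A_\theta$, and the assertion is the Elliott--Evans theorem~\cite{EE}. Assuming $A_{\theta,\gamma}$ simple, I would first record the following. It is unital and separable; it is nuclear and satisfies the UCT, because by Lemma~\ref{NLzstable} it is an inductive limit of recursive subhomogeneous (hence type~I) $C^*$-algebras and both properties pass to inductive limits. By Theorem~\ref{SimpleAtheta} it has a unique tracial state $\tau$, so $T(A_{\theta,\gamma})$ is a point and ${\rm Aff}(T(A_{\theta,\gamma}))={\mathbb R}$. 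By Lemma~\ref{NLzstable} it is ${\cal Z}$-stable. Finally, by Theorem~\ref{T:K-groups} and Lemma~\ref{NTKtheory}, $K_1(A_{\theta,\gamma})\cong{\mathbb Z}$, while $K_0(A_{\theta,\gamma})$ is a countable torsion-free group with the Riesz interpolation property and order $K_0(A_{\theta,\gamma})_+=\{0\}\cup\{x:\rho(x)>0\}$, where $\rho=\rho_{A_{\theta,\gamma}}$ has image ${\mathbb Z}+{\mathbb Z}\theta$; since $\theta$ is irrational this image is \emph{dense} in ${\mathbb R}$, and with the stated order description one checks directly that $K_0(A_{\theta,\gamma})$ is unperforated.

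Next I would deduce that $A_{\theta,\gamma}$ has real rank zero. Being simple, unital and ${\cal Z}$-stable, $A_{\theta,\gamma}$ has stable rank one and strict comparison of positive elements; combined with the density of $\rho(K_0(A_{\theta,\gamma}))$ in ${\rm Aff}(T(A_{\theta,\gamma}))$ this forces real rank zero, the point being that strict comparison lets one produce inside any hereditary subalgebra a projection whose trace approximates any prescribed value, which is precisely what is needed for real rank zero in the ${\cal Z}$-stable setting. This is where the irrationality of $\theta$ enters decisively, through the density of ${\mathbb Z}+{\mathbb Z}\theta$.

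At this stage $A_{\theta,\gamma}$ is a unital simple separable nuclear ${\cal Z}$-stable $C^*$-algebra of real rank zero satisfying the UCT, hence it has tracial rank zero and is determined up to isomorphism by its Elliott invariant $(K_0,K_0^+,[1],K_1,T,\rho)$. The six-tuple computed above --- $K_0$ a countable unperforated Riesz group, $K_1\cong{\mathbb Z}$ a countable torsion-free group, $T$ a one-point simplex, and $\rho$ with dense image inducing the order --- lies in the range of the Elliott invariant for unital simple $A{\mathbb T}$-algebras of real rank zero, so there is such an algebra $B$ with the same invariant; by the classification theorem $A_{\theta,\gamma}\cong B$. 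Therefore $A_{\theta,\gamma}$ is a unital simple $A{\mathbb T}$-algebra of real rank zero, and since a simple unital $A{\mathbb T}$-algebra of real rank zero has tracial rank zero, the ``in particular'' clause follows.

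I expect the main obstacle to be the real rank zero step. The computations of Section~6 make it very plausible that $A_{\theta,\gamma}$ carries an abundance of projections, but converting ``$\rho(K_0)$ is dense'' into ``real rank zero'' genuinely requires the regularity of $A_{\theta,\gamma}$ (${\cal Z}$-stability, strict comparison, stable rank one) rather than any soft argument; everything afterward --- verifying the hypotheses of the classification theorem and realizing the invariant by an $A{\mathbb T}$-algebra --- is routine once the ordered $K$-theory and trace data of Sections~3--6 are available.
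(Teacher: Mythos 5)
Your proposal is correct in outline but follows a genuinely different route from the paper at the decisive step. The paper never establishes real rank zero of $A_{\theta,\gamma}$ directly: it tensors with a UHF algebra $U$ of infinite type, observes that $A_{\theta,\gamma}\otimes U$ is approximately divisible with a unique trace so that projections separate traces and \cite{BKR} gives real rank zero of the \emph{tensor product}, applies Lin's criterion (5.16 of \cite{Lincrell}, using the locally type I structure from Lemma~\ref{NLzstable}) to conclude that $A_{\theta,\gamma}\otimes U$ has tracial rank zero, and then invokes the Lin--Niu classification \cite{LNadv} to get $C\otimes{\cal Z}\cong A_{\theta,\gamma}\otimes{\cal Z}$ for a model $A{\mathbb T}$-algebra $C$ of real rank zero supplied by \cite{Ellinter}; the ${\cal Z}$ is then stripped using ${\cal Z}$-stability of both sides, and real rank zero and tracial rank zero of $A_{\theta,\gamma}$ itself are read off \emph{a posteriori} from the isomorphism. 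You instead prove real rank zero of $A_{\theta,\gamma}$ directly from ${\cal Z}$-stability, strict comparison, and density of $\rho(K_0)$ in ${\rm Aff}(T)$ --- this is R{\o}rdam's theorem on the real rank of ${\cal Z}$-absorbing algebras and is a legitimate alternative; it buys you the ability to apply the tracial rank zero criterion and the TAF classification to $A_{\theta,\gamma}$ itself, avoiding both the UHF tensor and the ${\cal Z}$-stripping step. The one place you should be more careful is the clause ``hence it has tracial rank zero'': this does not follow from ``unital simple separable nuclear ${\cal Z}$-stable real rank zero UCT'' by soft means (at least not without quasidiagonality input); the clean justification is exactly the one the paper uses for $A_{\theta,\gamma}\otimes U$, namely Lin's theorem applied to $A_{\theta,\gamma}$ using real rank zero, stable rank one, weak unperforation of $K_0$, and the fact that $A_{\theta,\gamma}$ is an inductive limit of recursive subhomogeneous (hence type I) algebras. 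With that citation supplied, your argument closes and is, if anything, slightly more economical than the paper's.
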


\begin{proof}
{
By Lemma~\ref{NLzstable} and Theorem~\ref{NTKtheory}, $A_{\theta, \gamma}$ is ${\cal Z}$-stable,
$K_0(A_{\theta, \gamma})$ is weakly unperforated and
has the Riesz interpolation property. Since $A_{\theta, \gamma}$ is an inductive limit of type I \CA s, it satisfies the universal coefficient theorem.
By Corollary~\ref{T:K-groups}, $K_i(A_{\theta, \gamma})$ is torsion free.
Therefore, by \cite{Ellinter}, there is a unital simple $A{\mathbb T}$-algebra $C$ of real rank zero such that
\beq\label{T1-1}
(K_0(C), K_0(C)_+,[1_C], K_1(C))\cong (K_0(A_{\theta, \gamma}), K_0(A_{\theta, \gamma}), [1_{A_{\theta, \gamma}}], K_1(A_{\theta, \gamma})).
\eneq
}

{
Let $U$ be a UHF-algebra of infinite type.
Consider $B=A_{\theta, \gamma}\otimes U.$ $B$ has a unique tracial state and is approximately divisible. Therefore its projections
separate the tracial state space. It follows from \cite{BKR} that $B$ has real rank zero. Since $B$ is ${\cal Z}$-stable, $B$ has strict comparison for projections.
Therefore $K_0(B)$ is weakly unperforated.
It follows from Lemma \ref{NLzstable} that $B$ is a locally type
I $C^*$-algebra.  Then,  by applying 5.16 of \cite{Lincrell}, $B$ has tracial rank zero. We also note that since $A_{\theta, \gamma}$ satisfies the universal coefficient theorem, so does $B.$}

{It follows from the classification theorem of \cite{LNadv} (Theorem 5.4) that
$C\otimes {\cal Z}\cong A_{\theta, \gamma}\otimes {\cal Z}.$
However, $C$ is ${\cal Z}$-stable and, by Lemma \ref{NLzstable}, $A_{\theta, \gamma}$ is also ${\cal Z}$-stable, one actually has}
\beq\label{T1-4}
C\cong A_{\theta, \gamma}.
\eneq

\end{proof}

\begin{Corollary}\label{T1c3}
{Let $\theta$ be an irrational number, $\gamma\in C({\mathbb T})$ be a non-negative function, let $Y$ be the set of zeros of $\gamma$ and let $\phi: {\mathbb T}\to {\mathbb T}$ be the homeomorphism by rotation of $\theta.$
Then the following are equivalent:}

{(1) $A_{\theta, \gamma}$ is simple;}

{(2) $A_{\theta, \gamma}$ has a unique tracial state;}

{(3) $\phi^n(Y)\cap Y=\emptyset$ for all integers $n\not=0;$ }

{(4) For each $\xi\in {\mathbb T},$ $Orb(\xi)\cap Y$ contains at most one point; }

{(5) $A_{\theta, \gamma}$ is a unital simple $A{\mathbb T}$-algebra of real rank zero.}
\end{Corollary}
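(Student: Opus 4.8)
The plan is to deduce Corollary~\ref{T1c3} by collecting results already established in the excerpt, so the proof is essentially a bookkeeping exercise rather than a new argument. The equivalence of (1), (2), (3), (4) has already been proved in full: it is exactly the content of Theorem~\ref{SimpleAtheta}, whose proof combines Theorem~\ref{P:U unique trace} (giving $(2)\Leftrightarrow(3)\Leftrightarrow(4)$), Proposition~\ref{NP} (translating the orbit condition (3) into $Y_1\cap Y_2=\emptyset$), and Lemma~\ref{T:simple} (giving $(1)\Leftrightarrow Y_1\cap Y_2=\emptyset$). So the first step is simply to cite Theorem~\ref{SimpleAtheta} for $(1)\Leftrightarrow(2)\Leftrightarrow(3)\Leftrightarrow(4)$.

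Next I would establish $(5)\Rightarrow(1)$, which is immediate: a unital simple $A\mathbb{T}$-algebra is in particular a simple $C^*$-algebra, so (5) trivially implies (1). The only substantive implication left is $(1)\Rightarrow(5)$, and this is precisely Theorem~\ref{T1}: if $A_{\theta,\gamma}$ is a unital simple $C^*$-algebra, then it is a unital simple $A\mathbb{T}$-algebra of real rank zero. Since Theorem~\ref{T1} is already proved earlier in the excerpt, I would just invoke it. Putting these together closes the cycle of equivalences.

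Thus the entire proof is: ``By Theorem~\ref{SimpleAtheta}, conditions (1), (2), (3), (4) are equivalent. If (5) holds then $A_{\theta,\gamma}$ is simple, so (5)$\Rightarrow$(1). Conversely, if (1) holds, then by Theorem~\ref{T1}, $A_{\theta,\gamma}$ is a unital simple $A\mathbb{T}$-algebra of real rank zero, so (1)$\Rightarrow$(5). Hence all five conditions are equivalent.'' There is no real obstacle here — the work has all been done in Theorems~\ref{P:U unique trace}, \ref{SimpleAtheta}, \ref{T1} and Lemma~\ref{NLzstable}/Theorem~\ref{NTKtheory}; the main (very mild) point to be careful about is that Theorem~\ref{T1} as stated already presupposes simplicity, so one must not accidentally claim (5) holds without first knowing (1), but the logical order above handles this correctly.

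\begin{proof}
By Theorem~\ref{SimpleAtheta}, conditions (1), (2), (3) and (4) are equivalent. If (5) holds, then $A_{\theta,\gamma}$ is in particular a simple $C^*$-algebra, so (5)$\Rightarrow$(1). Conversely, if (1) holds, i.e. $A_{\theta,\gamma}$ is a unital simple $C^*$-algebra, then by Theorem~\ref{T1}, $A_{\theta,\gamma}$ is a unital simple $A{\mathbb T}$-algebra of real rank zero; hence (1)$\Rightarrow$(5). Therefore all five conditions are equivalent.
\end{proof}
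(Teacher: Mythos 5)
Your proof is correct and matches the paper's (implicit) argument exactly: the paper states this corollary without proof precisely because it follows immediately from Theorem~\ref{SimpleAtheta} for the equivalence of (1)--(4) and Theorem~\ref{T1} for (1)$\Rightarrow$(5), with (5)$\Rightarrow$(1) being trivial. Your remark about the logical order needed to invoke Theorem~\ref{T1} is apt.
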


\begin{Theorem}\label{T1c1}
{Let $\theta_1$ and $\theta_2$ be two irrational numbers,
$\gamma_1$ and $\gamma_2\in C({\mathbb T})$ be non-negative functions and let $Y_i$ be the set of zeros of $\gamma_i,$ $i=1,2.$
Suppose that $A_{\theta_i, \gamma_i}$ is simple,
or one of the equivalent conditions in Corollary~\ref{T1c3} satisfies.
Then  $A_{\theta_1, \gamma_1}\cong A_{\theta_2, \gamma_2}$  if and only if
the following hold:
\beq\label{T1c1-1}
\theta_1=\pm \theta_2 mod(\mathbb Z)\andeqn C(Y_1, {\mathbb Z})/{\mathbb Z}\cong C(Y_2, {\mathbb Z})/{\mathbb Z}.
\eneq
In particular, when  $\gamma_1$ has only finitely many zeros, then
$A_{\theta_1,\gamma_1}\cong A_{\theta_2, \gamma_2}$  if and only if $\theta_1=\pm \theta_2 mod{\mathbb{Z}}$ and
$\gamma_2$ has the same number of zeros.}
\end{Theorem}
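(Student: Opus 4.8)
The plan is to reduce everything to Elliott's classification of unital simple $A\mathbb{T}$-algebras of real rank zero by their scaled ordered $K$-theory, and then to compute and compare the relevant invariants using Theorem~\ref{T1}, Theorem~\ref{T:K-groups} and Lemma~\ref{NTKtheory}. By Theorem~\ref{T1} each $A_{\theta_i,\gamma_i}$ is a unital simple $A\mathbb{T}$-algebra of real rank zero, hence by the classification theorem (\cite{Ellinter}, invoked already in the proof of Theorem~\ref{T1}) one has $A_{\theta_1,\gamma_1}\cong A_{\theta_2,\gamma_2}$ if and only if $(K_0,K_0^+,[1_{A}],K_1)$ of the two algebras are isomorphic. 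First I would record the invariant: when $Y_i\neq\emptyset$, Theorem~\ref{T:K-groups} gives $K_1(A_{\theta_i,\gamma_i})\cong\mathbb{Z}$, and Lemma~\ref{NTKtheory} gives a \emph{split} exact sequence $0\to C(Y_i,\mathbb{Z})/\mathbb{Z}\to K_0(A_{\theta_i,\gamma_i})\xrightarrow{\rho_i}\mathbb{Z}+\mathbb{Z}\theta_i\to 0$ with $K_0^+=\{0\}\cup\rho_i^{-1}((0,\infty))$ and $\rho_i([1_{A_{\theta_i,\gamma_i}}])=1$; moreover, since $A_{\theta_i,\gamma_i}$ is simple it has a unique trace (Theorem~\ref{SimpleAtheta}), and $\rho_i$, composed with the inclusion $\mathbb{Z}+\mathbb{Z}\theta_i\subset\mathbb{R}$, is precisely the state on $K_0$ induced by that trace. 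When $Y_i=\emptyset$ we have $A_{\theta_i,\gamma_i}=A_{\theta_i}$ with its classical invariant ($K_1\cong\mathbb{Z}^2$, $K_0\cong\mathbb{Z}+\mathbb{Z}\theta_i$).

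For the forward direction, suppose $\Psi\colon A_{\theta_1,\gamma_1}\to A_{\theta_2,\gamma_2}$ is an isomorphism. It is automatically unital and carries the unique trace to the unique trace, so the induced map $\Psi_{*0}$ on $K_0$ intertwines the two trace-induced states. Comparing images in $\mathbb{R}$ forces $\mathbb{Z}+\mathbb{Z}\theta_1=\mathbb{Z}+\mathbb{Z}\theta_2$ as subgroups of $\mathbb{R}$; writing $\theta_1=p+q\theta_2$ and $\theta_2=p'+q'\theta_1$ with $p,q,p',q'\in\mathbb{Z}$ and using irrationality of $\theta_i$ gives $qq'=1$, hence $q=q'=\pm1$, i.e.\ $\theta_1=\pm\theta_2\pmod{\mathbb{Z}}$. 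Since $\Psi_{*0}$ carries the kernel of the state on $K_0(A_{\theta_1,\gamma_1})$ onto that of $K_0(A_{\theta_2,\gamma_2})$, we also get $C(Y_1,\mathbb{Z})/\mathbb{Z}\cong C(Y_2,\mathbb{Z})/\mathbb{Z}$. (That $Y_1=\emptyset$ if and only if $Y_2=\emptyset$ is detected already by $K_1$ being $\mathbb{Z}^2$ versus $\mathbb{Z}$.)

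For the converse, assume $\theta_1=\pm\theta_2\pmod{\mathbb{Z}}$ and fix a group isomorphism $\alpha\colon C(Y_1,\mathbb{Z})/\mathbb{Z}\to C(Y_2,\mathbb{Z})/\mathbb{Z}$ (if $Y_i=\emptyset$, both algebras are $A_{\theta_i}$ and this is the classical fact $A_{\theta_1}\cong A_{\theta_2}$, so assume $Y_i\neq\emptyset$). The hypothesis $\theta_1=\pm\theta_2\pmod{\mathbb{Z}}$ means $\mathbb{Z}+\mathbb{Z}\theta_1$ and $\mathbb{Z}+\mathbb{Z}\theta_2$ are literally the same ordered subgroup of $\mathbb{R}$ with the same order unit $1$. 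Choose splittings $s_i\colon\mathbb{Z}+\mathbb{Z}\theta_i\to K_0(A_{\theta_i,\gamma_i})$ of the sequences of Lemma~\ref{NTKtheory} with $s_i(1)=[1_{A_{\theta_i,\gamma_i}}]$; this is possible because $\rho_i([1])=1$ and $\mathbb{Z}+\mathbb{Z}\theta_i$ is free on $\{1,\theta_i\}$, so one may correct an arbitrary section on the generator $1$. Now define $\Psi_0\colon K_0(A_{\theta_1,\gamma_1})\to K_0(A_{\theta_2,\gamma_2})$ to be $\alpha$ on the kernels and $s_2\circ s_1^{-1}$ on the images of the sections. Then $\rho_2\circ\Psi_0=\rho_1$, so $\Psi_0$ preserves the positive cones (defined entirely through $\rho_i$) and sends $[1]$ to $[1]$; together with the identity $\mathbb{Z}\to\mathbb{Z}$ on $K_1$ this exhibits an isomorphism of invariants, and \cite{Ellinter} yields $A_{\theta_1,\gamma_1}\cong A_{\theta_2,\gamma_2}$.

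Finally, the ``in particular'' statement: if $\gamma_1$ has exactly $n_1$ zeros then $K_0(A_{\theta_1,\gamma_1})\cong\mathbb{Z}^{n_1+1}$ by Theorem~\ref{T:K-groups}, so an isomorphism forces $K_0(A_{\theta_2,\gamma_2})$ to be finitely generated; since $C(Y_2,\mathbb{Z})$ is a quotient of $K_0(A_{\theta_2,\gamma_2})$ by $\mathbb{Z}$ (Theorem~\ref{T:K-groups}), $C(Y_2,\mathbb{Z})$ is finitely generated, which forces $Y_2$ to be finite, say with $n_2$ points; then $C(Y_i,\mathbb{Z})/\mathbb{Z}\cong\mathbb{Z}^{n_i-1}$ and the criterion $C(Y_1,\mathbb{Z})/\mathbb{Z}\cong C(Y_2,\mathbb{Z})/\mathbb{Z}$ becomes $n_1=n_2$. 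I expect the main obstacle to be the converse direction's bookkeeping: arranging the section so that it respects the order unit and verifying that the assembled map $\Psi_0$ is genuinely an isomorphism of ordered groups with order unit. This hinges on the key structural input from Lemma~\ref{NTKtheory} that the order on $K_0(A_{\theta,\gamma})$ is pulled back wholesale from $\mathbb{R}$ via the unique trace, so that $\Psi_0$ being order-preserving is equivalent to the identity $\rho_2\circ\Psi_0=\rho_1$; a secondary nuisance is keeping the degenerate case $Y=\emptyset$ (where $K_1\cong\mathbb{Z}^2$ and there is no infinitesimal part) separate throughout.
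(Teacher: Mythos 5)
Your proof is correct and follows essentially the same route as the paper: compute the Elliott invariant via Lemma~\ref{NTKtheory} and Theorem~\ref{T1} and invoke the classification of unital simple $A{\mathbb T}$-algebras of real rank zero; in fact you supply more detail than the paper, whose displayed argument treats only the ``if'' direction. The one step worth an explicit word of justification is that ``$C(Y_2,\mathbb{Z})$ finitely generated forces $Y_2$ finite'': this uses that simplicity gives $m(Y_2)=0$, hence every connected component of $Y_2$ is a point, so an infinite $Y_2$ would contain infinitely many pairwise disjoint nonempty clopen subsets and $C(Y_2,\mathbb{Z})$ would have infinite rank.
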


\begin{proof}
{
We will prove the ``if" part only.
Note that we have $K_1(A_{\theta_1, \gamma_1})\cong K_1(A_{\theta_2, \gamma_2}).$ We may write, by Lemma \ref{NTKtheory},  that
\beq\label{T1c1-2}
K_0(A_{\theta_i, \gamma_i})=C(Y_i, {\mathbb Z})/{\mathbb Z}\oplus
(\mathbb Z}+{\mathbb Z\theta).
\eneq
}
{It follows that $K_0(A_{\theta_1, \gamma_1})\cong K_0(A_{\theta_2, \gamma_2}).$ In fact they are order isomorphic. By Theorem \ref{T1} both \CA s are
unital simple $A{\mathbb T}$-algebras of real rank zero. By the classification theorem they are isomorphic. }

\end{proof}

\begin{Corollary}\label{T1c2}
{
 With the same assumption as in \ref{T1c1}, if $Y_1$ and $Y_2$ are homeomorphic and $\theta_1=\pm \theta_2 \text{mod}\,{\mathbb Z},$ then
$A_{\theta_1, \gamma_1}\cong A_{\theta_2, \gamma_2}.$}
\end{Corollary}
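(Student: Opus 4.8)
The plan is to deduce this directly from Theorem~\ref{T1c1}. Under the stated hypotheses, $\theta_1=\pm\theta_2\ \mathrm{mod}\ {\mathbb Z}$ is already one of the two conditions in (\ref{T1c1-1}), so the only thing left to verify is that a homeomorphism $Y_1\cong Y_2$ forces $C(Y_1,{\mathbb Z})/{\mathbb Z}\cong C(Y_2,{\mathbb Z})/{\mathbb Z}$. First I would fix a homeomorphism $h\colon Y_1\to Y_2$ and form the pullback map $h^*\colon C(Y_2,{\mathbb Z})\to C(Y_1,{\mathbb Z})$ defined by $h^*(f)=f\circ h$. Since $h$ is a homeomorphism of compact spaces, a ${\mathbb Z}$-valued function on $Y_2$ is continuous (equivalently, locally constant) if and only if its composite with $h$ is, so $h^*$ is a well-defined group homomorphism with two-sided inverse $(h^{-1})^*$; hence $h^*$ is an isomorphism of abelian groups. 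Next I would observe that $h^*$ carries the constant function of value $n$ on $Y_2$ to the constant function of value $n$ on $Y_1$, so it maps the subgroup ${\mathbb Z}\subset C(Y_2,{\mathbb Z})$ of constant functions isomorphically onto the subgroup ${\mathbb Z}\subset C(Y_1,{\mathbb Z})$ of constant functions. Therefore $h^*$ descends to an isomorphism $C(Y_2,{\mathbb Z})/{\mathbb Z}\cong C(Y_1,{\mathbb Z})/{\mathbb Z}$.

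Combining this with the hypothesis $\theta_1=\pm\theta_2\ \mathrm{mod}\ {\mathbb Z}$, both equalities in (\ref{T1c1-1}) are in place, and Theorem~\ref{T1c1} immediately yields $A_{\theta_1,\gamma_1}\cong A_{\theta_2,\gamma_2}$. There is essentially no obstacle here; the only point that deserves a word of care is that the $Y_i$ need not be finite (Cantor-type zero sets are permitted under the simplicity hypothesis of Corollary~\ref{T1c3}), so ``continuous'' must be read as ``locally constant'' throughout — but the pullback construction is insensitive to this. In the special case where the $Y_i$ are finite one can bypass the quotient groups entirely: $C(Y_i,{\mathbb Z})/{\mathbb Z}\cong{\mathbb Z}^{|Y_i|-1}$, and a homeomorphism of finite spaces is just a bijection, i.e.\ $|Y_1|=|Y_2|$, which is exactly the numerical condition appearing in the last sentence of Theorem~\ref{T1c1}.
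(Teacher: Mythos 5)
Your proposal is correct and matches the paper's (unstated) reasoning: the corollary is an immediate consequence of Theorem~\ref{T1c1}, since the pullback along a homeomorphism $Y_1\to Y_2$ is an isomorphism $C(Y_2,{\mathbb Z})\to C(Y_1,{\mathbb Z})$ carrying constants to constants and hence descends to the quotients. The paper gives no separate proof, and your argument supplies exactly the missing verification.
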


\begin{Theorem}\label{Morita}
Let $\theta_1, \theta_2\in (0,1)$ be two irrational numbers, $\gamma_1, \gamma_2\in C({\mathbb T})$ be non-negative functions and let $Y_i$ be the set of zeros
of $\gamma_i,$ $i=0,1.$  Suppose that $A_{\theta_i, \gamma_i}$ is simple,
or one of the equivalent conditions in Corollary~\ref{T1c3} satisfies. Then
$A_{\theta_1,\gamma_1}$ and $A_{\theta_2, \gamma_2}$ are Morita equivalent if and only if ${\mathbb Z}+{\mathbb Z}\theta_1$ and
${\mathbb Z}+{\mathbb Z}\theta_2$ are order isomorphic and
\beq\label{Morita-1}
C(Y_1, {\mathbb Z})/{\mathbb Z}\cong C(Y_2, {\mathbb Z})/{\mathbb Z}.
\eneq

In particular, assuming, in addition,  $Y_1$ and $Y_2$ are both finite subsets, then $A_{\theta_1, \gamma_1}$ and $A_{\theta_2, \gamma_2}$ are Morita equivalent if and only if
${\mathbb Z}+{\mathbb Z}\theta_1$ and
${\mathbb Z}+{\mathbb Z}\theta_2$ are order isomorphic
and $Y_1$ and $Y_2$ have the same number of points.

\end{Theorem}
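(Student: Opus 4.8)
The plan is to convert the question into one about the Elliott invariant using the structural results already established, and then to read off the stated conditions from an elementary description of the ordered group $K_0$. Concretely:

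\textbf{Step 1 (reduction to $K$-theory).} Since $A_{\theta_1,\gamma_1}$ and $A_{\theta_2,\gamma_2}$ are unital, hence $\sigma$-unital, the Brown--Green--Rieffel theorem says they are Morita equivalent if and only if they are stably isomorphic, i.e. $A_{\theta_1,\gamma_1}\otimes\mathcal{K}\cong A_{\theta_2,\gamma_2}\otimes\mathcal{K}$. By Theorem~\ref{T1} each $A_{\theta_i,\gamma_i}$ is a unital simple $A{\mathbb T}$-algebra of real rank zero, so each $A_{\theta_i,\gamma_i}\otimes\mathcal{K}$ is a (non-unital) simple $A{\mathbb T}$-algebra of real rank zero; for such a stable algebra the scale is all of $K_0^+$, so Elliott's classification of real-rank-zero $A{\mathbb T}$-algebras reduces the invariant to the graded ordered group $(K_0,K_0^+,K_1)$ with no distinguished order unit. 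Since $K_*(A\otimes\mathcal{K})\cong K_*(A)$ as ordered graded groups, we conclude that $A_{\theta_1,\gamma_1}$ and $A_{\theta_2,\gamma_2}$ are Morita equivalent if and only if there is an order isomorphism $K_0(A_{\theta_1,\gamma_1})\to K_0(A_{\theta_2,\gamma_2})$ together with an isomorphism $K_1(A_{\theta_1,\gamma_1})\to K_1(A_{\theta_2,\gamma_2})$.

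\textbf{Step 2 ($K$-theory computation).} We may assume $Y_1,Y_2\neq\emptyset$; if some $Y_i$ is empty then $A_{\theta_i,\gamma_i}=A_{\theta_i}$ and the claim reduces to the classical classification of irrational rotation algebras (with $C(Y_i,\mathbb{Z})/\mathbb{Z}=0$). For nonempty proper $Y_i$, Theorem~\ref{T:K-groups} gives $K_1(A_{\theta_i,\gamma_i})\cong\mathbb{Z}$, so the $K_1$-condition of Step 1 holds automatically and may be dropped. By Lemma~\ref{NTKtheory} the quotient map $\rho_i:K_0(A_{\theta_i,\gamma_i})\to\mathbb{Z}+\mathbb{Z}\theta_i$ fits into a split short exact sequence $0\to C(Y_i,\mathbb{Z})/\mathbb{Z}\to K_0(A_{\theta_i,\gamma_i})\to\mathbb{Z}+\mathbb{Z}\theta_i\to 0$ with $K_0(A_{\theta_i,\gamma_i})_+=\{0\}\cup\rho_i^{-1}(\mathbb{R}_{>0})$. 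Hence, as an ordered abelian group, $K_0(A_{\theta_i,\gamma_i})\cong\big(C(Y_i,\mathbb{Z})/\mathbb{Z}\big)\oplus\big(\mathbb{Z}+\mathbb{Z}\theta_i\big)$, where $\mathbb{Z}+\mathbb{Z}\theta_i$ carries the order inherited from $\mathbb{R}$ and the positive cone of the direct sum consists of $0$ together with all pairs whose second coordinate is $>0$.

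\textbf{Step 3 (ordered-group bookkeeping and conclusion).} For a group $G=H\oplus L$ with $L\subseteq\mathbb{R}$ and $G^+=\{0\}\cup\{(h,\ell):\ell>0\}$, the subgroup of infinitesimal elements (those $g$ with $ng\le y$ for every $n\in\mathbb{Z}$ and every $y\in G^+\setminus\{0\}$) is exactly $H\oplus 0$, and the order induced on $G/(H\oplus0)$ is that of $L\subseteq\mathbb{R}$. Consequently an order isomorphism $K_0(A_{\theta_1,\gamma_1})\to K_0(A_{\theta_2,\gamma_2})$ restricts to a group isomorphism $C(Y_1,\mathbb{Z})/\mathbb{Z}\to C(Y_2,\mathbb{Z})/\mathbb{Z}$ and descends to an order isomorphism $\mathbb{Z}+\mathbb{Z}\theta_1\to\mathbb{Z}+\mathbb{Z}\theta_2$; conversely, any such pair of isomorphisms assembles, via the splittings of Lemma~\ref{NTKtheory}, into an order isomorphism of the $K_0$-groups, since the positive cone only constrains the second summand. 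Combining this with Steps 1--2 yields the equivalence. For the final assertion, when $Y_i$ is finite we have $C(Y_i,\mathbb{Z})/\mathbb{Z}\cong\mathbb{Z}^{|Y_i|-1}$, and $\mathbb{Z}^{|Y_1|-1}\cong\mathbb{Z}^{|Y_2|-1}$ if and only if $|Y_1|=|Y_2|$, which is the stated reformulation. The one genuinely delicate point is the appeal to classification in Step~1 in its stable, non-unital form: one must make sure the stabilizations fall under Elliott's classification of real-rank-zero $A{\mathbb T}$-algebras and that, the scale being all of $K_0^+$, the invariant really reduces to $(K_0,K_0^+,K_1)$; the remaining ordered-group analysis is routine.
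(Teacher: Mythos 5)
Your proposal is correct and follows essentially the same route as the paper: both reduce Morita equivalence to stable isomorphism, invoke the classification of simple $A{\mathbb T}$-algebras of real rank zero to replace stable isomorphism by order isomorphism of $(K_0,K_0^+,K_1)$, and then use the split exact sequence of Lemma~\ref{NTKtheory} to decompose $K_0$ into $C(Y_i,{\mathbb Z})/{\mathbb Z}$ (the kernel of $\rho$, i.e.\ the infinitesimals) and ${\mathbb Z}+{\mathbb Z}\theta_i$. Your Step~3 merely makes explicit the observation the paper leaves implicit, namely that any order isomorphism must carry ${\rm ker}\,\rho_{A_{\theta_1,\gamma_1}}$ onto ${\rm ker}\,\rho_{A_{\theta_2,\gamma_2}}$ because these are exactly the infinitesimal subgroups.
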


\begin{proof}
Suppose that $h_1: {\mathbb Z}+{\mathbb Z}\theta_1\to
{\mathbb Z}+{\mathbb Z}\theta_2$ is an order isomorphism and
$h_2: C(Y_1, {\mathbb Z})/{\mathbb Z}\to C(Y_2, {\mathbb Z})/{\mathbb Z}$ is an isomorphism as groups.
There is an injective homomorphism $\imath_i: {\mathbb Z}+{\mathbb Z}\theta_i\to
K_0(A_{\theta_i, \gamma_i})$ such that
$$
\rho_{A_{\theta_i, \gamma_i}}\circ \imath_i={\rm id}_{{\mathbb Z}+{\mathbb Z}\theta_i},\,\,\, i=1,2.
$$
We write
$$
K_0(A_{\theta_i, \gamma_i})=C(Y_i, {\mathbb Z})/{\mathbb Z}\oplus
\imath_i({\mathbb Z}+{\mathbb Z}\theta_i),
$$
$i=1,2.$

Define $h_3: K_0(A_{\theta_1, \gamma_1})\to K_0(A_{\theta_2, \gamma_2})$ by \beq\label{Morita-3}
h_3|_{{\rm ker}\rho_{A_{\theta_1, \gamma_1}}}=h_2
\eneq
and
\beq\label{Morita-4}
h_3(x)=\imath_2\circ h_1\circ \rho_{A_{\theta_1, \gamma_1}}(x).
\eneq
for $x\in \imath_1({\mathbb Z}+{\mathbb Z}\theta_1).$
It is easy to verify that $h_3$ is an order isomorphism from $K_0(A_{\theta_1, \gamma_1})$ onto $K_0(A_{\theta_2, \gamma_2}).$
We also have $K_1(A_{\theta_1, \gamma_1})={\mathbb Z}=K_1(A_{\theta_2, \gamma_2}).$
Since both $A_{\theta_1, \gamma_1}$ and $A_{\theta_2, \gamma_2}$ are
unital simple $A{\mathbb T}$-algebras of real rank zero, by the classification results mentioned earlier, $A_{\theta_1, \gamma_1}$ and $A_{\theta_2, \gamma_2}$ are stably isomorphic. In other words,
$A_{\theta_1, \gamma_1}$ and $A_{\theta_2, \gamma_2}$ are Morita equivalent.

Conversely,
if $A_{\theta_1, \gamma_1}\otimes {\cal K}\cong A_{\theta_2, \gamma_2}\otimes {\cal K},$ then $K_0(A_{\theta_1,\gamma_1})$ and $K_0(A_{\theta_2, \gamma_2})$ are order isomorphic. Denote by
$h_0$ the order isomorphism.
This implies, in particular,  $h_0$ maps ${\rm ker}\rho_{A_{\theta_1, \gamma_1}}$ isomorphically onto ${\rm ker}\rho_{A_{\theta_2,\gamma_2}}$ which implies that
$$
C(Y_1, {\mathbb Z})/{\mathbb Z}={\rm ker}\rho_{A_{\theta_1, \gamma_1}}\cong {\rm ker}\rho_{A_{\theta_2,\gamma_2}}=C(Y_2, {\mathbb Z})/{\mathbb Z}.
$$
Therefore $h_0$ induces an order isomorphism from $\rho_{A_{\theta_1, \gamma_1}}(K_0(A_{\theta_1,\gamma_1}))$ onto $\rho_{A_{\theta_2, \gamma_2}}(K_0(A_{\theta_2, \gamma_2}))$ which implies that
${\mathbb Z}+{\mathbb Z}\theta_1$ and ${\mathbb Z}+{\mathbb Z}\theta_2$ are order isomorphic.
\end{proof}

Let ${\rm GL}(2,\mathbb{Z})$ denote the group of $2\times 2$ matrices with entries in $\mathbb{Z}$ and with determinant $\pm 1$, and let ${\rm GL}(2,\mathbb{Z})$ act on the set of irrational numbers by
\[
\begin{pmatrix}
a&b\\
c&d
\end{pmatrix}\alpha=\frac{a\alpha+b}{c\alpha+d}.
\]
By Corollary 2.6 of~\cite{Ri} (or Lemma 4.7 of~\cite{Sh}), $\mathbb{Z}+\mathbb{Z}\theta_1$ and $\mathbb{Z}+\mathbb{Z}\theta_2$ are ordered isomorphic if and only if $\theta_1$ and $\theta_2$ are in the same orbit of ${\rm GL}(2,\mathbb{Z})$. Thus we obtain the following corollary.
\begin{Corollary}\label{Riffelcond}
Let $\theta_1, \theta_2\in (0,1)$ be two irrational numbers, $\gamma\in C({\mathbb T})$ be non-negative functions and let $Y$ be the set of zeros of $\gamma$. Suppose that $A_{\theta_i, \gamma}$ is simple,
or one of the equivalent conditions in Corollary~\ref{T1c3} satisfies.  Then $A_{\theta_1,\gamma}$ and  $A_{\theta_2,\gamma}$ are morita equivalent if and only if $\theta_1$ and $\theta_2$ are in the same orbit under the action of ${\rm GL}(2,\mathbb{Z})$ on  irrational numbers.
\end{Corollary}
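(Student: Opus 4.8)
The plan is to obtain this as an immediate consequence of Theorem~\ref{Morita} combined with the classical description of order isomorphisms between the dense subgroups $\mathbb{Z}+\mathbb{Z}\theta$ of $\mathbb{R}$.

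First I would specialize Theorem~\ref{Morita} to the case $\gamma_1=\gamma_2=\gamma$. Then $Y_1=Y_2=Y$, so the two groups $C(Y_1,\mathbb{Z})/\mathbb{Z}$ and $C(Y_2,\mathbb{Z})/\mathbb{Z}$ are literally equal and condition (\ref{Morita-1}) holds automatically. The standing hypothesis that $A_{\theta_i,\gamma}$ is simple for $i=1,2$ (equivalently, one of the conditions of Corollary~\ref{T1c3}) is exactly what Theorem~\ref{Morita} requires. Thus Theorem~\ref{Morita} reduces, in this situation, to the statement that $A_{\theta_1,\gamma}$ and $A_{\theta_2,\gamma}$ are Morita equivalent if and only if the ordered groups $\mathbb{Z}+\mathbb{Z}\theta_1$ and $\mathbb{Z}+\mathbb{Z}\theta_2$ are order isomorphic as ordered subgroups of $\mathbb{R}$.

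Next I would invoke Corollary~2.6 of~\cite{Ri} (equivalently, Lemma~4.7 of~\cite{Sh}): the ordered groups $\mathbb{Z}+\mathbb{Z}\theta_1$ and $\mathbb{Z}+\mathbb{Z}\theta_2$ are order isomorphic precisely when there exist integers $a,b,c,d$ with $ad-bc=\pm 1$ and $\theta_2=\frac{a\theta_1+b}{c\theta_1+d}$, that is, precisely when $\theta_1$ and $\theta_2$ lie in the same orbit of the $\mathrm{GL}(2,\mathbb{Z})$-action described above. Chaining this equivalence with the one obtained in the previous paragraph gives the claim in both directions.

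Since the argument is merely the composition of two already-available equivalences, there is no real obstacle here. The one point worth stressing is that, once $\gamma$ is held fixed across the two algebras, the ``zero-set'' invariant $C(Y,\mathbb{Z})/\mathbb{Z}$ occurring in Theorem~\ref{Morita} drops out, so that Morita equivalence is governed entirely by the rotation parameter through the ordered group $\mathbb{Z}+\mathbb{Z}\theta$.
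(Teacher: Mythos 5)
Your argument is exactly the paper's: the corollary is stated immediately after Theorem~\ref{Morita} together with the citation of Corollary~2.6 of~\cite{Ri} (Lemma~4.7 of~\cite{Sh}), and the intended proof is precisely the specialization $\gamma_1=\gamma_2=\gamma$ (so the invariant $C(Y,\mathbb{Z})/\mathbb{Z}$ coincides on both sides) followed by the translation of order isomorphism of $\mathbb{Z}+\mathbb{Z}\theta_i$ into the $\mathrm{GL}(2,\mathbb{Z})$-orbit condition. Your write-up is correct and matches the paper's route.
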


\section{The $C^*$-algebra generated by $u+\lambda v$}
\begin{Proposition}\label{P:algebras generated by g(w)}
Let $R$ be the hyperfinite type ${\rm II}_1$ factor with two unitary
generators $u,v$ such that $vu=e^{2\pi i\theta}uv$.
 If $f(z)\in C(\mathbb{T})$ and $m(\{z|f(z)=0\})=0$, then the von Neumann
subalgebra generated by $uf(v)$ and $v$ is $R$. Furthermore,
$C^*(uf(v), v)=C^*(u,v)$ if and only if $f(z)\neq 0$ for all $z\in
\mathbb{T}$.
\end{Proposition}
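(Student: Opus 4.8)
The plan is to treat the two assertions separately, beginning with the von Neumann algebra statement, which is the shorter. Set $M=\{uf(v),v\}''$ and $T=uf(v)$. First I would note that $M\subseteq R$ trivially, while $M\supseteq C^*(v)''=L^\infty(\mathbb{T},m)$. Next, $T^*T=\overline{f}(v)f(v)=|f|^2(v)$, so in the polar decomposition $T=V|T|$ (with $V,|T|\in M$) one has $|T|=|f|(v)$. Because $m(\{z:f(z)=0\})=0$, the multiplication operator $|f|(v)$ has trivial kernel, hence support projection $1$; thus $V$ is an isometry in the finite factor $R$ and therefore a unitary. Let $W\in L^\infty(\mathbb{T},m)$ be the unimodular function agreeing with $f/|f|$ off the null set $\{f=0\}$; then $f(v)=W\,|f|(v)$, so $T=(uW)|f|(v)$ with $uW$ a unitary of $R$, and uniqueness of the polar decomposition forces $V=uW$. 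Hence $u=VW^*\in M$, so $M\supseteq\{u,v\}''=R$ and $M=R$.

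For the second assertion, the ``if'' direction is immediate: if $f$ is nowhere zero then $1/f\in C(\mathbb{T})$, so $(1/f)(v)\in C^*(v)\subseteq C^*(uf(v),v)$ and $u=uf(v)(1/f)(v)\in C^*(uf(v),v)$, giving $C^*(uf(v),v)=C^*(u,v)$. For the converse I would argue the contrapositive. Suppose $f(z_0)=0$ and put $Y=\{z\in\mathbb{T}:f(z)=0\}$; this is a nonempty closed set, and $m(Y)=0$ forces $Y\neq\mathbb{T}$. The key preliminary step is to identify $C^*(uf(v),v)$ with the generalized rotation algebra $A_{\theta,|f|^2}$. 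Since the closed ideal $\overline{fC(\mathbb{T})}$ of $C(\mathbb{T})$ equals $C_0(\mathbb{T}\setminus Y)$ and contains both $f$ and $|f|$, one gets \[C^*(uf(v),v)=C^*\bigl(C(\mathbb{T}),\,uC_0(\mathbb{T}\setminus Y)\bigr)=C^*(u|f|(v),v),\] and by Theorem~\ref{T:universal property} (applied with $\gamma=|f|^2$, so $\gamma^{1/2}=|f|$) this algebra is $A_{\theta,|f|^2}$.

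Then I would invoke the gauge action $\alpha_z$ of $A_\theta$ with $\alpha_z(u)=zu$, $\alpha_z(v)=v$; it sends the generators $uf(v),v$ of $B:=C^*(uf(v),v)$ to scalar multiples of themselves, hence restricts to an action on $B$. By Lemma~\ref{L:Bn}, the first spectral subspace of $B$ for this action is $B_1=\{ug(v):g\in C(\mathbb{T}),\ g|_Y=0\}$, which does not contain $u=u\cdot 1$ since the constant function $1$ does not vanish on the nonempty set $Y$. If $C^*(uf(v),v)=C^*(u,v)$, then $u\in B$, and $\alpha_z(u)=zu$ would put $u\in B_1$, a contradiction; hence $C^*(uf(v),v)\subsetneq C^*(u,v)$. (Alternatively one may finish via $K$-theory: Theorem~\ref{T:K-groups} gives $K_1(A_{\theta,|f|^2})\cong\mathbb{Z}$ because $\emptyset\neq Y\neq\mathbb{T}$, whereas $K_1(A_\theta)\cong\mathbb{Z}^2$, so the two algebras cannot coincide.) I expect the main obstacle to be this reduction to $A_{\theta,|f|^2}$, and specifically the passage from $f$ to $|f|$: the phase $f/|f|$ is in general only a bounded Borel function, not continuous, so it cannot be manipulated directly at the $C^*$-level, and the ideal identity $\overline{fC(\mathbb{T})}=C_0(\mathbb{T}\setminus Y)$ is exactly what lets one sidestep this and apply Theorem~\ref{T:universal property} and Lemma~\ref{L:Bn}.
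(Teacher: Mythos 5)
Your proof is correct, and it diverges from the paper's at two points worth noting. For the von Neumann algebra statement the paper simply observes that $f(v)^{-1}$ is an unbounded operator affiliated with $M$ (its spectral projections lie in $\{v\}''\subseteq M$ because $m(\{f=0\})=0$), so that $u=uf(v)\cdot f(v)^{-1}$ is affiliated with $M$ and, being bounded, lies in $M$; your polar-decomposition argument ($|T|=|f|(v)$ injective, $V$ unitary, $V=uW(v)$ by uniqueness) reaches the same conclusion with a bit more machinery but also with the measure-zero hypothesis used in exactly the same place. For the $C^*$-statement the ``if'' direction is identical, while for the converse the paper goes straight to $K$-theory: it cites Theorem~\ref{T:K-groups} to get $K_1(C^*(uf(v),v))\cong\mathbb{Z}$, against $K_1(A_\theta)\cong\mathbb{Z}^2$. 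Your primary route via the gauge action and Lemma~\ref{L:Bn} (if $u\in B$ then $u\in B_1$, forcing the constant function $1$ to vanish on $Y\neq\emptyset$) is a softer argument that avoids $K$-theory altogether, and your secondary route is exactly the paper's. What your write-up adds, and the paper leaves implicit, is the justification that $C^*(uf(v),v)=C^*(C(\mathbb{T}),uC_0(\mathbb{T}\setminus Y))=A_{\theta,|f|^2}$ via $\overline{fC(\mathbb{T})}=C_0(\mathbb{T}\setminus Y)$; this step is genuinely needed before either Lemma~\ref{L:Bn} or Theorem~\ref{T:K-groups} can be applied, since $f$ is not assumed nonnegative, so your version is arguably the more complete one.
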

\begin{proof}
Let $M$ be the von Neumann algebra generated by $uf(v)$ and $v$.
Since $m(\{z|f(z)=0\})=0$, $f(v)^{-1}$ is affiliated with $M$, i.e., the spectral projections of the unbounded operator $f(v)^{-1}$ are in $M$. Hence $u=uf(v)\cdot f(v)^{-1}$ is affiliated with $M$. Since $u$ is a
bounded operator, $u\in M$ and therefore $R\subseteq M$ and $M=R$.

If $f(z)\neq 0$ for all $z\in \mathbb{T}$, then $f(v)$ is an
invertible operator in $C^*(v)$. Hence $u=uf(v)\cdot f(v)^{-1}$ is
in the $C^*$-subalgebra generated by $uf(v)$ and $v$. Therefore,
$C^*(uf(v), v)=C^*(u,v)$. Conversely, suppose $f(z_0)=0$ for some
$z_0\in \mathbb{T}$. By Theorem~\ref{T:K-groups}, $K_1(C^*(uf(v), v))\cong\mathbb{Z}$.
Therefore, $C^*(uf(v), v)\neq C^*(u,v)$.
\end{proof}

\begin{Theorem}\label{T:generators}
Let $R$ be the hyperfinite type ${\rm II}_1$ factor with  two unitary
generators $u,v$ such that $vu=e^{2\pi i\theta}uv$.
Then the von Neumann subalgebra generated by $u+\lambda v$ is $R$
for $\lambda>0$. Furthermore, $C^*(u+\lambda v)=C^*(u,v)$ if
$\lambda\neq 1$ while $C^*(u+v)$ is  a proper simple
$C^*$-subalgebra of $C^*(u,v)$ which has a unique trace,
$K_1(C^*(u+v))\cong \mathbb{Z}$, and there is an order isomorphism
of $K_0(C^*(u+v))$ onto $\mathbb{Z}+\mathbb{Z}\theta$. {Moreover,
$C^*(u+v)$ is a unital simple $A{\mathbb T}$-algebra of tracial rank zero.}
\end{Theorem}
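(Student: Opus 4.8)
The plan is to identify $C^*(u+\lambda v)$ with a generalized rotation algebra and then read off its structure from the results established above. Write $w=u^*v$, so that $u+\lambda v=u(1+\lambda w)$; the pair $(u,w)$ satisfies $wu=e^{2\pi i\theta}uw$ and generates $C^*(u,v)$, and since $C^*(u,v)$ is simple, $(C^*(u,v);u,w)$ is again a model of the universal irrational rotation algebra, so Theorem~\ref{T:universal property} may be used with $u,w$ in place of the standard generators. The first step is to show that $w\in C^*(u+\lambda v)$ for every $\lambda>0$. A direct computation gives
\[
(u+\lambda v)^*(u+\lambda v)=(1+\lambda^2)+\lambda(w+w^*),\qquad (u+\lambda v)(u+\lambda v)^*=(1+\lambda^2)+\lambda\bigl(e^{-2\pi i\theta}w+e^{2\pi i\theta}w^*\bigr),
\]
so both $w+w^*$ and $e^{-2\pi i\theta}w+e^{2\pi i\theta}w^*$ lie in $C^*(u+\lambda v)$. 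The $2\times2$ coefficient matrix of this linear system in $w,w^*$ has determinant $e^{2\pi i\theta}-e^{-2\pi i\theta}=2i\sin(2\pi\theta)\neq0$ because $\theta$ is irrational, so $w\in C^*(u+\lambda v)$. Hence $C^*(u+\lambda v)=C^*(u+\lambda v,w)=C^*\bigl(u(1+\lambda w),w\bigr)$, and the pair $(u+\lambda v,w)$ satisfies (\ref{U1})--(\ref{U4}) with $\gamma(z)=|1+\lambda z|^2$.

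Next I split on $\lambda$. If $\lambda\neq1$, then $|1+\lambda z|^2\ge(1-\lambda)^2>0$ on $\mathbb{T}$, so $1+\lambda w$ is invertible in $C^*(w)\subseteq C^*(u+\lambda v)$; thus $u=\bigl(u(1+\lambda w)\bigr)(1+\lambda w)^{-1}\in C^*(u+\lambda v)$ and $v=uw\in C^*(u+\lambda v)$, whence $C^*(u+\lambda v)=C^*(u,v)$ and in particular $W^*(u+\lambda v)=R$. If $\lambda=1$, then $\gamma(z)=|1+z|^2$ vanishes only at $z=-1$, and inside $C^*(w)\cong C(\mathbb{T})$ the closed ideals generated by $1+w$ and by $|1+w|$ both equal $C_0(\mathbb{T}\setminus\{-1\})$; since $C^*(u+v)=C^*\bigl(u(1+w),w\bigr)$ by the previous paragraph, it follows that $C^*(u+v)=C^*\bigl(u|1+w|,w\bigr)$, which by Theorem~\ref{T:universal property} (with $u,w$ as standard generators and $\gamma=|1+z|^2$) is isomorphic to $A_{\theta,|1+z|^2}$.

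It then remains to collect the consequences of $C^*(u+v)\cong A_{\theta,|1+z|^2}$. The zero set is $Y=\{-1\}$, a single point, so by Corollary~\ref{C:simplicity} (equivalently Theorem~\ref{SimpleAtheta}, using $e^{2\pi in\theta}(-1)\neq-1$ for $n\neq0$) the algebra $C^*(u+v)$ is simple with a unique tracial state. By Theorem~\ref{T:K-groups}, $K_1(C^*(u+v))\cong\mathbb{Z}$ and $K_0(C^*(u+v))\cong\mathbb{Z}^2$. By Lemma~\ref{NTKtheory}, the induced homomorphism from $K_0(A_{\theta,|1+z|^2})$ to $K_0(A_\theta)=\mathbb{Z}+\mathbb{Z}\theta$ is surjective with kernel $C(\{-1\},\mathbb{Z})/\mathbb{Z}=0$ and sends the positive cone onto the positive part of $\mathbb{Z}+\mathbb{Z}\theta$, so $K_0(C^*(u+v))$ is order isomorphic to $\mathbb{Z}+\mathbb{Z}\theta$. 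By Theorem~\ref{T1}, $C^*(u+v)$ is a unital simple $A\mathbb{T}$-algebra of real rank zero, hence of tracial rank zero. It is a proper subalgebra of $C^*(u,v)$ because $K_1(C^*(u+v))\cong\mathbb{Z}$ while $K_1(C^*(u,v))\cong\mathbb{Z}^2$, so the two are not isomorphic. Finally, the remaining von Neumann algebra assertion (the case $\lambda=1$) follows from Proposition~\ref{P:algebras generated by g(w)} applied with $f(z)=1+z$, for which $m(\{z:1+z=0\})=0$, and generators $u,w$: the von Neumann algebra generated by $u(1+w)=u+v$ and $w$ is $R$, and $w\in C^*(u+v)$, so $W^*(u+v)=R$.

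The only genuinely delicate step is the identification $C^*(u+v)\cong A_{\theta,|1+z|^2}$, i.e.\ upgrading the canonical surjection of $A_{\theta,|1+z|^2}$ onto $C^*(u+v,u^*v)$ to an isomorphism; the argument above handles this by rewriting $C^*(u+v,u^*v)=C^*\bigl(u|1+u^*v|,u^*v\bigr)$ and quoting Theorem~\ref{T:universal property}, but one could equally rerun the GNS/trace-comparison argument from the proof of that theorem for the pair $(u+v,u^*v)$. Everything else is routine bookkeeping on top of the earlier sections.
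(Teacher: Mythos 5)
Your proposal is correct and follows essentially the same route as the paper: compute $(u+\lambda v)^*(u+\lambda v)$ and $(u+\lambda v)(u+\lambda v)^*$ to extract $w=u^*v$, reduce to $C^*(u(1+\lambda w),w)$, invert $1+\lambda w$ when $\lambda\neq 1$, identify $C^*(u+v)$ with $A_{\theta,|1+z|^2}$ via the universal property, and then quote the earlier structural results. Your extra care in passing from $u(1+w)$ to $u|1+w|$ and your use of Lemma~\ref{NTKtheory} for the order isomorphism of $K_0$ (where the paper cites the Rieffel projection construction) are only minor variations on the same argument.
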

\begin{proof}
 Note that
\[
(u+\lambda v)(u+\lambda v)^*=(u+\lambda v)(u^*+\lambda
v^*)=\lambda e^{-2\pi i\theta}u^*v+\lambda uv^*+1+\lambda^2
\]
and
\[
(u+\lambda v)^*(u+\lambda v)=(u^*+\lambda v^*)(u+\lambda
v)=u^*v+e^{-2\pi i\theta}\lambda uv^*+1+\lambda^2.
\]
Hence $u^*v,uv^*\in C^*(u+\lambda v)$. Let $w=u^*v$. Thus
$C^*(u+\lambda v)=C^*(u+\lambda v, w)=C^*(u(1+\lambda w), w)$. By
Proposition~\ref{P:algebras generated by g(w)}, the von Neumann
subalgebra generated by $u+\lambda v$ is $R$ for $\lambda>0$, and
$C^*(u+\lambda v)=C^*(u,v)$ if $\lambda\neq 1$ while $C^*(u+v)$ is
a proper $C^*$-subalgebra of $C^*(u,v)$. Note that $u+v$ and $w$
satisfy (\ref{U1})-(\ref{U4}) for $\theta$ and
$\gamma(z)=|1+z|^2$. By Proposition~\ref{P:U unique trace},
Theorem~\ref{T:simple}, Theorem~\ref{T:Rieffel projection}, and
Theorem~\ref{T:K-groups}, $C^*(u+v)$ is a simple
algebra with a unique trace, $K_1(C^*(u+v))\cong \mathbb{Z}$, and
there is an order isomorphism of $K_0(C^*(u+v))$ onto
$\mathbb{Z}+\mathbb{Z}\theta$. By Theorem~\ref{T1}, $C^*(u+v)$ is a unital simple $A{\mathbb T}$-algebra of tracial rank zero.
\end{proof}

\begin{Corollary}
$C^*(u+v)$ is not $\ast$-isomorphic to $C^*(u,v)$.
\end{Corollary}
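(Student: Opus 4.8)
The plan is to separate $C^*(u+v)$ from $C^*(u,v)$ by a $K$-theoretic invariant. The natural first guess, $K_0$, does \emph{not} work: by Theorem~\ref{T:generators} there is an order isomorphism of $K_0(C^*(u+v))$ onto $\mathbb{Z}+\mathbb{Z}\theta$, and the same is true of $K_0(A_\theta)=K_0(C^*(u,v))$ by Rieffel's computation (\cite{Ri}), so the two ordered $K_0$-groups are isomorphic. Instead I would use $K_1$. Applying Theorem~\ref{T:K-groups} to $A_{\theta,|1+z|^2}$, whose zero set $Y=\{-1\}$ is a nonempty proper closed subset of $\mathbb{T}$, together with the identification $C^*(u+v)\cong A_{\theta,|1+z|^2}$ recorded in Theorem~\ref{T:generators}, gives $K_1(C^*(u+v))\cong\mathbb{Z}$. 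On the other hand $K_1(C^*(u,v))=K_1(A_\theta)\cong\mathbb{Z}^2$ by the Pimsner--Voiculescu exact sequence (\cite{PV,Ri}), as recalled in the introduction.

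Since $K_1$ is a functor from $C^*$-algebras to abelian groups that sends $\ast$-isomorphisms to group isomorphisms, and $\mathbb{Z}$ is not isomorphic to $\mathbb{Z}^2$, no $\ast$-isomorphism $C^*(u+v)\to C^*(u,v)$ can exist. Equivalently, both algebras are unital simple $A{\mathbb T}$-algebras of real rank zero by Theorem~\ref{T1}, so by the classification theorem they would be $\ast$-isomorphic precisely when their ordered, pointed $K$-theory agrees; here the $K_0$ parts agree but the $K_1$ parts do not. There is no genuine obstacle — the content of the corollary is already contained in Theorems~\ref{T:K-groups}, \ref{T:generators} and~\ref{T1}; the only point worth flagging is that one must invoke $K_1$ rather than $K_0$.
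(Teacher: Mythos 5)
Your proof is correct and is precisely the argument the paper intends: the corollary is stated without proof immediately after Theorem~\ref{T:generators}, and the intended justification is that $K_1(C^*(u+v))\cong\mathbb{Z}$ while $K_1(A_\theta)\cong\mathbb{Z}^2$, so the two algebras cannot be $\ast$-isomorphic. Your additional remark that $K_0$ alone would not suffice is a sensible clarification but does not change the substance.
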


\section{Spectral radius of $u+\lambda v$}
In this section, we assume that $0\leq \lambda \leq 1$. Let $\alpha=e^{2\pi i \theta}$ and $w=u^*v$. Then $w$ is a Haar unitary operator in $R$, i.e., $\tau(w^n)=\tau((w^*)^n)=0$ for all $n\in \mathbb{N}$.  Note that
\[
u+\lambda v=u(1+\lambda u^*v)=u(1+\lambda w),
\]
\[
(u+\lambda v)^2=(u+\lambda v)u(1+\lambda w)=(u^2+\alpha\lambda  uv)(1+\lambda w)=u^2(1+\alpha \lambda  u^*v)(1+\lambda w)=u^2(1+\alpha \lambda w)(1+\lambda w),
\]
\[
(u+\lambda v)^3=(u+\lambda v)u^2(1+\alpha\lambda w)(1+\lambda w)=(u^3+\alpha\lambda u^2v)(1+\alpha\lambda w)(1+\lambda w)=u^3(1+\alpha^2 \lambda w)(1+\alpha\lambda w)(1+\lambda w).
\]
By induction, we have
\begin{equation}\label{E:(u+v)^{-1}}
(u+\lambda v)^n=u^n(1+\lambda w)(1+\alpha\lambda w)\cdots(1+\alpha^{(n-1)}\lambda w),\quad \forall n\in\mathbb{N}
\end{equation}
Let $r(u+\lambda v)$ be the spectral radius of $u+\lambda v$. Then
\[
r(u+\lambda v)=\lim_{n\rightarrow+\infty} \|(u+\lambda v)^n\|^{1/n}=\lim_{n\rightarrow+\infty} \|(1+\lambda w)(1+\alpha \lambda w)\cdots(1+\alpha^{(n-1)}\lambda w)\|^{1/n}.
\]
Since $w=u^*v$ is a Haar unitary operator, we may identify $w$
with the multiplication operator $M_z$ on $L^2(\mathbb{T},m)$,
where $m$ is the Haar measure on $\mathbb{T}$. Hence,
\begin{eqnarray}
\nonumber \|(u+\lambda v)^n\|^{1/n}&=&\|(1+\lambda w)(1+\alpha\lambda  w)\cdots(1+\alpha^{(n-1)}\lambda w)\|^{1/n}\\
\nonumber &=&\|(1+\lambda M_z)(1+\alpha\lambda  M_z)\cdots(1+\alpha^{(n-1)}\lambda M_z)\|^{1/n}\\
\nonumber&=&\left(\max_{z\in \mathbb{T}}\left|(1+\lambda
z)(1+\alpha \lambda z)\cdots (1+\alpha^{(n-1)}\lambda
z)\right|\right)^{1/n}.
\end{eqnarray}
Let $z=e^{i2\pi x}$, $x\in [0,1]$. Then simple calculation shows that
\[
\left|(1+\lambda z)(1+\alpha\lambda z)\cdots (1+\alpha^{(n-1)}\lambda z)\right|=\left(\prod_{k=0}^{n-1}\left(1+\lambda^2+2\lambda\cos(2\pi(x+k\theta))\right)\right)^{\frac{1}{2}}.
\]
So
\begin{equation}\label{E:(u+v)^n}
\|(u+\lambda v)^n\|^{1/n}=\max_{x\in[0,1]}\left(\prod_{k=0}^{n-1}\left(1+\lambda^2+2\lambda\cos(2\pi(x+k\theta))\right)\right)^{\frac{1}{2n}}.
\end{equation}

\begin{Lemma}\label{L:integral}
For $0\leq \lambda \leq 1$,
\[
\int_0^1\ln(1+\lambda^2+2\lambda \cos 2\pi x)dx=0.
\]
\end{Lemma}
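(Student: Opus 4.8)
The plan is to evaluate the integral $\int_0^1 \ln(1+\lambda^2 + 2\lambda\cos 2\pi x)\,dx$ directly by recognizing the integrand as the logarithm of $|1+\lambda e^{2\pi i x}|^2$. Indeed, writing $z = e^{2\pi i x}$ we have $1+\lambda^2 + 2\lambda\cos 2\pi x = |1+\lambda z|^2$, so the integral equals $2\int_0^1 \ln|1+\lambda e^{2\pi i x}|\,dx$, which is (twice) the logarithmic potential of the point $-1/\lambda$ — or more precisely, twice the average of $\ln|1+\lambda z|$ over the unit circle. This is a classical computation.

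The cleanest route is Jensen's formula (or the mean value property of harmonic functions): the function $\zeta \mapsto \ln|1+\lambda\zeta|$ is harmonic on a neighborhood of the closed unit disk whenever $|\lambda| < 1$, since $1+\lambda\zeta$ has no zero there. Hence its average over the unit circle equals its value at the center $\zeta = 0$, which is $\ln|1+0| = \ln 1 = 0$. This gives $\int_0^1 \ln|1+\lambda e^{2\pi i x}|\,dx = 0$ and therefore the stated identity for $0 \le \lambda < 1$. The endpoint case $\lambda = 1$ follows by continuity: the integrand $\ln(2+2\cos 2\pi x) = \ln(4\cos^2\pi x)$ has only a logarithmic (integrable) singularity at $x = 1/2$, and the family of integrals depends continuously on $\lambda$ up to $\lambda = 1$ by dominated convergence (dominate by $|\ln(4\cos^2\pi x)|$, which is integrable). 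Alternatively one computes the $\lambda=1$ integral directly: $\int_0^1 \ln(4\cos^2\pi x)\,dx = \ln 4 + 2\int_0^1 \ln|\cos\pi x|\,dx = 2\ln 2 + 2(-\ln 2) = 0$, using the standard value $\int_0^{1}\ln|\cos\pi x|\,dx = -\ln 2$.

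I expect no real obstacle here; the only point requiring a little care is the behavior at $\lambda = 1$, where the integrand becomes unbounded, so one must justify passing to the limit (or handle that case separately via the known value of $\int_0^1\ln|\cos\pi x|\,dx$). For $\lambda < 1$ the harmonicity argument is immediate and clean, so I would present that as the main line and dispatch $\lambda = 1$ by a one-line continuity or explicit-evaluation remark.
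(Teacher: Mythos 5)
Your proof is correct, but it takes a genuinely different route from the paper's. The paper defines $f(\lambda)=\int_0^1\ln(1+\lambda^2+2\lambda\cos 2\pi x)\,dx$, notes $f(0)=0$ and that $f$ is continuous on $[0,1]$, and then shows $f'(\lambda)=0$ on $(0,1)$ by differentiating under the integral sign and evaluating the resulting contour integral by residues (the two residues at $0$ and $-\lambda$ cancel). You instead observe that the integrand is $\ln|1+\lambda e^{2\pi ix}|^2$ and invoke the mean value property of the harmonic function $\zeta\mapsto\ln|1+\lambda\zeta|$ on the closed disk for $\lambda<1$, then treat $\lambda=1$ separately. Your argument is more conceptual and shorter for $\lambda<1$; the paper's residue computation is more mechanical but folds all $\lambda$ into one formula, deferring the $\lambda=1$ issue to the (unjustified but true) assertion that $f$ is continuous at $1$ --- which in fact requires exactly the integrability/domination observation you make explicit. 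One small imprecision on your side: the literal dominating function $|\ln(4\cos^2\pi x)|$ does not bound $|\ln(1+\lambda^2+2\lambda\cos 2\pi x)|$ pointwise (e.g.\ where $4\cos^2\pi x=1$ the bound is $0$ but the integrand is not); you should use something like $\ln 4+|\ln(2\cos^2\pi x)|$, noting $1+\lambda^2+2\lambda\cos 2\pi x=(1-\lambda)^2+4\lambda\cos^2\pi x\geq 2\cos^2\pi x$ for $\lambda\geq 1/2$. This is cosmetic, and your alternative direct evaluation of the $\lambda=1$ case via $\int_0^1\ln|\cos\pi x|\,dx=-\ln 2$ already closes the case rigorously.
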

\begin{proof}
For $0\leq \lambda\leq 1$, let
\[
f(\lambda)=\int_0^1\ln(1+\lambda^2+2\lambda \cos 2\pi x)dx.
\]
Then $f(\lambda)$ is continuous on $[0,1]$, differentiable in $(0,1)$, and $f(0)=0$. Note that for $0<\lambda<1$,
\begin{eqnarray*}
  f'(\lambda) &=& \int_0^1\frac{2\lambda+2\cos 2\pi x}{1+\lambda^2+2\lambda\cos 2\pi x}dx \\
   &=& \frac{1}{2\pi i}\int_{\mathbb{T}}\frac{2\lambda+z+\frac{1}{z}}{1+\lambda^2+\lambda\left(z+\frac{1}{z}\right)}\cdot\frac{dz}{z} \\
   &=& \frac{1}{2\pi i}\int_{\mathbb{T}}\frac{2\lambda z+z^2+1}{(1+\lambda^2)z+\lambda z^2+\lambda}\cdot\frac{dz}{z} \\
  &=& \frac{1}{2\pi i}\int_{\mathbb{T}}\frac{2\lambda z+z^2+1}{(\lambda z+1)(z+\lambda)z}dz \\
   &=& \text{Res}\left(\frac{2\lambda z+z^2+1}{(\lambda z+1)(z+\lambda)z};0 \right)+\text{Res}\left(\frac{2\lambda z+z^2+1}{(\lambda z+1)(z+\lambda)z};-\lambda\right)\\
   &=& \frac{1}{\lambda}-\frac{1}{\lambda}=0.
\end{eqnarray*}
So for $0\leq \lambda \leq 1$, $f(\lambda)=0$.

\end{proof}

\begin{Lemma}\label{L:Ergodic}
Let $0<\lambda\leq 1$. Then for almost all $x\in [0,1]$,
\[
\lim_{n\rightarrow +\infty}\left(\prod_{k=0}^{n-1}\left(1+\lambda^2+2\lambda\cos(2\pi(x+k\theta))\right)\right)^{\frac{1}{2n}}=1.
\]
\end{Lemma}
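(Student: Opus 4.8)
The plan is to take logarithms and recognize the quantity under the limit as (the exponential of) a Birkhoff average for the irrational rotation. Let $T\colon [0,1)\to [0,1)$ be the rotation $T(x)=x+\theta \ (\mathrm{mod}\ 1)$; since $\theta$ is irrational, $T$ is ergodic with respect to Lebesgue measure $m$ (a $T$-invariant $L^2$ function has vanishing Fourier coefficients off the constant mode). Put
\[
g(x)=\ln\bigl(1+\lambda^2+2\lambda\cos(2\pi x)\bigr),
\]
so that
\[
\frac{1}{2n}\sum_{k=0}^{n-1}\ln\bigl(1+\lambda^2+2\lambda\cos(2\pi(x+k\theta))\bigr)=\frac12\cdot\frac{1}{n}\sum_{k=0}^{n-1}g(T^k x).
\]
It therefore suffices to show that this Birkhoff average converges $m$-a.e.\ to $\tfrac12\int_0^1 g\,dm$ and that this integral is zero; exponentiating then gives the lemma.

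First I would check that $g\in L^1([0,1],m)$. For $0<\lambda<1$ we have $1+\lambda^2+2\lambda\cos 2\pi x\ge (1-\lambda)^2>0$, so $g$ is continuous and bounded, hence integrable. For $\lambda=1$, $1+\lambda^2+2\lambda\cos 2\pi x=2+2\cos 2\pi x=4\cos^2(\pi x)$, so $g(x)=2\ln\lvert 2\cos(\pi x)\rvert$ has a single logarithmic singularity at $x=\tfrac12$; since $\ln\lvert x-\tfrac12\rvert$ is integrable in a neighborhood of $\tfrac12$, we again get $g\in L^1$.

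With $g\in L^1(m)$ in hand, Birkhoff's pointwise ergodic theorem applied to the ergodic transformation $T$ gives, for $m$-a.e.\ $x$,
\[
\lim_{n\to\infty}\frac{1}{n}\sum_{k=0}^{n-1}g(T^k x)=\int_0^1 g\,dm.
\]
By Lemma~\ref{L:integral}, $\int_0^1 g\,dm=\int_0^1\ln(1+\lambda^2+2\lambda\cos 2\pi x)\,dx=0$. Hence for $m$-a.e.\ $x$,
\[
\lim_{n\to\infty}\frac{1}{2n}\sum_{k=0}^{n-1}\ln\bigl(1+\lambda^2+2\lambda\cos(2\pi(x+k\theta))\bigr)=0,
\]
and since $t\mapsto e^t$ is continuous, exponentiating yields
\[
\lim_{n\to\infty}\left(\prod_{k=0}^{n-1}\bigl(1+\lambda^2+2\lambda\cos(2\pi(x+k\theta))\bigr)\right)^{1/(2n)}=1
\]
for almost all $x\in[0,1]$.

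The only point requiring genuine care is the $L^1$ bound at $\lambda=1$, where the factors $1+\lambda^2+2\lambda\cos(2\pi(x+k\theta))$ can be arbitrarily small and $g$ is unbounded; once integrability is secured, the statement is a routine consequence of the ergodicity of the irrational rotation together with Lemma~\ref{L:integral}. I would also note that ``a.e.'' cannot be replaced by ``for all $x$'': if some iterate $x+k\theta$ is extremely close to $\tfrac12$ (when $\lambda=1$), the corresponding partial product is tiny, so the convergence genuinely fails on a measure-zero set; but the lemma as stated, and its use in computing the spectral radius, only needs the almost-everywhere assertion.
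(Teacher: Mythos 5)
Your proof is correct and follows essentially the same route as the paper: take logarithms, verify that $\ln(1+\lambda^2+2\lambda\cos 2\pi x)\in L^1[0,1]$ (with the only delicate point being the logarithmic singularity when $\lambda=1$, which the paper handles via Lemma~\ref{L:integral} and you handle by direct local integrability of the log singularity), and then apply Birkhoff's ergodic theorem for the irrational rotation together with the vanishing of the integral from Lemma~\ref{L:integral}.
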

\begin{proof}
We only need to show that for almost all $x\in [0,1]$,
\[
\lim_{n\rightarrow+\infty}\frac{1}{n}\sum_{k=0}^{n-1}
\ln \left(1+\lambda^2+2\lambda\cos(2\pi(x+k\theta))\right)=0.
\]
Let $f(x)=\ln (1+\lambda^2+2\lambda \cos 2\pi x)$. If $0<\lambda<1$, then
\[
2\ln(1-\lambda)\leq f(x)\leq 2\ln (1+\lambda),\quad \forall x\in [0,1].
\]
 So $f(x)\in L^1[0,1]$. If $\lambda=1$, then
\[
f(x)=\ln(2+2\cos 2\pi x)=2\ln 2+2\ln|\cos \pi x|
\]
and so
\[
|f(x)|\leq 2\ln 2-2\ln|\cos \pi x|,\quad \forall x\in [0,1].
\]
By Lemma~\ref{L:integral}, $\int_0^1 f(x)dx=0$, which implies that $\int_0^1 2\ln|\cos \pi x|dx=-2\ln 2$.
Therefore, $\int_0^1 |f(x)|dx\leq 4\ln 2$ and $f(x)\in L^1[0,1]$.

Let $T:x\rightarrow x+\theta (\text{mod}\, 1)$. Then $T$ is a measure preserving ergodic transformation of $[0,1]$. By  Birkhoff's Ergodic theorem and Lemma~\ref{L:integral}, for almost all $x\in [0,1]$
\[
\lim_{n\rightarrow\infty}\frac{1}{n}\sum_{k=0}^{n-1}\ln \left(1+\lambda^2+2\lambda\cos(2\pi(x+k\theta))\right)=\int_0^1\ln(1+\lambda^2+2\lambda \cos 2\pi x)dx=0.
\]
\end{proof}

\begin{Corollary}\label{C:r geq 1}
For $0<\lambda\leq 1$, $r(u+\lambda v)\geq 1$.
\end{Corollary}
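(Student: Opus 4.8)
The plan is to read the result off directly from the explicit formula (\ref{E:(u+v)^n}) together with the pointwise ergodic limit of Lemma~\ref{L:Ergodic}. Recall that, as noted above, the spectral radius is computed by $r(u+\lambda v)=\lim_{n\rightarrow\infty}\|(u+\lambda v)^n\|^{1/n}$, and this limit exists.

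First I would use Lemma~\ref{L:Ergodic} to fix a single point $x_0\in[0,1]$ lying in the full-measure set on which the conclusion of that lemma holds, so that
\[
\lim_{n\rightarrow\infty}\left(\prod_{k=0}^{n-1}\left(1+\lambda^2+2\lambda\cos(2\pi(x_0+k\theta))\right)\right)^{\frac{1}{2n}}=1.
\]
Since the maximum over $x\in[0,1]$ appearing in (\ref{E:(u+v)^n}) dominates the value of the product at this particular $x_0$, we obtain
\[
\|(u+\lambda v)^n\|^{1/n}\ \geq\ \left(\prod_{k=0}^{n-1}\left(1+\lambda^2+2\lambda\cos(2\pi(x_0+k\theta))\right)\right)^{\frac{1}{2n}}\qquad\text{for every }n\in\mathbb{N}.
\]
Letting $n\rightarrow\infty$ in this inequality, and noting that both sides converge, gives $r(u+\lambda v)\geq 1$.

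There is essentially no obstacle here; the argument is just monotonicity of limits applied to (\ref{E:(u+v)^n}). The only point deserving a word of care is that Lemma~\ref{L:Ergodic} is an almost-everywhere statement, so one must first commit to a legitimate choice of $x_0$ in the exceptional-set complement before comparing the product at $x_0$ with the maximum over all $x$; after that the passage to the limit is immediate.
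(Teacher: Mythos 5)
Your proof is correct and is essentially the paper's own argument: both fix a point $x_0$ in the full-measure set supplied by Lemma~\ref{L:Ergodic}, bound the maximum in (\ref{E:(u+v)^n}) below by the value of the product at $x_0$, and pass to the limit (the paper phrases the last step with an $\varepsilon$, but this is only a cosmetic difference). No gaps.
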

\begin{proof}
Let $\epsilon>0$. By Lemma~\ref{L:Ergodic}, there is an $x\in [0,1]$ and $N\in \mathbb{N}$ such that for all $n\geq N$,
\[
 \left(\prod_{k=0}^{n-1}\left(1+\lambda^2+2\lambda\cos(2\pi(x+k\theta))\right)\right)^{\frac{1}{2n}}\geq 1-\epsilon.
\]
By equation (\ref{E:(u+v)^n}), for $n\geq N$,
\[
\|(u+\lambda v)^n\|^{1/n}\geq 1-\epsilon.
\]
This implies that
\[r(u+\lambda v)=\lim_{n\rightarrow+\infty}\|(u+\lambda v)^n\|^{1/n}\geq 1-\epsilon.\]
 Since $\epsilon>0$ is arbitrary, $r(u+\lambda v)\geq 1$.
\end{proof}

Let $\theta\in (0,1)$ be an irrational number and let $\alpha=e^{2\pi i\theta}$.

\begin{Lemma}\label{L:irrational rotation}
Given $\epsilon>0$ and $N\in \mathbb{N}$. Then there exists $N'\in \mathbb{N}$ such that for $n\geq N'$ and every arc $\Gamma$ of the unit circle $\mathbb{T}$ with length $\frac{2\pi}{N}$, there exits $\frac{n}{N}+r$ points of $1,\alpha,\cdots, \alpha^{n-1}$ in $\Gamma$ with $\left|\frac{r}{n}
\right|<\epsilon$.
\end{Lemma}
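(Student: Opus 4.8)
The plan is to reformulate the statement as a uniform–distribution assertion on $[0,1)$ and to invoke Weyl's equidistribution theorem, upgrading the pointwise information it provides to control that is uniform over all arcs of the prescribed length.

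First I would identify $\mathbb{T}$ with $[0,1)$ via $e^{2\pi it}\leftrightarrow t$, so that $\alpha^{k}$ corresponds to the fractional part $\{k\theta\}$ and an arc of $\mathbb{T}$ of length $2\pi/N$ corresponds to an interval of $[0,1)$ of length $1/N$ — either a genuine subinterval $[a,a+\tfrac1N)$ or, if it straddles $0$, a union $[a,1)\cup[0,b)$ with $(1-a)+b=\tfrac1N$. Since $\theta$ is irrational the numbers $\{k\theta\}$, $0\le k\le n-1$, are pairwise distinct, so changing the open/closed convention at the two endpoints of an arc alters the count of points among $1,\alpha,\dots,\alpha^{n-1}$ by at most $2$; thus we may freely pass to a half-open representative up to an additive error of $2$. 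For $n\in\mathbb{N}$ and $t\in[0,1]$ put
\[
F_n(t)=\tfrac{1}{n}\,\#\{0\le k\le n-1:\{k\theta\}<t\}.
\]
Each $F_n$ is nondecreasing with $F_n(0)=0$ and $F_n(1)=1$, and by Weyl's equidistribution theorem $F_n(t)\to t$ for every $t\in[0,1]$. Since the $F_n$ are monotone and the limit $t\mapsto t$ is continuous on the compact interval $[0,1]$, this convergence is in fact uniform (the standard Pólya argument: given $\delta>0$, subdivide $[0,1]$ into finitely many intervals of length $<\delta$, use pointwise convergence at the division points, and sandwich $F_n(t)$ for an arbitrary $t$ between its values at the two neighbouring division points by monotonicity). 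Hence, given $\epsilon>0$, I would fix $N'$ so that $\sup_{t\in[0,1]}|F_n(t)-t|<\epsilon/4$ and $2/n<\epsilon/2$ for all $n\ge N'$.

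For the final step I would fix $n\ge N'$ and an arc $\Gamma$ of length $2\pi/N$; by the remark above it suffices to treat its half-open representative. If this is $[a,a+\tfrac1N)\subseteq[0,1)$, the number of points of $1,\alpha,\dots,\alpha^{n-1}$ in it equals
\[
n\bigl(F_n(a+\tfrac1N)-F_n(a)\bigr)=\frac{n}{N}+n\Bigl[\bigl(F_n(a+\tfrac1N)-(a+\tfrac1N)\bigr)-\bigl(F_n(a)-a\bigr)\Bigr],
\]
which is $\tfrac{n}{N}+r$ with $|r|<n(\epsilon/4+\epsilon/4)=n\epsilon/2$; the wrapped case $[a,1)\cup[0,b)$ gives $n(1-F_n(a))+nF_n(b)=\tfrac{n}{N}+r$ with the same bound, using $F_n(1)=1$ and $F_n(0)=0$. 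Restoring the arc $\Gamma$ we started with changes $r$ by at most $2$, so altogether $|r/n|<\epsilon/2+2/n<\epsilon$, which is the assertion of the lemma.

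I expect the only genuinely delicate point to be the uniformity over all arcs of the given length: a bare application of Weyl's theorem controls each fixed arc, and the upgrade to a single $N'$ valid for all of them simultaneously is exactly the Pólya monotonicity argument above (equivalently, the fact that the discrepancy of $(k\theta)_{k\ge0}$ tends to $0$). Everything else — the coordinatisation of $\mathbb{T}$, the endpoint bookkeeping, and the two elementary displays — is routine.
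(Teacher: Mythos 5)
Your proof is correct, and it takes a genuinely different route from the paper's. The paper establishes the asymptotic count for a single reference arc (one of length slightly less than $2\pi/N$ for the lower bound, one slightly longer for the upper bound) via Birkhoff's ergodic theorem, and then transfers the estimate to an arbitrary arc $\Gamma$ by rotation: density of $\{\alpha^k\}$ yields an $m$ such that any rotation of the circle is approximated by some $\alpha^k$ with $k\le m$, and rotating the reference arc by $\alpha^k$ merely re-indexes the orbit points at an additive cost of at most $m$; the shrunken/enlarged reference arcs absorb the approximation error. This detour is needed partly because Birkhoff gives only almost-everywhere convergence in the starting point, while the lemma concerns the specific orbit of $1$. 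You instead pass to the empirical distribution functions $F_n$ of $(\{k\theta\})_{0\le k<n}$, obtain the pointwise convergence $F_n(t)\to t$ from Weyl's equidistribution theorem (which does hold for this specific orbit), and upgrade to uniform convergence by the P\'olya monotonicity argument; uniformity over all arcs of length $2\pi/N$ is then immediate. In effect you are proving that the discrepancy of $(k\theta)$ tends to zero, which is exactly the content of the lemma. Your route is shorter and cleaner; the paper's route has the minor virtue of reusing the ergodic theorem it already invokes in the neighbouring lemmas. Your endpoint bookkeeping (the additive error of $2$ from open/closed conventions, and the wrapped-arc case using $F_n(0)=0$, $F_n(1)=1$) is handled correctly, so there is no gap.
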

\begin{proof}
Since $\theta\in (0,1)$ is irrational, $\{\alpha^k:\,k\in\mathbb{N}\}$ is dense in the unit circle $\mathbb{T}$. Therefore, there exists $m\in \mathbb{N}$ such that for every $0\leq \varphi\leq 2\pi$, there exists $1\leq k\leq m$ such that $|(\varphi-2k\pi \theta){\rm mod} 2\pi |<\frac{\epsilon}{8}$. By Birkhoff's ergodic theorem, there exists an arc $\Gamma_1$ of the unit circle with length $l(\Gamma_1)=2\pi \left(\frac{1}{N}-\frac{\epsilon}{4} \right)$ and
\[
\lim_{n\rightarrow\infty}\frac{\chi_{\Gamma_1}(1)+\chi_{\Gamma_1}(\alpha)+\cdots+\chi_{\Gamma_1}(\alpha^{n-1})}{n}=\frac{l(\Gamma_1)}{2\pi}
=\frac{1}{N}-\frac{\epsilon}{4}.
\]
Let $N_1$ be sufficiently large such that $\frac{m}{N_1}<\frac{\epsilon}{2}$ and if $n\geq N_1$ then
\[
\frac{\chi_{\Gamma_1}(1)+\chi_{\Gamma_1}(\alpha)+\cdots+\chi_{\Gamma_1}(\alpha^{n-1})}{n}
\geq \frac{1}{N}-\frac{\epsilon}{4}-\frac{\epsilon}{4}=\frac{1}{N}-\frac{\epsilon}{2}.
\]
Let $e^{2\pi i\theta}$ and $e^{2\pi i (\theta+2\pi/N)}$ be the ending points of the arc $\Gamma$. Let $\Gamma_1'\subset \Gamma$ be the arc of $\mathbb{T}$ with ending points $e^{2\pi i\theta+(\pi/4)\epsilon i}$ and $e^{2\pi i (\theta+2\pi/N)-(\pi/4)\epsilon i}$. Then  there exists an $\varphi$ with $0\leq \varphi\leq 2\pi$ such that we can rotate $\Gamma_1$ by angle $\varphi$ to obtain $\Gamma_1'$. So if $\{\alpha^{k_1},\cdots,\alpha^{k_s}\}\subseteq \Gamma_1$ with $0\leq k_1<k_2<\cdots<k_s\leq n-1$, then $\{\alpha^{k_1}e^{i\varphi},\cdots, \alpha^{k_s}e^{i\varphi}\}\subseteq \Gamma_1'\subseteq \Gamma$. Since $|(\varphi-2k\pi \theta){\rm mod} 2\pi|<\frac{\epsilon}{8}$ for some $1\leq k\leq m$,
\[
\{\alpha^{k_1}e^{2k\pi \theta i},\cdots,\alpha^{k_{s-m}}e^{2k\pi \theta i}\}\subset \Gamma.
\]
Since $k_{s-m}\leq n-m$, $\{\alpha^{k_1+k},\cdots,\alpha^{k_{s-m}+k}\}\subset \Gamma$. So $\Gamma$ contains at least $n\left(\frac{1}{N}-\frac{\epsilon}{2} \right)-m=n\left(\frac{1}{N}-\epsilon \right)$ points of $1,\alpha,\cdots,\alpha^{n-1}$.

By Birkhoff's ergodic theorem, there exists an arc $\Gamma_2$ of the unit circle with length $l(\Gamma_2)=2\pi \left(\frac{1}{N}+\frac{\epsilon}{4} \right)$ and
\[
\lim_{n\rightarrow\infty}\frac{\chi_{\Gamma_2}(1)+\chi_{\Gamma_2}(\alpha)+\cdots+\chi_{\Gamma_2}(\alpha^{n-1})}{n}=\frac{l(\Gamma_2)}{2\pi}
=\frac{1}{N}+\frac{\epsilon}{4}.
\]
Let $N_2$ be sufficiently large such that $\frac{m}{N_2}<\frac{\epsilon}{2}$ and if $n\geq N_2$ then
\[
\frac{\chi_{\Gamma_2}(1)+\chi_{\Gamma_2}(\alpha)+\cdots+\chi_{\Gamma_2}(\alpha^{n-1})}{n}
\leq \frac{1}{N}+\frac{\epsilon}{4}+\frac{\epsilon}{4}=\frac{1}{N}+\frac{\epsilon}{2}.
\]
Let $e^{2\pi i\theta'}$ and $e^{2\pi i \theta'+2\pi(1/N+\epsilon/4)i}$ be the ending points of the arc $\Gamma_2$. Let $\Gamma_2'\subset \Gamma_2$ be the arc of $\mathbb{T}$ with ending points $e^{2\pi i\theta'+(\pi/4)\epsilon i}$ and $e^{2\pi i (\theta'+1/N)+(\pi/4)\epsilon i}$. Then  there exists an $\varphi'$ with $0\leq \varphi'\leq 2\pi$ such that we can rotate $\Gamma$ by angle $\varphi'$ to obtain $\Gamma_2'$. So if $\{\alpha^{j_1},\cdots,\alpha^{j_r}\}\subseteq \Gamma$ with $0\leq j_1<j_2<\cdots<j_s\leq n-1$, then $\{\alpha^{j_1}e^{i\varphi'},\cdots, \alpha^{j_r}e^{i\varphi'}\}\subseteq \Gamma_2'\subseteq \Gamma$. Since $|(\varphi'-2k'\pi \theta){\rm mod} 2\pi|<\frac{\epsilon}{8}$ for some $1\leq k'\leq m$,
\[
\{\alpha^{j_1}e^{2k'\pi \theta i},\cdots,\alpha^{j_{r-m}}e^{2k'\pi \theta i}\}\subset \Gamma_2.
\]
 So $\Gamma$ contains at most $n\left(\frac{1}{N}+\frac{\epsilon}{2} \right)+m=n\left(\frac{1}{N}+\epsilon \right)$ points of $1,\alpha,\cdots,\alpha^{n-1}$. Let $N'=\max\{N_1,N_2\}$. Then we obtain the lemma.
\end{proof}

Now we prove the main result of this section.
\begin{Lemma}\label{L:r=1}
For $0<\lambda\leq 1$, $r(u+\lambda v)=1$.
\end{Lemma}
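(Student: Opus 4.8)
The plan is to combine the lower bound $r(u+\lambda v)\ge 1$ supplied by Corollary~\ref{C:r geq 1} with the reverse inequality $r(u+\lambda v)\le 1$, which is what remains. Write $\phi(t)=\ln\bigl(1+\lambda^2+2\lambda\cos 2\pi t\bigr)$. By formula (\ref{E:(u+v)^n}),
\[
\|(u+\lambda v)^n\|^{1/n}=\exp\Bigl(\tfrac12\max_{x\in[0,1]}\tfrac1n\sum_{k=0}^{n-1}\phi(x+k\theta)\Bigr),
\]
and since $r(u+\lambda v)=\lim_{n}\|(u+\lambda v)^n\|^{1/n}$, it suffices to show
\[
\limsup_{n\to\infty}\ \max_{x\in[0,1]}\ \tfrac1n\sum_{k=0}^{n-1}\phi(x+k\theta)\le 0 .
\]
The function $\phi$ is bounded above on $\mathbb{T}$; it is continuous when $\lambda<1$, while for $\lambda=1$ it has a single integrable logarithmic singularity at $t=\tfrac12$. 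By Lemma~\ref{L:integral}, $\int_0^1\phi=0$.

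First I would remove the singularity by truncation. For $B>0$ put $\phi_B=\max(\phi,-B)$, a bounded continuous function on $\mathbb{T}$ with $\phi\le\phi_B$ and $\phi\le\phi_B\le\phi_{B_0}$ whenever $B\ge B_0$; by dominated convergence $\int_0^1\phi_B\to\int_0^1\phi=0$ as $B\to\infty$. Hence it is enough to establish, for each fixed $B$,
\[
\limsup_{n\to\infty}\ \max_{x\in[0,1]}\ \tfrac1n\sum_{k=0}^{n-1}\phi_B(x+k\theta)\le\int_0^1\phi_B(t)\,dt ,
\]
because then the left side of the previous display is $\le\int_0^1\phi_B$ for every $B$, and letting $B\to\infty$ gives the conclusion and hence $r(u+\lambda v)\le1$.

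To prove the last display, fix $\delta>0$. Using uniform continuity of $\phi_B$, partition $\mathbb{T}$ into $N$ half-open arcs $\Gamma_1,\dots,\Gamma_N$ of equal length $2\pi/N$, with $N$ large enough that $\tfrac1N\sum_{j=1}^N M_j<\int_0^1\phi_B+\delta$, where $M_j=\sup_{\Gamma_j}\phi_B$ (the upper Riemann sums decrease to the integral). Set $C_B=\sup_{\mathbb{T}}|\phi_B|$ and choose $\epsilon<\delta/(NC_B+1)$. For fixed $x$, the number of $k\in\{0,\dots,n-1\}$ with $x+k\theta\in\Gamma_j$ equals the number of points among $1,\alpha,\dots,\alpha^{n-1}$ lying in the arc obtained from $\Gamma_j$ by translation by $-x$, an arc of length $2\pi/N$; so by Lemma~\ref{L:irrational rotation} there is $N'$, independent of $x$ and $j$, such that for all $n\ge N'$ this number equals $n/N+r_j(x)$ with $|r_j(x)|<\epsilon n$. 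Grouping the Birkhoff sum over the arcs and using $\phi_B\le M_j$ on $\Gamma_j$ (and $n/N+r_j(x)\ge 0$),
\[
\tfrac1n\sum_{k=0}^{n-1}\phi_B(x+k\theta)\le\sum_{j=1}^{N}\frac{n/N+r_j(x)}{n}\,M_j=\tfrac1N\sum_{j=1}^{N}M_j+\tfrac1n\sum_{j=1}^N r_j(x)M_j,
\]
and the right side is $<\int_0^1\phi_B+\delta+\epsilon\sum_{j=1}^N|M_j|<\int_0^1\phi_B+2\delta$, uniformly in $x$, because $\epsilon\sum_j|M_j|\le\epsilon NC_B<\delta$. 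Letting $\delta\to0$ gives the claim, and therefore $r(u+\lambda v)=1$ in view of Corollary~\ref{C:r geq 1}. The only genuine difficulty is the loss of continuity of $\phi$ at $\lambda=1$, which is precisely why the argument detours through the truncations $\phi_B$ before invoking Lemma~\ref{L:irrational rotation}; the other delicate point, the uniformity in $x$ of the equidistribution estimate, is already built into Lemma~\ref{L:irrational rotation}, whose conclusion holds for \emph{every} arc of the prescribed length.
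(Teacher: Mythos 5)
Your proof is correct. The overall strategy coincides with the paper's: the lower bound comes from Corollary~\ref{C:r geq 1}, and the upper bound is a uniform Birkhoff/Riemann-sum estimate whose uniformity in $x$ rests on Lemma~\ref{L:irrational rotation}. The one genuine difference is how the logarithmic singularity of $\phi(t)=\ln(1+\lambda^2+2\lambda\cos 2\pi t)$ at $t=\tfrac12$ (when $\lambda=1$) is tamed. The paper exploits the monotonicity of $\cos 2\pi t$ on $[0,\tfrac12]$ and $[\tfrac12,1]$: on each of the $2N$ arcs the supremum of $\phi$ is attained at a division point $k\pi/N$ with $k\neq N$, so the singular division point is simply omitted from the Riemann sum and the constant $M(\lambda)$ stays finite. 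You instead truncate from below, replacing $\phi$ by $\phi_B=\max(\phi,-B)$, run the standard unique-ergodicity argument for the continuous majorant $\phi_B$, and let $B\to\infty$ using $\int_0^1\phi_B\to\int_0^1\phi=0$. Your route is slightly more robust — it does not use any monotonicity of the integrand and would apply verbatim to any upper semicontinuous $\phi$ that is bounded above and integrable with the right integral — at the cost of one extra limiting step; the paper's route is more hands-on and tailored to the cosine. Both are complete, and your bookkeeping (the sign discussion for $M_j$, the uniformity of $N'$ over all arcs of length $2\pi/N$, and the choice $\epsilon<\delta/(NC_B+1)$) is in order.
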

\begin{proof}
By Corollary~\ref{C:r geq 1}, we need to prove that $r(u+\lambda v)\leq 1$.
Let $\epsilon>0$. Note that
\[
\lim_{n\rightarrow\infty}\frac{1}{2n}
\left(\sum_{k=0}^{n-1}\ln\left(1+\lambda^2+2\lambda\cos\left(\frac{k\pi}{n}\right)\right)+
\sum_{k=n+1}^{2n}\ln\left(1+\lambda^2+2\lambda\cos\left(\frac{k\pi}{n}\right)\right)\right)
\]
\[
=\int_0^1\ln(1+\lambda^2+2\lambda \cos 2\pi x)dx=0.
\]
There is $N\in \mathbb{N}$ such that
\[
\frac{1}{2N}
\left(\sum_{k=0}^{N-1}\ln\left(1+\lambda^2+2\lambda\cos\left(\frac{k\pi}{N}\right)\right)+
\sum_{k=N+1}^{2N}\ln\left(1+\lambda^2+2\lambda\cos\left(\frac{k\pi}{N}\right)\right)\right)<\epsilon.
\]
Let
\[
M(\lambda)=\max_{0\leq k\leq 2N, k\neq N}\left|\ln\left(1+\lambda^2+2\lambda\cos\left(\frac{k\pi}{N}\right)\right)\right|.
\]
Then for $0<\lambda\leq 1$, $M(\lambda)<\infty$. Divide the unit
circle $\mathbb{T}$ into $2N$ equal sections $A_1,\cdots, A_{2N}$.
By Lemma~\ref{L:irrational rotation}, there exists $N'$ such that for all
$n\geq N'$ and all $x\in [0,1]$, if $A_k$ contains
$n/(2N)+r_{k}(x)$ points of $e^{2\pi ix}, \alpha e^{2\pi ix},
\cdots, \alpha^{n-1} e^{2\pi ix}$, then $\displaystyle
\frac{\sum_{k=1}^{2N}|r_k(x)|}{n}<\frac{\epsilon}{M(\lambda)}. $
Note that $\cos 2\pi x$ is decreasing for $x\in [0,1/2]$ and
increasing for $x\in [1/2,1]$. Therefore, for all $x\in [0,1]$,

\[
\frac{1}{n}\sum_{k=0}^{n-1}
\ln \left(1+\lambda^2+2\lambda\cos(2\pi(x+k\theta))\right)\leq \frac{1}{n}\sum_{k=0}^{N-1}\left(\frac{n}{2N}+r_{k+1}(x)\right)\ln\left(1+\lambda^2+2\lambda\cos\left(\frac{k\pi}{N}\right)\right)
\]
\[
+\frac{1}{n}\sum_{k=N+1}^{2N}\left(\frac{n}{2N}+r_k(x)\right)\ln\left(1+\lambda^2+2\lambda\cos\left(\frac{k\pi}{N}\right)\right)
\]
\[
=\frac{1}{2N}
\left(\sum_{k=0}^{N-1}\ln\left(1+\lambda^2+2\lambda\cos\left(\frac{k\pi}{N}\right)\right)+
\sum_{k=N+1}^{2N}\ln\left(1+\lambda^2+2\lambda\cos\left(\frac{k\pi}{N}\right)\right)\right)
\]
\[
+\frac{1}{n}\sum_{k=0}^{N-1}r_{k+1}(x)\ln\left(1+\lambda^2+2\lambda\cos\left(\frac{k\pi}{N}\right)\right)
+\frac{1}{n}\sum_{k=N+1}^{2N}r_k(x)\ln\left(1+\lambda^2+2\lambda\cos\left(\frac{k\pi}{N}\right)\right)
\]
\[
<\epsilon+\frac{1}{n}\sum_{k=1}^{2N}
|r_k(x)|M(\lambda)<2\epsilon.
\]
This implies that for all $n\geq N'$ and $x\in [0,1]$,
\[
\left(\prod_{k=0}^{n-1}\left(1+\lambda^2+2\lambda\cos(2\pi(x+k\theta))\right)\right)^{\frac{1}{2n}}\leq e^{2\epsilon}.
\]
By equation(\ref{E:(u+v)^n}),
$\|(u+\lambda v)^n\|^{1/n}\leq e^{2\epsilon}$ for all $n\geq N'$. So $r(u+\lambda v)\leq e^{2\epsilon}$. Since $\epsilon>0$ is arbitrary, $r(u+\lambda v)\leq 1$.
\end{proof}

\section{Strongly irreducible operators relative to  type ${\rm II}_1$ factors}

An operator $T$ in a type ${\rm II}_1$ factor $M$ is
called \emph{irreducible} if $\{T,T^*\}'\cap M=\mathbb{C}1$, i.e.,
the von Neumann subalgebra generated by $T$ is an irreducible
subfactor of $M$.
\begin{Proposition}
 Every separable type ${\rm II}_1$
factor $M$ contains an irreducible operator.
\end{Proposition}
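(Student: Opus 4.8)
The plan is to reduce the statement to two facts: that $M$ contains a hyperfinite II$_1$ subfactor with trivial relative commutant, and that the hyperfinite II$_1$ factor is singly generated (the latter being exactly what Section~8 provides). First I would invoke the theorem of Popa that every separable type II$_1$ factor $M$ contains a hyperfinite II$_1$ subfactor $R$ that is irreducible in $M$, i.e. $R'\cap M=\mathbb{C}1$; if one prefers a self-contained account, such an $R$ can be built directly by the standard inductive construction of an increasing chain of finite-dimensional subfactors that ``spread out'' a $\|\cdot\|_2$-dense sequence in the unit ball of $M$. The reason this reduction is useful is that the relative commutant of a subset $S\subseteq M$ depends only on the von Neumann subalgebra it generates: for any $T\in M$ one has $\{T,T^*\}'\cap M=(W^*(T))'\cap M$. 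Hence it suffices to produce $T\in R$ with $W^*(T)=R$.

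Next I would supply such a single generator. Since all separable hyperfinite II$_1$ factors are isomorphic (Murray--von~Neumann), $R$ is isomorphic to the weak operator closure of $\pi(A_\theta)$ in the GNS representation of the unique trace on $A_\theta$, which as recalled in the Introduction is exactly the hyperfinite II$_1$ factor. By Theorem~\ref{T:generators}, the element $u+v$ generates that von Neumann algebra. Transporting the isomorphism into $M$, let $T\in R\subseteq M$ be the image of $u+v$; then $W^*(T)=R$.

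Combining the two steps gives
\[
\{T,T^*\}'\cap M=(W^*(T))'\cap M=R'\cap M=\mathbb{C}1,
\]
so $T$ is an irreducible operator in $M$, which proves the Proposition.

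The only nonelementary ingredient, and hence the main obstacle, is the first step: the existence of an irreducible hyperfinite subfactor of an arbitrary separable II$_1$ factor. This is resolved either by citing Popa's result or by the standard approximation argument sketched above. Once it is granted, the rest is bookkeeping, the single-generator statement for $R$ being precisely Theorem~\ref{T:generators}, together with the elementary observation that relative commutants are insensitive to passing from a set of operators to the von Neumann algebra it generates.
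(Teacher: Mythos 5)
Your proof is correct and follows essentially the same route as the paper: cite Popa's theorem to obtain an irreducible hyperfinite subfactor $R\subseteq M$, note that $R$ is singly generated, and observe that a generator of $R$ has relative commutant $R'\cap M=\mathbb{C}1$. The only cosmetic difference is that you exhibit a specific generator ($u+v$ via Theorem~\ref{T:generators}) where the paper simply invokes single generation of the hyperfinite factor.
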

\begin{proof}
 By~\cite{Po}, every separable type ${\rm II}_1$
factor $M$ contains an irreducible hyperfinite factor. Since hyperfinite factor is generated by an operator $T$, it follows that $T$ is an irreducible operator in $M$.
\end{proof}

Recall that an operator $T$ in $B(H)$ is a \emph{strongly
irreducible operator} if  there is no nontrivial idempotents in
$\{T\}'$. Strongly irreducible operators are generalizations of
Jordan blocks in matrix algebras. A rich theory has been set up on
this class of operators in the past twenty years
(see~\cite{JW1,JW2}).
 Let $M$ be a type ${\rm II}_1$ factor. An operator $T\in M$ is
called a \emph{strongly irreducible operator relative to $M$} if
$\{T\}'\cap M=\mathbb{C} 1$. In this section we will give explicit examples of strongly irreducible operators in hyperfinite  ${\rm II}_1$ factors.

Let $A_\theta$ be the universal irrational rotation $C^*$-algebra with two
unitary generators $u,v$ such that $vu=e^{2\pi i\theta}uv$. Then there
exists a unique trace $\tau$ on $A_\theta$. Applying the GNS-construction to $\tau$, we may
assume that $A_\theta$ acts on $L^2(A_\theta,\tau)$. Let $R$ be the strong operator
closure of $A_\theta$. Then $R$ is the hyperfinite type ${\rm II}_1$ factor with a unique trace $\tau$.
Recall that $u,v$ in $R$ satisfy the following properties:
\begin{enumerate}
\item $\tau(u^n)=\tau(v^n)=0$ for all integers $n\neq 0$;

\item $vu=e^{2\pi i\theta}uv$;

\item $\{u^mv^n:\,m,n\in\mathbb{Z}\}$ is an orthonormal basis of
$L^2(R)=L^2(R,\tau)$, where $u^mv^n$ is viewed as an element of
$L^2(R)$.

\end{enumerate}

The following theorem is the main result of this section.

\begin{Theorem}\label{L:strongly irreducible}
For every irrational number $\theta\in (0,1)$, $u+v$ is a strongly
irreducible operator relative $R$, i.e., there exists no
nontrivial idempotents in $\{u+v\}'\cap R$.
\end{Theorem}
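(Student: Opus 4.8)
The plan is to exploit the structure of $C^*(u+v)\cong A_{\theta,|1+z|^2}$ established in Theorem~\ref{T:generators}, together with the fact that $C^*(u+v)$ is a simple $C^*$-algebra with a unique trace. First I would observe that any idempotent $e\in\{u+v\}'\cap R$ can be assumed, after a standard similarity argument inside a ${\rm II}_1$ factor, to be a projection: indeed if $e$ is a nonselfadjoint idempotent commuting with $u+v$, then $e'=ee^*(ee^*+(1-e)^*(1-e))^{-1}$ is a projection with the same range that also lies in $\{u+v\}'\cap R$, since $(u+v)$ commutes with $e$ and hence with $e^*$ only after we pass to the von Neumann algebra generated by $u+v$, which is $R$ itself (by Theorem~\ref{T:generators}). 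So it suffices to rule out nontrivial \emph{projections} in $\{u+v\}'\cap R$. Equivalently, I must show the von Neumann algebra generated by $u+v$, namely $R$, contains no nontrivial projection commuting with $u+v$; but this is automatic since $R$ is a factor only tells us $\{u+v,(u+v)^*\}'\cap R=\mathbb{C}1$, whereas here we only commute with $u+v$ and not its adjoint. So the real content is to control $\{u+v\}'\cap R$, the relative commutant of the single (non-normal) operator $u+v$.

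The key step is to use the Riesz functional calculus / Riesz spectral decomposition. Suppose $e\in\{u+v\}'\cap R$ is a nontrivial projection. Then $e$ decomposes $u+v$ as $(u+v)|_{eH}\oplus (u+v)|_{(1-e)H}$, and the spectrum $\sigma(u+v)$ is the union of the spectra of the two pieces. By Lemma~\ref{L:r=1}, the spectral radius of $u+v$ (the case $\lambda=1$) equals $1$; combined with the rotation symmetry coming from the automorphisms $\rho_t$ (which satisfy $\rho_t(u+v)=e^{2\pi it}(u+v)$ up to the identification, so $\sigma(u+v)$ is rotation-invariant) and the fact, to be used later in the paper, that $\sigma(u+v)=\overline{B(0,1)}$ is connected, one sees that a nontrivial Riesz idempotent would split the spectrum into two nonempty closed pieces whose union is $\overline{B(0,1)}$. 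The plan is then to derive a contradiction from connectedness of $\sigma(u+v)$: if $\{u+v\}'\cap R$ contained a nontrivial \emph{idempotent} $e$, spectral theory of a single element in a Banach algebra would force $\sigma(u+v)=\sigma((u+v)e)\cup\sigma((u+v)(1-e))$ where these two sets are disjoint closed sets, each containing $0$ only if the corresponding corner operator is not invertible. The finer point — the actual obstacle — is that connectedness of $\overline{B(0,1)}$ does not by itself prevent a decomposition $\overline{B(0,1)}=K_1\cup K_2$ with $K_1\cap K_2=\emptyset$ unless one of $K_1,K_2$ is empty, since for idempotents the two spectra need not be disjoint. So one must instead argue that the two pieces $\sigma((u+v)e)$ and $\sigma((u+v)(1-e))$ are indeed disjoint — which follows because $e$ is a \emph{bounded} idempotent giving a genuine topological direct sum $H=eH\oplus(1-e)H$ and $(u+v)$ leaves each summand invariant, so its resolvent set is the intersection of the two resolvent sets; hence the spectra are complementary closed subsets of $\overline{B(0,1)}$, forcing one of them empty, hence $e=0$ or $e=1$.

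Concretely, the proof I would write: (i) reduce to a projection $e$ as above using the von Neumann algebra identity from Theorem~\ref{T:generators}; (ii) note $eH$ and $(1-e)H$ are both $(u+v)$-invariant, so $\sigma(u+v)=\sigma((u+v)|_{eH})\sqcup\sigma((u+v)|_{(1-e)H})$ as a disjoint union of nonempty compact sets whenever $e$ is nontrivial — the disjointness is the standard fact about invariant complemented subspaces for a single operator; (iii) invoke connectedness of $\sigma(u+v)=\overline{B(0,1)}$ (established via Lemma~\ref{L:r=1}, rotation-invariance of the spectrum from the automorphisms $\rho_t$, and the resulting identification $\sigma(u+v)=\overline{B(0,1)}$ explained in Sections 9--11 of the paper) to conclude that one of the two sets must be empty; (iv) conclude $e\in\{0,1\}$. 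The main obstacle I anticipate is making step (ii) airtight in the ${\rm II}_1$-factor setting while only assuming the earlier-stated results — in particular, ensuring that the reduction from idempotent to projection is valid, which I handle by invoking that the weak closure of $C^*(u+v)$ is all of $R$, so $e^*\in R$ and the similarity transform stays inside $R$; and the second obstacle is that the statement $\sigma(u+v)=\overline{B(0,1)}$ is proved \emph{later} in the paper using precisely this theorem, so I would phrase the argument to use only (a) $r(u+v)=1$ from Lemma~\ref{L:r=1}, (b) rotation-invariance, and (c) that $\sigma(u+v)\supseteq\mathbb{T}$ or is otherwise connected — and if logical circularity threatens, fall back to the self-contained route: show directly that $u+v$ has no nontrivial \emph{algebraic} reducing structure by analyzing the conditional expectation $\Phi$ and the fact that $A_{\theta,|1+z|^2}$ is simple, which prevents $\{u+v\}'\cap R$ from containing any projection other than $0,1$ because such a projection, together with the trace being unique, would contradict simplicity when we intersect with the $C^*$-algebra.
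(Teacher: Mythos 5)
Your proposal has a fatal gap at its core, step (ii): the claim that a nontrivial idempotent $e$ commuting with $T=u+v$ forces $\sigma(T)$ to split as a \emph{disjoint} union $\sigma(T|_{eH})\sqcup\sigma(T|_{(1-e)H})$ is false. What is true is only that the resolvent set of $T$ is the intersection of the two resolvent sets, i.e.\ $\sigma(T)=\sigma(T|_{eH})\cup\sigma(T|_{(1-e)H})$; nothing makes the two pieces disjoint (consider $T=S\oplus S$ with the obvious idempotent: both pieces have spectrum $\sigma(S)$). The Riesz decomposition theorem goes in one direction only — a disconnection of the spectrum produces an idempotent, not conversely — and the paper uses exactly that direction but in the opposite logical order from yours: Theorem~\ref{L:strongly irreducible} is proved first, by a direct computation, and only \emph{then} does Corollary~\ref{C:connected} deduce connectedness of $\sigma(u+v)$ from the absence of idempotents, which feeds into $\sigma(u+v)=\overline{B(0,1)}$ in Theorem~\ref{T:1}. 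So your argument is both invalid at step (ii) and circular in its use of the spectral picture. Your two repairs do not work either. The reduction of an idempotent to a projection fails because $x(u+v)=(u+v)x$ does not yield $x^*(u+v)=(u+v)x^*$ for a non-normal operator, so the range projection of $e$ (whose formula involves $e^*$) need not commute with $u+v$, and the similarity taking $e$ to a projection destroys the commutation with $u+v$. And simplicity plus uniqueness of trace of $C^*(u+v)$ says nothing about $\{u+v\}'\cap R$: simple $C^*$-algebras such as $A_\theta$ are full of projections, and the relative commutant here is taken in all of $R$, not in $C^*(u+v)$.

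The paper's actual proof is a hands-on Fourier computation in $L^2(R,\tau)$: expand $x\in\{u+v\}'\cap R$ as $\sum_{m,n}\alpha_{m,n}u^mv^n$, compare coefficients of $(u+v)x$ and $x(u+v)$ to get the recursion $\alpha_{m-1,n}(1-e^{2\pi in\theta})=\alpha_{m,n-1}(1-e^{2\pi im\theta})$, use square-summability of the coefficients together with a subsequence $e^{2\pi im_k\theta}\to1$ to show $\alpha_{m,n}=0$ whenever $m<0$ or $n<0$, conclude that $x=\sum_{k\ge0}\lambda_k(u+v)^k$ as a formal series, and finally observe that $x^2=x$ forces the convolution identity $\lambda_k=\sum_{j=0}^k\lambda_j\lambda_{k-j}$, whose only solutions give $x=0$ or $x=1$. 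Some argument of this computational type (or another genuinely new idea) is required; the spectral-decomposition route cannot be made to work.
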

\begin{proof}
Let  $x\in \{u+v\}'\cap R$. By condition 3 above
Theorem~\ref{L:strongly irreducible}, $x=\sum_{m,n\in
\mathbb{Z}}\alpha_{m,n}u^mv^n$ and $\sum_{m,n\in
\mathbb{Z}}|\alpha_{m,n}|^2=\tau(x^*x)<\infty$. By condition 2
above Theorem~\ref{L:strongly irreducible},
\begin{equation}\label{E:(u+v)x}
(u+v)x=(u+v)\sum_{m,n\in
\mathbb{Z}}\alpha_{m,n}u^mv^n=\sum_{m,n\in
\mathbb{Z}}\alpha_{m,n}u^{m+1}v^n+\sum_{m,n\in
\mathbb{Z}}\alpha_{m,n}e^{2\pi im\theta}u^mv^{n+1}
\end{equation}
and
\begin{equation}\label{E:x(u+v)}
x(u+v)=\sum_{m,n\in
\mathbb{Z}}\alpha_{m,n}u^mv^n(u+v)=\sum_{m,n\in
\mathbb{Z}}\alpha_{m,n}e^{2\pi in\theta}u^{m+1}v^n+\sum_{m,n\in
\mathbb{Z}}\alpha_{m,n}u^mv^{n+1}.
\end{equation}
 By condition 3 above Theorem~\ref{L:strongly irreducible}, $\{u^mv^n:\,m,n\in\mathbb{Z}\}$ is an orthonormal basis of $L^2(R)$. Comparing the coefficients of the term $u^mv^n$ in (\ref{E:(u+v)x}) and (\ref{E:x(u+v)}), we have
\[
\alpha_{m-1,n}+\alpha_{m,n-1}e^{2\pi
im\theta}=\alpha_{m-1,n}e^{2\pi in\theta}+\alpha_{m,n-1},
\]
which is equivalent to
\begin{equation}\label{E:2.3}
\alpha_{m-1,n}(1-e^{2\pi in\theta})=\alpha_{m,n-1}(1-e^{2\pi
im\theta}).
\end{equation}

Since $\theta$ is an irrational number, $1-e^{2\pi ik\theta}\neq
0$ for $k\neq 0$. Let $n=0$ in equation~(\ref{E:2.3}). We have
$\alpha_{m,-1}=0$ for $m\neq 0$. Let $n=-1$ in equation
(\ref{E:2.3}). We have $\alpha_{m,-2}=0$ for $m\neq 0$, $m\neq 1$.
In general, let $n=-k$ in equation (\ref{E:2.3}). We have
$\alpha_{m,-k-1}=0$ for $m\neq 0$, $\cdots$, $m\neq k$. On the
other hand, let $m=0$ in equation (\ref{E:2.3}). We have
$\alpha_{-1,n}=0$ for $n\neq 0$. Similarly, in general we have
$\alpha_{-k-1,n}=0$ for $n\neq 0$, $\cdots$, $n\neq k$. So we have
$\alpha_{m,n}=0$ if either both $m<0$ and $n<0$ or $m=-n\neq 0$.

The motivation of the following part is to prove that
$\alpha_{m,n}=0$ if either $m<0$ or $n<0$. We only need to show
that $\alpha_{m,-k-m}=0$ and $\alpha_{-k-m,m}=0$ for $k\geq 1$ and
$m\geq 0$. Repeat using equation (\ref{E:2.3}), we have
\begin{equation}\label{E:2.4}
\alpha_{m,-k-m}=\alpha_{0,-k}\frac{1-e^{-2\pi ik\theta}}{1-e^{2\pi
i\theta}}\cdot\frac{1-e^{-2\pi i(k+1)\theta}}{1-e^{2\pi
i2\theta}}\cdots\frac{1-e^{-2\pi i(k+m-1)\theta}}{1-e^{2\pi
im\theta}}
\end{equation}
and
\begin{equation}\label{E:2.5}
\alpha_{-k-m,m}=\alpha_{-k,0}\frac{1-e^{-2\pi ik\theta}}{1-e^{2\pi
i\theta}}\cdot\frac{1-e^{-2\pi i(k+1)\theta}}{1-e^{2\pi
i2\theta}}\cdots\frac{1-e^{-2\pi i(k+m-1)\theta}}{1-e^{2\pi
im\theta}}
\end{equation}
for $m\geq 0$ and $k\geq 1$.

Let $k=1$ in equation (\ref{E:2.4}). We have
\begin{equation}\label{E:2.*}
|\alpha_{m,-1-m}|=|\alpha_{0,-1}|\frac{|1-e^{-2\pi
i\theta}|}{|1-e^{2\pi i\theta}|}\cdot\frac{|1-e^{-2\pi
i2\theta}|}{|1-e^{2\pi i2\theta}|}\cdots\frac{|1-e^{-2\pi
ik\theta}|}{|1-e^{2\pi im\theta}|}=|\alpha_{0,-1}|.
\end{equation}
In general for $k\geq 2$ and $m\geq 0$,
\begin{equation}\label{E:key observation}
 |\alpha_{m,-k-m}|=|\alpha_{0,-k}|\frac{|1-e^{-2\pi i(m+1)\theta}|}{|1-e^{2\pi i\theta}|}\cdot \frac{|1-e^{-2\pi i(m+2)\theta}|}{|1-e^{2\pi i2\theta}|}\cdots \frac{|1-e^{-2\pi i(m+k-1)\theta}|}{|1-e^{2\pi i (k-1)\theta}|}.
\end{equation}
To prove $\alpha_{0,-k}=0$ and therefore $\alpha_{m,-k-m}=0$ (by
equation (\ref{E:2.4})), we use the following fact:
\begin{equation}\label{E:goes to zero}
 \sum_{m,n}|\alpha_{m,n}|^2<+\infty\Rightarrow \sum_{m>0}|\alpha_{m,-k-m}|^2<+\infty\Rightarrow \lim_{m\rightarrow+\infty}|\alpha_{m,-k-m}|=0.
\end{equation}
 If $k=1$, then $|\alpha_{m,-1-m}|=|\alpha_{0,-1}|$ by (\ref{E:2.*}).  By (\ref{E:goes to zero}), we have $|\alpha_{m,-1-m}|=|\alpha_{0,-1}|=0$ for all $m\geq 0$.

To prove the general case, we need to use a property of irrational
rotation. Namely, there exists a sequence of increasing integers
$m_n$ such that
\[
 \lim_{n\rightarrow+\infty} e^{2\pi i m_n\theta}=1.
\]
Now for each fixed $k\geq 2$, by (\ref{E:goes to zero}) and
equation~(\ref{E:key observation}),
\begin{eqnarray}
 \nonumber 0&=&\lim_{n\rightarrow+\infty}|\alpha_{m_n,-k-m_n}|\\
\nonumber&=& \lim_{n\rightarrow+\infty} |\alpha_{0,-k}|\frac{|1-e^{-2\pi i(m_n+1)\theta}|}{|1-e^{2\pi i\theta}|}\cdot \frac{|1-e^{-2\pi i(m_n+2)\theta}|}{|1-e^{2\pi i2\theta}|}\cdots \frac{|1-e^{-2\pi i(m_n+k-1)\theta}|}{|1-e^{2\pi i (k-1)\theta}|}\\
\nonumber&=&\lim_{n\rightarrow+\infty} |\alpha_{0,-k}|\frac{|e^{2\pi im_n\theta}-e^{-2\pi i\theta}|}{|1-e^{2\pi i\theta}|}\cdot \frac{|e^{2\pi im_n\theta}-e^{-2\pi i2\theta}|}{|1-e^{2\pi i2\theta}|}\cdots \frac{|e^{2\pi im_n\theta}-e^{-2\pi i(k-1)\theta}|}{|1-e^{2\pi i (k-1)\theta}|}\\
\nonumber&=&|\alpha_{0,-k}|\frac{|1-e^{-2\pi i\theta}|}{|1-e^{2\pi i\theta}|}\cdot \frac{|1-e^{-2\pi i2\theta}|}{|1-e^{2\pi i2\theta}|}\cdots \frac{|1-e^{-2\pi i(k-1)\theta}|}{|1-e^{2\pi i (k-1)\theta}|}\\
\nonumber&=&|\alpha_{0,-k}|.
\end{eqnarray}
By equation~(\ref{E:key observation}),
$|\alpha_{m,-k-m}|=|\alpha_{0,-k}|=0$ for all $m\geq 0$ and $k\geq
1$. By equation (\ref{E:2.5}) and  similar arguments,
$|\alpha_{-k-m,m}|=|\alpha_{-k,0}|=0$ for all $m\geq 0$ and $k\geq
1$.

Above all, we have proved that $\alpha_{m,n}=0$ if either $m<0$ or
$n<0$. Hence
\[
x=\sum_{m\geq 0,n\geq 0}\alpha_{m,n}u^mv^n.
\]
For $k\geq 0$, let $x_k=\sum_{m\geq 0,n\geq 0,
m+n=k}\alpha_{m,n}u^mv^n$. Then $x=\sum_{k=0}^\infty x_k$ as a
vector in $L^2(R)$. Since $x\in \{u+v\}'\cap R$ and
$\{u^mv^n:\,m,n\in\mathbb{Z}\}$ is an orthonormal basis of $L^2(R)$, $x_k\in
\{u+v\}'\cap R$. By equation (\ref{E:2.3}), $\alpha_{m,k-m}$ is
uniquely determined by $\alpha_{0,k}$ for $0\leq k\leq m$. Since
$(u+v)^k$ commutes with $u+v$, $x_k=\lambda_k (u+v)^k$ for some
complex number $\lambda_k$. This implies that $x=\sum_{k=0}^\infty
\lambda_k (u+v)^k$ and the decomposition is unique.

 Suppose $x\in \{u+v\}'\cap R$. Let $x^2=\sum_{k=0}^\infty \sigma_k(u+v)^k$. For $a,b\in R$, let $\langle a,b\rangle=\tau(b^*a)$. Then
 \begin{equation}\label{E:x^2}
\sigma_k=\langle x^2, u^k\rangle=\langle
x,x^*u^k\rangle=\left\langle \sum_{j=0}^\infty \lambda_j(u+v)^j,
\sum_{j=0}^\infty
\bar{\lambda_j}\left((u+v)^*\right)^ju^k\right\rangle=\sum_{j=0}^k\lambda_j\lambda_{k-j},\quad
\forall k\geq 0.
 \end{equation}
If $x^2=x$, then $\lambda_k=\sigma_k$ for all $k$. Let $k=0$. Then
(\ref{E:x^2}) implies that $\lambda_0=\lambda_0^2$. So
$\lambda_0=0$ or $\lambda_0=1$. By considering $1-x$, we may
assume that $\lambda_0=0$. Let $k=1$. Then (\ref{E:x^2}) implies
that
$\lambda_1=\lambda_0\cdot\lambda_1+\lambda_1\cdot\lambda_0=0$. By
(\ref{E:x^2}) and induction, we have $\lambda_k=0$ for all $k\geq
0$. This implies that $x=0$, which completes the proof.
\end{proof}

By the Riesz spectral decomposition theorem, we immediately have
the following corollary.
\begin{Corollary}\label{C:connected}
For every irrational number $\theta\in (0,1)$, the spectrum of
$u+v$ is connected.
\end{Corollary}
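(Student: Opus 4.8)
The plan is to combine Theorem~\ref{L:strongly irreducible} with the Riesz spectral decomposition theorem (the holomorphic functional calculus applied to characteristic functions of clopen subsets of the spectrum). First I would recall that for any bounded operator $T$ in a unital Banach algebra, if $\sigma(T)$ is disconnected, say $\sigma(T) = \sigma_1 \sqcup \sigma_2$ with $\sigma_1,\sigma_2$ nonempty and relatively clopen, then there is a Jordan curve $\Gamma$ separating $\sigma_1$ from $\sigma_2$, and the Riesz projection $e = \frac{1}{2\pi i}\int_\Gamma (\zeta - T)^{-1}\,d\zeta$ is a nontrivial idempotent: it satisfies $e = e^2$, commutes with $T$, and is neither $0$ nor $1$ because $\sigma(T|_{eH}) = \sigma_1$ and $\sigma(T|_{(1-e)H}) = \sigma_2$ are both nonempty. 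Moreover this $e$ lies in the bicommutant of $T$, hence in particular in $\{T\}'$; since $R$ is a von Neumann algebra containing $u+v$ and the resolvents $(\zeta - (u+v))^{-1}$ all lie in $R$ (for $\zeta \notin \sigma(u+v)$), the integral $e$ also lies in $R$.

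Now I would apply this with $T = u+v$. If $\sigma(u+v)$ were disconnected, the above construction would produce a nontrivial idempotent $e \in \{u+v\}' \cap R$, contradicting Theorem~\ref{L:strongly irreducible}, which asserts that $\{u+v\}' \cap R$ contains no nontrivial idempotent. Therefore $\sigma(u+v)$ is connected.

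There is essentially no obstacle here beyond quoting the standard facts correctly; the only point that needs a word of care is why the Riesz idempotent $e$ belongs to $R$ and not merely to $B(L^2(R))$. This follows because $R$ is weakly (indeed norm) closed, $(\zeta - (u+v))^{-1} \in R$ for every $\zeta$ in the resolvent set (the resolvent of an element of a unital $C^*$-algebra lies in that algebra), and the contour integral defining $e$ is a norm limit of Riemann sums of such resolvents, hence lies in $R$. With that observation the corollary is immediate, so the proof is short: disconnectedness of $\sigma(u+v)$ yields a nontrivial idempotent in the relative commutant, contradicting strong irreducibility relative to $R$.
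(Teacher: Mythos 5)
Your argument is correct and is exactly the paper's: the authors deduce the corollary "by the Riesz spectral decomposition theorem" from Theorem~\ref{L:strongly irreducible}, i.e., a disconnected spectrum would produce a nontrivial Riesz idempotent in $\{u+v\}'\cap R$. You have merely spelled out the standard details (including why the Riesz projection lies in $R$) that the paper leaves implicit.
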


\begin{Remark}
\emph{By the proof of Theorem~\ref{L:strongly irreducible}, every
operator in the commutant algebra of $u+v$ can be written as a
formal series $\sum_{n=0}^\infty a_n(u+v)^n$. A similar argument
can show that for $0<\lambda<1$, every operator in the commutant
algebra of $u+\lambda v$ can be written as a formal series
$\sum_{n=-\infty}^\infty a_n(u+\lambda v)^n$.}
\end{Remark}

In the following, we will construct more examples of strongly
irreducible operators relative to the hyperfinite type ${\rm
II}_1$ factor. Precisely, we will prove the following result.

\begin{Proposition}\label{T:u+v^k}
For $\theta$ in a second category subset of $[0,1]$, we have
$u+v^k$ is strongly irreducible relative to $R$ for all
$k=1,2,\cdots$.
\end{Proposition}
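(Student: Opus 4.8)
The plan is to run the Fourier-coefficient analysis of Theorem~\ref{L:strongly irreducible} for $u+v^k$, establish a "power series" normal form for the commutant on a large set of $\theta$, and then recycle the idempotent computation. Fix $k\ge 1$ and take $x\in\{u+v^k\}'\cap R$, written as $x=\sum_{m,n\in\mathbb Z}\alpha_{m,n}u^mv^n$ with $\sum|\alpha_{m,n}|^2=\tau(x^*x)<\infty$. Using $v^ku^m=e^{2\pi ikm\theta}u^mv^k$ and comparing coefficients of $u^mv^n$ in $(u+v^k)x$ and $x(u+v^k)$, one obtains the analogue of~(\ref{E:2.3}),
\[
\alpha_{m-1,n}\,(1-e^{2\pi i n\theta})=\alpha_{m,n-k}\,(1-e^{2\pi i km\theta}),
\]
which links Fourier modes along the lines $\{km+n=j\}$, $j\in\mathbb Z$. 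For each $N\ge 0$ the power $(u+v^k)^N$ is supported exactly on the finite segment $\{(m,k(N-m)):0\le m\le N\}$ of the "centre" line $j=kN$, with the coefficient of $u^N$ equal to $1$. The first goal is to confine the support of $\alpha$ to $\bigcup_{N\ge0}\{(m,k(N-m)):0\le m\le N\}$, modulo the "off-centre" lines $\{km+n=j\}$ with $k\nmid j$ (which occur only for $k\ge 2$).

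This confinement I would carry out exactly in the style of Theorem~\ref{L:strongly irreducible}. Setting $m=0$ gives $\alpha_{-1,n}(1-e^{2\pi in\theta})=0$, hence $\alpha_{-1,n}=0$ for $n\ne0$; the surviving $\alpha_{-1,0}$ lies on the line $j=-k$, along which the relevant factors $|1-e^{\pm2\pi itk\theta}|$ telescope to give $|\alpha_{-1-t,tk}|=|\alpha_{-1,0}|$ for all $t\ge0$, so square-summability forces $\alpha_{-1,0}=0$; iterating, all $\alpha_{m,n}$ with $m<0$ vanish, and the same mechanism (a telescoping product of $|1-e^{2\pi i\cdot\theta}|$-ratios together with $\limsup_t|1-e^{2\pi itk\theta}|=2$) kills the coefficients on the segments $\{m>N\}$ of the centre lines via the resonance node $(N,0)$ where $1-e^{2\pi i\cdot 0\cdot\theta}=0$. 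On a centre line $j=kN$ the surviving nodes $(m,k(N-m))$, $0\le m\le N$, then satisfy a finite chain of relations with all factors $1-e^{2\pi ikm\theta}\ne0$ ($1\le m\le N$), so they form a one-dimensional solution space; since $(u+v^k)^N$ is a nonzero element supported there, $x$ restricted to that line is a scalar multiple of $(u+v^k)^N$. Everything so far works for every irrational $\theta$.

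The only $\theta$-sensitive point is the off-centre lines. On $\{km+n=j\}$ with $k\nmid j$ the part $m<0$ is again killed by the $m=0$ resonance, but the semi-infinite chain $(0,j),(1,j-k),(2,j-2k),\dots$ contains \emph{no} resonance node ($rk-j\ne0$ for all $r$ because $k\nmid j$), and the recurrence yields
\[
\bigl|\alpha_{t,\,j-tk}\bigr|=|\alpha_{0,j}|\,\exp\!\bigl(B_t(-j\theta)-B_t(k\theta)\bigr),\qquad
B_t(x)=\sum_{r=0}^{t-1}\log\bigl|2\sin\pi(x+rk\theta)\bigr|,
\]
the Birkhoff sums of $f(x)=\log|2\sin\pi x|$ for the rotation by $k\theta$. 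Since $k\nmid j$, the base points $-j\theta$ and $k\theta$ are incommensurable modulo $\mathbb Z+\mathbb Z k\theta$, so $B_t(-j\theta)-B_t(k\theta)$ is not a Birkhoff sum of a coboundary, and square-summability forces $\alpha_{0,j}=0$ \emph{unless} $B_t(-j\theta)-B_t(k\theta)\to-\infty$. Let $\Theta_{k,j}$ be the set of irrational $\theta$ with $\limsup_t\bigl(B_t(-j\theta)-B_t(k\theta)\bigr)=+\infty$; writing it as $\bigcap_M\bigcup_t\{\theta: B_t(-j\theta)-B_t(k\theta)>M\}$ exhibits $\Theta_{k,j}$ as a dense $G_\delta$ — density because, near any $\theta_0$, a rational approximant $\theta\approx p/(t_0k)+\epsilon$ makes the single denominator factor $|2\sin\pi t_0k\theta|$ as small as we like while leaving $B_{t_0}(-j\theta)$ bounded below (the classical resonance/perturbation trick for Birkhoff sums of $\log|2\sin\pi x|$). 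Hence $\Theta_0:=\bigcap_{k\ge1}\bigcap_{k\nmid j}\Theta_{k,j}$ is a dense $G_\delta$, in particular a second category subset of $[0,1]$. For $\theta\in\Theta_0$ every off-centre coefficient vanishes, so $x=\sum_{N\ge0}\lambda_N(u+v^k)^N$ as an $L^2$-series; if moreover $x=x^2$, comparing coefficients of $(u+v^k)^N$ (using $(u+v^k)^i(u+v^k)^{N-i}=(u+v^k)^N$) gives $\lambda_0\in\{0,1\}$, and after replacing $x$ by $1-x$ one gets $\lambda_0=0$ and inductively $\lambda_N=0$, so $x=0$. Thus $u+v^k$ is strongly irreducible relative to $R$ for all $k$ whenever $\theta\in\Theta_0$.

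The main obstacle is precisely the off-centre step: showing that $[0,1]\setminus\Theta_0$ is meager is a quantitative statement about the oscillation of Birkhoff sums of the (logarithmically singular, non-coboundary) function $\log|2\sin\pi x|$ over the rotation by $k\theta$ — namely that, generically in $\theta$, a difference of two such sums at incommensurable base points has $\limsup=+\infty$. (For $k=1$ there are no off-centre lines, which is exactly why Theorem~\ref{L:strongly irreducible} holds for \emph{every} irrational $\theta$.)
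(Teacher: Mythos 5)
Your proposal is correct and follows essentially the same route as the paper: the same coefficient recurrence, the same resonance-plus-square-summability argument confining the support, and the same Baire-category argument (density of the denominator factors' zeros, i.e.\ of roots of unity of order divisible by $k$) showing that the exceptional set of $\theta$ on which the non-resonant product ratios could decay is meager --- your Birkhoff-sum sets $\Theta_{k,j}$ are just a logarithmic rewriting of the paper's sets $\mathbb{T}\setminus E_{r,k}$ built from the meromorphic products $f_{s,r,k}(z)=\prod_{t=1}^{s}\frac{1-z^{kt+r}}{1-z^{kt}}$. The only (harmless) differences are that you impose the stronger generic condition $\limsup_t\bigl(B_t(-j\theta)-B_t(k\theta)\bigr)=+\infty$ where the paper only needs that $f_{s,r,k}(e^{2\pi i\theta})$ does not tend to $0$, and that the paper closes by observing that the surviving $x$ lies in $W^*(u,v^k)$ and invoking Theorem~\ref{L:strongly irreducible} for the rotation angle $k\theta$ instead of redoing the idempotent computation.
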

To prove Proposition \ref{T:u+v^k}, we need the following lemma.
\begin{Lemma}\label{L:u+v^k}
Let
\[
f_{s,r,k}(z)=\prod_{t=1}^s\frac{1-z^{kt+r}}{1-z^{kt}},\quad
E_{r,k}=\{z\in \mathbb{T}:\, \lim_{s\rightarrow
+\infty}f_{s,r,k}(z)=0\},
\]
where $k$ and $r$ are positive integers such that $k\geq 2$ and
$r\neq 0\, {\rm mod}\, k$. Then $E_{r,k}$ is a first category
subset of $\mathbb{T}$.
\end{Lemma}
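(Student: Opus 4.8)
The plan is to show that for each fixed $z\in\mathbb{T}$ with $z^k\neq 1$, convergence of $f_{s,r,k}(z)\to 0$ forces $z$ to lie in a ``thin'' set, and then to argue that the union over all $z$ giving this behavior is meagre. First I would rewrite $|f_{s,r,k}(z)|$ in a logarithmic form: setting $z=e^{2\pi i x}$, one has
\[
\ln|f_{s,r,k}(z)|=\sum_{t=1}^s\Bigl(\ln|1-e^{2\pi i(kt+r)x}|-\ln|1-e^{2\pi i kt x}|\Bigr),
\]
and the condition $z\in E_{r,k}$ says this sum diverges to $-\infty$. The natural tool is again Birkhoff's ergodic theorem applied to the rotation $T_\alpha:y\mapsto y+\alpha$ with $\alpha=kx$ (or a comparison of Birkhoff sums for the two shifted orbits), exactly in the spirit of Lemma~\ref{L:Ergodic} and Lemma~\ref{L:integral}: for \emph{almost every} $x$ (with respect to Lebesgue measure) the averaged sum converges to $\int_0^1\ln|1-e^{2\pi i y}|\,dy=0$, so the partial sums do not run off to $-\infty$ and hence $z\notin E_{r,k}$. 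This already shows $E_{r,k}$ has Lebesgue measure zero; the real work is upgrading ``measure zero'' to ``first category.''

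For the category statement I would express $E_{r,k}$ as a countable union of closed sets with empty interior. Write
\[
E_{r,k}\subseteq\bigcup_{N\ge 1}\bigcap_{M\ge 1}\;\bigcup_{s\ge M}\Bigl\{z\in\mathbb{T}:\ |f_{s,r,k}(z)|\le \tfrac1N\Bigr\},
\]
and more usefully observe that if $f_{s,r,k}(z)\to 0$ then for every $N$ there is $M$ with $|f_{s,r,k}(z)|\le 1/N$ for all $s\ge M$; thus
\[
E_{r,k}\subseteq \bigcup_{N\ge 1}\bigcup_{M\ge 1} F_{N,M},\qquad
F_{N,M}:=\bigcap_{s\ge M}\{z\in\mathbb{T}:\ |f_{s,r,k}(z)|\le 1/N\}.
\]
Each $F_{N,M}$ is closed, being an intersection of closed sets (the functions $f_{s,r,k}$ are continuous away from the finitely many $k$-th or $(kt)$-th roots of unity, but $|1-z^{kt+r}|/|1-z^{kt}|$ extends continuously across those points since numerator and denominator vanish to the same order, so $|f_{s,r,k}|$ is genuinely continuous on $\mathbb{T}$). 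It then suffices to prove each $F_{N,M}$ has empty interior. Suppose some arc $\Gamma\subset F_{N,M}$; then $|f_{s,r,k}(z)|\le 1/N$ for all $z\in\Gamma$ and all $s\ge M$. I would derive a contradiction by exhibiting, inside any such arc, points $z$ where $|f_{s,r,k}(z)|$ is bounded \emph{below} away from $0$ for infinitely many $s$ — e.g.\ using that the map $z\mapsto z^k$ carries $\Gamma$ onto an arc, and that one can choose $z$ so that $kx$ is extremely well approximated by rationals with controlled denominators, making long blocks of the orbit $\{t\cdot kx\}$ avoid small neighborhoods of $0$ so that the telescoping sum stays bounded. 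Alternatively, and more cleanly, one can invoke the measure-zero fact already established together with the fact that a closed set of full measure in any subarc cannot be contained in $F_{N,M}$: if $F_{N,M}\supseteq\Gamma$ had nonempty interior it would have positive Lebesgue measure, contradicting $m(E_{r,k})=0$ provided one first checks $E_{r,k}\supseteq \mathrm{int}(F_{N,M})$ up to a measure-zero set — but this last inclusion is not automatic, so the honest route is the direct Baire-category / Diophantine argument.

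The main obstacle, then, is precisely showing $F_{N,M}$ has empty interior: one must produce, in every arc, a point where the telescoping product $\prod_{t=1}^s (1-z^{kt+r})/(1-z^{kt})$ fails to decay. I expect the cleanest way is the quantitative equidistribution estimate of the type in Lemma~\ref{L:irrational rotation}: for $z=e^{2\pi i x}$ with $kx$ irrational and ``generic,'' the Birkhoff sums $\frac1s\sum_{t\le s}\ln|1-e^{2\pi i(kt x+rx)}|$ and $\frac1s\sum_{t\le s}\ln|1-e^{2\pi i kt x}|$ both converge to $0$, hence their difference times $s$ does not diverge to $-\infty$ along a full-measure set of $x$; since a nonempty arc meets this full-measure set, $\Gamma\not\subseteq F_{N,M}$. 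Care is needed because $\ln|1-e^{2\pi i y}|$ is not bounded, so one must handle the logarithmic singularity — but this is exactly the integrability issue resolved for $\lambda=1$ in the proof of Lemma~\ref{L:Ergodic}, and the same domination works here. Assembling: $E_{r,k}=\bigcup_{N,M}(E_{r,k}\cap F_{N,M})$ is contained in a countable union of closed nowhere-dense sets, hence first category.
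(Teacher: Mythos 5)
Your reduction of the problem to showing that each closed set $F_{N,M}=\bigcap_{s\ge M}\{z:|f_{s,r,k}(z)|\le 1/N\}$ is nowhere dense is the same decomposition the paper uses (the paper fixes a single $\epsilon$ rather than a countable family, which already suffices). But two things go wrong after that. First, your continuity claim is false: the numerator and denominator do \emph{not} vanish to the same order at the roots of unity. At a primitive $k$-th root of unity $\zeta\ne 1$ (which exists since $k\ge 2$) every denominator factor $1-\zeta^{kt}$ vanishes while no numerator factor $1-\zeta^{kt+r}=1-\zeta^{r}$ does, precisely because $r\not\equiv 0\bmod k$; so $f_{s,r,k}$ has genuine poles. (Closedness of $F_{N,M}$ survives anyway, since a sublevel set $\{|f|\le 1/N\}$ of a meromorphic function is closed — near a pole $|f|>1/N$.) Second, and more seriously, your main tool for the nowhere-density step does not work: Birkhoff's theorem gives $\frac1s\sum_{t\le s}\ln|1-e^{2\pi i(ktx+rx)}|\to 0$ and $\frac1s\sum_{t\le s}\ln|1-e^{2\pi iktx}|\to 0$, hence only $\ln|f_{s,r,k}(z)|=o(s)$. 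That is perfectly compatible with $\ln|f_{s,r,k}(z)|\to-\infty$ (e.g.\ like $-\sqrt{s}$), so it neither shows $m(E_{r,k})=0$ nor produces a point of any given arc outside $F_{N,M}$. You acknowledge the remaining Diophantine argument would be needed but do not supply it, so the proof is incomplete.

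The paper's argument avoids all of this and is worth contrasting with your plan. One does not need a point $z$ at which $|f_{s,r,k}(z)|$ stays bounded below for \emph{all} large $s$; to exit $F_{N,M}$ it is enough that $|f_{a,r,k}(z)|>1/N$ for a \emph{single} $a\ge M$. Now
\[
\mathbb{T}\setminus F_{N,M}\;\supseteq\;\bigcup_{a\ge M}\{z:|f_{a,r,k}(z)|>1/N\},
\]
and each set on the right contains a punctured neighborhood of every pole of $f_{a,r,k}$; in particular its closure contains the $ak$-th roots of unity at which the denominator vanishes but the numerator does not (again this is where $r\not\equiv 0\bmod k$ enters). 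Since $\bigcup_{a\ge M}\{z:z^{ak}=1\}$ is dense in $\mathbb{T}$, the open set $\mathbb{T}\setminus F_{N,M}$ is dense, so $F_{N,M}$ is nowhere dense — no ergodic theory, no equidistribution estimates, and no measure-theoretic input at all. If you want to salvage your approach, you would have to replace the Birkhoff step by this pole argument (or by a genuinely quantitative Diophantine construction), since the averaged ergodic statement is simply too weak to control the un-normalized telescoping sums.
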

\begin{proof}
Let $\epsilon>0$. Note that $f_{s,r,k}(z)$ is a meromorphic
function with finite poles on $\mathbb{T}$. So the set
\[
D_{s,r,k,\epsilon}\triangleq\{z\in \mathbb{T}:\,
|f_{s,r,k}(z)|\leq \epsilon\}
\]
is a closed subset of $\mathbb{T}$. Let
\[
E_{s,r,k,\epsilon}\triangleq\{z\in \mathbb{T}:\,
|f_{s,r,k}(z)|\leq \epsilon,\,\forall a\geq s\}.
\]
Then $\displaystyle E_{s,r,k,\epsilon}=\bigcap_{a\geq
s}D_{a,r,k,\epsilon}$ is also a closed subset of $\mathbb{T}$.

Let $F_{s,r,k,\epsilon}=\mathbb{T}\setminus E_{s,r,k,\epsilon}$.
Then $F_{s,r,k,\epsilon}$ is an open subset of $\mathbb{T}$, and
\begin{eqnarray}
\nonumber F_{s,r,k,\epsilon}&=&\mathbb{T}\setminus \cap_{a\geq s}D_{a,r,k,\epsilon}\\
\nonumber &=&\bigcup_{a\geq s}(\mathbb{T}\setminus D_{a,r,k,\epsilon})\\
\nonumber &=&\bigcup_{a\geq s}\{z\in \mathbb{T}:\, |f_{a,r,k}(z)|>\epsilon\}\\
\nonumber &\supseteq& \bigcup_{a\geq s}\,\{\text{poles of $f_{a,r,k}(z)$}\}\\
\nonumber &=&\bigcup_{a\geq s}\{z:\, z^{ak}=1\}.
\end{eqnarray}
So $F_{s,r,k,\epsilon}$ is a dense open subset of $\mathbb{T}$,
which implies that $E_{s,r,k,\epsilon}$ is a nowhere dense closed
subset of $\mathbb{T}$. Therefore, $\displaystyle E_{r,k}\subseteq
\bigcup_{s=1}^\infty E_{s,r,k,\epsilon}$ is a first category
subset of $\mathbb{T}$.
\end{proof}

\noindent\emph{Proof of Proposition}~\ref{T:u+v^k}:\,
 Define $f_{s,r,k}(z)$ and $E_{r,k}$ as in Lemma~\ref{L:u+v^k}. By Lemma~\ref{L:u+v^k}, $E_{r,k}$ is a first category subset of $\mathbb{T}$. So $\displaystyle \bigcup_{r\neq 0({\rm mod} k)} E_{r,k}$ is also a first category subset of $\mathbb{T}$. Hence
\[
\mathbb{T}\setminus \{e^{2\pi i\theta}:\,\theta\in [0,1]
\,\text{is an rational number}\}\setminus \bigcup_{r\neq 0({\rm
mod} k)} E_{r,k}
\]
is a second category subset of $\mathbb{T}$. Choose a $\theta\in
[0,1]$ such that $e^{2\pi i\theta}$ is in the above set. Then for
all $r$ with $r\neq 0\, {\rm mod}\, k$,
\[
\lim_{s\rightarrow +\infty}f_{s,r,k}(z)=0
\]
does not hold.

 Let  $x\in \{u+v^k\}'\cap R$ be an idempotent. By condition 3 above Theorem~\ref{L:strongly irreducible}, $x=\sum_{m,n\in \mathbb{Z}}\alpha_{m,n}u^mv^n$ and $\sum_{m,n\in \mathbb{Z}}|\alpha_{m,n}|^2=\tau(x^*x)<\infty$. By condition 2 above Theorem~\ref{L:strongly irreducible},
\begin{equation}\label{E:(u+v^k)x}
(u+v^k)x=(u+v^k)\sum_{m,n\in
\mathbb{Z}}\alpha_{m,n}u^mv^n=\sum_{m,n\in
\mathbb{Z}}\alpha_{m,n}u^{m+1}v^n+\sum_{m,n\in
\mathbb{Z}}\alpha_{m,n}e^{2\pi i{km}\theta}u^mv^{n+k}
\end{equation}
and
\begin{equation}\label{E:x(u+v^k)}
x(u+v^k)=\sum_{m,n\in
\mathbb{Z}}\alpha_{m,n}u^mv^n(u+v)=\sum_{m,n\in
\mathbb{Z}}\alpha_{m,n}e^{2\pi in\theta}u^{m+1}v^n+\sum_{m,n\in
\mathbb{Z}}\alpha_{m,n}u^mv^{n+k}.
\end{equation}
 By condition 3 above Theorem~\ref{L:strongly irreducible}, $\{u^mv^n:\,m,n\in\mathbb{Z}\}$ is an orthonormal of $L^2(R)$. Comparing the coefficients of the term $u^mv^n$ in (\ref{E:(u+v^k)x}) and (\ref{E:x(u+v^k)}), we have
\[
\alpha_{m-1,n}+\alpha_{m,n-k}e^{2\pi
i(km)\theta}=\alpha_{m-1,n}e^{2\pi in\theta}+\alpha_{m,n-k},
\]
which is equivalent to
\begin{equation}\label{E:2.10}
\alpha_{m-1,n}(1-e^{2\pi in\theta})=\alpha_{m,n-k}(1-e^{2\pi
i(km)\theta}).
\end{equation}

Since $\theta$ is an irrational number, $1-e^{2\pi ik\theta}\neq
0$ for $k\neq 0$. Let $n=0$ in equation~(\ref{E:2.10}). We have
$\alpha_{m,-k}=0$ for $m\neq 0$. Let $n=-k$ in equation
(\ref{E:2.10}). We have $\alpha_{m,-2k}=0$ for $m\neq 0$, $m\neq
1$. In general, let $n=-sk$ in equation (\ref{E:2.10}). We have
$\alpha_{m,-(s+1)k}=0$ for $m\neq 0$, $\cdots$, $m\neq s$. On the
other hand, let $m=0$ in equation (\ref{E:2.10}). We have
$\alpha_{-1,n}=0$ for $n\neq 0$. Similarly, in general we have
$\alpha_{-s-1,n}=0$ for $n\neq 0, k,\cdots,sk$.

Claim that $\alpha_{m,n}=0$ if either $m<0$ or $n<0$. By the above
arguments, we only need to show that $\alpha_{0,-r}=0$ and
$\alpha_{-r,0}=0$ for $r\geq 1$.  Firstly, we show that
$\alpha_{-r,0}=0$ for $r\geq 1$.

In equation (\ref{E:2.10}), let $m=-r$ and $n=k$. We have
\begin{equation}\label{E:2.11}
\alpha_{-r-1,k}(1-e^{2\pi i(k)\theta})=\alpha_{-r,0}(1-e^{-2\pi
i(kr)\theta}).
\end{equation}
In equation (\ref{E:2.10}), let $m=-r-1$ and $n=2k$. We have
\begin{equation}\label{E:2.12}
\alpha_{-r-2,2k}(1-e^{2\pi
i(2k)\theta})=\alpha_{-r-1,k}(1-e^{-2\pi i(k(r+1))\theta}).
\end{equation}
In general, for a positive integer $s$, let $m=-r-s+1$ and
$n=(s-1)k$ in equation (\ref{E:2.10}). We have
\begin{equation}\label{E:2.13}
\alpha_{-r-s,sk}(1-e^{2\pi
i(sk)\theta})=\alpha_{-r-s+1,k}(1-e^{-2\pi i(k(r+s-1))\theta}).
\end{equation}

By equations (\ref{E:2.11}), (\ref{E:2.12}), (\ref{E:2.13}),
\[
\alpha_{-r-s,sk}=\alpha_{-r,0}\prod_{t=1}^s\frac{1-e^{-2\pi
i(k(r+s-t))\theta}}{1-e^{2\pi i(tk)\theta}}.
\]
So for $s>r-1$, we have
\begin{equation}\label{E:2.14}
|\alpha_{-r-s,sk}|=|\alpha_{-r,0}|\prod_{t=1}^{r-1}\frac{|1-e^{-2\pi
i((r+s))k\theta}e^{2\pi i(tk))\theta}|}{|1-e^{2\pi i(tk)\theta}|}.
\end{equation}
Since $\theta$ is an irrational number, there is a sequence
positive integers $s_n$ such that
\[
 \lim_{n\rightarrow\infty} e^{-2\pi i (r+s_n)k\theta}=1.
\]

By equation (\ref{E:2.14}),
$\lim_{n\rightarrow\infty}|\alpha_{-r-s_n,s_nk}|=|\alpha_{-r,0}|$.
Since $\sum_{n=1}^\infty|\alpha_{-r-s_n,s_nk}|^2<\infty$,
$|\alpha_{-r,0}|=\lim_{n\rightarrow\infty}|\alpha_{-r-s_n,s_nk}|=0$.

Secondly, we show that $\alpha_{0,-r}=0$ for $r\geq 1$. In
equation (\ref{E:2.10}), let $m=1$ and $n=-r$. We have
\begin{equation}\label{E:2.15}
\alpha_{0,-r}(1-e^{2\pi i(-r)\theta})=\alpha_{1,-r-k}(1-e^{2\pi
i(k)\theta}).
\end{equation}
In equation (\ref{E:2.10}), let $m=2$ and $n=-r-k$. We have
\begin{equation}\label{E:2.16}
\alpha_{1,-r-k}(1-e^{2\pi
i(-r-k)\theta})=\alpha_{2,-r-2k}(1-e^{2\pi i(2k)\theta}).
\end{equation}
In general, for a positive integer $s$, let $m=s+1$ and
$n=-r-(s-1)k$ in equation (\ref{E:2.10}). We have
\begin{equation}\label{E:2.17}
\alpha_{0,-r-(s-1)k}(1-e^{2\pi
i(-r-(s-1)k)\theta})=\alpha_{s,-r-sk}(1-e^{2\pi i(sk)\theta}).
\end{equation}

By equations (\ref{E:2.15}), (\ref{E:2.16}), (\ref{E:2.17}),
\begin{equation}\label{E:2.18*}
\alpha_{s,-r-sk}=\alpha_{0,-r}\prod_{t=1}^s\frac{1-e^{-2\pi
i(tk+(r-k))\theta}}{1-e^{2\pi i(tk)\theta}}.
\end{equation}
We consider two cases. Case 1: $r=0 ({\rm mod}\, k)$. By equation
(\ref{E:2.18*}),
\begin{equation}\label{E:2.18}
\alpha_{s,-r-sk}=\alpha_{0,-r}\prod_{t=1}^{\frac{r-k}{k}}\frac{1-e^{-2\pi
i(sk+r)\theta}e^{2\pi i(tk)\theta}}{1-e^{2\pi i(tk)\theta}}
\end{equation}
 for $s>\frac{r-k}{k}$. Since $\theta$ is an irrational number, there is a sequence positive integers $s_n$ such that
\[
 \lim_{n\rightarrow\infty} e^{-2\pi i (s_nk+r)\theta}=1.
\]
By equation (\ref{E:2.18}),
$\lim_{n\rightarrow\infty}|\alpha_{s_n,-r-s_nk}|=|\alpha_{0,-r}|$.
Since $\sum_{n=1}^\infty|\alpha_{s_n,-r-s_nk}|^2<\infty$,
\[|\alpha_{0,-r}|=\lim_{n\rightarrow\infty}|\alpha_{s_n,-r-s_nk}|=0.\]

Case 2: $r\neq 0 ({\rm mod}\, k)$. Note that $\sum_{s=1}^\infty
|\alpha_{s,-r-sk}|^2<\infty$. So $\lim_{s\rightarrow\infty}
\alpha_{s,-r-sk}=0$. By the choice of $\theta$,
\[\lim_{s\rightarrow\infty}\prod_{t=1}^s\frac{1-e^{-2\pi i(tk+(r-k))\theta}}{1-e^{2\pi i(tk)\theta}}=0\]
does not hold. So $\alpha_{0,-r}$ has to be 0.

Above all, we have proved that $\alpha_{m,n}=0$ if either $m<0$ or
$n<0$. Furthermore, we claim that $\alpha_{m,n}=0$ for $m,n\geq 0$
and $n\neq 0 ({\rm mod}\, k)$. Let $s$ be the least positive
integer greater than $n/k$. By equation (\ref{E:2.10}), we have
\begin{eqnarray}
\nonumber \alpha_{m,n}(1-e^{2\pi i n\theta})&=&\alpha_{m+1, n-k}(1-e^{2\pi ik(m+1)\theta}),\\
\nonumber \alpha_{m+1,n-k}(1-e^{2\pi i (n-k)\theta})&=&\alpha_{m+2, n-2k}(1-e^{2\pi ik(m+2)\theta}),\\
\nonumber &\vdots&\\
\nonumber \alpha_{m+s-1,n-(s-1)k}(1-e^{2\pi i
(n-(s-1)k)\theta})&=&\alpha_{m+s, n-sk}(1-e^{2\pi ik(m+s)\theta}).
\end{eqnarray}
Since $n-sk<0$, $\alpha_{m+s, n-sk}=0$. The above equations imply
that $\alpha_{m,n}=0$ since $1-e^{2\pi i (n-jk)\theta}\neq 0$ for
all $j$.

Hence
\[
x=\sum_{m\geq 0,n\geq 0}\alpha_{m,n}u^mv^{kn},
\]  which implies that $x$ is in the commutant algebra of $u+v^k$ relative to the von Neumann subalgebra generated by $u$ and $v^k$. Since $v^ku=e^{2\pi ik\theta}v^ku$ and $k\theta$ is an irrational number,
 $x=0$ or $x=1$ by Theorem~\ref{L:strongly irreducible}. So $T$ is a strongly irreducible operator relative to $R$. This completes the proof of Theorem~\ref{T:u+v^k}.
\vskip 0.5cm
\begin{Proposition}\label{indexn}
Let $n$ be a positive integer. Then by Theorem~\ref{T:generators} $N=W^*(u+v^n)=W^*(u,v^n)$ is
an irreducible subfactor of $W^*(u+v)=R$ with Jones
index~\cite{Jo} $[R:N]=n$.
\end{Proposition}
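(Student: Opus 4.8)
The plan is to carry out three steps: identify $N=W^*(u+v^n)$ with $W^*(u,v^n)$, prove $N'\cap R=\mathbb C1$ (which makes $N$ an irreducible \emph{subfactor} of $R$), and compute $[R:N]=n$ from the left $N$-module structure of $L^2(R)$. For the first step, recall that $W^*(u+v)=R$ by Theorem~\ref{T:generators}. Put $w=u^*v^n$. From $vu=e^{2\pi i\theta}uv$ one gets $v^ju^a=e^{2\pi i ja\theta}u^av^j$ for all $j,a$, hence $uw=v^n$, $wu=e^{2\pi in\theta}uw$ and $u+v^n=u(1+w)$; moreover $\tau(u^{-m}v^{nm})=0$ for $m\neq 0$, so $w$ is a Haar unitary in $R$. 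Expanding $(u+v^n)^*(u+v^n)$ and $(u+v^n)(u+v^n)^*$ in terms of $w$ exactly as in the proof of Theorem~\ref{T:generators} yields $w,w^*\in C^*(u+v^n)$, so $C^*(u+v^n)=C^*(u(1+w),w)$. Since $m(\{z\in\mathbb T:1+z=0\})=0$, Proposition~\ref{P:algebras generated by g(w)} applied to the pair $(u,w)$ (which satisfies the rotation relation with angle $n\theta$) shows that the von Neumann algebra it generates is $W^*(u,w)=W^*(u,v^n)$; hence $N=W^*(u,v^n)$, and $n\theta$ is irrational.

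For the irreducibility, let $x\in N'\cap R$ and expand $x=\sum_{a,b\in\mathbb Z}\alpha_{a,b}u^av^b$ with $\sum|\alpha_{a,b}|^2<\infty$, using that $\{u^av^b\}$ is an orthonormal basis of $L^2(R)$. Comparing coefficients in $ux=xu$, with $u\cdot u^av^b=u^{a+1}v^b$ and $u^av^b\cdot u=e^{2\pi i b\theta}u^{a+1}v^b$, forces $\alpha_{a,b}(1-e^{2\pi i b\theta})=0$, so $\alpha_{a,b}=0$ whenever $b\neq 0$ and $x=\sum_a\alpha_{a,0}u^a$. Then $v^nx(v^n)^*=x$ combined with $v^nu^a(v^n)^*=e^{2\pi ina\theta}u^a$ gives $\alpha_{a,0}(1-e^{2\pi ina\theta})=0$, and since $n\theta$ is irrational this forces $\alpha_{a,0}=0$ for $a\neq 0$. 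Thus $x\in\mathbb C1$; in particular $N'\cap N=\mathbb C1$, so $N$ is a II$_1$ factor, and it is irreducible in $R$.

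For the index, decompose $L^2(R)=\bigoplus_{j=0}^{n-1}H_j$ with $H_j=\overline{{\rm span}}\{u^av^b:b\equiv j\ ({\rm mod}\ n)\}$, an orthogonal decomposition into left $N$-submodules. Since $L^2(N)=\overline{{\rm span}}\{u^av^{nc}:a,c\in\mathbb Z\}$ and right multiplication by the unitary $v^j$ commutes with the left $N$-action, the map $\xi\mapsto\xi v^j$ is a left $N$-module unitary from $L^2(N)$ onto $H_j$, so $H_j\cong L^2(N)$ as left $N$-modules. Hence
\[
[R:N]={\rm dim}_N L^2(R)=\sum_{j=0}^{n-1}{\rm dim}_N H_j=n\cdot{\rm dim}_N L^2(N)=n,
\]
where $[R:N]$ is the Jones index~\cite{Jo}. (Equivalently, $\{1,v,\dots,v^{n-1}\}$ is a Pimsner--Popa basis for $N\subset R$ relative to the trace-preserving conditional expectation $E$ with $E(u^av^b)=u^av^b$ when $n\mid b$ and $E(u^av^b)=0$ otherwise, and then $[R:N]=\sum_{j=0}^{n-1}v^j(v^j)^*=n$.) The first two steps are routine variants of arguments already in the paper (the proofs of Theorem~\ref{T:generators} and Theorem~\ref{L:strongly irreducible}); the only delicate point is the index computation, where the left $N$-module structure of $L^2(R)$ must be pinned down—once $L^2(R)$ is exhibited as $n$ copies of $L^2(N)$, the value $n$ drops out of the coupling-constant formula.
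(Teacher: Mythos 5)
Your proof is correct and rests on the same decomposition the paper uses, namely $R=\bigoplus_{j=0}^{n-1}Nv^j$ (you phrase the index computation via $\dim_N L^2(R)$ and the coupling constant, but your parenthetical Pimsner--Popa formulation with the basis $\{1,v,\dots,v^{n-1}\}$ is exactly the paper's argument). You additionally write out the identification $W^*(u+v^n)=W^*(u,v^n)$ and the irreducibility $N'\cap R=\mathbb{C}1$, which the paper leaves implicit by appeal to Theorem~\ref{T:generators}; both of those supplementary arguments are sound.
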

\begin{proof}
 Since $R$ is generated by $u,v$ and $N$ is generated by $u,v^n,$ it is clear that every element of $R$ can be written as finite linear combinations of elements in $Nv^i, 0\leq i\leq n-1.$
 Since $Nv^i$ is orthogonal to $Nv^j, 0\leq i\neq j\leq n-1,$ under the inner product defined by the trace on $R,$ it follows that
 $R=N\oplus
Nv\oplus Nv^2\oplus \cdots\oplus Nv^{n-1}$, where $Nv^i$ is orthogonal to $Nv^j, 0\leq i\neq j\leq n-1.$ So
by~\cite{PP}, $v^i, 0\leq i\leq n-1$ is a Pimsner-Popa basis of $R$ over $N$, and since $v^i$ is unitary, it follows that
$[R:N]=n$.
\end{proof}
On the other hand, by Proposition~\ref{T:u+v^k} for $\theta$ in a
second category subset of $[0,1]$, $u+v^n$ is strongly irreducible
relative to $R$. So for every bounded invertible operator $x\in
R$, $x(u+v^n)x^{-1}$ generates an irreducible subfactor
$W^*(x(u+v^n)x^{-1})$ of $R$. Is it true that
  $[R:W^*(x(u+v^n)x^{-1})]=n$ for all bounded invertible
 operators $x$ in $R$, at least when $x$ is close to identity in norm?

\vskip 0.5cm

By definitions if $T$ is
strongly irreducible relative to $M$, then $T$ is irreducible
relative to $M$. An operator $T$ is strongly irreducible relative to a type ${\rm
II}_1$ factor  if and only if  $XTX^{-1}$ is an irreducible
operator relative to $M$ for every bounded invertible operator
$X\in M$. However, if $T$ is irreducible relative to $M$, this is
not true in general. The following result shows that an
irreducible operator relative to $M$ can be similar to a unitary
operator.

\begin{Proposition}
Let $\theta$ be an irrational number in $[0,1]$ and let $n$ be any
positive integer. Then in the hyperfinite type ${\rm II}_1$ factor
$R$ there exists a bounded invertible operator $x$ such that
$W^*(xux^{-1})=W^*(u+v^n)=W^*(u,v^n)$.
\end{Proposition}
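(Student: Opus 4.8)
The plan is to realize the desired $x$ as an element that conjugates the subfactor $N=W^*(u,v^n)$ onto $W^*(u+v^n)$ by exploiting the special structure uncovered in the proof of Theorem~\ref{T:generators}. Recall from there that, writing $w=u^*v^n$, one has $C^*(u+v^n)=C^*(u+v^n,w)=C^*(u(1+w),w)$, and that the relevant $\gamma$ here is $\gamma(z)=|1+z|^2$ with a single zero on $\mathbb T$; in particular $u+v^n$ and $w$ satisfy the relations (\ref{U1})--(\ref{U4}) for the rotation by $n\theta$. The key point is that, working inside the von Neumann algebra $R$, the factor $(1+w)$ is \emph{affiliated with an invertible} in the sense that, because $m(\{z:\gamma(z)=0\})=0$, $1+w$ has trivial kernel and its polar decomposition $1+w=h\,|1+w|$ lives inside $W^*(w)\subseteq N$. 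So the idea is: set $b=h\in W^*(u,v^n)$ a \emph{unitary} in $N$, and then $b^{-1}(u+v^n)b$ should generate $W^*(u,v^n)$ (since conjugating by a unitary of $N$ preserves $N$, and the image still generates all of $R$ by Proposition~\ref{P:algebras generated by g(w)}), while a \emph{non-unitary invertible} $x$ built from $|1+w|$ will instead move $W^*(u+v^n)$ to $N$.

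Concretely, first I would observe that $u+v^n=u(1+w)$, so that if $c$ denotes the positive affiliated operator $|1+w|=(1+w)^{1/2}(1+\bar w)^{1/2}$ (here $c$ is only affiliated with $W^*(w)$, not bounded-invertible), the issue is purely that $c^{-1}$ is unbounded. To repair this, replace $c$ by a bounded invertible $x$ in $W^*(w)\subseteq R$ which agrees with $c$ away from a neighborhood of the single zero $\lambda_0\in\mathbb T$ of $|1+z|^2$ and is bounded below there — e.g. $x=g(w)$ where $g\in C(\mathbb T)$ is strictly positive, equals $|1+z|$ outside a small arc about $\lambda_0$, and is a small positive constant inside it. Then $x$ is a bounded invertible positive element of $R$. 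Next I would compute $x\,u\,x^{-1}=u\cdot(u^*xu)\,x^{-1}=u\cdot(x\circ\phi)\,x^{-1}$ where $\phi$ is the rotation by $n\theta$; since $x$ and $x\circ\phi$ are bounded invertibles in $W^*(w)$, the element $xux^{-1}$ is of the form $u\cdot f(w)$ with $f$ bounded and bounded below on $\mathbb T$, hence (by Proposition~\ref{P:algebras generated by g(w)}) $W^*(xux^{-1})=W^*(xux^{-1},w)=W^*(u,v^n)$. Finally I would check that $x^{-1}(u+v^n)x$, up to the correction on the small arc, equals $x^{-1}u(1+w)x=x^{-1}u\,c^2\,x$, and by choosing $g$ so that $x^{-1}c^2 x^{-1\!,\,-1}$... — more cleanly: conjugating $u+v^n=u\,c^2$ appropriately by $x$ cancels the degenerate factor $c^2$ and produces an element of $u\cdot(\text{bdd invertible in }W^*(w))$ together with the arc-correction, whose generated von Neumann algebra is again all of $R=W^*(u+v^n)$ by the same proposition. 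Assembling these, $x$ conjugates $W^*(u,v^n)$ onto $W^*(u+v^n)$, and since the latter equals $W^*(u+v^n)=W^*(u,v^n)$ is \emph{false} in general — no: here $W^*(u+v^n)=R=W^*(u,v^n)$ as von Neumann algebras by Proposition~\ref{P:algebras generated by g(w)} — wait, the subfactor statement is at the \emph{$C^*$} level; at the von Neumann level $W^*(u+v^n)=R$. So in fact the statement $W^*(xux^{-1})=W^*(u+v^n)=W^*(u,v^n)$ is the assertion that all three equal $R$, plus that $x$ is a genuine (non-unitary) invertible witnessing the similarity.

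Thus the cleanest route is: (1) produce a bounded invertible positive $x=g(w)\in R$ as above; (2) show $xux^{-1}=u\cdot f(w)$ with $f$ nowhere zero, hence $W^*(xux^{-1})=R$ by Proposition~\ref{P:algebras generated by g(w)}; (3) note $W^*(u+v^n)=R$ and $W^*(u,v^n)=R$ likewise; (4) conclude. The main obstacle I anticipate is step~(2): one must verify carefully that $u^*g(w)u=g(\phi^{-1}(\cdot))$ stays strictly positive and bounded (this uses only that $g$ is bounded and bounded below, so it is routine once $g$ is chosen), and — more delicately — that $x$ genuinely \emph{is} a similarity rather than a mere coincidence of generated algebras, i.e. that there is a single $x$ with $x(u+v^n)x^{-1}$ and $u$ simultaneously controlled; handling the single zero $\lambda_0$ of $|1+z|^2$ without destroying invertibility is exactly where the freedom in choosing the cutoff arc is spent, and making sure the resulting element still generates $R$ (not a proper subalgebra) requires one more application of the affiliation argument from Proposition~\ref{P:algebras generated by g(w)}.
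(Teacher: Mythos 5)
There are two genuine problems here. First, you have misidentified the target algebra: $W^*(u+v^n)=W^*(u,v^n)$ is \emph{not} $R=W^*(u,v)$ when $n\ge 2$; by Proposition~\ref{indexn} it is an irreducible subfactor of $R$ of Jones index $n$. Proposition~\ref{P:algebras generated by g(w)}, applied to the pair $(u,v^n)$ (which satisfies $v^nu=e^{2\pi in\theta}uv^n$ with $n\theta$ irrational), yields $W^*(uf(v^n),v^n)=W^*(u,v^n)$, not $W^*(u,v)$. So the assertion near the end of your argument that ``all three equal $R$'' is false, and the proposition cannot be reduced to the observation that some conjugate of $u$ generates $R$.

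Second --- and this is the real content of the proposition --- you assert $W^*(xux^{-1})=W^*(xux^{-1},w)$ without proof. Every application of Proposition~\ref{P:algebras generated by g(w)} requires \emph{both} generators, $uf(\cdot)$ and the unitary; the entire difficulty here is to show that the von Neumann algebra generated by the \emph{single} operator $xux^{-1}$ already contains $v^n$. This is certainly not automatic: for $x$ a scalar one gets $W^*(u)$, which is abelian. With your $x=g(w)$ one has $xux^{-1}=uf(w)$ where $f=(g\circ\phi)/g$, and $(xux^{-1})^*(xux^{-1})=f(w)^2$; one would then have to choose $g$ so that the functions $f\circ\phi^k$, $k\in\mathbb Z$, separate the points of $\mathbb T$ (ruling out, e.g., $f$ invariant under a finite rotation) and run a bootstrap to recover all of $W^*(w)$, hence $u$ and then $v^n$. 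The paper does exactly this kind of bootstrap with the concrete choice $x=1-\tfrac12E_{v^n}(\sigma)$ for a small arc $\sigma$ with $\sigma\cap e^{2\pi in\theta}\sigma=\emptyset$: it computes $(xux^{-1})^*(xux^{-1})$ as a three-level positive operator, extracts the spectral projection $E_{v^n}(\sigma)$ by functional calculus and an SOT limit of powers, recovers $x$ and hence $u$, and then uses $u^{-k}E_{v^n}(\sigma)u^k=E_{v^n}(e^{2\pi ikn\theta}\sigma)$ together with density of $\{e^{2\pi ikn\theta}\}$ to obtain every spectral projection of $v^n$. None of this is present in your proposal. (The digression about $u+v^n=u(1+w)$ and the zero of $|1+z|^2$ is also beside the point: the operator being conjugated is $u$, not $u+v^n$.)
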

\begin{proof}
Let $\sigma$ be a nonempty open connected subset of
$\sigma(v^n)=\mathbb{T}$ such that $\sigma\cap e^{2\pi
in\theta}\sigma=\emptyset$. Let $x=1-\frac{E_{v^n}(\sigma)}{2}\in
R$, where $E_{v^n}(\cdot)$ denotes the spectral measure of $v^n$.
Since $v^nu=e^{2\pi in\theta}uv^n$, $f(v^n)u=uf(e^{2\pi
in\theta}v)$ for all $f\in L^\infty(\mathbb{T},m)$. Therefore,
\begin{equation}
E_{v^n}(\sigma)u=uE_{v^n}(e^{2\pi in\theta}\sigma),
\end{equation}
which implies that
\begin{equation}
u^{-1}E_{v^n}(\sigma)=E_{v^n}(e^{2\pi in\theta}\sigma)u^{-1}.
\end{equation}
Similarly, by $v^nu^{-1}=e^{-2\pi in\theta}u^{-1}v^n$, we have
\begin{equation}
uE_{v^n}(\sigma)=E_{v^n}(e^{-2\pi in\theta}\sigma)u.
\end{equation}

Combining the above equations, we have
\[
(xux^{-1})^*(xux^{-1})=x^{-1}u^{-1}xxux^{-1}=\left(1-\frac{E_{v^n}(\sigma)}{2}\right)^{-1}
\left(1-\frac{E_{v^n}(e^{2\pi in\theta}\sigma)}{2}\right)^2
\left(1-\frac{E_{v^n}(\sigma)}{2}\right)^{-1}.
\]
We can write
\[
1-\frac{E_{v^n}(e^{2\pi in\theta}\sigma)}{2}=\left(
\begin{array}{ccc}
  1 & 0 & 0 \\
  0 & \frac{1}{2} & 0 \\
  0 & 0 & 1 \\
\end{array}
\right)\begin{array}{l}
\text{Ran}\,E_{v^n}(\mathbb{T}\setminus (e^{2\pi in\theta}\sigma\cup \sigma))\\
\text{Ran}\,E_{v^n}(e^{2\pi in\theta}\sigma)\\
\text{Ran}\,E_{v^n}(\sigma)
\end{array}
\]
and write
\[
1-\frac{E_{v^n}(\sigma)}{2}=\left(
\begin{array}{ccc}
  1 & 0 & 0 \\
  0 & 1 & 0 \\
  0 & 0 & 2 \\
\end{array}
\right)\begin{array}{l}
\text{Ran}\,E_{v^n}(\mathbb{T}\setminus (e^{2\pi in\theta}\sigma\cup \sigma))\\
\text{Ran}\,E_{v^n}(e^{2\pi in\theta}\sigma)\\
\text{Ran}\,E_{v^n}(\sigma)
\end{array}.
\]
So we have
\[
(xux^{-1})^*(xux^{-1})=\left(
\begin{array}{ccc}
  1 & 0 & 0 \\
  0 & \frac{1}{4} & 0 \\
  0 & 0 & 4 \\
\end{array}
\right)\begin{array}{l}
\text{Ran}\,E_{v^n}(\mathbb{T}\setminus (e^{2\pi in\theta}\sigma\cup \sigma))\\
\text{Ran}\,E_{v^n}(e^{2\pi in\theta}\sigma)\\
\text{Ran}\,E_{v^n}(\sigma)
\end{array}\in W^*(xux^{-1}).
\]
Therefore,
\[
\left(
\begin{array}{ccc}
  1 & 0 & 0 \\
  0 & \frac{1}{2} & 0 \\
  0 & 0 & 2 \\
\end{array}
\right)\begin{array}{l}
\text{Ran}\,E_{v^n}(\mathbb{T}\setminus (e^{2\pi in\theta}\sigma\cup \sigma))\\
\text{Ran}\,E_{v^n}(e^{2\pi in\theta}\sigma)\\
\text{Ran}\,E_{v^n}(\sigma)
\end{array}\in W^*(xux^{-1}).
\]
This implies that
\[
\left(
\begin{array}{ccc}
  0 & 0 & 0 \\
  0 & -\frac{1}{2} & 0 \\
  0 & 0 & 1 \\
\end{array}
\right)\begin{array}{l}
\text{Ran}\,E_{v^n}(\mathbb{T}\setminus (e^{2\pi in\theta}\sigma\cup \sigma))\\
\text{Ran}\,E_{v^n}(e^{2\pi in\theta}\sigma)\\
\text{Ran}\,E_{v^n}(\sigma)
\end{array}\in W^*(xux^{-1}).
\]
Note that
\[
\left(
\begin{array}{ccc}
  0 & 0 & 0 \\
  0 & -\frac{1}{2} & 0 \\
  0 & 0 & 1 \\
\end{array}
\right)^k\overset{\text{SOT}}{\longrightarrow} \left(
\begin{array}{ccc}
  0 & 0 & 0 \\
  0 & 0 & 0 \\
  0 & 0 & 1 \\
\end{array}
\right)\begin{array}{l}
\text{Ran}\,E_{v^n}(\mathbb{T}\setminus (e^{2\pi in\theta}\sigma\cup \sigma))\\
\text{Ran}\,E_{v^n}(e^{2\pi in\theta}\sigma)\\
\text{Ran}\,E_{v^n}(\sigma)
\end{array}=E_{v^n}(\sigma).
\]
Hence, $E_{v^n}(\sigma)\in W^*(xux^{-1})$ and
$x=1-\frac{E_{v^n}(\sigma)}{2}\in W^*(xux^{-1})$. Therefore, $u\in
W^*(xux^{-1})$. Note that
\[
E_{v^n}(e^{2\pi ikn\theta}\sigma)=u^{-k}E_{v^n}(\sigma)u^k\in
W^*(xux^{-1}), \,\forall k\in \mathbb{N}.
\]
Since $\{e^{2\pi ikn\theta}\}_{k=1}^\infty$ is dense in
$\mathbb{T}$, $E_{v^n}(\sigma_1)\in W^*(xux^{-1})$ for every open
connected subset $\sigma_1$ in $\mathbb{T}$ which has same arc
length as $\sigma$. If $\sigma_0$ is an open connected subset of
$\mathbb{T}$ with arc length smaller than the arc length of
$\sigma$, then there are two open connected subsets
$\sigma_1,\sigma_2$ of $\mathbb{T}$ with arc length same as
$\sigma$ such that $\sigma_1\cap \sigma_2=\sigma_0$. Thus
\[E_{v^n}(\sigma_0)=E_{v^n}(\sigma_1)\cap E_{v^n}(\sigma_2)\in
W^*(xux^{-1}).\] This implies that for every measurable subset $F$
of $\mathbb{T}$, we have $E_{v^n}(F)\in W^*(xux^{-1})$. So $v^n\in
W^*(xux^{-1})$ and we have proved that $W^*(xux^{-1})=W^*(u,v^n)$.
\end{proof}

In general, we have the following observation.
\begin{Proposition}
Let $N\subseteq M$ be an inclusion of type ${\rm II}_1$ factors. Then there exists an operator $S\in N$ which is similar to an irreducible operator
$T$ relative to $M$.
\end{Proposition}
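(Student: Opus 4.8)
The plan is to produce $S$ as a Haar unitary of $N$ and $T=XSX^{-1}$ for a carefully chosen $X\in M$, imitating the template of the preceding proposition (the one producing $x$ with $W^*(xux^{-1})=W^*(u,v^n)$). I would look for a Haar unitary $u\in N$ and a projection $p\in M$ with $0<\tau(p)<\tfrac12$ and $p$ orthogonal to $q:=upu^{-1}$, and then set $S:=u$, $X:=1-\tfrac12 p$ (a positive invertible element with $X^{-1}=1+p$), and $T:=XuX^{-1}$. Here we may assume $M$ separable, the general case reducing to this.

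The first step is the algebra, which is clean because of the orthogonality. Since $up=qu$ and $pq=qp=0$, one gets $T=(1-\tfrac12 p)u(1+p)=(1-\tfrac12 p+q)u=Yu$, where $Y:=1-\tfrac12 p+q$ is a positive invertible self-adjoint operator equal to $1$ on $\operatorname{ran}(1-p-q)$, to $\tfrac12$ on $\operatorname{ran}(p)$, and to $2$ on $\operatorname{ran}(q)$. Hence $TT^*=Yuu^*Y=Y^2=1-\tfrac34 p+3q$, an operator whose three distinct spectral values $1,\tfrac14,4$ are attained on the mutually orthogonal ranges of $1-p-q$, $p$, $q$; therefore $p,q\in W^*(TT^*)\subseteq W^*(T)$. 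Consequently $X=1-\tfrac12 p\in W^*(T)$, so $u=X^{-1}TX\in W^*(T)$, so $u^npu^{-n}\in W^*(T)$ for all $n$, giving $W^*(u,p)\subseteq W^*(T)$; the reverse inclusion is immediate since $T=XuX^{-1}$ with $X,u\in W^*(u,p)$. Thus $W^*(T)=W^*(u,p)$ and $\{T,T^*\}'\cap M=W^*(u,p)'\cap M$, so $T$ is irreducible relative to $M$ precisely when $W^*(u,p)'\cap M=\mathbb{C}1$, and in that case $S=u\in N$ is similar (via $X\in M$) to the irreducible operator $T$, as required.

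So everything reduces to the claim: \emph{$N$ contains a Haar unitary $u$ admitting a projection $p\in M$ with $p\perp upu^{-1}$ and $W^*(u,p)'\cap M=\mathbb{C}1$.} I expect this to be the main obstacle. The orthogonality part I would handle as in the previous proposition: $N$, being a type $\mathrm{II}_1$ factor, contains a copy of the weak closure of the irrational rotation algebra, i.e. Haar unitaries $u,w\in N$ with $wu=e^{2\pi i\alpha}uw$ for some irrational $\alpha$; then $up_0u^{-1}=E_w(e^{-2\pi i\alpha}\sigma)$ for $p_0=E_w(\sigma)$, so any short enough arc $\sigma$ with $\sigma\cap e^{\pm 2\pi i\alpha}\sigma=\emptyset$ yields $p_0\perp up_0u^{-1}$. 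The delicate point is to also arrange $W^*(u,p)'\cap M=\mathbb{C}1$, which forces $p$ to lie partly outside $N$ whenever $N$ is not already irreducible in $M$ (note that if $p\in N$ then $W^*(u,p)'\cap M\supseteq N'\cap M$). Here I would use Popa's methods (\cite{Po}): perturb $p_0$ within $M$, keeping the orthogonality $p\perp upu^{-1}$ (a relatively open constraint), so that the orbit $\{u^npu^{-n}:n\in\mathbb{Z}\}$ together with $u$ generates a subalgebra of $M$ containing an irreducible hyperfinite subfactor of $M$; then $W^*(u,p)'\cap M\subseteq (\text{that subfactor})'\cap M=\mathbb{C}1$. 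Equivalently, one builds à la Popa an irreducible hyperfinite subfactor $R_1\subseteq M$ containing the prescribed Haar unitary $u$, and then exhibits a projection $p\in R_1$ with $W^*(u,p)=R_1$ and $p\perp upu^{-1}$; producing such a $p$ for a \emph{prescribed} Haar unitary $u$ is exactly where the work concentrates, and it is the step I would expect to require the most care.
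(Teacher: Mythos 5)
Your reduction is correct as far as it goes: with $p\perp q:=upu^{-1}$ the identities $T=(1-\tfrac12 p+q)u$ and $TT^*=1-\tfrac34p+3q$ do hold, and recovering $p,q$, hence $X$, hence $u$, from $W^*(T,T^*)$ gives $W^*(T,T^*)=W^*(u,p)$. But the proposition is then made to rest entirely on the italicized claim — a Haar unitary $u\in N$ together with a projection $p\in M$ satisfying \emph{both} $p\perp upu^{-1}$ and $W^*(u,p)'\cap M=\mathbb{C}1$ — and this claim is nowhere proved. It is essentially the original problem in disguise, and you concede as much ("exactly where the work concentrates"). The gesture toward Popa does not close it: Popa's theorem produces \emph{some} irreducible hyperfinite subfactor of $M$, with no control guaranteeing that it contains a prescribed Haar unitary of $N$, and adapting his construction to a prescribed $u$ is a genuine piece of work, not a citation. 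Moreover the proposed perturbation step is flawed as stated: exact orthogonality $pq=0$ is a closed condition, not "a relatively open constraint," so perturbing $p_0$ inside $M$ will generically destroy $p\perp upu^{-1}$, and your spectral argument for extracting $p$ and $q$ from $TT^*$ uses the exact orthogonality (three distinct eigenvalues on three orthogonal ranges). So there is a genuine gap at the central existence claim, and the route sketched to fill it would itself need repair.

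For comparison, the paper's proof avoids all of this with an elementary matrix construction. Write $N=M_3(\mathbb{C})\otimes N_1\subseteq M_3(\mathbb{C})\otimes M_1=M$ using matrix units taken from $N$. Let $S=\mathrm{diag}(\alpha_1,\alpha_2,\alpha_3)\in M_3(\mathbb{C})\otimes 1\subseteq N$ with the $\alpha_i$ distinct scalars, and let $T$ be upper triangular with the same diagonal, $1$'s on the superdiagonal, and an irreducible $D\in M_1$ in the $(1,3)$ entry; an explicit unipotent upper triangular $X$ conjugates $S$ to $T$. Irreducibility of $T$ is a direct computation: commuting with $T$ and $T^*$ forces an element of $M$ to have the form $a\oplus a\oplus a$ (the distinct diagonal scalars kill the off-diagonal entries, the superdiagonal $1$'s equalize the diagonal ones), and then the $(1,3)$ entry forces $a$ to commute with $D$ and $D^*$, so $a\in\mathbb{C}$. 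The only nontrivial input is one irreducible operator $D$ in a corner of $M$, which the paper already has; no Haar unitaries, no orthogonal orbits of projections, and no appeal to Popa's subfactor machinery are needed.
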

\begin{proof}
We may identify $N=M_3(\mathbb{C})\otimes N_1$ and $M=M_3(\mathbb{C})\otimes M_1$. Choose complex numbers $\alpha_1,\alpha_2,\alpha_3$ such that $\alpha_i\neq \alpha_j$ for $i\neq j$.  Let $D$ be an irreducible operator in $M_1$,
\[
S=\begin{pmatrix}
\alpha_1&0&0\\
0&\alpha_2&0\\
0&0&\alpha_3
\end{pmatrix},\quad T=\begin{pmatrix}
\alpha_1&1&D\\
0&\alpha_2&1\\
0&0&\alpha_3
\end{pmatrix},\quad X=\begin{pmatrix}
1&\frac{1}{\alpha_1-\alpha_2}&\frac{1}{(\alpha_1-\alpha_2)(\alpha_1-\alpha_3)}+\frac{D}{\alpha_1-\alpha_3}\\
0&1&\frac{1}{\alpha_2-\alpha_3}\\
0&0&1
\end{pmatrix}.
\]
Then direct calculations show that $T$ is an irreducible operator in $M$ and $XSX^{-1}=T$.
\end{proof}

\section{Spectrum of $u+\lambda v$}

\begin{Theorem}\label{T:1}
For every irrational number $\theta\in (0,1)$,
\[
\sigma(u+\lambda v)=
\begin{cases}
\mathbb{T}& 0<\lambda<1,\\
\overline{B(0,1)}&\lambda=1,\\
\lambda \mathbb{T}& \lambda>1,
\end{cases}
\]
where $\mathbb{T}$ is the unit circle.
\end{Theorem}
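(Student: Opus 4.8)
The plan is to combine three facts: the spectral radius computation of Lemma~\ref{L:r=1}, the connectedness/symmetry results of Section 10, and a dual estimate on the spectral radius of the inverse $(u+\lambda v)^{-1}$. First, observe that the spectrum is rotation-invariant: since $\rho_t$ (extended to $R$) fixes $w=u^*v$ and sends $u+\lambda v = u(1+\lambda w)$ to $e^{2\pi it}(u+\lambda v)$ — more precisely, conjugating $u$ by the unitary implementing multiplication by $e^{2\pi it}$ on the $v$-variable rotates $u+\lambda v$ by a scalar — we get $\sigma(u+\lambda v)=e^{2\pi it}\sigma(u+\lambda v)$ for all $t$. Hence $\sigma(u+\lambda v)$ is a union of circles centered at $0$, determined by the set of moduli $\{|z| : z\in\sigma(u+\lambda v)\}$, which is a closed subset of $[0,r(u+\lambda v)]$.

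For the case $\lambda=1$: by Lemma~\ref{L:r=1}, $r(u+v)=1$, so $\sigma(u+v)\subseteq\overline{B(0,1)}$. By Corollary~\ref{C:connected}, $\sigma(u+v)$ is connected. A rotation-invariant connected closed subset of $\overline{B(0,1)}$ containing a point of modulus $1$ (the spectral radius is attained) must be either $\mathbb{T}$ or $\overline{B(0,1)}$. To rule out $\mathbb{T}$, note that $u+v$ is not invertible: $0\in\sigma(u+v)$ because $\tau$-evaluation, or more directly because $(u+v)^*(u+v) = u^*v + e^{-2\pi i\theta}uv^* + 2 = \gamma(w)$ with $\gamma(z)=|1+z|^2$, which vanishes at $z=-1\in\sigma(w)=\mathbb{T}$, so $(u+v)^*(u+v)$ is not invertible and neither is $u+v$. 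Hence $0\in\sigma(u+v)$, forcing $\sigma(u+v)=\overline{B(0,1)}$.

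For $0<\lambda<1$: here the key extra input is that $u+\lambda v$ \emph{is} invertible in $R$, with $(u+\lambda v)^{-1} = (1+\lambda w)^{-1}u^*$, well-defined since $|1+\lambda z|\geq 1-\lambda>0$ on $\mathbb{T}$. From the analogue of \eqref{E:(u+v)^{-1}} one computes $(u+\lambda v)^{-n} = (1+\alpha^{n-1}\lambda w)^{-1}\cdots(1+\lambda w)^{-1}(u^*)^n$, so $\|(u+\lambda v)^{-n}\|^{1/n} = \big(\max_{x}\prod_{k=0}^{n-1}(1+\lambda^2+2\lambda\cos 2\pi(x+k\theta))^{-1}\big)^{1/2n} = \big(\min_{x}\prod_{k=0}^{n-1}(1+\lambda^2+2\lambda\cos 2\pi(x+k\theta))\big)^{-1/2n}$. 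Running the same Birkhoff/ergodic argument as in Lemmas~\ref{L:Ergodic} and~\ref{L:r=1} applied to the minimum (the function $\ln(1+\lambda^2+2\lambda\cos 2\pi x)$ is still in $L^1$ and has integral $0$ for $\lambda<1$, being bounded there) yields $r((u+\lambda v)^{-1})=1$ as well. Therefore $\sigma((u+\lambda v)^{-1})\subseteq\overline{B(0,1)}$, which means $\sigma(u+\lambda v)$ lies outside the open unit disk, i.e. $\sigma(u+\lambda v)\subseteq\{|z|\geq 1\}$; combined with $r(u+\lambda v)=1$ this gives $\sigma(u+\lambda v)\subseteq\mathbb{T}$, and rotation-invariance plus nonemptiness forces $\sigma(u+\lambda v)=\mathbb{T}$.

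For $\lambda>1$: reduce to the previous case by the symmetry $u\leftrightarrow v$. Since $vu=e^{2\pi i\theta}uv$ and there is a trace-preserving $*$-automorphism of $A_\theta$ (equivalently of $R$) swapping the roles of $u$ and $v$ up to replacing $\theta$ by $-\theta$ (or $1-\theta$), and since by uniqueness of $A_\theta$ the spectrum only depends on $\lambda$, we have $\sigma(u+\lambda v) = \lambda\,\sigma(\lambda^{-1}u + v) = \lambda\,\sigma(v + \lambda^{-1}u) = \lambda\,\sigma(u+\lambda^{-1}v) = \lambda\mathbb{T}$, using $0<\lambda^{-1}<1$. The main obstacle I anticipate is the $0<\lambda<1$ case: verifying that the ergodic-theoretic machinery of Section 9 transfers verbatim to the \emph{minimum} in place of the maximum — one must check that the uniform approximation in Lemma~\ref{L:irrational rotation} controls the product from below as well as above, which it does since it controls the distribution of the points $\{\alpha^k e^{2\pi ix}\}$ in \emph{each} arc, but this needs to be stated carefully; and separately, confirming that $f(x)=\ln(1+\lambda^2+2\lambda\cos 2\pi x)$ is bounded (hence trivially integrable with vanishing integral by Lemma~\ref{L:integral}) precisely when $\lambda<1$, so the $\lambda=1$ argument genuinely requires the separate connectedness input while $\lambda<1$ does not.
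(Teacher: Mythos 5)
Your proposal is correct and follows essentially the same route as the paper: rotation invariance of the spectrum, non-invertibility of $u+v$ combined with Corollary~\ref{C:connected} and Lemma~\ref{L:r=1} for $\lambda=1$, the inverse estimate $r\left((u+\lambda v)^{-1}\right)\le 1$ via the same Birkhoff argument (hinging, as you note, on the boundedness of $\ln(1+\lambda^2+2\lambda\cos 2\pi x)$ when $\lambda<1$) for $0<\lambda<1$, and the $u\leftrightarrow v$ symmetry for $\lambda>1$. One small imprecision: a rotation-invariant connected closed subset of $\overline{B(0,1)}$ meeting the unit circle could also be an annulus $\{a\le |z|\le 1\}$, not only $\mathbb{T}$ or the full disk, but since you separately establish $0\in\sigma(u+v)$ the conclusion $\sigma(u+v)=\overline{B(0,1)}$ is unaffected.
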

\begin{proof}
Note that $u+v=u(1+u^*v)$. Since $u^*v$ is a Haar unitary operator, $-1\in\sigma(u^*v)$. This implies that $u+v$ is not invertible and therefore $0\in\sigma(u+v)$. For every $\theta\in [0,2\pi]$, $e^{i\theta}u$ and $e^{i\theta}v$ satisfy the same irrational rotation relation as $u$ and $v$, so $\sigma(u+v)$ is rotation symmetric with respect to 0. By Corollary~\ref{C:connected}, $\sigma(u+v)$ is a closed disk with center 0. By Lemma~\ref{L:r=1}, $\sigma(u+v)=\overline{B(0,1)}$.

For $0<\lambda<1$, $u+\lambda v=u(1+\lambda u^*v)$ is invertible.
In the following  we  prove that $r\left((u+\lambda v)^{-1}\right)\leq 1$. The proof is similar to the proof of Lemma~\ref{L:r=1}. However, some details should be treated carefully, so we include the complete proof.
By equation (\ref{E:(u+v)^{-1}}),
\[
(u+\lambda v)^{-n}=(1+\lambda w)^{-1}(1+\alpha \lambda w)^{-1}\cdots (1+\alpha^{n-1}\lambda w)^{-1}u^{-n},\quad \forall n\in \mathbb{N}.
\]
Hence,
\begin{eqnarray*}
 \|(u+\lambda v)^{-n}\|^{1/n}&=&\|(1+\lambda w)^{-1}(1+\alpha\lambda w)^{-1}\cdots(1+\alpha^{(n-1)}\lambda w)^{-1}\|^{1/n}\\
&=&\left(\max_{z\in \mathbb{T}}\left|(1+\lambda z)^{-1}(1+\alpha \lambda z)^{-1}\cdots (1+\alpha^{(n-1)}\lambda z)^{-1}\right|\right)^{1/n}\\
&=&\max_{x\in[0,1]}\left(\prod_{k=0}^{n-1}\left(1+\lambda^2+2\lambda\cos(2\pi(x+k\theta))\right)^{-1}\right)^{\frac{1}{2n}}.
\end{eqnarray*}

Let $\epsilon>0$. Note that
\[
\lim_{n\rightarrow\infty}\frac{1}{2n}
\left(\sum_{k=1}^{n}-\ln\left(1+\lambda^2+2\lambda\cos\left(\frac{k\pi}{n}\right)\right)+
\sum_{k=n}^{2n-1}-\ln\left(1+\lambda^2+2\lambda\cos\left(\frac{k\pi}{n}\right)\right)\right)
\]
\[
=-\int_0^1\ln(1+\lambda^2+2\lambda \cos 2\pi x)dx=0.
\]
There is $N\in \mathbb{N}$ such that
\[
\frac{1}{2N}
\left(\sum_{k=1}^{N}-\ln\left(1+\lambda^2+2\lambda\cos\left(\frac{k\pi}{N}\right)\right)+
\sum_{k=N}^{2N-1}-\ln\left(1+\lambda^2+2\lambda\cos\left(\frac{k\pi}{N}\right)\right)\right)<\epsilon/2.
\]
Let
\[
L(\lambda)=\max_{1\leq k\leq 2N-1}\left|\ln\left(1+\lambda^2+2\lambda\cos\left(\frac{k\pi}{N}\right)\right)\right|.
\]
Then for $0<\lambda<1$, $L(\lambda)<\infty$ (Note that if
$\lambda=1$, then $L(\lambda)=\infty$). Divide the unit circle
$\mathbb{T}$ into $2N$ equal sections $A_1,\cdots, A_{2N}$. By Lemma~\ref{L:irrational rotation},  there exists $N'$ such that for all
$n\geq N'$ and all $x\in [0,1]$, if $A_k$ contains
$n/(2N)+r_{k}(x)$ points of $e^{2\pi ix}, \alpha e^{2\pi ix},
\cdots, \alpha^{n-1} e^{2\pi ix}$, then $\displaystyle
\frac{\sum_{k=1}^{2N}|r_k(x)|}{n}<\frac{\epsilon}{L(\lambda)}. $
Note that $\cos 2\pi x$ is decreasing for $x\in [0,1/2]$ and
increasing for $x\in [1/2,1]$. Therefore, for all $x\in [0,1]$,

\[
\frac{1}{n}\sum_{k=0}^{n-1}
-\ln \left(1+\lambda^2+2\lambda\cos(2\pi(x+k\theta))\right)\leq \frac{1}{n}\sum_{k=1}^{N}-\left(\frac{n}{2N}+r_{k}(x)\right)\ln\left(1+\lambda^2+2\lambda\cos\left(\frac{k\pi}{N}\right)\right)
\]
\[
+\frac{1}{n}\sum_{k=N}^{2N-1}-\left(\frac{n}{2N}+r_{k+1}(x)\right)\ln\left(1+\lambda^2+2\lambda\cos\left(\frac{k\pi}{N}\right)\right)
\]
\[
=\frac{1}{2N}
\left(\sum_{k=1}^{N}-\ln\left(1+\lambda^2+2\lambda\cos\left(\frac{k\pi}{N}\right)\right)+
\sum_{k=N}^{2N-1}-\ln\left(1+\lambda^2+2\lambda\cos\left(\frac{k\pi}{N}\right)\right)\right)
\]
\[
+\frac{1}{n}\sum_{k=1}^{N}-r_{k}(x)\ln\left(1+\lambda^2+2\lambda\cos\left(\frac{k\pi}{N}\right)\right)
+\frac{1}{n}\sum_{k=N}^{2N-1}-r_{k+1}(x)\ln\left(1+\lambda^2+2\lambda\cos\left(\frac{k\pi}{N}\right)\right)
\]
\[
<\epsilon+\frac{1}{n}\sum_{k=1}^{2N}
|r_k(x)|L(\lambda)<2\epsilon.
\]
This implies that for all $n\geq N'$ and $x\in [0,1]$,
\[
\left(\prod_{k=0}^{n-1}\left(1+\lambda^2+2\lambda\cos(2\pi(x+k\theta))\right)^{-1}\right)^{\frac{1}{2n}}\leq e^{2\epsilon}.
\]
Therefore, $\|(u+\lambda v)^{-n}\|^{1/n}\leq e^{2\epsilon}$ for
all $n\geq N'$. So $r\left((u+\lambda v)^{-1}\right)\leq
e^{2\epsilon}$. Since $\epsilon>0$ is arbitrary,
$r\left((u+\lambda v)^{-1}\right)\leq 1$. By Lemma~\ref{L:r=1},
$r(u+\lambda v)=1$ for $0<\lambda<1$. This implies that
$\sigma(u+\lambda v)\subseteq \mathbb{T}$. Since $\sigma(u+\lambda
v)$ is rotation invariant, $\sigma(u+\lambda v)=\mathbb{T}$.

If $\lambda>1$, then $\sigma(u+\lambda v)=\lambda
\sigma(\lambda^{-1}u+v)=\lambda \mathbb{T}$. This completes the
proof.

\end{proof}

\section{Brown's spectral distribution of $u+\lambda v$}

Let $M$ be a finite von Neumann algebra with a faithful normal tracial state $\tau$. The \emph{Fuglede-Kadison determinant}~\cite{FK}, $\Delta:\, M\rightarrow [0,+\infty[,$ is given by
\[
\Delta(T)={\rm exp}\{\tau(\ln|T|)\},\quad T\in M,
\]
with ${\rm exp}\{-\infty\}:=0$. For an arbitrary element $T$ in $M$ the function $\lambda\rightarrow \ln\Delta(T-\lambda 1)$ is subharmonic on $\mathbb{C}$, and its Laplacian
\[
d\mu_T(\lambda):=\frac{1}{2\pi}\triangledown^2\ln \Delta(T-\lambda 1),
\]
in the distribution sense, defines a probability measure $\mu_T$ on $\mathbb{C}$, called the \emph{Brown's spectral distribution} or \emph{Brown measure} of $T$. From the definition, Brown measure $\mu_T$ only depends on the joint distribution of $T$ and $T^*$, i.e., the (noncommutative) mixed moments of $T$ and $T^*$.

If $T$ is normal, then $\mu_T$ is the  trace $\tau$ composed with the spectral projections of $T$. If $M=M_n(\mathbb{C})$, then $\mu_T$ is the normalized counting measure $\frac{1}{n}\left(\delta_{\lambda_1}+\delta_{\lambda_2}+\cdots+\delta_{\lambda_n}\right)$, where $\lambda_1,\lambda_2,\cdots,\lambda_n$ are the eigenvalues of $T$ repeated according to root multiplicity.

The following theorem is  Theorem 2.2 of~\cite{HS2}.
\begin{Theorem}\label{T:HS}
Let $T\in M$, and for $n\in \mathbb{N}$, let $\mu_n\in \text{Prob}([0,\infty))$ denote the distribution of $(T^n)^*T^n$ w.r.t $\tau$, and let $\nu_n$ denote the push-forward measure of $\mu_n$ under the map $t\rightarrow t^{\frac{1}{n}}$. Moreover, let $\nu$ denote the push-forward measure of $\mu_T$ under the map $z\rightarrow |z|^2$, i.e., $\nu$ is determined by
\[
\nu([0,t^2])=\mu_T(\overline{B(0,t)}),\quad, t>0.
\]
Then $\nu_n\rightarrow \nu$ weakly in $\text{Prob}([0,\infty))$.
\end{Theorem}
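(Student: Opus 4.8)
The plan is to prove the two one-sided bounds
\[
\liminf_{n\to\infty}\nu_n\big([0,t^2]\big)\ \ge\ \mu_T\big(B(0,t)\big)\qquad\text{and}\qquad\limsup_{n\to\infty}\nu_n\big([0,t^2]\big)\ \le\ \mu_T\big(\overline{B(0,t)}\big)
\]
for every $t>0$, and then conclude: since all the $\nu_n$ and $\nu$ are supported in the fixed compact interval $[0,\|T\|^2]$ and $t\mapsto\mu_T(\overline{B(0,t)})$ has at most countably many jumps, these bounds force $\nu_n([0,s])\to\nu([0,s])$ at every continuity point $s$ of $\nu$, i.e.\ $\nu_n\to\nu$ weakly. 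Throughout I use the identities $\nu_n([0,t^2])=\tau\big(\mathbf{1}_{[0,t^n]}(|T^n|)\big)$ (trace of the spectral projection of $|T^n|$ onto $[0,t^n]$) and $\mu_{T^n}=(z\mapsto z^n)_*\mu_T$, so that $\mu_{T^n}(\overline{B(0,t^n)})=\mu_T(\overline{B(0,t)})$. In this language the theorem is a noncommutative Yamamoto-type statement: the rescaled singular values of $T^n$ recover the radial part of the Brown measure of $T$.

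Both bounds rest on the Haagerup--Schultz hyperinvariant subspaces: for each $s>0$ there is a $T$-hyperinvariant projection $P_s=P(T,\overline{B(0,s)})\in M$ with $\tau(P_s)=\mu_T(\overline{B(0,s)})$ and $TP_s=P_sTP_s$, realizing the ``upper triangular'' decomposition of $T$ in which the corner $P_sTP_s$ has Brown measure supported in $\overline{B(0,s)}$ and the complementary corner $(1-P_s)T(1-P_s)$ has Brown measure in $\mathbb{C}\setminus B(0,s)$, hence is invertible in $(1-P_s)M(1-P_s)$. The crucial analytic input is the corner norm estimate: $\limsup_{n\to\infty}\|a^n\|_2^{1/n}\le s$ whenever $a$ is such a corner with Brown measure supported in $\overline{B(0,s)}$, where $\|\cdot\|_2$ denotes the trace $2$-norm. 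For the lower bound, fix $s<t$; then $\|T^nP_s\|_2\le(s+\varepsilon)^n$ for $n$ large, so there is a subprojection $P'\le P_s$ with $\tau(P_s-P')\to0$ and $P'|T^n|^2P'\le(s+\varepsilon)^{2n}P'$, whence $\tau(\mathbf{1}_{[0,(s+\varepsilon)^n]}(|T^n|))\ge\tau(P')$, i.e.\ $\nu_n([0,(s+\varepsilon)^2])\ge\tau(P')$; letting $n\to\infty$, then $\varepsilon\downarrow0$, then $s\uparrow t$ yields the $\liminf$ bound. For the upper bound, fix $s>t$ and apply the same estimate to the inverse of the invertible corner $(1-P_s)T(1-P_s)$: this forces all but a vanishing fraction of the singular values of $T^n$ to exceed $(s-\varepsilon)^n$, so $\nu_n([0,(s-\varepsilon)^2])\le\mu_T(\overline{B(0,s)})+o(1)$, and sending $\varepsilon\downarrow0$, then $s\downarrow t$, gives the $\limsup$ bound.

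The main obstacle is precisely the corner norm estimate $\limsup_n\|a^n\|_2^{1/n}\le s$: in a general ${\rm II}_1$ factor the spectral radius of an operator can strictly exceed the radius of the support of its Brown measure, so even the weaker control of the $2$-norm growth of powers by the Brown measure is not formal; establishing it is one of the main technical achievements of Haagerup and Schultz, relying on their construction of the hyperinvariant projections $P(T,\overline{B(0,s)})$ together with fine resolvent and Fuglede--Kadison determinant estimates. Everything else --- the reduction to the two one-sided bounds, the comparison of projections in the ${\rm II}_1$ factor, and the $\varepsilon,s,t$ bookkeeping --- is routine.
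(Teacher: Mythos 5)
The paper does not prove this statement at all: it is quoted verbatim as Theorem 2.2 of Haagerup and Schultz \cite{HS2}, so there is no internal argument to compare yours against. Judged on its own terms, your proposal has a genuine gap, and in fact a structural one. You derive the theorem from the existence of the hyperinvariant projections $P_s=P(T,\overline{B(0,s)})$ with $\tau(P_s)=\mu_T(\overline{B(0,s)})$ and $TP_s=P_sTP_s$, together with the power-norm estimate $\limsup_n\|a^n\|_2^{1/n}\le s$ for the corner $a=P_sTP_s$. But in \cite{HS2} these objects are constructed \emph{after} Theorem 2.2: the projections $P(T,\overline{B(0,s)})$ arise as spectral projections $1_{[0,s]}(A)$ of the strong-operator limit $A$ of $((T^n)^*T^n)^{1/2n}$, whose existence is the main technical achievement of that paper and which already presupposes (and strengthens) the weak convergence $\nu_n\to\nu$ you are trying to prove. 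So the route you sketch is circular relative to the only available construction of the ingredients it invokes, and the one step you yourself flag as ``the main obstacle'' (the corner norm estimate) is precisely the part that is not established.

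For the record, the proof of Theorem 2.2 in \cite{HS2} is softer and does not use invariant subspaces. One first notes the exact identity $\int\log t\,d\nu_n(t)=\tfrac{2}{n}\log\Delta(T^n)=2\log\Delta(T)=\int\log t\,d\nu(t)$, by multiplicativity of the Fuglede--Kadison determinant and the definition of the Brown measure. Then, using the polynomial identity $T^n-\lambda^n 1=\prod_{j=0}^{n-1}(T-\omega^j\lambda)$ with $\omega=e^{2\pi i/n}$, one shows $\tfrac1n\log\Delta(T^n-\lambda^n 1)\to\int_{\mathbb C}\log\max(|z|,|\lambda|)\,d\mu_T(z)$; comparing this with $\tfrac1{2n}\tau\bigl(\log((T^n)^*T^n+|\lambda|^{2n})\bigr)$ controls the potentials $\int\log\max(t,r^2)\,d\nu'(t)$ of any weak limit point $\nu'$ of the (tight, uniformly compactly supported) sequence $(\nu_n)$, and a uniqueness argument for measures with prescribed potentials of this form forces $\nu'=\nu$. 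If you want a self-contained argument to reproduce, that is the one to follow; your $\liminf$/$\limsup$ reduction and the bookkeeping at the end are fine, but the engine driving the proof has to be the determinant identities, not the spectral subspaces.
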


\begin{Theorem}
The Brown measure of $u+\lambda v$ is the Haar measure on the unit
circle $\mathbb{T}$ if $0<\lambda\leq 1$ and the Haar measure on
$\lambda \mathbb{T}$ if $\lambda>1$.
\end{Theorem}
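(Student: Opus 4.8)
The plan is to compute the Brown measure $\mu_{u+\lambda v}$ via the Haagerup--Schultz approximation theorem (Theorem~\ref{T:HS}), which reduces the problem to understanding the limiting distribution of $|(u+\lambda v)^n|^{2/n} = \left((u+\lambda v)^{*n}(u+\lambda v)^n\right)^{1/n}$ with respect to $\tau$. First I would record, from equation~(\ref{E:(u+v)^{-1}}), that
\[
(u+\lambda v)^{*n}(u+\lambda v)^n = \prod_{k=0}^{n-1}(1+\alpha^{-k}\lambda w^*)\prod_{j=0}^{n-1}(1+\alpha^{j}\lambda w),
\]
and since $w=u^*v$ is a Haar unitary, I may identify $w$ with $M_z$ on $L^2(\mathbb{T},m)$; then $(u+\lambda v)^{*n}(u+\lambda v)^n$ becomes the multiplication operator by $g_n(z) = \prod_{k=0}^{n-1}\bigl(1+\lambda^2+2\lambda\cos(2\pi(x+k\theta))\bigr)$ where $z=e^{2\pi i x}$, exactly as in section~9. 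Consequently the distribution $\mu_n$ of $(u+\lambda v)^{*n}(u+\lambda v)^n$ is the push-forward of Lebesgue measure on $[0,1]$ under $x\mapsto g_n(x)$, and the measure $\nu_n$ of Theorem~\ref{T:HS} is the push-forward under $x\mapsto g_n(x)^{1/n}$.

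Next I would identify the weak limit of $\nu_n$. By Lemma~\ref{L:Ergodic}, for almost every $x\in[0,1]$ one has $g_n(x)^{1/n}\to 1$ (for $0<\lambda\le 1$); combined with the uniform bound $g_n(x)^{1/n}\le e^{2\epsilon}$ for large $n$ established in the proof of Lemma~\ref{L:r=1}, and a matching lower bound argument, the dominated convergence theorem gives $\nu_n\to\delta_1$ weakly. Hence by Theorem~\ref{T:HS} the push-forward $\nu$ of $\mu_{u+\lambda v}$ under $z\mapsto|z|^2$ equals $\delta_1$, so $\mu_{u+\lambda v}$ is supported on the unit circle $\mathbb{T}$. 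Since $e^{i\psi}u$ and $e^{i\psi}v$ satisfy the same rotation relation as $u,v$ and Brown measure depends only on the $*$-moments, $\mu_{u+\lambda v}$ is rotation invariant; the unique rotation-invariant probability measure on $\mathbb{T}$ is the Haar measure, so $\mu_{u+\lambda v}=m$ for $0<\lambda\le1$. For $\lambda>1$, scaling by $\lambda$ and using $u+\lambda v=\lambda(\lambda^{-1}u+v)$ together with the symmetry between $u$ and $v$ reduces to the case already treated and gives the Haar measure on $\lambda\mathbb{T}$.

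The main obstacle I expect is controlling the lower bound: Lemma~\ref{L:Ergodic} gives almost-everywhere convergence $g_n(x)^{1/n}\to1$, but to push this to convergence of the measures $\nu_n$ I need a genuinely uniform (in $x$ and $n$) two-sided estimate, not just an almost-everywhere statement, since a small set where $g_n^{1/n}$ is large could in principle shift mass in the limit. The upper bound $g_n(x)^{1/n}\le e^{2\epsilon}$ is exactly the content of the estimate in the proof of Lemma~\ref{L:r=1}; the corresponding lower bound $g_n(x)^{1/n}\ge e^{-2\epsilon}$ for all $x$ and all large $n$ should follow by the same Birkhoff-ergodic-theorem-plus-equidistribution argument applied to $-\ln(1+\lambda^2+2\lambda\cos 2\pi x)$ (as in the proof of $r((u+\lambda v)^{-1})\le 1$ in Theorem~\ref{T:1}), but when $\lambda=1$ the function $-\ln(2+2\cos 2\pi x)$ is unbounded, so one must instead argue from the $L^1$-integrability noted in Lemma~\ref{L:Ergodic} together with Egorov's theorem to get the weak convergence $\nu_n\to\delta_1$ directly, rather than through a pointwise uniform bound. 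Once weak convergence of $\nu_n$ is in hand, everything else is the rotation-invariance bookkeeping above.
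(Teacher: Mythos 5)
Your proposal is correct and, for the essential case $\lambda=1$, follows the same route as the paper: identify $((T^n)^*T^n)^{1/n}$ with multiplication by $\bigl(\prod_{k=0}^{n-1}(1+\lambda^2+2\lambda\cos(2\pi(x+k\theta)))\bigr)^{1/n}$, invoke Theorem~\ref{T:HS} and Lemma~\ref{L:Ergodic}, and finish by rotation invariance. The difference lies in how $\nu=\delta_1$ is identified, and here the paper's argument is lighter than yours: it never proves two-sided convergence $\nu_n\to\delta_1$. It only uses the portmanteau inequality $\nu([0,b))\le\liminf_n\nu_n([0,b))$ for the open sets $[0,b)$ with $b<1$ (which follows from a.e., hence in-measure, convergence of $g_n^{1/n}$ to $1$) to get $\nu([0,b))=0$, and then caps the support from above by the Brown spectral radius bound $r'(u+v)\le r(u+v)=1$ from \cite{HS2}, Corollary 2.6. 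This completely sidesteps the lower-bound/Egorov issue you flag. Moreover, the obstacle you identify is not actually an obstacle even on your route: since $\nu_n\to\nu$ is tested against \emph{bounded} continuous $f$, the a.e. convergence $g_n(x)^{1/n}\to 1$ together with the constant dominating function $\|f\|_\infty$ already gives $\int f\,d\nu_n=\int_0^1 f(g_n(x)^{1/n})\,dx\to f(1)$ by dominated convergence; no uniform two-sided estimate is needed, and the unboundedness of $-\ln(2+2\cos 2\pi x)$ at $\lambda=1$ is harmless. Finally, for $0<\lambda<1$ and $\lambda>1$ the paper does not use Theorem~\ref{T:HS} at all: it simply combines $\sigma(u+\lambda v)=\mathbb{T}$ (resp.\ $\lambda\mathbb{T}$) from Theorem~\ref{T:1} with the inclusion of the support of the Brown measure in the spectrum and rotation invariance, which is shorter than running the Haagerup--Schultz machinery again; your scaling reduction for $\lambda>1$ is equivalent to the paper's.
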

\begin{proof}
By Theorem~\ref{T:1}, $\sigma(u+\lambda v)=\mathbb{T}$ if
$0<\lambda<1$ and $\sigma(u+\lambda v)=\lambda \mathbb{T}$ if
$\lambda>1$. Since $\mu_{(u+\lambda v)}$ is rotation invariant and
the support of $\mu_{(u+\lambda v)}$ is contained in
$\sigma(u+\lambda v)$, the Brown measure of $u+\lambda v$ is the
Haar measure on the unit circle $\mathbb{T}$ if $0<\lambda< 1$ and
the Haar measure on $\lambda \mathbb{T}$ if $\lambda>1$.

In the following, we consider the case $\lambda=1$. Let $T=u+v$, and let $\nu$ and $\nu_n$ be the measures defined as in Theorem~\ref{T:HS}. Note that $((T^n)^*T^n)^{\frac{1}{n}}=|(1+w)\cdots (1+\alpha^{n-1}w)|^{\frac{2}{n}}$, where $w=u^*v$ is a Haar unitary operator. So we can view $((T^n)^*T^n)^{\frac{1}{n}}$ as the multiplication operator on $L^2[0,1]$ corresponding to the function
\[
\left|\prod_{k=0}^{n-1}\left(2+2\cos(2\pi(x+k\theta))\right)\right|^{\frac{1}{n}}.
\]
Let $m$ be the Lebesgue measure on $[0,1]$.  For $0<b<1$, since $[0,b)$ is an open set relative to $[0,\infty)$ and   $\nu_n\rightarrow \nu$ weakly in $\text{Prob}([0,\infty))$ (by Theorem~\ref{T:HS}),
\[
\nu([0,b))\leq \liminf_{n\rightarrow\infty}\nu_n([0,b))=\liminf_{n\rightarrow\infty}m\left(\left\{x: \left|\prod_{k=0}^{n-1}\left(2+2\cos(2\pi(x+k\theta))\right)\right|^{\frac{1}{n}}\in [0,b)\right\}\right).
\]
By Lemma~\ref{L:Ergodic},  for almost all $x\in [0,1]$,
\[
\lim_{n\rightarrow\infty}\left|\prod_{k=0}^{n-1}\left(2+2\cos(2\pi(x+k\theta))\right)\right|^{\frac{1}{n}}=1.
\]
In particular, $\left|\prod_{k=0}^{n-1}\left(2+2\cos(2\pi(x+k\theta))\right)\right|^{\frac{1}{n}}$ converges in
 measure to the constant function $1$ on [0,1]. Since $b<1$, $\nu([0,b))=0$. Let $r'(u+v)$ be the
 Brown spectral radius of $u+v$. Then $r'(u+v)\leq r(u+v)=1$ (see~\cite{HS2}, Corollary 2.6). So the support of $\nu$ is contained in $[0,1]$. Thus $\nu$ is the Dirac measure  $\delta_1$ and the support of $\mu_T$ is contained in $\mathbb{T}$.  Since $\mu_T$ is rotation invariant, $\mu_T$ is the Haar measure on $\mathbb{T}$.
\end{proof}

\noindent Junsheng Fang

\noindent School of Mathematical Sciences,
Dalian University of Technology,

\noindent Dalian, 116024, P.R China,

\noindent {\em E-mail address: } [Junsheng Fang]\,\,
junshengfang\@@gmail.com

\vspace{.2in}

\noindent Chunlan Jiang

\noindent Department of Mathematics, Hebei Normal University,

\noindent Shijianzhuang, 050016, P.R China

\noindent {\em E-mail address: } [Chunlan Jiang]\,\,
cljiang\@@mail.hebtu.edu.cn

\newpage

\noindent Huaxin Lin

\noindent Department of Mathematics, East China  Normal University

\noindent Shanghai, 200241, P.R. China, and

\noindent University of Oregon

\noindent Eugene, Oregon, 97402, U.S.A.

\noindent {\em E-mail address: } [Huaxin Lin]\,\,
hlin\@@uoregon.edu

\vspace{.2in}

\noindent Feng Xu

\noindent Department of Mathematics, University of California at Riverside

\noindent Riverside, CA 92521, United States

\noindent {\em E-mail address: } [Feng Xu]\,\,
xufeng\@@math.ucr.edu


\begin{thebibliography}{99}
 \bibitem{AJ} Avila, A. and Jitomirskaya, S.,  The ten martini problem, \emph{Ann. of Math.},(2) 170 (2009), no. 1, 303-342.

 \bibitem{AK} Avila, A. and Krikrian, R.,  Reducibility or non-uniform hyperbolicity for quasiperiodic Schr$\ddot{\text{o}}$dinger cocycles, \emph{Ann. of Math.} 164 (2006), 911-940.

\bibitem{BDF} Brown, L. G., Douglas, R.G., and Fillmore, P.A.,  Extensions of $C^*$-algebras and K-homology, \emph{Ann. of Math.} (2) {\bf 105} (1977), no. 2, 265-324.

{
\bibitem{BKR} Blackadar, B., Kumjian, A. and R\o rdam, M.,  Approximately central matrix units and the structure of noncommutative tori,
    \emph{K-Theory} {\bf 6} (1992),  267-284.}

 \bibitem{Br} Brown, L. G., Lidskii's theorem in the type ${\rm II}$ case, Geometric methods in operator algebras, H.Araki and E.Effros(Eds.) Pitman Res. Notes Math. Ser 123 (1986), 1-35.


 \bibitem{BL} Biane, P. and Lehner, F., Computation of some examples of Brown's spectral measure in free probability, \emph{Colloq. Math.}, 90 (2001), 181-211.

 \bibitem{BS} B$\acute{\text{e}}$llissard, J. and Simon, B., Cantor spectrum for the almost Mathieu equation, \emph{J.Funct.Anal}, 48 (1982), no.3, 408-419.



 \bibitem{CEY} Choi, M-D.; Elliott, G.A. and Yui, N.,  Gauss polynomials and the rotation algebra, \emph{Invent. Math.}, 99 (1990), no.2, 225-246.

 \bibitem{Da} Davidson, K. R., \CA s by examples, Fields Institute Monographs, 6. American Mathematical Society, Providence, RI, 1996.


\bibitem{Ellinter} Elliott, G. A., Dimension groups with torsion, Internat. J. Math. 1 (1990), no. 4, 361-380.

\bibitem{EE} Elliott, G. A. and Evans, D. E.,  The structure of the irrational rotation -algebra, \emph{Ann. of Math.} (2) 138 (1993), no. 3, 477-501.

\bibitem{Ex} Exel, R.,  Circle actions on $C^*$-algebras, partial
automorphisms and a generalized Pimsner-Voiculescu exact sequence,  \emph{J. Funct. Anal}. {\bf 122} (1994), no. 2, 361-401.

 \bibitem{FHM} Fang, J.; Hadwin, D. and Ma, X., On spectra and Brown's spectral measures of elements in free products of matrix algebras, \emph{Math. Scand.}, (103) 2008, 77-96.

 \bibitem{FK} Fuglede, B. and Kadison, R.,  Determinant theory in finite factors, \emph{Ann. of Math.}, 55 (3) 1952, 520-530.


 \bibitem{HL} Haagerup, U. and Larsen, F.; Brown's spectral distribution measure for $R$-diagonal elements in finite von Neumann algebras, \emph{J.Funct.Anal}. 176 (2000), 331-367.


 \bibitem{HS1} Helffer, B.; Sj$\ddot{\text{o}}$strand., Semiclassical analysis for Harper's equation. III. Cantor structure of the spectrum. \emph{Mem. Soc. Math. France (N.S.)}, No. 39(1989), 1-124.

 \bibitem{HS2} Haagerup, U. and Schultz, H.; Invariant subspaces for operators in a general ${\rm II}_1$ factor, \emph{Publ. Math. Inst. Hautes $\acute{\text{E}}$tudes Sci}. No. 109 (2009), 19-111.

\bibitem{JS} Jiang, X. and Su, H.,  On a simple unital projectionless $C^*$-algebra,
\emph{Amer. J. Math}. 121 (1999), no. 2, 359-413

\bibitem{JW1} Jiang, C-L. and Wang, Z-Y.,  Strongly irreducible operators on Hilbert space. Pitman Research Notes in Mathematics Series, 389. Longman, Harlow, 1998. x+243 pp.

\bibitem{JW2} Jiang, C-L. and Wang, Z-Y.,  Structure of Hilbert space operators. World Scientific Publishing Co. Pte. Ltd., Hackensack, NJ, 2006. x+248 pp.


\bibitem{Jo} Jones, V.~F.~R., \newblock Index for subfactors, \newblock {\em Invent. Math.}, 72:1--25, 1983.

\bibitem{La} Last, Y., Spectral theory of Sturm-Liouville operators on infinite intervals: a review of recent developments. Sturm-Liouville theory, 99¨C120, Birkh$\ddot{\text{a}}$user, Basel, 2005.


\bibitem{Lin2} Lin, H.,  Tracially AF $C^*$-algebras, \emph{Trans. Amer. Math. Soc.}, {\bf 353} (2001), 693-722.


{
\bibitem{Lincrell} Lin, H.,  Traces and simple $C^*$-algebras with tracial topological rank zero. \emph{J. Reine Angew. Math.}  {\bf 568}, (2004), 99--137.}

{
\bibitem{LNadv} Lin, H. and  Niu, Z.,  Lifting KK-elements, asymptotic unitary equivalence and classification of simple $C^*$-algebras, \emph{Adv. Math.} {\bf 219}, (2008),  1729--1769.}

{
\bibitem{LP} Lin, H, and  Phillips, N. C., Crossed products by minimal homeomorphisms, \emph{J. Reine Angew. Math.} {\bf 641} (2010), 95--122.}

 \bibitem{Pe} Peierls, R., Zur Theorie Des Diamagnetismus von Leitungselektronen \emph{Z.Phys}. 80, 763-791 (1933).

\bibitem{Phi} Phillps, N.C., Recursive subhomogeneous algebras, \emph{Trans.Amer.Math.Soc.} {\bf 359} (2007), 4595-4623.

 \bibitem{Po} Popa, S., On a problem of R.V. Kadison on maximal abelian $\ast$-subalgebras in factors, \emph{Invent. Math}. 65 (1981) 269-281.

\bibitem{PP}Pimsner, M. and Popa, S.; Entropy and index for subfactors. Ann. Sci. $\acute{\text{E}}$cole Norm. Sup. (4) 19 (1986), no. 1, 57-106.


 \bibitem{Pu} Puig, J., Cantor spectrum for the almost Mathieu operator. \emph{Comm. Math. Phys}. 244 (2004), no. 2, 297-309.

 \bibitem{PV} Pimsner, M. and Voiculescu, D., Imbedding the irrational rotation $C^*$-algebra into an AF-algebra,
\emph{J. Operator Theory} 4 (1980), no. 2, 201-210.

{
\bibitem{Putnam} Putnam, I., On the $K$-theory of \CA s of principal groupoid, \emph{Rocky Mount. J. Math.} {\bf 28} (1998), 1483--1518.}

 \bibitem{Ri} Rieffel, M.A., $C^*$-algebras associated with irrational rotations, \emph{Pacific J. Math.} 93 (1981), no. 2, 415-429.
\bibitem{Sh}  Shen, C.L.,  On the classification of the ordered groups associated with the approximately finite-dimensional $C^*$-algebras,  \emph{Duke Math. J.}  {\bf 46} (1979), no. 3, 613-633.

 \bibitem{Si} Simon, B., Schr$\ddot{\text{o}}$dinger operators in the twenty-first century, \emph{Mathematical physicis}, 2000, 283-288, Imp. Coll. Press, London, 2000.

\bibitem{To} Tomiyama, J., On the projection of norm one in $W^*$-algebras, \emph{Proc. Japan Acad.} {\bf 33} (1957), 608-612.

\bibitem{WinterPLMS04} Winter, W., Decomposition rank of subhomogeneous $C^*$-algebras, \emph{Proc. London Math. Soc.} (3) {\bf 89} (2004), no. 2, 427-456.

\bibitem{Winterinv11} Winter, W., Decomposition rank and Z-stability, \emph{Invent. Math}. {\bf 179} (2010), no. 2, 229-301.




 \end{thebibliography}
\end{document}